\newcommand{\currentsidemargin}{%
	\ifodd\value{page}%
	\oddsidemargin%
	\else%
	\evensidemargin%
	\fi%
}
\newlength{\whatsleft}
\definecolor{InternalLinks}{rgb}{0.33, 0.29, 0.31}
\theoremstyle{plain}
\newtheorem{theo}{Theorem}[section]
\newtheorem{lem}[theo]{Lemma}
\newtheorem{propo}[theo]{Proposition}
\newtheorem{coro}[theo]{Corollary}
\theoremstyle{definition}
\newtheorem{exe}[theo]{Example}
\newtheorem{defi}[theo]{Definition}
\theoremstyle{remark}
\newtheorem{rem}[theo]{Remark}
\numberwithin{equation}{section}
\newcommand{\C}{\mathbb{C}}
\newcommand{\N}{\mathbb{N}}
\newcommand{\Z}{\mathbb{Z}}
\newcommand{\id}{\mathrm{id}}
\newcommand{\spa}{\mathrm{span}}
\newcommand{\iso}{\cong}
\newcommand{\com}{\Delta}
\newcommand{\cou}{\varepsilon}
\newcommand{\G}{\mathbb{G}}
\newcommand{\HH}{\mathbb{H}}
\newcommand{\irr}{\mathrm{Irr}}
\newcommand{\mor}{\mathrm{Mor}}
\newcommand{\pol}{\mathcal{O}}
\newcommand{\rep}{\mathrm{Rep}}
\newcommand{\triv}{\mathbbm{1}}
\DeclareMathOperator{\xbox}{\otimes}
\newcommand{\blhd}{\blacktriangleleft}
\newcommand{\nes}{\psi}
\newcommand{\CC}{\mathcal{C}}
\newcommand{\LL}{\mathcal{L}}
\newcommand{\PP}{\mathcal{P}}
\DeclareMathOperator{\Proj}{Proj}
\newcommand{\rl}{\mathrm{rl}}
\newcommand{\B}{\mathrm{B}}
\newcommand{\BB}{\mathcal{B}}
\newcommand{\F}{\varphi}
\newcommand{\hil}{\mathrm{Hilb}}
\DeclareMathOperator{\Ind}{Ind}
\DeclareMathOperator{\Res}{Res}
\DeclareFontFamily{U}{MnSymbolC}{}
\DeclareFontShape{U}{MnSymbolC}{m}{n}{
	<-6>  MnSymbolC5
	<6-7>  MnSymbolC6
	<7-8>  MnSymbolC7
	<8-9>  MnSymbolC8
	<9-10> MnSymbolC9
	<10-12> MnSymbolC10
	<12->   MnSymbolC12}{}
\DeclareFontShape{U}{MnSymbolC}{b}{n}{
	<-6>  MnSymbolC-Bold5
	<6-7>  MnSymbolC-Bold6
	<7-8>  MnSymbolC-Bold7
	<8-9>  MnSymbolC-Bold8
	<9-10> MnSymbolC-Bold9
	<10-12> MnSymbolC-Bold10
	<12->   MnSymbolC-Bold12}{}
\DeclareSymbolFont{MnSyC}{U}{MnSymbolC}{m}{n}
\DeclareMathSymbol{\boxvert}{\mathbin}{MnSyC}{113}
\newcounter{PartitionDepth}
\newcounter{PartitionLength}
\newcommand{\partii}[3]{
	\begin{picture}(#3,#1)
	\setcounter{PartitionLength}{#3-#2}
	\setcounter{PartitionDepth}{-1-#1}
	\put(#2,\thePartitionDepth){\line(0,1){#1}}
	\put(#3,\thePartitionDepth){\line(0,1){#1}}
	\put(#2,\thePartitionDepth){\line(1,0){\thePartitionLength}}
	\end{picture}}
\newcommand{\uppartii}[3]{
	\begin{picture}(#3,#1)
	\setcounter{PartitionLength}{#3-#2}
	\setcounter{PartitionDepth}{#1}
	\put(#2,0){\line(0,1){#1}}
	\put(#3,0){\line(0,1){#1}}
	\put(#2,\thePartitionDepth){\line(1,0){\thePartitionLength}}
	\end{picture}}
\newcommand{\addresseshere}{%
\enddoc@text\let\enddoc@text\relax
}
\newcommand{\pushright}[1]{\ifmeasuring@#1\else\omit\hfill$\displaystyle#1$\fi\ignorespaces}
\newcommand{\pushleft}[1]{\ifmeasuring@#1\else\omit$\displaystyle#1$\hfill\fi\ignorespaces}
\title{Tannaka-Krein reconstruction and ergodic actions of easy quantum groups}
\date{}
\author{Amaury Freslon}
\address{Universit\'e Paris-Saclay, CNRS, Laboratoire de Math\'ematiques d'Orsay, 91405 Orsay, France}
\email{amaury.freslon@universite-paris-saclay.fr}
\author{Frank Taipe}
\address{Universit\'e Paris-Saclay, CNRS, Laboratoire de Math\'ematiques d'Orsay, 91405 Orsay, France}
\email{frank.taipe@universite-paris-saclay.fr}
\author{Simeng Wang}
\address{Institute for Advanced Study in Mathematics, Harbin Institute of Technology, Harbin 150001, China}
\email{simeng.wang@hit.edu.cn}
\subjclass[2010]{20G42, 05E10}
\keywords{Quantum groups, C*-tensor categories, Duality theory for actions.}
\begin{document}

\begin{abstract}
We give a new alternative version of the reconstruction procedure for ergodic actions of compact quantum groups and we refine it to include characterizations of (braided commutative) Yetter-Drinfeld C*-algebras. We then use this to construct families of ergodic actions of easy quantum groups out of combinatorial data involving partitions and study them. Eventually, we use this categorical point of view to show that the quantum permutation group cannot act ergodically on a classical connected compact space, thereby answering a question of D. Goswami and H. Huang.
\end{abstract}

\maketitle

\section{Introduction}

Compact quantum groups were introduced by S. L. Woronowicz in the 1980's \cite{W87}. Even though they were originally designed as an extension of compact groups in the setting of noncommutative geometry, it appeared right away that the definition allows for a great deal of different behaviors since for instance compact quantum groups also cover the theory of discrete groups. Moreover, it was soon established by S. L. Woronowicz in \cite{W88} that compact quantum groups can equivalently be described in categorical terms through a generalization of the Tannaka-Krein duality theorem. This proved useful in constructing and studying examples. In particular, T. Banica and R. Speicher used in \cite{banica2009liberation} the Tannaka-Krein approach to construct compact quantum groups out of combinatorial data called categories of partitions. This seminal work has led to many interesting connections between quantum groups, combinatorics and free probability.

Just like groups, quantum groups can act on ``quantum spaces'', that is to say on C*-algebras. Because of the importance of group actions in the modern theory of groups, it was natural to investigate actions of compact quantum groups, and this was done in the early days of the theory. In particular, F. Boca generalized in \cite{boca1995ergodic} the work of A. Wassermann on the spectral decomposition of actions \cite{wassermann1989ergodic}. But based on the previous paragraph, it is natural to wonder how to see actions through the Tannaka-Krein framework. An answer was given by C. Pinzari and J. E. Roberts in \cite{PR07} for actions which are \emph{ergodic} (see below for the definitions). Just like compact quantum groups arise from tensor functors to Hilbert spaces on specific monoidal categories, ergodic actions arise from ``not exactly tensor'' functors, called weak unitary tensor functors. This was later generalized to arbitrary actions by S. Neshveyev in \cite{N14}, where Hilbert spaces have to be replaced with Hilbert C*-modules. Moreover, it is also natural to consider more enriched dynamic structures such as Yetter-Drinfeld C*-algebras, which are closely related to the study of noncommutative Poisson boundaries \cite{NY14}, equivariant KK-theory for quantum groups \cite{NV09} and quantum transformation groupoids \cite{taipe21}. For example, a categorical duality for Yetter-Drinfeld C*-algebras was done by S. Neshveyev and M. Yamashita in \cite{NY14}, with the use of module C*-categories.

The purpose of this work is to extend the combinatorial approach of Banica and Speicher to the setting of actions. This means that in the same way as they used partitions to product tensor functors on C*-tensor categories, we will use them to produce functors that enable the reconstruction of an ergodic action or even a Yetter-Drinfeld structure. Two difficulties immediately arise. First, partitions only give information on some part of the C*-tensor category, namely that given by tensor powers of the fundamental representation. The strategy is therefore to build functors on that part and check that there is a unique way of extending them. We therefore start by proving a reconstruction theorem of that kind.

The second difficulty is that there is no canonical way of produce a weak unitary tensor functor from partitions. More precisely, we describe two general methods of doing this. In both cases, we give several families of examples which are of very different nature. Indeed, one can recover actions on finite spaces, on embedded homogeneous spaces and on quotient spaces from these. Moreover, many of the actions constructed can be naturally described from the aforementioned ones through an induction procedure that we detail.

The examples illustrate the power of the functorial approach to ergodic actions of easy quantum groups. As another illustration, we use categorical techniques to answer a question of D. Goswami and H. Huang on quantum rigidity of classical spaces (see \cite[Question 3.12]{huang2013faithful}). Namely, we show that the quantum permutation group $S_{N}^{+}$ cannot act ergodically on a compact connected space, unless the latter is a point. Our methods also work for many other free quantum groups such as the quantum orthogonal group $O_N^+$ and the quantum refection group $H_N^+$. 

We will now briefly outline the organization of this work. After some preliminaries in Section \ref{sec:preliminaries}, whose main purpose is to fix notations and recall some important facts which will be used afterwards, we state and prove in Section \ref{sec:duality} our Tannaka-Krein duality theorem for ergodic actions. For clarity, the proof is split into two parts, one concerning the construction of the functor and one concerning the explicit description of the C*-algebra obtained from that functor. We then give in a separate statement a characterization of the existence of a (braided commutative) Yetter-Drinfeld structure in terms of the weak unitary tensor functor. This relies on a corresponding extension of the reconstruction theorem of C. Pinzari and J. E. Roberts which we give in an appendix.

Then, Section \ref{sec:examples} deals with applications of our theorem to easy quantum groups. After recalling a few additional facts on the combinatorics of partitions, we produce several families of actions through the Tannaka-Krein reconstruction theorem. We identify some of them and postpone the description of some others because they require some results on induction. In Section \ref{sec:induction}, we describe the induction procedure for compact quantum groups. This is already known to experts, but such matters are usually treated in the general framework of locally compact quantum groups. We therefore believe that a self-contained treatment of the compact case can be of use, all the more that it is quite elementary. Eventually, in Section \ref{sec:rigidity} we use the reconstruction theorem to prove that the quantum permutation group $S_{N}^{+}$ cannot act ergodically on a non-trivial connected compact space. The conjecture that a genuine compact quantum group cannot act faithfully on a connected classical compact space was first formulated by D. Goswami, based upon strong evidence from the geometric setting. However, H. Huang showed in \cite{huang2013faithful} that there are faithful actions of the quantum permutation group on connected spaces. Here therefore refined the conjecture by requiring the action to be ergodic, and we prove that conjecture for $S_N^+$ as well as for other free easy quantum groups.

\subsection*{Conventions} The inner product in any Hilbert space will be always linear in the first component.

\subsection*{Acknowledgments}

A.F. and S.W. were partially funded by the ANR grant ``Noncommutative analysis on groups and quantum groups'' (ANR-19-CE40-0002) and the PHC Polonium ``Quantum structures and processes'', A.F. was also partially funded by the ANR grant ``Operator algebras and dynamics on groups'' (ANR-19-CE40-0008), the PHC Procope ``Quantum groups and quantum probability'' and the PHC Van Gogh ``Quantum groups, harmonic analysis and quantum probability''. S.W. was also partially supported by the Fundamental
Research Funds for the Central Universities No. FRFCUAUGA5710012222, the NSF of China No.
12031004 and a public grant as part of the Fondation Mathématique Jacques Hadamard.

\subsection*{Conflict of interests} The authors have no conflicts of interest to declare and there is no financial interest to report.
\subsection*{Data availability} Data sharing not applicable to this article as no datasets were generated or analyzed during the current study.

\section{Preliminaries}\label{sec:preliminaries}

In this section we gather the necessary material concerning the objects that we will study. We will not give any proofs, the aim being mainly to fix the notations and vocabulary.

\subsection{Compact quantum groups and their representations}

This work is concerned with compact quantum groups in the following sense,

\begin{defi}
A \emph{compact quantum group} is a pair $(C(\G), \com)$ where $C(\G)$ is a C*-algebra and
\begin{equation*}
\com : C(\G)\to C(\G)\otimes C(\G)
\end{equation*}
is a unital $*$-homomorphism satisfying the following two conditions:
\begin{enumerate}
\item $(\com\otimes \id)\circ\com = (\id\otimes \com)\circ\com$;
\item $\overline{\spa}\{\com(C(\G))(1\otimes C(\G))\} = C(\G)\otimes C(\G) = \overline{\spa}\{\com(C(\G))(C(\G)\otimes 1)\}$.
\end{enumerate}
We use the notation $\G = (C(\G), \com)$ to denote the compact quantum group.
\end{defi}

We refer the reader to the book \cite{NT13} for a detailed treatment of the theory of compact quantum groups, we will here simply give the definitions and results which will be needed in the sequel. We will freely use the usual \emph{leg numbering notations}: if $X$ is an element of a tensor product of two spaces, then $X_{(12)}$, $X_{(23)}$ and $X_{(13)}$ are the natural extensions to a triple tensor product spaces acting on the two tensors indicated as subscripts.

Let $\G = (C(\G),\com)$ be a compact quantum group. A {\em representation} of $\G$ on a finite-dimensional Hilbert space $H$ is an invertible element $u\in \B(H)\otimes C(\G)$ such that $(\id\otimes\com)(u) = u_{(12)}u_{(13)}$, and we may write $H = H_{u}$ when there is a risk of confusion. Note that the representations considered in this work will always be finite-dimensional unless specified otherwise. The simplest example is the \emph{trivial representation} $\triv = 1\otimes 1\in \B(\C)\otimes C(\G)$. If $(e_{i})_{i\in I}$ is an orthonormal basis of $H$, then the matrix coefficients of $u$ are
\begin{equation*}
u_{ij} = (\omega_{ij}\otimes \mathrm{id})u \in C(\G),
\end{equation*}
where $\omega_{ij} = \langle \;\cdot\;e_{j}, e_{i}\rangle$. We can then equivalently view $u$ as a coaction
\begin{equation*}
\delta_{u} :  
H  \to  H\otimes C(\mathbb G) ,\quad 
e_{j}  \mapsto   \displaystyle\sum_{i\in I} e_{i}\otimes u_{ij}.
\end{equation*}
The standard operations on representations are available in this quantum setting. In particular, the \emph{conjugate} of $u$ is defined as $\bar{u} = [u_{ij}^*]_{i,j \in I} \in \mathrm B(\bar{H})
\otimes C(\G)$ and is again a representation, and we will often be interested in the \emph{self-conjugate} case where $u_{ij} = u_{ij}^*$ for all $i,j \in I$ and where we write $u=\bar{u}$ by identifying $H$ and $\bar H$ in the canonical way; while the tensor product of $u$ and $v$ is defined as
\begin{equation*}
u\boxtimes v = u_{(13)}v_{(23)}\in\B(H_{u}\otimes H_{v})\otimes C(\G).
\end{equation*}
Because we will work in a categorical framework, the morphisms of representations will play a prominent role. They are defined as follows:
\begin{align*}
\mor_{\G}(u ,v) & := \left\{T\in \B(H_{u}, H_{v}) \mid v(T\otimes \id) = (T \otimes \id)u \right\} \\
& \phantom{:}= \left\{T\in \B(H_{u}, H_{v}) \mid \delta_{v}\circ T = (T\otimes \id)\circ\delta_{u}\right\}.
\end{align*} 
These are usually called the \emph{intertwiners} between $u$ and $v$. We will say that a unitary representation $u$ is {\em irreducible} if $\mor_{\G}(u,u) = \C\id_{H _{u}}$ and we will denote by $\rep(\G)$ the category of all finite-dimensional unitary representations of $\G$ with intertwiners as morphisms. We denote by $\irr(\G)$ the set of equivalence classes of finite-dimensional irreducible unitary representations of $\G$ and we fix, for each $x\in \irr(\G)$, a representative $u^{x}\in\B(H_{x})\otimes C(\G)$.

\subsection{Ergodic actions}

The central notion in the present work is that of an action of a compact quantum group on a C*-algebra.
 
\begin{defi}\label{def:action}
A {\em continuous action of a compact quantum group} $\G = (C(\G), \com)$ on a unital C*-algebra $B$ is a unital $*$-homomorphism $\alpha : B \to B \otimes C(\G)$ satisfying the following two conditions:
\begin{enumerate}
\item $(\alpha\otimes \id)\circ\alpha = (\id\otimes \com)\circ\alpha$;
\item $\overline{\spa}\{(1\otimes C(\G) )\alpha(B)\} = B \otimes C(\G)$.
\end{enumerate}
The corresponding {\em fixed point subalgebra} is the C*-subalgebra
\begin{equation*}
B^{\G} = \{b\in B \mid \alpha(b) = b\otimes 1\}
\end{equation*}
and the action $\alpha$ is called {\em ergodic} if $B^{\G} = \C 1$.
\end{defi} 

Because we are only interested in continuous actions, we will drop the word ``continuous'' from now on. The reader may refer to \cite{DC17} for a comprehensive treatment of actions of compact quantum groups and proofs of the results used hereafter. The notion of intertwiner extends to actions by setting, for a representation $u$ of $\G$,
\begin{equation*}
\mor_{\G}(u, \alpha) := \left\{T : H_{u}\to B \mid \alpha\circ T = (T\otimes\id)\circ\delta_{u}\right\}.
\end{equation*}
This space admits a natural Hilbert space structure if $\alpha$ is ergodic, given by
\begin{equation*}
\langle T, S\rangle = \sum_{i\in I} T(e_{i})S (e_{i})^{*}\in B^{\G} = \C 1
\end{equation*}
and this defines a suitable inner product. It is sometimes convenient to use an alternative description of the morphism spaces which we now describe. Observe that $\delta_{u}$ and $\alpha$ induce a coaction on $H_{u}\otimes B$ through the formula
\begin{equation*}
\delta_{u}\boxtimes \alpha : \xi\otimes b\mapsto \delta_{u}(\xi)_{(13)}\alpha(b)_{(23)}.
\end{equation*}
It is then easily checked that the map $T\mapsto \sum_{i\in I} e_{i}\otimes T(e_{i})$ yields an isomorphism between $\mor_{\G}(u, \alpha)$ and
\begin{equation*}
(H_{u}\otimes B)^{\G} := \{X\in H_{u}\otimes B \mid (\delta_{u}\boxtimes\alpha)(X) = X\otimes 1\}.
\end{equation*}
The \emph{spectral subspace} of $B$ associated with $u$ is
\begin{equation*}
B_{u} := \left\{T(\xi) \mid T\in \mor_{\G}(u, \alpha), \xi \in H_{u}\right\}.
\end{equation*}
For any $x\in \irr(\G)$, we write $B_{x}$ for $B_{u^{x}}$ and the map $T\otimes \xi\mapsto T(\xi)$ yields an isomorphism
\begin{equation*}
\mor_{\G}(u^{x},\alpha)\otimes H_{x}\cong B_{x}.
\end{equation*}
Given a basis $(T_{i})_{i\in I'}$ of $\mor_{\G}(u, \alpha)$ and writing $b_{ij} = T_{i}(e_{j})$, the restriction of $\alpha$ to $\mathcal{B}_{x}$ is simply given, for any $(i, j)\in I\times I'$, by
\begin{equation*}
\alpha(b_{ij}) = \sum_{k\in I} b_{ik}\otimes u^{x}_{kj}.
\end{equation*}
One crucial property of the spectral subspaces is that they enable to recover the whole structure of the action. This is because
\begin{equation*}
\BB := \bigoplus_{x \in \irr(\G)} \mathcal{B}_{x}
\end{equation*}
is a dense $*$-subalgebra of $B$, called the \emph{Podleś subalgebra} or \emph{algebraic core}, and $\alpha_{\mid\BB} : \BB \to \BB\otimes \pol(\G)$ is a coaction of the Hopf $*$-algebra $(\pol(\G), \com)$ of coefficients of finite-dimensional representations on $\G$. We can furthermore take completions of $\BB$ to recover a reduced and a universal version, involving the corresponding completions of $\pol(\G)$. In the present work,\emph{ we will always be considering the universal completions of $\BB$ and $\pol(\G)$, together with the corresponding action}.

The notion of quantum group action in Definition \ref{def:action} can be naturally extended to the general locally compact setting, covering in particular actions of the dual discrete quantum group $\widehat{\mathbb G}$ of $\mathbb G$. In this paper we will not define the object $\widehat{\mathbb G}$ since we do not need it. We will rather use the following well-known equivalent description of $\widehat{\mathbb G}$-actions in terms of $\pol(\G)$-actions:  

\begin{defi}\label{def:hopfaction}
Let $\G$ be a compact quantum group and let $\mathcal{B}$ be a unital $*$-algebra. We say that a linear map 
\[
\lhd: \mathcal{B} \otimes \pol(\G) \to \mathcal{B},\quad
b\otimes a \mapsto b\lhd a
\]
is an \emph{action of the Hopf $*$-algebra} $(\pol(\G), \com)$ on $\mathcal{B}$ (or say that $(\mathcal B , \lhd)$ is a \emph{$\pol(\G)$-module $*$-algebra}) if for all $b, b'\in\mathcal{B}$ and $a, a' \in \pol(\G)$, the following conditions hold:
\begin{enumerate}
\item $b\lhd (a  a') = (b \lhd a) \lhd a'$ and $b \lhd 1_{\pol(\G)} = b$;
\item $(b b') \lhd a = (b \lhd a_{(1)})(b' \lhd a_{(2)})$ and $1_{\mathcal{B}} \lhd a = \cou(a)1_{\mathcal{B}}$;
\item $(b \lhd a)^{*} = b^{*} \lhd S(a)^{*}$.
\end{enumerate}
\end{defi}
It is particularly important to consider the case where $\G$ and $\widehat{\G}$ act simultaneously on the same quantum space with several natural compatibility relations, such as the Yetter-Drinfeld structures. 
  
\begin{defi}[Yetter-Drinfeld C*-algebra]\label{def:bc_yd_algebra}
Let $\G$ be a compact quantum group and let $B$ be a C*-algebra endowed with a quantum group action $\alpha : B \to B \otimes C(\G)$ and a Hopf $*$-algebra action $\lhd : \mathcal{B}\otimes \pol(\G)\to \mathcal{B}$, where $\mathcal{B}$ denotes the algebraic core of $B$. The tuple $(B, \lhd, \alpha)$ is a {\em Yetter-Drinfeld $\G$-C*-algebra} if for all $a \in \pol(\G)$, $b \in \mathcal{B}$,
\begin{equation}\tag{YD}\label{eq:yetter_drinfeld_condition}
\alpha(b \lhd a) = (b_{[0]} \lhd a_{(2)}) \otimes S(a_{(1)})b_{[1]}a_{(3)},
\end{equation}
where we are using, for all $a \in \pol(\G)$, $b \in \mathcal{B}$, Sweedler's notations $\com(a) = a_{(1)} \otimes a_{(2)}$ and $\alpha(b) = b_{[0]} \otimes b_{[1]}$. A Yetter-Drinfeld $\G$-C*-algebra $(B,\lhd,\alpha)$ is said to be {\em braided commutative} if
\begin{equation}\tag{BC}\label{eq:braided_commutative_condition}
bb' = b'_{[0]}(b \lhd b'_{[1]})
\end{equation}
for all $b, b' \in \mathcal{B}$.
\end{defi}

\subsection{Easy quantum groups}

We will mainly focus on a class of compact quantum groups called \emph{easy quantum groups} which was introduced by T. Banica and R. Speicher in \cite{banica2009liberation}. This is a family of compact quantum groups built in a categorical way from combinatorial data expressed in terms of partitions. A \emph{partition} is given by two integers $k$ and $l$ and a partition $p$ of the set $\{1, \dots, k+l\}$ and we denote by $P$ the set of all partitions. It is convenient to picture such partitions as diagrams, in particular for computational purposes. A diagram consists in an upper row of $k$ points, a lower row of $l$ points, and some strings connecting these points if and only if they belong to the same subset of the partition. Let us consider for instance the partitions $p_{1} = \{\{1, 8\}, \{2, 6\}, \{3, 4\}, \{5, 7\}\}$ and $p_{2} = \{\{1, 4, 5, 6\}, \{2, 3\}\}$. If we see $p_{1}$ as an element of $P(3, 5)$ and $p_{2}$ as en element of $P(4, 2)$, then their diagram representations are
\begin{center}
\begin{tikzpicture}[scale=0.5]
\draw (0,-3) -- (0,3);
\draw (-1,-3) -- (-1,-2);
\draw (1,-3) -- (1,-2);
\draw (-1,-2) -- (1,-2);
\draw (-2,-3) -- (2,3);
\draw (2,-3) -- (-2,3);


\draw (-2.25,-3.5) node[below]{$1$};
\draw (-1,-3.5) node[below]{$2$};
\draw (0,-3.5) node[below]{$3$};
\draw (1,-3.5) node[below]{$4$};
\draw (2.25,-3.5) node[below]{$5$};


\draw (-2.25,3.5) node[above]{$1$};
\draw (0,3.5) node[above]{$2$};
\draw (2.25,3.5) node[above]{$3$};

\draw (-2.5,0) node[left]{$p_{1} = $};
\end{tikzpicture}
\begin{tikzpicture}[scale=0.5]
\draw (0,-1) -- (0,1);
\draw (-2,1) -- (2,1);
\draw (-2,1) -- (-2,3);
\draw (2,1) -- (2,3);
\draw (-1,2) -- (-1,3);
\draw (1,2) -- (1,3);
\draw (-1,2) -- (1,2);

\draw (-1,-1) -- (1,-1);
\draw (-1,-1) -- (-1,-3);
\draw (1,-1) -- (1,-3);


\draw (-1,-3.5) node[below]{$1$};
\draw (1,-3.5) node[below]{$2$};


\draw (-2,3.5) node[above]{$1$};
\draw (-1,3.5) node[above]{$2$};
\draw (1,3.5) node[above]{$3$};
\draw (2,3.5) node[above]{$4$};

\draw (-2.5,0) node[left]{$p_{2} = $};
\end{tikzpicture}
\end{center}

When manipulating partitions, the crucial notion is that of a block.

\begin{defi}
Let $p$ be a partition. A maximal set of points which are all connected (i.e.~one of the subsets defining the partition) is called a \emph{block} of $p$. If the block contains both upper and lower points (i.e.~the subset contains an element of $\{1, \dots, k\}$ and an element of $\{k+1, \dots, k+\ell\}$), then it is called a \emph{through-block}. Otherwise, it is called a \emph{non-through-block}. The total number of through-blocks of the partition $p$ is denoted by $t(p)$.
\end{defi}

In the following, we will pay particular attention to partitions which are non-crossing in the following sense.

\begin{defi}\label{de:noncrossing}
Let $p$ be a partition. A \emph{crossing} in $p$ is a tuple $k_{1} < k_{2} < k_{3} < k_{4}$ of integers such that
\begin{enumerate}
\item $k_{1}$ and $k_{3}$ are in the same block;
\item $k_{2}$ and $k_{4}$ are in the same block;
\item the four points are \emph{not} in the same block.
\end{enumerate}
If there is no crossing in $p$, then it is said to be  a \emph{non-crossing} partition. The set of non-crossing partitions will be denoted by $NC$.
\end{defi}

As an example, the partition $p_{1}$ above has a crossing, while the partition $p_{2}$ is non-crossing. There are several fundamental operations available on partitions called the \emph{category operations}:
\begin{enumerate}
\item If $p\in \CC(k, l)$ and $q\in \CC(m, n)$, then $p\odot q\in \CC (k+m, l+n)$ is their \emph{horizontal concatenation}, i.e.~the first $k$ of the $k+m$ upper points are connected by $p$ to the first $l$ of the $l+n$ lower points, whereas $q$ connects the remaining $m$ upper points with the remaining $n$ lower points. This is usually denoted by $p\otimes q$ but we will rather use the symbol $\odot$ to avoid confusion with tensor products.
\item If $p\in \CC(k, l)$ and $q\in \CC(l, m)$, then $qp\in \CC(k, m)$ is their \emph{vertical concatenation}, i.e.~$k$ upper points are connected by $p$ to $l$ middle points and the lines are then continued by $q$ to $m$ lower points. This process may produce loops in the partition. More precisely, consider the set $L$ of elements in $\{1, \dots, l\}$ which are not connected to an upper point of $p$ nor to a lower point of $q$. The lower row of $p$ and the upper row of $q$ both induce a partition of the set $L$. For $x, y\in L$, let us set $x\sim y$ if $x$ and $y$ belong either to the same block of the partition induced by $p$ or to the one induced by $q$. The transitive closure of $\sim$ is an equivalence relation on $L$ and the corresponding partition is called the \emph{loop partition} of $L$, its blocks are called \emph{loops} and their number is denoted by $\rl(q, p)$. To complete the operation, we remove all the loops.
\item If $p\in \CC(k, l)$, then $p^{*}\in \CC(l, k)$ is the partition obtained by reflecting $p$ with respect to an horizontal axis between the two rows.
\end{enumerate}

\begin{defi}
A \emph{category of partitions} $\CC$ is the data of a set of partitions $\CC(k, l)$ for all integers $k$ and $l$, which is stable under all the category operations and contains the identity partition $\vert\in P(1, 1)$.
\end{defi}

Such data gives rise to a compact quantum group through a natural way of associating linear maps with partitions.

\begin{defi}
Let $N$ be an integer and let $(e_{i})_{1\leqslant i\leqslant N}$ be the canonical basis of $\C^{N}$. For any partition $p\in P(k, l)$, we define a linear map
\begin{equation*}
T_{p}:(\C^{N})^{\otimes k} \mapsto (\C^{N})^{\otimes l}
\end{equation*}
by the following formula:
\begin{equation*}
T_{p}(e_{i_{1}} \otimes \dots \otimes e_{i_{k}}) = \sum_{j_{1}, \dots, j_{l} = 1}^{n} \delta_{p}(\mathbf{i},\mathbf{j})e_{j_{1}} \otimes \dots \otimes e_{j_{l}},
\end{equation*}
where $\delta_{p}(\mathbf{i},\mathbf{j}) = 1$ if all strings of the partition $p$ connect equal indices of the tuple $\mathbf{i}=(i_{1},\dots,i_{k})$ in the upper row with equal indices of the tuple $\mathbf{j}=(j_{1},\dots,j_{l})$ in the lower row, and $\delta_{p}(\mathbf{i},\mathbf{j}) = 0$ otherwise.
\end{defi}

The interplay between these maps and the category operations are given by the following rules proven in \cite[Prop 1.9]{banica2009liberation}:
\begin{enumerate}
\item $T_{p}^{*} = T_{p^{*}}$;
\item $T_{p}\otimes T_{q} = T_{p\otimes q}$;
\item $T_{p}\circ T_{q} = N^{\rl(p, q)}T_{pq}$.
\end{enumerate}
Here is now statement from \cite[Thm 3.9]{banica2009liberation} which enables to define easy quantum groups.

\begin{theo}\label{thm:tannakakrein}
Let $\CC$ be a category of partitions and let $N$ be an integer. Then, there exists a compact quantum group $\G$ together with a fundamental representation $u$ of dimension $N$ such that for all $k, l\in \N$,
\begin{equation*}
\mor_{\G}(u^{\boxtimes k}, u^{\boxtimes l}) = \spa\{T_{p} \mid p\in \CC(k, l)\}.
\end{equation*}
The compact quantum group $\G$ is called the \emph{easy quantum group} associated with $\CC$ and $N$ and is denoted by $\G_{N}(\CC)$.
\end{theo}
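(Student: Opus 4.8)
The plan is to deduce the statement from Woronowicz's Tannaka--Krein reconstruction theorem \cite{W88}, in its concrete monoidal form (see e.g.\ \cite{NT13}): if one is given a finite-dimensional Hilbert space $H$ and, for all $k,l\in\N$, linear subspaces $\mathcal R(k,l)\subseteq\B(H^{\otimes k},H^{\otimes l})$ which contain the identity maps, are stable under composition, tensor product and adjoints, and for which $H$ admits a conjugate inside the resulting category, then there exists a compact quantum group $\G$ and a unitary representation $u$ on $H$ with $\mor_\G(u^{\boxtimes k},u^{\boxtimes l})=\mathcal R(k,l)$ for all $k,l\in\N$. I would apply this with $H=\C^N$ and $\mathcal R(k,l):=\spa\{T_p\mid p\in\CC(k,l)\}$, so that the proof reduces to checking that these morphism spaces satisfy the hypotheses of Woronowicz's theorem.

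All these verifications are direct translations of the three rules $T_p^*=T_{p^*}$, $T_p\otimes T_q=T_{p\odot q}$ and $T_p\circ T_q=N^{\rl(p,q)}T_{pq}$ recalled above, together with the defining stability properties of a category of partitions. Since $\vert\in\CC(1,1)$ and $\CC$ is stable under horizontal concatenation, $\vert^{\odot k}\in\CC(k,k)$ and $T_{\vert^{\odot k}}=T_\vert^{\otimes k}=\id_{(\C^N)^{\otimes k}}$, so the identities lie in $\mathcal R$; stability of $\CC$ under $p\mapsto p^*$ gives $T_p^*=T_{p^*}\in\mathcal R(l,k)$; stability under $\odot$ gives $T_p\otimes T_q=T_{p\odot q}\in\mathcal R(k+m,l+n)$; and stability under vertical concatenation gives $T_p\circ T_q=N^{\rl(p,q)}T_{pq}\in\mathcal R(k,m)$, which is enough since $\mathcal R(k,m)$ is a linear subspace and $N^{\rl(p,q)}$ is a scalar. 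Because all spaces involved are finite-dimensional, each $\mathcal R(k,l)$ is automatically norm-closed, so one genuinely obtains a concrete monoidal C*-category.

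It then remains to exhibit a conjugate for $\C^N$ inside this category. Let $\sqcup\in P(0,2)$ be the pair partition (two lower points joined, no upper point), which lies in $\CC$, and put $R:=T_{\sqcup}\colon\C\to(\C^N)^{\otimes 2}$, $1\mapsto\sum_i e_i\otimes e_i$, together with $\bar R:=R$. Then $R\in\mathcal R(0,2)$ and $R^*=T_{\sqcup^*}\in\mathcal R(2,0)$, and a one-line computation gives $(R^*\otimes\id)(\id\otimes R)=\id_{\C^N}=(\id\otimes R^*)(R\otimes\id)$, so $(R,\bar R)$ solves the conjugate equations and $\C^N$ is self-conjugate in the category. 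Woronowicz's theorem now provides a compact quantum group, which we denote by $\G_N(\CC)$, and a fundamental representation $u$ of dimension $N$ with $\mor_{\G_N(\CC)}(u^{\boxtimes k},u^{\boxtimes l})=\spa\{T_p\mid p\in\CC(k,l)\}$. That this is an equality rather than merely an inclusion is exactly what the reconstruction yields, and it relies on $\CC$ being closed under all the category operations, so that the given morphism spaces already form a category and nothing further is forced.

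The one genuinely delicate point is this rigidity step: one must ensure that the concrete category has conjugates, for otherwise Woronowicz's machinery would only produce a compact quantum semigroup (a bialgebra) and not a compact quantum group. This is precisely what the presence of the pair partition in $\CC$ secures, and it reflects the fact that the fundamental representation of an (orthogonal) easy quantum group is self-conjugate. All the remaining steps are mechanical consequences of the three relations between partitions and the maps $T_p$ displayed above.
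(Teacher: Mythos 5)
The paper does not prove this statement: it is quoted verbatim from Banica--Speicher \cite[Thm 3.9]{banica2009liberation}, and your argument is precisely the proof given there, namely checking that the spaces $\spa\{T_{p}\mid p\in\CC(k,l)\}$ form a concrete rigid C*-tensor category and invoking Woronowicz's Tannaka--Krein theorem. Your verifications (identity, adjoints, tensor products, composition up to the scalar $N^{\rl(p,q)}$, and the conjugate equations via $R=T_{\sqcup}$) are all correct, and you rightly single out rigidity as the only non-mechanical point. The one thing worth flagging is that the paper's Definition of a category of partitions only requires $\vert\in\CC(1,1)$ and closure under the three operations, so the membership $\sqcup\in\CC(0,2)$ that your rigidity step uses is not literally guaranteed by the definition as written; it is part of the standard Banica--Speicher axioms (and is implicitly assumed here, since the paper asserts that $NC_{2}$ is the smallest category of partitions), but you should either cite that axiom or note that you are using it.
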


The smallest category of partitions is the set $NC_{2}$ of all non-crossing partitions with blocks of size two (these are usually called pair partitions) and the corresponding compact quantum group is the \emph{free orthogonal quantum group} $O_{N}^{+}$ originally introduced by S. Z. Wang in \cite{wang1995free}. Given a category of partitions $\CC$, the inclusion $NC_{2}\subset \CC$ translates into a surjective $*$-homomorphism $C(O_{N}^{+})\to C(\G_{N}(\CC))$ so that easy quantum groups all are quantum subgroups of $O_{N}^{+}$.

\section{Duality theorems for ergodic actions}\label{sec:duality}

In this section we will prove our reconstruction theorem for ergodic actions of compact quantum groups. The result will be split into two parts. First, we show how one can reconstruct an ergodic action out of a specific functor and then we investigate extra structures on the functor corresponding to the action having a Yetter-Drinfeld structure.

\subsection{The reconstruction theorem for ergodic actions}

We will work, for the sake of simplicity and because this is enough for our examples in the next section, with a compact quantum group $\G$ which is a quantum subgroup of $O_{N}^{+}$. This means that there is a specific unitary finite-dimensional representation $u$ of $\G$ acting on $H = \C^{N}$ which is orthogonal and whose coefficients generate $C(\G)$. This implies in particular that any irreducible representation is equivalent to a subrepresentation of $u^{\boxtimes k}$ for some $k$. Let us start with some notations. We will simply write $\mathrm{Mor}_{\mathbb{G}}(k,l)$ for $\mathrm{Mor}_{\mathbb{G}}(u^{\boxtimes k},u^{\boxtimes l})$. Moreover, $(e_{i})_{1\leqslant i\leqslant N}$ will denote the canonical basis of $\C^{N}$ and we will, for tuples $\mathbf{i} = (i_{1}, \dots, i_{n})$ and $\mathbf{j} = (j_{1}, \dots, j_{n})$,
\begin{equation*}
e_{\mathbf{i}} = e_{i_{1}}\otimes\cdots\otimes e_{i_{n}} \;\; \text{ and } \;\;  u_{\mathbf{i}\mathbf{j}} = u_{i_{1}j_{1}}\cdots u_{i_{n}j_{n}}.
\end{equation*}

The statement of our result is somewhat complicated, and to make it clearer we first describe it informally. Recall that given an ergodic action $\alpha : B \to B\otimes C(\G)$, the spaces $K_{v} = (H_{v}\otimes B)^{\G}$ are Hilbert spaces. Moreover, any intertwiner $T\in \mor_{\G}(u, v)$ yields a linear map $(T\otimes \id) : K_{v}\to K_{v'}$. In other words, the collection of spectral subspaces of $B$ can be seen as a functor from $\rep(\G)$ to the category of finite-dimensional Hilbert spaces (denoted in the sequel by $\hil_{f}$), and even a $*$-functor since this obviously preserves adjoints. Because $\rep(\G)$ has the extra structure of a monoidal category coming from the tensor product of representations, it is natural to investigate the behavior of this functor with respect to tensor products. It turns out (see \cite[Thm 7.3]{PR07}) that there are isometric inclusions $K_{v}\otimes K_{v'}\hookrightarrow K_{v\boxtimes v'}$, but these fail to be surjective in general (see for instance Section \ref{sec:examples} for examples). We therefore have the structure of a \emph{weak unitary tensor functor} as defined in \cite{N14} (originally called \emph{quasitensor $*$-functor} in \cite{PR07}).

\begin{defi}\label{de:weakunitarytensorfunctor}
A weak unitary tensor functor $\varphi:\rep(\G) \to \hil_{f}$ is given by a finite-dimensional Hilbert space $K_{v} $ for each finite-dimensional representation $v$ and a linear map $\varphi(T)\in B(K_{v}, K_{w})$ for each $T\in\mor_{\G}(v,w)$ and $v, w\in\rep(\G)$ such that for all $v, v', w, w'\in\rep(\G)$ and for all $T\in\mor_{\G}(v, v')$, $S\in\mor_{\G}(w, w')$, the following hold:
\begin{enumerate}[label=(F\arabic*)]
\item\label{cond:1} the map $T\mapsto \varphi(T)$ is linear and satisfies
\begin{itemize}
\item $\varphi(\id)=\id$,
\item $\varphi(T^{*}) = \varphi(T)^{*}$,
\item $\varphi(TS) = \varphi(T)\varphi(S)$ if $w' = v$;
\end{itemize}
\item\label{cond:2} there is an isometric embedding $\iota_{v, w} : K_{v}\otimes K_{w}\to K_{v\boxtimes w}$ such that 
\begin{equation*}
\iota_{v\boxtimes v', w}(\iota_{v, v'}\otimes\id_{K_{w}}) = \iota_{v, v'\boxtimes w}(\id_{K_{v}}\otimes\iota_{v', w})
\end{equation*}
and
\begin{equation*}
\varphi(T \xbox S)\iota_{v,w}=\iota_{v',w'}(\varphi(T)\otimes\varphi(S));
\end{equation*}
\item\label{cond:3} we have
\[
P_{v\boxtimes v',w} P_{v,v'\boxtimes w} \leq P_{v,v',w}
\]
where $P_{v\boxtimes v',w}$, $P_{v,v'\boxtimes w}$ and $P_{v,v',w}$ are the range projection of the maps $\iota_{v\boxtimes v',w}$, $\iota_{v,v'\boxtimes w}$ and
\[
\iota_{v,v',w}:=\iota_{v\boxtimes v',w}(\iota_{v,v'} \otimes \id_{K_{w}})=\iota_{v,v'\boxtimes w}(\id_{K_{v}} \otimes \iota_{v,w}).
\]
respectively;
\item we have $K_{v\oplus w} = K_{v}\oplus K_{w}$ and $\varphi (T\oplus S) = \varphi(T)\oplus \varphi(S)$.
\end{enumerate}
Following the usual categorical notation, we will also write $K_v = \varphi (v)$ if we want to emphasize the role of the functor.
\end{defi}

\begin{rem}\label{rem:cond}
	Condition \ref{cond:3} may implies other similar variants considered in \cite{N14}. For $u,v \in \rep(\G)$ and $X \in \varphi(u)$, we put
\[
S^{u,v}_{X}: \varphi(v) \to \varphi(u \boxtimes v),\quad Y  \mapsto  \iota_{u,v}(X \otimes Y) .
\]
Then for all $u,v,w \in \mathrm{Rep}(\mathbb{G})$, the equality $P_{u\boxtimes v,w} P_{u,v\boxtimes w} \leq P_{u,v,w}$ implies
\begin{equation*}
(S^{u,v\boxtimes w}_{X})^{*}\iota_{u \boxtimes v,w} = \iota_{v,w}((S^{u,v}_{X})^{*} \otimes \id).
\end{equation*}
This implication is well-known to experts and we will explain it a little bit in Appendix \ref{appendix:cond}.
\end{rem}
\medskip

As explained above, any ergodic action $(B, \alpha)$ yields such a functor, denoted by $\varphi_{\alpha}$. C. Pinzari and J. Robert proved in \cite{PR07} a Tannaka-Krein duality result for ergodic actions, which we may state briefly as follows:
\begin{theo}[{\cite[Thm 8.1]{PR07}}]\label{theo:tannakakreinpinzarirobert}
Any weak unitary tensor functor $\varphi:\rep(\G) \to \hil_{f}$ is naturally unitary monoidal isomorphic to $\F_{\alpha}$ for some ergodic action $(B,\alpha)$ of $\G$. 
\end{theo}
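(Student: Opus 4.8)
The plan is to reconstruct the action explicitly from the data of $\varphi$, following the strategy of C. Pinzari and J. E. Roberts \cite{PR07} (and paralleling the Tannaka--Krein construction of easy quantum groups recalled above). First I would write down the candidate algebraic core: fixing a representative $u^{x}$ for each $x\in\irr(\G)$, set
\[
\BB:=\bigoplus_{x\in\irr(\G)}\varphi(u^{x})\otimes\overline{H_{x}},
\]
the conjugate being dictated by variance (this is the concrete, convention-dependent incarnation of the coend $\int^{v}\overline{H_{v}}\otimes\varphi(v)$ over $\rep(\G)$). The multiplication is forced by \ref{cond:2}: given $\xi\otimes\bar h\in\varphi(u^{x})\otimes\overline{H_{x}}$ and $\eta\otimes\bar k\in\varphi(u^{y})\otimes\overline{H_{y}}$, form $\iota_{u^{x},u^{y}}(\xi\otimes\eta)\otimes\overline{h\otimes k}\in\varphi(u^{x}\boxtimes u^{y})\otimes\overline{H_{x}\otimes H_{y}}$, choose isometric intertwiners $w_{\ell}\in\mor_{\G}(u^{z_{\ell}},u^{x}\boxtimes u^{y})$ realizing the irreducible decomposition of $u^{x}\boxtimes u^{y}$, and set the product to be $\sum_{\ell}\bigl(\varphi(w_{\ell})^{*}\iota_{u^{x},u^{y}}(\xi\otimes\eta)\bigr)\otimes\overline{w_{\ell}^{*}(h\otimes k)}\in\BB$. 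One then checks independence of the $w_{\ell}$'s — the naturality $\varphi(T\boxtimes S)\circ\iota=\iota\circ(\varphi(T)\otimes\varphi(S))$ of \ref{cond:2}, with \ref{cond:1}, is exactly what makes the ambiguity cancel — that $1\in\varphi(\triv)\otimes\overline{H_{\triv}}=\C$ is a unit, and that the product is associative, the last following from the coherence $\iota_{v\boxtimes v',w}\circ(\iota_{v,v'}\otimes\id)=\iota_{v,v'\boxtimes w}\circ(\id\otimes\iota_{v',w})$ of \ref{cond:2} together with the associativity constraint of $\boxtimes$ on $\rep(\G)$. For the involution I would fix standard solutions $R_{x}\in\mor_{\G}(\triv,u^{\bar x}\boxtimes u^{x})$ of the conjugate equations; the vector $\iota_{u^{\bar x},u^{x}}^{*}\varphi(R_{x})(1)\in\varphi(u^{\bar x})\otimes\varphi(u^{x})$ is nondegenerate (here the isometry property of $\iota$ enters, via a zig-zag argument), hence induces a conjugate-linear isomorphism $\varphi(u^{x})\to\varphi(u^{\bar x})$, and tensoring it with the canonical identification $\overline{H_{x}}\cong H_{\bar x}$ (up to a normalizing scalar) defines the map $\BB_{x}\to\BB_{\bar x}$. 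That $**=\id$ and that $*$ is anti-multiplicative are diagram chases using the zig-zag equations and $\varphi(R^{*})=\varphi(R)^{*}$.

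Next I would equip $\BB$ with the inner product which is the orthogonal sum of the forms $\langle\xi\otimes\bar h,\eta\otimes\bar k\rangle=\langle\xi,\eta\rangle\,\overline{\langle h,k\rangle}$, and consider the functional $\omega_{0}:\BB\to\C$ projecting onto the isotypic component $\varphi(\triv)\otimes\overline{H_{\triv}}=\C$. One checks that $\omega_{0}$ is a faithful state with $\omega_{0}(b^{*}b)=\|b\|^{2}\geq 0$ — this is where one crucially uses that $\varphi$ is a $*$-functor and that the $\iota_{v,w}$ are isometries — so that its GNS space is exactly $(\BB,\langle\cdot,\cdot\rangle)$, and that left multiplication operators are bounded there (bounding them, by Frobenius reciprocity as in \cite{PR07}, is the only genuinely analytic point). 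Moreover, writing $b_{ij}$ for a basis of the finite-dimensional fundamental piece $\BB_{u}\cong\mor_{\G}(u,\alpha)\otimes\C^{N}$, the elements $\sum_{j}b_{ij}b_{i'j}^{*}$ are $\G$-invariant hence scalar, which bounds the norms of the generators $b_{ij}$ in every $*$-representation of $\BB$; therefore the universal enveloping C*-algebra $B$ of $\BB$ exists, $\BB\hookrightarrow B$ is injective by faithfulness of $\omega_{0}$, and $\BB$ is exactly the algebraic core of $B$. The action is then $\alpha:=\bigoplus_{x}(\id_{\varphi(u^{x})}\otimes\overline{\delta_{u^{x}}}):\BB\to\BB\otimes\pol(\G)$; it is a $*$-homomorphism because intertwiners and the maps $\iota_{v,w}$ are $\G$-equivariant, coassociative because $\com$ is, and Podle\'s-dense by construction, so it extends to $\alpha:B\to B\otimes C(\G)$ by universality. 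It is ergodic since $B^{\G}=\BB_{\triv}=\varphi(\triv)=\C 1$.

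Finally I would check that $\varphi_{\alpha}$ is naturally, unitarily and monoidally isomorphic to $\varphi$. By construction the isotypic component $\BB_{x}\cong\varphi(u^{\bar x})\otimes H_{x}$ carries the representation $u^{x}$ under $\alpha$; decomposing an arbitrary $v$ into irreducibles and using $(H_{x}\otimes H_{\bar x})^{\G}\cong\C$ then yields, naturally and isometrically in $v$, an isomorphism $\varphi_{\alpha}(v)=(H_{v}\otimes B)^{\G}\cong\varphi(v)$, and under it the isometric embeddings $\varphi_{\alpha}(v)\otimes\varphi_{\alpha}(w)\hookrightarrow\varphi_{\alpha}(v\boxtimes w)$ from \cite[Thm 7.3]{PR07} correspond to the maps $\iota_{v,w}$, which gives the monoidal compatibility. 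I expect the main obstacle to be twofold: first, proving that the multiplication above is well defined and associative — the bookkeeping with irreducible decompositions, where the coherence in \ref{cond:2} must be used carefully; second, the boundedness of left multiplication needed for the C*-completion. Both are precisely the places where the \emph{weak} (as opposed to genuine) tensoriality of $\varphi$ has to be handled with care, and together they constitute the technical heart of the argument.
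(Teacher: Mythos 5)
The paper does not actually prove this statement: it is quoted directly from \cite[Thm 8.1]{PR07}, and the text only recalls the output of that construction --- the linear decomposition $\BB=\bigoplus_{x\in\irr(\G)}\overline{K_x}\otimes H_x$ with action $\id\otimes\delta_{u^x}$, the elements $Y^v$ and the conjugation relation (before the second half of the proof of Theorem~\ref{thm:tkergodic}), and the product and involution formulas for $\pi_\varphi$ in Appendix~\ref{app:yd}. Your sketch follows precisely the Pinzari--Roberts route that these recollections presuppose: a coend-type algebra indexed by $\irr(\G)$, multiplication defined through the maps $\iota_{v,w}$ and chosen irreducible decompositions (with well-definedness coming from \ref{cond:1} and the naturality in \ref{cond:2}), involution via solutions of the conjugate equations and the antilinear maps $J_v$, positivity of the invariant state, and the universal C*-completion with the coaction $\id\otimes\delta_{u^x}$ on each isotypic piece --- so it is consistent with the proof the paper is citing, and you correctly identify the two technical pressure points (associativity bookkeeping and boundedness of left multiplication).

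The one place where you should be careful is the placement of the complex conjugate. The paper's summand is $\overline{\varphi(u^x)}\otimes H_x$ with the coaction living on the $H_x$ leg, whereas you take $\varphi(u^x)\otimes\overline{H_x}$. Taken literally, your $\overline{H_x}$ leg carries $\overline{u^x}\cong u^{\bar x}$, so after reindexing the $y$-isotypic component of your algebra is $\varphi(u^{\bar y})\otimes H_y$ and the spectral functor of the reconstructed action becomes $y\mapsto\varphi(\bar y)$, i.e.\ $\varphi_\alpha\cong\varphi\circ\overline{(\,\cdot\,)}$ rather than $\varphi_\alpha\cong\varphi$; since conjugation reverses tensor products, this is not a harmless relabelling. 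The discrepancy is repaired by inserting the conjugate-linear identifications coming from the $J_v$'s and the solutions $R_v$ consistently throughout, which is exactly the bookkeeping that \cite{PR07} and Appendix~\ref{app:yd} carry out explicitly --- it deserves to be written out rather than dismissed as ``dictated by variance,'' because it is where an error of this kind would otherwise propagate into the claimed monoidal isomorphism at the end.
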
 
Our next result is a variation on this where we show that the weak unitary tensor functor only needs to be defined on tensor powers of the fundamental representation $u$. Recall that we only focus on the orthogonal setting, and we will always be considering the universal versions of the quantum group and the corresponding action.

\begin{theo}\label{thm:tkergodic}
Let $(K_{n})_{n\in\N}$ be a family of finite-dimensional Hilbert spaces with $K_{0} = \C$, and let $\varphi : \mor_{\G}(k, l)\to \B(K_{k}, K_{l})$ be a map for all $k, l\in \N$. Assume that $\varphi$ and $(K_{n})_{n\in\N}$ satisfy \emph{\ref{cond:1}, \ref{cond:2}} and \emph{\ref{cond:3} }  of Definition \ref{de:weakunitarytensorfunctor} for $v = u^{\boxtimes k}$, $v' = u^{\boxtimes k'}$, $w = u^{\boxtimes l}$ and $w' = u^{\boxtimes l'}$. Then, there exists a unique weak unitary tensor $\varPhi: \rep(\G)\to \hil_{f}$ such that for all integers $n, k, l$,
\begin{equation*}
\varPhi (u^{\boxtimes n} ) = K_{n} \quad \text{and} \quad \varPhi | _{\mor_{\G}(k, l)} = \varphi.
\end{equation*}
Moreover, let $B$ be the universal unital $C^{*}$-algebra generated by the matrix coefficients of $X^{(n)}\in\B((\C^{N})^{\otimes n}, K_{n})\otimes B$ for $n\in\mathbb{N}$ such that for any $k, l\in \N$,
\begin{enumerate}[label=\textup{(\alph*)}]
\item\label{con:tensor} $X^{(k)}\boxtimes X^{(l)} = (\iota_{k, l}^{*}\otimes\id)X^{(k+l)}$, where $\iota_{k,l}$ is the isometric embedding $\iota_{u^{\boxtimes k},u^{\boxtimes l}}$;
\item\label{con:intertwiners} $X^{(l)}(T\otimes\id) = (\varphi(T)\otimes\id)X^{(k)}$ for any $T\in\mor_{\G}(k, l)$;
\item\label{cond:conjugate} $\overline{X^{(k)}}(j_{k}\otimes\id) = (J_{k}\otimes\id)X^{(k)}$, where $j_{k}: (\C^{N})^{\otimes k}\to \overline{(\C^{N})^{\otimes k}}$ is the complex conjugation, $J_{k}: K_{k}\to K_{k}$ is the anti-linear involutive map such that
\begin{equation*}
\langle J_{k}\xi, \eta\rangle = \langle\iota_{k, k}^{*}\varphi(R_{k})(1), \xi\otimes\eta\rangle
\end{equation*}
for any $\xi, \eta\in K_{k}$ and $R_{k}$ is the solution of conjugate equation for the representation $u^{\boxtimes k}$.
\end{enumerate}
Given a basis $(\xi_{p})_{p\in I}$ of $K_{n}$ and writing $x^{(n)}_{p\mathbf{i}} := (\omega_{p\mathbf{i}}\otimes\mathrm{id})(X^{(n)})$, define an ergodic action $\alpha$ of $\G$ on $B$ by
\begin{equation*}
\alpha(x^{(n)}_{p\mathbf{j}}) = \sum_{\mathbf{i}\in \{1, \dots, N\}^{n}} x^{(n)}_{p\mathbf{i}}\otimes u_{\mathbf{i}\mathbf{j}}.
\end{equation*}
Then, the weak unitary tensor functor is naturally unitarily monoidally isomorphic to $\varphi_{\alpha}$.
\end{theo}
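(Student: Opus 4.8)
The plan is to follow the two-part scheme announced before the statement: first extend $\varphi$ to all of $\rep(\G)$, then identify the C*-algebra attached to the extended functor.

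\textbf{Step 1: extension and uniqueness.} Since $\G\subset O_N^+$ through $u$, every object of $\rep(\G)$ is a subrepresentation of some $u^{\boxtimes k}$, so I would fix for each $v\in\rep(\G)$ an isometry $s_v\in\mor_\G(v,u^{\boxtimes k(v)})$ and set $\varphi(v):=\varphi(s_vs_v^*)K_{k(v)}$ (the range of a projection of $K_{k(v)}$ by \ref{cond:1}), and $\varphi(T):=\varphi(s_wTs_v^*)|_{\varphi(v)}$ for $T\in\mor_\G(v,w)$. The first thing to check, using only \ref{cond:1}, is that this is independent of the chosen isometries — two choices differ by a partial isometry between tensor powers of $u$, and $\varphi$ of it implements the comparison unitary — and that \ref{cond:1} passes to the extension. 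One then builds the embeddings $\iota_{v,w}$ from $\iota_{k(v),k(w)}$ by conjugating with $\varphi(s_v\boxtimes s_w)$ and $\varphi(s_v)^*\otimes\varphi(s_w)^*$, and deduces coherence, naturality and the direct-sum condition of Definition \ref{de:weakunitarytensorfunctor} from their validity on tensor powers of $u$. Uniqueness is automatic: any weak unitary tensor functor restricting to the given data is forced to obey the above formulas, because $s_vs_v^*$, $s_wTs_v^*$ and so on are morphisms between tensor powers of $u$.

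\textbf{Step 2: existence of $B$ and a concrete model.} Before $B$ can be said to exist, I would check that the relations \ref{con:tensor}--\ref{cond:conjugate} force the generators to be bounded. From \ref{con:tensor} at $k=l=0$ and \ref{cond:conjugate} at $n=0$, $X^{(0)}$ is a self-adjoint idempotent, and \ref{con:tensor} at $k=0$ makes it a left unit for all generators, so $X^{(0)}=1$. Then, combining \ref{con:intertwiners} for a standard solution $R_k$ of the conjugate equations of $u^{\boxtimes k}$, \ref{con:tensor} for $\iota_{k,k}$, and \ref{cond:conjugate}, one should obtain the coisometry identity $X^{(k)}(X^{(k)})^*=\mathrm{id}_{K_k}\otimes 1$, i.e.\ $\sum_{\mathbf i}x^{(k)}_{p\mathbf i}(x^{(k)}_{q\mathbf i})^*=\delta_{pq}1$, so each $x^{(k)}_{p\mathbf i}$ is a contraction in any representation of the relations and the universal C*-algebra $B$ is well defined. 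Relation \ref{con:tensor} also rewrites products $x^{(k)}_{p\mathbf i}x^{(l)}_{q\mathbf j}$ as linear combinations of the $x^{(k+l)}_{r,(\mathbf i,\mathbf j)}$, and \ref{cond:conjugate} rewrites $(x^{(k)}_{p\mathbf i})^*$ as a linear combination of the $x^{(k)}_{q\mathbf i}$, so the $*$-subalgebra generated by the $x^{(n)}_{p\mathbf i}$ is just their linear span, dense in $B$. To see $B\neq 0$ and to have something to compare against, I would apply Theorem \ref{theo:tannakakreinpinzarirobert} to the functor from Step 1, getting an ergodic action $(A,\alpha_A)$ with a natural unitary monoidal isomorphism $\varphi_{\alpha_A}\cong\varphi$; under the usual identifications $K_n\cong(H_{u^{\boxtimes n}}\otimes A)^\G\cong\mor_\G(u^{\boxtimes n},\alpha_A)$, if $(T_p)_p$ is the orthonormal basis of $\mor_\G(u^{\boxtimes n},\alpha_A)$ matching $(\xi_p)_p$ then the elements $x^{(n)}_{p\mathbf i}:=T_p(e_{\mathbf i})\in A$ satisfy \ref{con:intertwiners} by naturality, \ref{con:tensor} because the monoidal structure of $\varphi_{\alpha_A}$ is the canonical inclusion $K_k\otimes K_l\hookrightarrow K_{k+l}$ of \cite[Thm 7.3]{PR07}, and \ref{cond:conjugate} because $J_k$ is the reality structure of $(H_{u^{\boxtimes k}}\otimes A)^\G$ expressed through $R_k$. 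Hence the universal property gives a surjection $\pi:B\twoheadrightarrow A$, and in particular $B\neq0$.

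\textbf{Step 3: the action and the isomorphism $B\cong A$.} The prescribed formula sends each generator of $B$ into the algebraic tensor product of the dense span of the $x^{(n)}_{p\mathbf j}$ with $\pol(\G)$, and since $(u^{\boxtimes n})_n$ is a representation and relations \ref{con:tensor}--\ref{cond:conjugate} are assembled from $\G$-intertwiners (which are $\com$-equivariant), this assignment respects the defining relations, hence extends to a unital $*$-homomorphism $\alpha:B\to B\otimes C(\G)$; the coaction identity and the density condition of Definition \ref{def:action} then follow. The map $\xi_p\mapsto(e_{\mathbf i}\mapsto x^{(n)}_{p\mathbf i})$ embeds $K_n$ into $\mor_\G(u^{\boxtimes n},\alpha)$, isometrically exactly because of the coisometry identity ($\langle T_p,T_q\rangle=\delta_{pq}1$). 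Finally, $\pi$ is $\G$-equivariant and sends the spanning set of the algebraic core of $B$ onto the spanning set $\{T_p(e_{\mathbf i})\}$ of the algebraic core of $A$; since $A$ is ergodic, the $u^{\boxtimes n}$-spectral subspace of $A$ has dimension $\dim K_n\cdot N^n$, equal to the number of level-$n$ spanning elements of $B$, so — after using \ref{con:intertwiners} to see how the spectral subspaces of $B$ are distributed across levels — $\pi$ is injective on algebraic cores, hence an isomorphism of the universal completions. Transporting $\alpha_A$ back shows that $\alpha$ is ergodic and that the functor of Step 1 is naturally unitarily monoidally isomorphic to $\varphi_\alpha=\varphi_{\alpha_A}$, as claimed.

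The categorical extension in Step 1 and the equivariance verifications in Step 3 are routine. The hard part will be, on the one hand, extracting the coisometry identity $X^{(k)}(X^{(k)})^*=\mathrm{id}\otimes 1$ from \ref{con:tensor}--\ref{cond:conjugate} alone — so that the universal object even exists and the embedding of $K_n$ is isometric — which requires carefully unwinding how \ref{con:intertwiners} and \ref{cond:conjugate} interact with the conjugate equations of $u^{\boxtimes k}$; and, on the other hand, matching relations \ref{con:tensor}--\ref{cond:conjugate} precisely with the internal monoidal and reality structures of the Pinzari--Roberts reconstruction and running the dimension count that makes $\pi$ an isomorphism, since this is the step that really pins down that the listed relations are complete and that $\alpha$ is ergodic.
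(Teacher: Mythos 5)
Your overall architecture is the same as the paper's: extend the functor from tensor powers of $u$ to all of $\rep(\G)$, then compare the universal algebra with the Pinzari--Roberts model through a surjection and prove injectivity. However, two steps as written contain genuine gaps. In Step 1 you assert that coherence, naturality and the direct-sum axiom for the extended functor ``follow from their validity on tensor powers of $u$.'' This is precisely the nontrivial point: the paper does not verify the axioms of Definition \ref{de:weakunitarytensorfunctor} directly, but instead reduces to Neshveyev's extension criterion (Lemma \ref{prop:neshveyev}), whose two hypotheses --- the isometry condition for the maps $(\varphi(T_1),\dots,\varphi(T_n))$ attached to a decomposition of $x\boxtimes y$, and the coherence condition for morphisms out of triple products --- are exactly what must be checked by hand (and the paper's computations with the $\widetilde{T}=w_zT(w_x\otimes w_y)^*$ are not formal: they use \ref{cond:2} repeatedly to move the embeddings $\iota$ past $\varphi$ of intertwiners). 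Without Lemma \ref{prop:neshveyev} or an equivalent amount of work, your Step 1 is a statement of intent rather than a proof.

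The second gap is the injectivity argument in Step 3. The quantity $\dim K_n\cdot N^n$ is the \emph{number of level-$n$ generators} $x^{(n)}_{p\mathbf i}$, not the dimension of the $u^{\boxtimes n}$-spectral subspace of $A$: since $u^{\boxtimes n}$ is reducible, that subspace is $\sum_{x\subset u^{\boxtimes n}}A_x$ and has dimension $\sum_{x}\dim E_x\cdot\dim H_x$, which is strictly smaller than $\dim K_n\cdot N^n$ in general (relation \ref{con:intertwiners} with $T\in\mor_\G(n,n)$ already forces linear dependencies among the level-$n$ generators). So the dimension count does not close the argument. The paper's route is to use \ref{con:intertwiners} to decompose $X^{(k)}=\bigoplus_{x\subset u^{\boxtimes k}}\varphi(P_x^{(k)})X^{(k)}P_x^{(k)}$, define the compressed matrices $X^x\in\B(H_x,E_x)\otimes\BB'$ (checking via \ref{con:tensor} that they do not depend on $k$), show that the coefficients $(\omega_{\eta_j\xi_i}\otimes\id)X^x$ span $\BB'$, and then observe that their images under $\pi$ are the linearly independent vectors $\bar\xi_i\otimes\eta_j$ of the Pinzari--Roberts core. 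That is the step you need to replace your count with. Your Step 2 observations (deriving $X^{(0)}=1$ and a coisometry identity to control boundedness of the generators) are a reasonable supplement that the paper leaves implicit, since the paper instead identifies $\BB'$ with $\BB$ as $*$-algebras first and only then passes to the universal completion.
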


\begin{rem}
Condition {\ref{cond:conjugate}} in the above theorem is equivalent to the relation 
\begin{equation*}
(\overline{X^{(k)}}\boxtimes X^{(k)})R_{k}=\iota_{k,k}^{*}\varphi(R_{k}).
\end{equation*}
\end{rem}

For the sake of clarity, we will split the proof of our theorem into two parts. The first part is the existence of the weak unitary tensor functor meaning that we have to extend the construction to all finite-dimensional representations of $\G$. Since any such representation splits into a direct sum of irreducible representations, it should be sufficient to be able to extend the construction to irreducible representations. The following result of S. Neshveyev \cite[Proposition 4.1]{N14} makes this precise.
  
\begin{lem}\label{prop:neshveyev}
Let $(E_{x})_{x\in\irr(\G)}$ be a family of finite-dimensional Hilbert spaces with $E_{\triv} = \C$ and let $\nes : \mor_{\G}(x\boxtimes y, z)\to B(E_{x}\otimes E_{y},E_{z})$ be a linear map for all $x, y, z\in\irr(\G)$ such that the following hold:
\begin{enumerate}[label=$(\arabic*)$]
\item\label{cond:isometric} if $I : E_{x}\otimes\C\to E_{x}$ is the canonical isomorphism, then $\nes(I) = \id_{E_{x}}$ and if $x\boxtimes y\cong\oplus_{i=1}^{n}z_{i}$ and $T_{i}\in\mathrm{Mor}_{\mathbb{G}}(x\boxtimes y, z_{i})$ are the associated intertwiners whose range projections add up to the identity, then the linear operator
\begin{equation*}
(\nes(T_{1}), \dots, \nes(T_{n})) : E_{x}\otimes E_{y}\to\bigoplus_{i=1}^{n}E_{z_{i}}
\end{equation*}
is isometric;
\item\label{cond:coherence} if a morphism in $\mor_{\G}(x\boxtimes y\boxtimes z, x')$ can be written as
\begin{equation*}
\sum_{i}S_{i}(T_{i}\otimes\id) = \sum_{j}S_{j}'(\id\otimes T_{j}'),
\end{equation*}
where $T_{i}\in\mor_{\G}(x\boxtimes y, x_{i})$, $S_{i}\in\mor_{\G}(x_{i}\boxtimes z, x')$, $T_{j}'\in\mor_{\G}(y\boxtimes z, y_{j})$, $S_{j}'\in\mor_{\G}(x\boxtimes y_{j}, x')$, then
\begin{equation*}
\sum_{i}\nes(S_{i})(\nes(T_{i})\otimes\id) = \sum_{j}\nes(S_{j}')(\id\otimes\nes(T_{j}'));
\end{equation*}
\item\label{cond:proj} for every vector $X \in E_{x}$ and every morphism $T \in\mor_{\G}(x\boxtimes y,z)$, denote by $[T]_{X}:E_{y}\to E_{z}$ the linear map sending $Y$ to $\nes(T)(X \otimes Y)$; if a morphism in $\mor_{\G}(y\boxtimes z,x\boxtimes x')$ can be written as
\begin{equation*}
\sum_{i}(\id\otimes S_{i})(T^{*}_{i}\otimes\id) = \sum_{j}P^{*}_{j}R_{j},
\end{equation*}
where $T_{i}\in\mor_{\G}(x\boxtimes y'_{i},y)$, $S_{i}\in\mor_{\G}(y'_{i}\boxtimes z, x')$, $R_{j}\in\mor_{\G}(y\boxtimes z, y''_{j})$, $P_{j}\in\mor_{\G}(x\boxtimes x', x''_{j})$, then
\begin{equation*}
\sum_{i}\nes(S_{i})([T_{i}]^{*}_{X}\otimes\id) = \sum_{j}[P_{j}]^{*}_{X}\nes(R_{j}).
\end{equation*}
\end{enumerate}
Then there exists a weak unitary tensor functor $\varPhi:\rep(\G) \to \hil_{f}$ such that $\varPhi(x) = E_x$ and $\varPhi(T)\iota_{x,y}=\nes(T)$ for all $T\in \mor_\G (x\boxtimes y , z)$ and all $x,y,z\in\irr(\G)$.
\end{lem}

We can now prove the first part of the theorem.

\begin{proof}[Proof of Theorem \ref{thm:tkergodic}, first part]
We first construct the family $(E_{x})_{x\in\irr(\G)}$. Let $u^{x}$ be a representative of $x\in \irr(\G)$ and let $k_{x}$ be an integer such that $u^{x}$ is a subrepresentation of $u^{\boxtimes k_{x}}$. Then, there exists an intertwiner $P_{x}\in\mor_{\G}(k_{x}, k_{x})$ such that $P_{x}\in \B(H^{\otimes k_{x}}_{u})$ is an orthogonal projection and the subrepresentation $(P_{x}\otimes\id)u^{\boxtimes k_{x}}$ is equivalent to $u^{x}$. Moreover $\varphi(P_{x})$ defines an orthogonal projection on the Hilbert space $K_{k_{x}}$ and we set
\begin{equation*}
E_{x} = \varphi(P_{x})(K_{k_{x}}).
\end{equation*}

Now let us construct the morphism $\nes(T)$ for a given $T\in\mathrm{Mor}_{\mathbb{G}}(x\boxtimes y, z)$. We fix for each $x\in \irr(\G)$ an isometric embedding $w_{x} : H_{x}\to H^{\otimes k_{x}}_{u}$ such that $w_{x}w_{x}^{*} = P_{x}$. Set
\begin{equation*}
\widetilde{T} = w_{z}T(w_{x}\otimes w_{y})^{*} \in\mathrm{Mor}_{\G}(k_{x}+k_{y}, k_{z})
\end{equation*}
and define
\begin{equation*}
\nes(T) = \varphi(P_{z})\varphi(\widetilde{T})\left(\iota_{k_{x},k_{y}}\right)_{\mid E_{x}\otimes E_{y}} : E_{x}\otimes E_{y}\to E_{z}.
\end{equation*}

To conclude the proof we must check that the conditions of Lemma \ref{prop:neshveyev} are satisfied. First, let $T_{i}\in\mor_{\G}(x\boxtimes y, z_{i})$ be intertwiners as in \ref{cond:isometric} of Lemma \ref{prop:neshveyev}. Then,
\begin{align*}
\sum_{i=1}^{n}\nes(T_{i})^{*}\nes(T_{i}) & = \sum_{i=1}^{n}\iota_{k_{x}, k_{y}}^{*}\varphi(\widetilde{T}_{i}P_{z_{i}}\widetilde{T}_{i})\iota_{k_{x}, k_{y}} \\
& = \iota_{k_{x}, k_{y}}^{*}\varphi\left((w_{x}\otimes w_{y})(\sum_{i}T_{i}^{*}w_{z_{i}}^{*}P_{z_{i}}w_{z_{i}}T_{i})(w_{x}\otimes w_{y})^{*}\right)\iota_{k_{x},k_{y}} \\
& = \id.
\end{align*}
Thus, $(\nes(T_{1}),\ldots,\nes(T_{n}))$ is isometric and the first condition \ref{cond:isometric} of Lemma \ref{prop:neshveyev} is verified. Second, let $T_{i}$, $S_{i}$, $T_{i}'$ and $S_{i}'$ be the intertwiners as in  \ref{cond:coherence} of Lemma \ref{prop:neshveyev}. We have
\begin{align*}
\sum_{i}\nes(S_{i})(\nes(T_{i})\otimes\id) & = \sum_{i}\varphi(P_{x'})\varphi(\widetilde{S}_{i})\iota_{k_{x_{i}}, k_{z}}\left(\varphi(P_{x_{i}})\varphi(\widetilde{T}_{i})\iota_{k_{x}, k_{y}}\otimes\id\right) \\
& = \sum_{i}\varphi(P_{x'})\varphi(\widetilde{S}_{i})\iota_{k_{x_{i}}, k_{z}}\left(\varphi(P_{x_{i}}\widetilde{T}_{i})\otimes\varphi(1)\right)(\iota_{k_{x}, k_{y}}\otimes\id) \\
& = \sum_{i}\varphi(P_{x'})\varphi(\widetilde{S}_{i})\varphi(P_{x_{i}}\widetilde{T}_{i}\otimes1)\iota_{k_{x}+k_{y}, k_{z}}(\iota_{k_{x}, k_{y}}\otimes\id) \\
& = \varphi\left(\sum_{i}P_{x'}\tilde{S}_{i}(P_{x_{i}}\widetilde{T}_{i}\otimes 1)\right)\iota_{k_{x}, k_{y}+k_{z}}(\id\otimes\iota_{k_{y}, k_{z}}).
\end{align*}
Note that
\begin{align*}
\sum_{i}P_{x'}\widetilde{S}_{i}(P_{x_{i}}\widetilde{T}_{i}\otimes\id) & = \sum_{i}w_{x'}S_{i}(w_{x_{i}}\otimes w_{z})^{*}\left(w_{x_{i}}T_{i}(w_{x}\otimes w_{y})^{*}\otimes\id\right) \\
& = w_{x'}\sum_{i}S_{i}(T_{i}\otimes\id)(\id\otimes w_{y}^{*}\otimes w_{z}^{*}) \\
& = w_{x'}\sum_{i}S_{j}'(\id\otimes T_{j}')(\id\otimes w_{y}^{*}\otimes w_{z}^{*}) \\
& = \sum_{i}P_{x'}\widetilde{S}_{j}'(\id\otimes P_{y_{j}}\widetilde{T}_{j}').
\end{align*}
Thus, repeating the above argument for $S_{j}'$ and $T_{j}'$ yields
\begin{align*}
\sum_{i}\nes(S_{i})(\nes(T_{i})\otimes\id) & = \varphi\left(\sum_{i}P_{x'}\widetilde{S}_{j}'(\id\otimes P_{y_{j}}\widetilde{T}_{j}')\right)\iota_{k_{x},k_{y}+k_{z}}(\id\otimes\iota_{k_{y},k_{z}})\\
& = \sum_{j}\varphi(S_{j}')(\id\otimes\varphi(T_{j}')),
\end{align*}
as desired. Finally, let us check \ref{cond:proj} of Lemma \ref{prop:neshveyev}. For $n,k\in\N$ and $X\in K_{n}$, denote
\[
\begin{array}{lccc}
S^{n,k}_{X} :& K_{k} & \to & K_{n+k} \\
& Y & \mapsto & \iota_{n,k}(X\otimes Y)
\end{array}.
\]
Note that by \ref{cond:2} we have
\begin{equation}\label{eq:sxphi}
	(S^{n,k}_{X})^{*}\circ\varphi(\id\otimes T) = \varphi(T)\circ (S^{n,k'}_{X})^{*}
\end{equation}  
for all $T\in\mor_{\G}(k',k)$. Moreover by Remark \ref{rem:cond}, for all $ l\in \mathbb N$ we have 
\begin{equation}\label{eq:condtionn}
(S^{n,k+l}_{X})^{*}\iota_{n+k,l}=\iota_{k,l}((S^{n,k}_{X})^{*}\otimes\id),
\end{equation}

Given $X\in E_{x}$ and $T \in \mor_{\G}(x\boxtimes y,z)$, by definition we have
\[
[T]_{X}(Y) = \nes(T)(X\otimes Y) = \varphi(P_{z})\varphi(\tilde{T})S^{k_{x},k_{y}}_{X}(Y) 
\]
and the adjoint map is given by
\[
[T]_{X}^{*} = \varphi(P_{y})(S^{k_{x},k_{y}}_{X})^{*}\varphi(\tilde{T}^{*})\varphi(P_{z}).
\]
Now take a morphism in $\mor_{\G}(y\boxtimes z,x\boxtimes x')$ that can be written as
\begin{equation*}
\sum_{i}(\id_{x}\otimes S_{i})(T^{*}_{i}\otimes\id_{z}) = \sum_{j}P^{*}_{j}R_{j},
\end{equation*}
where $T_{i}\in\mor_{\G}(x\boxtimes y'_{i},y)$, $S_{i}\in\mor_{\G}(y'_{i}\boxtimes z, x')$, $R_{j}\in\mor_{\G}(y\boxtimes z, x''_{j})$, $P_{j}\in\mor_{\G}(x\boxtimes x', x''_{j})$. We have
\begin{align*}
&\ \quad \sum_{i}\nes(S_{i})([T_{i}]^{*}_{X}\otimes\id) \\
& = \sum_{i}\varphi(P_{x'})\varphi(\tilde{S}_{i})\iota_{k_{y'_{i}},k_{z}}\left(\varphi(P_{y'_{i}})(S^{k_{x},k_{y'_{i}}}_{X})^{*}\varphi(\tilde{T}^{*}_{i})\varphi(P_{y}) \otimes \id\right) \\
& = \sum_{i}\varphi(P_{x'})\varphi(\omega_{x'}S_{i}(\omega_{y'_{i}}\otimes\omega_{z})^{*})\iota_{k_{y'_{i}},k_{z}}\left(\varphi(P_{y'_{i}})(S^{k_{x},k_{y'_{i}}}_{X})^{*}\varphi((\omega_{x}\otimes\omega_{y'_{i}})T^{*}_{i}\omega^{*}_{y})\varphi(P_{y}) \otimes \id\right) \\
& = \sum_{i}\varphi(\omega_{x'}S_{i}(\omega_{y'_{i}}\otimes\omega_{z})^{*})\iota_{k_{y'_{i}},k_{z}}\left(\varphi(P_{y'_{i}})(S^{k_{x},k_{y'_{i}}}_{X})^{*}\varphi((\omega_{x}\otimes\omega_{y'_{i}})T^{*}_{i}\omega^{*}_{y}) \otimes \id\right). \end{align*}
Together with \eqref{eq:sxphi} and \eqref{eq:condtionn} we have
\begin{align*}
\quad \sum_{i}\nes(S_{i})([T_{i}]^{*}_{X}\otimes\id)  
&   = \sum_{i}\varphi(\omega_{x'}S_{i}(\omega_{y'_{i}}\otimes\omega_{z})^{*})\iota_{k_{y'_{i}},k_{z}}((S^{k_{x},k_{y'_{i}}}_{X})^{*}\otimes\id)\left(\varphi((\omega_{x}\otimes P_{y'_{i}}\omega_{y'_{i}})T^{*}_{i}\omega^{*}_{y}) \otimes \id\right)
\\
& =  \sum_{i}\varphi(\omega_{x'}S_{i}(\omega_{y'_{i}}\otimes\omega_{z})^{*})(S^{k_{x},k_{y'_{i}}+k_{z}}_{X})^{*}\iota_{k_{x}+k_{y'_{i}},k_{z}}\left(\varphi((\omega_{x}\otimes P_{y'_{i}}\omega_{y'_{i}})T^{*}_{i}\omega^{*}_{y}) \otimes \id\right) \\
& = \sum_{i}(S^{k_{x},k_{x'}}_{X})^{*}\varphi(\id\otimes\omega_{x'}S_{i}(\omega_{y'_{i}}\otimes\omega_{z})^{*})\varphi((\omega_{x}\otimes P_{y'_{i}}\omega_{y'_{i}})T^{*}_{i}\omega^{*}_{y} \otimes \id)\iota_{k_{y},k_{z}} \\
& = \sum_{i}(S^{k_{x},k_{x'}}_{X})^{*}\varphi((\omega_{x}\otimes\omega_{x'})(\id\otimes S_{i})(T^{*}_{i}\otimes\id)(\omega_{y} \otimes \omega_{z})^{*})\iota_{k_{y},k_{z}}.
\end{align*}
On the other hand,
\begin{align*}
\sum_{j}[P_{j}]^{*}_{X}\nes(R_{j}) & = \sum_{j}\varphi(P_{x'})(S^{k_{x},k_{x'}}_{X})^{*}\varphi(\tilde{P}^{*}_{j})\varphi(P_{x''_{j}})\varphi(P_{x''_{j}})\varphi(\tilde{R}_{j})\iota_{k_{y},k_{z}} \\
& = \sum_{j}\varphi(P_{x'})(S^{k_{x},k_{x'}}_{X})^{*}\varphi(\tilde{P}^{*}_{j} P_{x''_{j}}\tilde{R}_{j})\iota_{k_{y},k_{z}} \\
& = \sum_{j}\varphi(P_{x'})(S^{k_{x},k_{x'}}_{X})^{*}\varphi((\omega_{x}\otimes\omega_{x'})P^{*}_{j}\omega^{*}_{x''_{j}}P_{x''_{j}}\omega_{x''_{j}}R_{j}(\omega_{y}\otimes\omega_{z})^{*})\iota_{k_{y},k_{z}} \\
& = \sum_{j}\varphi(P_{x'})(S^{k_{x},k_{x'}}_{X})^{*}\varphi((\omega_{x}\otimes\omega_{x'})P^{*}_{j}R_{j}(\omega_{y}\otimes\omega_{z})^{*})\iota_{k_{y},k_{z}} \\
& = \sum_{j}(S^{k_{x},k_{x'}}_{X})^{*}\varphi((\omega_{x}\otimes P_{x'}\omega_{x'})P^{*}_{j}R_{j}(\omega_{y}\otimes\omega_{z})^{*})\iota_{k_{y},k_{z}} \\
& = \sum_{j}(S^{k_{x},k_{x'}}_{X})^{*}\varphi((\omega_{x}\otimes\omega_{x'})P^{*}_{j}R_{j}(\omega_{y}\otimes\omega_{z})^{*})\iota_{k_{y},k_{z}}.
\end{align*} 
Combing the above equalities, we obtain 
\begin{equation*}
\sum_{i}\nes(S_{i})([T_{i}]^{*}_{X}\otimes\id_{E_{z}}) = \sum_{j}[P_{j}]^{*}_{X}\nes(R_{j}),
\end{equation*}
as desired.
\end{proof}

The second part of Theorem \ref{thm:tkergodic} is the description of the C*-algebra $B$ and its action. To prove it, let us first recall how this C*-algebra is constructed in \cite{PR07}. It is enough to describe the core $\BB$ of $B$, and this is done as follows:
\begin{enumerate}
\item As a linear space,
\begin{equation*}
\BB = \bigoplus_{x\in\irr(\G)} \bar{K}_{x}\otimes H_{x}
\end{equation*}
and the action is given on each summand by $\alpha_{\mid \bar{K}_{x}\otimes H_{x}} = \mathrm{id}_{\bar{K}_{x}}\otimes\delta_{u^{x}}$.
\item For each $v\in \rep(\G)$, there exists a matrix $Y^{v}\in \B(H_{v}, K_{v})\otimes \BB$ such that for all $T\in \mor_{\G}(v, w)$
\begin{equation*}
Y^{w}(T\otimes\id) = (\varphi(T)\otimes\id)Y^{v}
\end{equation*}
Moreover, for any $x\in\irr(\G)$, $(\omega_{\eta\xi }\otimes \mathrm{id})(Y^{x}) = \overline{\eta}\otimes \xi $ for all $\xi\in H_{x}$ and all $\eta\in K_{x}$ (see the discussions around \cite[Equality (8.1)]{PR07}).
\item For all representations $v$, $\overline{Y^{v}}(j_{v}\otimes\id) = (J_{v}\otimes\id)Y^{v}$, where $j_{v} : H_{v} \to  H_{\bar{v}}$ is the complex conjugation, $J_{v}: K_{v} \to K_{\bar{v}}$ is the anti-linear involution map such that
\begin{equation*}
\langle J_{v}(\xi),\eta\rangle = \langle\iota_{v,\overline{v}}^{*}\varphi(\bar{R}_{v})(1),\xi\otimes\eta\rangle,
\end{equation*}
for any $\xi \in K_{v}$, $\eta \in K_{\overline{v}}$, and where $\bar{R}_{v}$ is the solution of conjugate equation for $v$ (see the discussions around \cite[Equality (8.2)]{PR07}).
\end{enumerate} 

We can now complete the proof of Theorem \ref{thm:tkergodic}.

\begin{proof}[Proof of Theorem \ref{thm:tkergodic}, second part]
The statement of the theorem defines a universal $*$-algebra $\BB'$ of which $B$ is the universal completion. We will therefore prove that the algebraic core $\BB$ of the action described above is isomorphic to $\BB'$. For $k\in\mathbb{N}$, write $Y^{(k)} = Y^{u^{\boxtimes k}}$ and observe that they satisfy all the conditions of Theorem \ref{thm:tkergodic}, so that there is a surjective $*$-homomorphism $\pi : \BB' \to \BB$ sending $(\omega\otimes\id)X^{(k)}$ to $(\omega\otimes\id)Y^{(k)}$ for all $\omega\in \B(H^{\otimes k}, K_{k})^{*}$. Our task is therefore to prove the injectivity of $\pi$. For each $x\in\irr(\G)$ such that $u^{x}$ is a subrepresentation of $u^{\boxtimes k}$, we denote by $P^{(k)}_{x}\in\mor_{\G}(k, k)$ an intertwiner such that the map $P_{x}^{(k)}\in \B(H^{\otimes k}_{u})$ is an orthogonal projection and the subrepresentation $(P_{x}^{(k)}\otimes\id)u^{\boxtimes k}$ is equivalent to $u^{x}$. It follows from Condition \ref{con:intertwiners} in the statement of the theorem that
\begin{equation*}
X^{(k)} = \bigoplus_{u^{x}\prec u^{\boxtimes k}}\varphi(P_{x}^{(k)})X^{(k)}P_{x}^{(k)}.
\end{equation*}
If $i : E_{x} \hookrightarrow K_{k}$ denotes the inclusion and $\omega_{x} : H_{x} \to H^{\otimes k}$ denotes an isometry arising from the containment $u^{x} \prec u^{\boxtimes k}$, we set
\begin{equation*}
X^{x} := i^{*}\circ\varphi(P_{x}^{(k)})X^{(k)}P_{x}^{(k)}\circ\omega_{x}\in \B(H_{x}, E_{x})\otimes \BB,
\end{equation*}
Moreover, using Condition \ref{con:tensor} in the statement, we see that the definition of $X^{x}$ does not depend on the choice of $k$. As a consequence, $\BB'$ is spanned by $\{(\omega\otimes\id)X^{x} : \omega\in\B(H_{x}, E_{x})^{*}, x\in\irr(\G)\}$, a set which is itself spanned by
\begin{equation*}
\{(\omega_{\eta_{j}\xi_{i}}\otimes\id)X^{x} : (i, j)\in I\times J, x\in\irr(\G)\},
\end{equation*}
where $(\xi_{i})_{i\in I}$ and $(\eta_{j})_{j\in J}$ are orthonormal basis of $H_{x}$ and $E_{x}$ respectively. To conclude, observe that because $(\omega_{\xi_{i}\eta_{j}}\otimes\id)Y^{x} = \overline{\xi_{i}}\otimes\eta_{j}$, these coefficients are linearly independent. In other words, $\pi$ sends a generating family to a linearly independent family, hence it is injective and the proof is complete.
\end{proof}

\subsection{The reconstruction theorem for Yetter-Drinfeld structures}\label{sec:yd}

We will now characterize when the C*-algebra built in the previous subsection is a Yetter-Drinfeld C*-algebra. Following the Tannaka-Krein philosophy, we want to describe the properties a weak unitary tensor functor should satisfy for the corresponding C*-algebra to be Yetter-Drinfeld. We also refer to \cite{NY14} for related results in the setting of module C*-categories.

Let us first state the general reconstruction theorem for (braided commutative) Yetter-Drinfeld algebras. Because this is not the version which will be used hereafter, we postpone its proof to Appendix~\ref{app:yd} together with the definition of a (braided) compatible collection for a weak unitary tensor functor.

\begin{theo}\label{theo:yd_reconstruction_na}
Let $\varphi: \rep(\G) \to \hil_{f}$ be a weak unitary tensor functor and $(B,\alpha)$ be the action of $\G$ associated with $\varphi$ given by Theorem \ref{theo:tannakakreinpinzarirobert}. The following are equivalent:
\begin{enumerate}[label=\emph{(\roman*)}]
\item there exists a compatible collection $\{\mathcal{A}^{w}_{v}\}_{v,w \in \rep(\G)}$ for $\varphi$;
\item there is a Hopf $*$-algebra action of $ \pol(\G)$ on $\mathcal{B}$, denoted by $\lhd: \mathcal{B}  \otimes \pol(\G) \to \mathcal{B} $, such that the tuple $(B ,\lhd ,\alpha)$ yields a Yetter-Drinfeld $\G$-C*-algebra.
\end{enumerate}
In that case, if $\phi: \varphi \iso \varphi_{\alpha}$ denotes the natural unitary monoidal isomorphism between the weak unitary tensor functors $\varphi$ and $\varphi_{\alpha}$, then the diagram
\[
\xymatrix@C=5pc{
\varphi(v)\ar[r]^-{\mathcal{A}^{w}_{v}}\ar[d]_-{\phi_{v}}^-{\iso} & \varphi(\bar{w} \boxtimes v \boxtimes w)\ar[d]\ar[d]^-{\phi_{\bar{w} \boxtimes v \boxtimes w}}_-{\iso} \\
\mor_{\G}(v,\alpha)\ar[r]_{\tensor*[^\alpha]{\mathcal A}{_v^w}} & \mor_{\G}(\bar{w} \boxtimes v \boxtimes w,\alpha)
}
\]
commutes for every $v,w \in \rep(\G)$. Moreover, $\{\mathcal{A}^{w}_{v}\}_{v,w \in \rep(\G)}$ is braided compatible if and only if $(B ,\lhd ,\alpha)$ is braided commutative.
\end{theo}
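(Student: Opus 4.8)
Since the equivalence $(i)\Leftrightarrow(ii)$ is established in Appendix~\ref{app:yd}, it only remains to prove the commutativity of the square, i.e.~the identity
\[
\phi_{\bar{w}\boxtimes v\boxtimes w}\circ\mathcal{A}^{w}_{v}=\tensor*[^\alpha]{\mathcal A}{_v^w}\circ\phi_{v}\qquad\text{for all }v,w\in\rep(\G).
\]
The plan is to read this as the assertion that the passage between (braided) compatible collections and (braided commutative) Yetter--Drinfeld structures is equivariant with respect to the Pinzari--Roberts isomorphism $\phi$. I would first record the formal fact that, because $\phi$ is a unitary monoidal natural isomorphism, conjugating a (braided) compatible collection $\{\mathcal{A}^{w}_{v}\}$ for $\varphi$ by $\phi$, that is setting $\widetilde{\mathcal A}^{w}_{v}:=\phi_{\bar{w}\boxtimes v\boxtimes w}\circ\mathcal{A}^{w}_{v}\circ\phi_{v}^{-1}$, yields a (braided) compatible collection for $\varphi_{\alpha}$: this is simply a matter of transporting the defining relations of such a collection through the compatibility of $\phi$ with intertwiners, with the embeddings $\iota$, with adjoints and with the duality.

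Next I would trace through the two constructions of Appendix~\ref{app:yd}. In the proof of $(i)\Rightarrow(ii)$, the Hopf $*$-algebra action $\lhd$ attached to $\{\mathcal{A}^{w}_{v}\}$ is built only from the spectral realization $\varphi_{\alpha}$ of $B$ and from the maps $\mathcal{A}^{w}_{v}$ read through $\phi$; consequently it coincides with the Hopf action attached by that very construction to the collection $\{\widetilde{\mathcal A}^{w}_{v}\}$, now regarded as a compatible collection for $\varphi_{\alpha}$ itself. On the other hand, applying the direction $(ii)\Rightarrow(i)$ to the triple $(B,\lhd,\alpha)$ returns, by definition of the notation, the compatible collection $\{\tensor*[^\alpha]{\mathcal A}{_v^w}\}$ for $\varphi_{\alpha}$. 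Since for a fixed weak unitary tensor functor the two constructions are mutually inverse --- this is what the argument in Appendix~\ref{app:yd} actually yields, the assignment $\lhd\mapsto\{\tensor*[^\alpha]{\mathcal A}{_v^w}\}$ being given by explicit formulas that invert the first construction --- the compatible collection for $\varphi_{\alpha}$ producing $\lhd$ is unique. Hence $\tensor*[^\alpha]{\mathcal A}{_v^w}=\widetilde{\mathcal A}^{w}_{v}=\phi_{\bar{w}\boxtimes v\boxtimes w}\circ\mathcal{A}^{w}_{v}\circ\phi_{v}^{-1}$, which is the required identity.

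The step I expect to be the main obstacle is precisely the injectivity of the assignment ``compatible collection for $\varphi_{\alpha}\mapsto\lhd$''. If the appendix only provides an existence statement, I would argue instead by a direct verification: using axiom (F3) and condition~\ref{cond:2} of Definition~\ref{de:weakunitarytensorfunctor}, together with the corresponding compatibilities of $\mathcal{A}$, $\tensor*[^\alpha]{\mathcal A}{}$ and $\phi$ with direct sums and with $\boxtimes$, one reduces to the case where $v$ and $w$ are irreducible and, further, where $w$ is a fixed subrepresentation of some $u^{\boxtimes n}$. One then evaluates both composites of the square on a spanning family of $\varphi(v)$, using on the left the explicit Pinzari--Roberts data (the matrices $Y^{v}$ and the normalization $(\omega_{\eta\xi}\otimes\id)(Y^{x})=\bar{\eta}\otimes\xi$) and on the right the explicit formula defining $\tensor*[^\alpha]{\mathcal A}{_v^w}$ from $\lhd$ and the matrix coefficients of $w$. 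The Yetter--Drinfeld relation~\eqref{eq:yetter_drinfeld_condition} is exactly what makes the two resulting expressions agree, and in the braided case one additionally invokes~\eqref{eq:braided_commutative_condition}.
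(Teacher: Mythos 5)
Your proposal is correct in substance, but it packages the diagram argument more elaborately than the paper does. The paper's proof (given in Appendix~\ref{app:yd} as Theorem~\ref{theo:yd_reconstruction}) obtains the equivalence exactly as you do — (i)$\Rightarrow$(ii) is Theorem~\ref{theo:action_reconstruction}, (ii)$\Rightarrow$(i) is Lemma~\ref{lem:action_collection} — and then settles the commuting square by a three-line direct computation: writing $\phi_{v}(X)(\eta_{j})=\pi_{\varphi}(\bar{X}\otimes\eta_{j})$, one has
$({}^{\alpha}\mathcal{A}^{u}_{v}\phi_{v}(X))(\bar{\xi}_{i}\otimes\eta_{j}\otimes\xi_{k})=\pi_{\varphi}(\bar{X}\otimes\eta_{j})\lhd\pi_{\G}(\bar{\xi}_{i}\otimes\xi_{k})=\pi_{\varphi}(\overline{\mathcal{A}^{u}_{v}(X)}\otimes\bar{\xi}_{i}\otimes\eta_{j}\otimes\xi_{k})=\phi_{\bar{u}\boxtimes v\boxtimes u}(\mathcal{A}^{u}_{v}(X))(\bar{\xi}_{i}\otimes\eta_{j}\otimes\xi_{k})$,
the middle step being Lemma~\ref{lem:pivarphi}. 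Your primary route — transport $\{\mathcal{A}^{w}_{v}\}$ along $\phi$ to a collection for $\varphi_{\alpha}$ and invoke uniqueness of the collection inducing a given $\lhd$ — does close: the round trip (collection for $\varphi_{\alpha}$) $\to\lhd\to$ (collection) is the identity because $\pi_{\varphi_{\alpha}}(\bar{S}\otimes\zeta)=S(\zeta)$ under the canonical identification, so the "injectivity" you flag as the main obstacle is available and reduces to essentially the same one-line computation. Two small corrections to your fallback: no reduction to irreducible $v,w$ or appeal to the matrices $Y^{v}$ is needed, since $\phi_{v}$ already has the explicit form above for arbitrary $v$; and the Yetter--Drinfeld relation \eqref{eq:yetter_drinfeld_condition} is not what makes the two composites agree — that agreement is definitional once $\lhd$ and ${}^{\alpha}\mathcal{A}$ are unwound — rather it is what guarantees that ${}^{\alpha}\mathcal{A}^{w}_{v}(T)$ actually lands in $\mor_{\G}(\bar{w}\boxtimes v\boxtimes w,\alpha)$.
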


Like in the previous subsection, our purpose is now to prove a version of the above characterization involving only tensor powers $(u^{\boxtimes n})_{n\in\mathbb N}$ of the fundamental representation $u$, such that it can be exploited in a combinatorial way for easy quantum groups. 
The following is the main result of this subsection. As we mentioned previously, we will focus on examples where the fundamental representation is self-conjugate, hence we will keep the assumption $u=\bar u$ in the following.

\begin{theo}\label{thm:tkergodic_yetter}
We keep the notations and assumptions of Theorem~\ref{thm:tkergodic}. Let $\{\mathrm{a}_{k}:K_{k} \to K_{k+2}\}_{k\in \N}$ be a family of linear maps and write for all $k,n \in \N$
\begin{equation}\label{eq:def_a}
\mathrm{a}^{0}_{k} = \id_{K_{k}},\quad 
\mathrm{a}^{n+1}_{k} = \mathrm{a}_{k+2n}\circ\mathrm{a}_{k+2(n-1)}\circ\cdots\circ\mathrm{a}_{k}.
\end{equation} 
Assume that $\{\mathrm{a}_{k}\}_{k\in \N}$ and $\{\mathrm{a}_{k}^n\}_{k,n\in \N}$ satisfy for all $k, k'\in \N$ the following conditions:
\begin{enumerate}[label=\textup{(a\arabic*)},start=0]
\item\label{cond:na2_new} For any $p, q$ orthogonal projections in $\mor_{\G}(n,n)$, we have
\[
\varphi(p \xbox \id \xbox q)\mathrm{a}^{n}_{k} = 0; 
\]
\item\label{cond:na1} $\mathrm{a}_{2k}\varphi(R_{k}) = \varphi(R_{k+1})$, where $R_{k}:\triv \to u^{\boxtimes 2k}$;
	
\item\label{cond:na2} For any $T \in \mor_{\G}(k,k')$, the diagram
\[
\xymatrix@R=1pc@C=5pc{
K_{k}\ar[r]^-{\varphi(T)}\ar[d]_-{\mathrm{a}_{k}} & K_{k'}\ar[d]^-{\mathrm{a}_{k'}} \\
K_{k+2}\ar[r]_-{\varphi(\id \xbox T \xbox \id)} & K_{k'+2}
}
\]
commutes;

\item\label{cond:na4} The diagram
\[
\xymatrix@R=1pc@C=7pc{
K_{k} \otimes K_{k'} \ar[rr]^-{\iota_{k,k'}}\ar[d]_-{\mathrm{a}_{k} \otimes \mathrm{a}_{k'}} && K_{k+k'}\ar[d]^-{\mathrm{a}_{k+k'}} \\
K_{k+2} \otimes K_{k'+2}\ar[r]_-{\iota_{k+2,k'+2}} & K_{k+k'+4}\ar[r]_-{\varphi(\id_{1,k} \xbox R^{*} \xbox \id_{k',1})} & K_{k+k'+2}
}
\]
commutes;

\item\label{cond:na5} The diagram
\[
\xymatrix@R=1pc@C=5pc{
K_{k}\ar[r]^-{J_{k}}\ar[d]_-{\mathrm{a}_{k}} & K_{k}\ar[d]^-{\mathrm{a}_{k}} \\
K_{k+2}\ar[r]_-{J_{k+2}} & K_{k+2}
}
\]
commutes.
\end{enumerate}
Then, there is a Hopf $*$-algebra action $\lhd: \mathcal{B} \otimes  \pol(\G) \to \mathcal{B}$ defined by
\[
x^{(k)}_{p\mathbf{i}} \lhd u_{jj'} = (\langle \;\cdot\;e_{j\mathbf{i}j'},\mathrm{a}_{k}(\xi_{p})\rangle \otimes \id)(X^{(k+2)})
\]
for all $k, n \in \N$, $e_{\mathbf{i}} \in (\C^{N})^{\otimes k}$ and $e_{j}, e_{j'} \in \C^{N}$, such that the tuple $(B,\lhd,\alpha)$ yields a Yetter-Drinfeld $\G$-C*-algebra.
\end{theo}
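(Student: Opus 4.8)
The plan is to reduce everything to the general reconstruction result Theorem~\ref{theo:yd_reconstruction_na}: I will promote the family $\{\mathrm{a}_{k}\}_{k\in\N}$ to a full compatible collection $\{\mathcal{A}^{w}_{v}\}_{v,w\in\rep(\G)}$ for the weak unitary tensor functor $\varphi$ (already extended to all of $\rep(\G)$ by the first part of Theorem~\ref{thm:tkergodic}), and then invoke that theorem. Using the standing assumption $u=\bar u$, one identifies $\varphi(\bar u^{\boxtimes n}\boxtimes u^{\boxtimes k}\boxtimes u^{\boxtimes n})$ with $\varphi(u^{\boxtimes(k+2n)})=K_{k+2n}$ and sets $\mathcal{A}^{u^{\boxtimes n}}_{u^{\boxtimes k}}:=\mathrm{a}^{n}_{k}$. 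The first step is to check that this satisfies, on the full subcategory with objects $(u^{\boxtimes n})_{n\in\N}$, the axioms of a compatible collection: naturality in the inner variable is obtained by iterating condition~\ref{cond:na2}; the associativity identity $\mathcal{A}^{u^{\boxtimes(m+n)}}_{u^{\boxtimes k}}=\mathcal{A}^{u^{\boxtimes m}}_{u^{\boxtimes(k+2n)}}\circ\mathcal{A}^{u^{\boxtimes n}}_{u^{\boxtimes k}}$ together with the normalisation $\mathcal{A}^{u^{\boxtimes 0}}=\id$ are exactly the content of \eqref{eq:def_a}; compatibility with the tensor embeddings $\iota_{k,l}$ is condition~\ref{cond:na4}; compatibility with the conjugations $J_{k}$ is condition~\ref{cond:na5}; and condition~\ref{cond:na1} pins down the ``unit'' of the collection by forcing $\mathcal{A}^{u}_{\triv}=\varphi(R_{1})$ and its consistency with the associativity relation, whence by naturality all $\mathcal{A}^{w}_{\triv}$ are determined. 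Condition~\ref{cond:na2_new} records the ``diagonality'' of $\mathrm{a}^{n}_{k}$ with respect to block decompositions of the two outer copies of $u^{\boxtimes n}$, which is the analogue for collections of the isometry/coherence hypotheses~\ref{cond:isometric}--\ref{cond:coherence} of Lemma~\ref{prop:neshveyev}.

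The second step is the extension of the collection to all of $\rep(\G)$, which I would carry out following the scheme of the first part of the proof of Theorem~\ref{thm:tkergodic}: for each $x\in\irr(\G)$ fix a containment $u^{x}\subset u^{\boxtimes k_{x}}$ with $P_{x}=w_{x}w_{x}^{*}$, set $\mathcal{A}^{y}_{x}$ to be $\mathrm{a}^{k_{y}}_{k_{x}}$ compressed by $\varphi(P_{\bullet})$ and the isometries $w_{x},w_{y}$, and use condition~\ref{cond:na2_new} together with naturality to prove independence of the choices of dominating powers and of the isometries, and to verify the coherence relations required to glue these maps into a compatible collection over $\rep(\G)$; the conjugation- and tensor-compatibilities coming from \ref{cond:na1},~\ref{cond:na4},~\ref{cond:na5} survive the compression. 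Feeding this collection into Theorem~\ref{theo:yd_reconstruction_na} produces a Hopf $*$-algebra action $\lhd:\BB\otimes\pol(\G)\to\BB$ such that $(B,\lhd,\alpha)$ is a Yetter--Drinfeld $\G$-C*-algebra, along with the commuting square of that theorem identifying $\lhd$ with the maps $\tensor*[^\alpha]{\mathcal A}{_v^w}$.

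The last step is to read off the explicit formula. Theorem~\ref{theo:yd_reconstruction_na} already provides that $\lhd$ is a Hopf $*$-algebra action and that $(B,\lhd,\alpha)$ satisfies \eqref{eq:yetter_drinfeld_condition}; it remains only to describe $\lhd$ on generators. Specialising the commuting square to $v=u^{\boxtimes k}$, $w=u$ and using the description of $Y^{(k)}=Y^{u^{\boxtimes k}}$ and $X^{(k)}$ from the proof of Theorem~\ref{thm:tkergodic} --- in particular the identity $(\omega_{\eta\xi}\otimes\id)(Y^{x})=\bar\eta\otimes\xi$ and the isomorphism $\BB\iso\BB'$ constructed there --- together with $\mathcal{A}^{u}_{u^{\boxtimes k}}=\mathrm{a}_{k}$, a direct computation on matrix coefficients yields
\[
x^{(k)}_{p\mathbf{i}}\lhd u_{jj'}=(\langle\;\cdot\;e_{j\mathbf{i}j'},\mathrm{a}_{k}(\xi_{p})\rangle\otimes\id)(X^{(k+2)}),
\]
which is the asserted formula. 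Since the $u_{jj'}$ generate $\pol(\G)$ as a $*$-algebra and the $x^{(k)}_{p\mathbf{i}}$ span $\BB$, this determines $\lhd$ completely, and the proof is finished.

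I expect the main obstacle to be the second step: checking that conditions~\ref{cond:na2_new}--\ref{cond:na5}, which are formulated purely in terms of tensor powers of $u$, genuinely assemble into a \emph{coherent} compatible collection on the whole category $\rep(\G)$. The delicate point is that condition~\ref{cond:na2_new} must be exactly strong enough to force independence of the dominating power $u^{\boxtimes k_{x}}$ and of the isometries $w_{x}$ --- playing the role that condition~\ref{cond:isometric} plays in Lemma~\ref{prop:neshveyev} --- whereas once the collection is built, the matching with the abstract Yetter--Drinfeld action supplied by Theorem~\ref{theo:yd_reconstruction_na} is only a leg-numbering bookkeeping exercise, and the verification of the axioms on tensor powers is a direct translation of the hypotheses (a$i$).
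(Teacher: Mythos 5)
Your proposal is correct and follows essentially the same route as the paper: promote $\{\mathrm{a}_{k}\}$ to maps $\mathcal{A}^{x}_{y}=\varphi(\bar{\omega}^{*}_{x}c_{k_{x}} \xbox \omega^{*}_{y} \xbox \omega^{*}_{x})\mathrm{a}^{k_{x}}_{k_{y}}\varphi(\omega_{y})$ by compression with isometric intertwiners, extend to direct sums, verify axioms \ref{cond:A0}--\ref{cond:A5} (with \ref{cond:na2_new} doing exactly the job you assign it, namely killing the off-diagonal terms in the decomposition of $R^{*}_{k_{x}}$ when checking \ref{cond:A4}), and feed the resulting compatible collection into Theorem~\ref{theo:yd_reconstruction_na}. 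The only content you elide that the paper treats explicitly is the bookkeeping of the conjugation isomorphisms $c_{n}$, $\sigma_{k,n}$, $\sigma_{n,k,n}$ (Lemma~\ref{lem:conjugate}) needed to make the identification $\overline{u^{\boxtimes n}}\cong u^{\boxtimes n}$ coherent with $J_{k}$ in axiom \ref{cond:A5}, which is routine.
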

\begin{rem}
We will also frequently use the following reinterpretation of \ref{cond:na2_new}: for two mutually orthogonal nonzero subrepresentations $v,w \prec u^{\boxtimes n}$ with corresponding isometric intertwiners $\theta_v\in \mor_\G (v,u^{\boxtimes n})$, $\theta_w\in \mor_\G (w,u^{\boxtimes n})$, we may take $p=\theta_v \theta_v^*$, $q=\theta_w \theta_w^*$ and obtain
\begin{equation}\label{p:1_2}
\varphi (\theta_v^* \otimes \mathrm{id} \otimes \theta_w^*)\mathrm{a}_k^n =0.
\end{equation}	
\end{rem}
\smallskip

The fact that we only focus on the self-conjugate case has formally simplified some parts of the statement of the theorem. Indeed, in this setting the representation $\overline{u^{\boxtimes n}}$ on $\overline{(\mathbb C ^N)^{\otimes n}}$ is naturally identified with $u^{\boxtimes n}$ on $(\mathbb C ^N)^{\otimes n}$. To avoid confusions in the proof, we first clarify the associated conjugate notations and relations for functors in a separate easy lemma. In the sequel we will frequently use the canonical isomorphisms $c_{n} \in \mor_\G ( u^{\boxtimes n} , \overline{u^{\boxtimes n}} )$, $\sigma_{k,n}\in \mor_\G ( u^{\boxtimes k} \boxtimes u^{\boxtimes n} , \overline{u^{\boxtimes n}} \boxtimes \overline{u^{\boxtimes k}})$ and $\sigma_{n,k,n} \in\mor_\G (\overline{u^{\boxtimes n}} \boxtimes u^{\boxtimes k} \boxtimes u^{\boxtimes n},  \overline{u^{\boxtimes n}} \boxtimes \overline{u^{\boxtimes k}} \boxtimes u^{\boxtimes n} )$ for $n,k\in \mathbb N$, which are \emph{linear} maps determined by
\[
c_n : (\mathbb C ^N)^{\otimes n} \to \overline{(\mathbb C ^N)^{\otimes n}},\quad 
e_{i_1} \otimes \cdots \otimes e_{i_n} \mapsto \overline{e_{i_n} \otimes \cdots \otimes e_{i_1}}
\]
for the canonical basis $(e_{i})_{1\leqslant i\leqslant N}$ of $\mathbb C ^N$, and
\[
\sigma_{k,n} : \overline{(\mathbb C ^N)^{\otimes k} \otimes (\mathbb C ^N)^{\otimes n}} \to \overline{(\mathbb C ^N)^{\otimes n}} \otimes \overline{(\mathbb C ^N)^{\otimes k}},\quad \overline{\xi \otimes \eta} \mapsto \overline{\eta} \otimes \overline{\xi},
\]
\[
\sigma_{n,k,n}: \overline{\overline{(\mathbb C ^N)^{\otimes n}} \otimes (\mathbb C ^N)^{\otimes k} \otimes (\mathbb C ^N)^{\otimes n}} \to
\overline{(\mathbb C ^N)^{\otimes n}} \otimes
\overline{(\mathbb C ^N)^{\otimes k}} \otimes
{(\mathbb C ^N)^{\otimes n}} , \quad \overline{\overline{\xi} \otimes \eta \otimes \chi} \mapsto \overline{\chi} \otimes \overline{\eta} \otimes \xi.
\]
We will also consider the \emph{anti-linear} invertible maps $J_{n} : K_{n}\to K_{n}$ and $J_{v}: \varphi(v) \to \varphi(\bar{v})$ arising from the study of weak unitary tensor functors in the previous section, which are respectively determined by
\begin{equation*}
\langle J_{n}\xi, \eta \rangle = \langle \varphi(R_{n})(1), \iota_{n,n} ( \xi\otimes\eta) \rangle,\quad \forall \xi,\eta,\in K_n
\end{equation*}
and
\[
\langle J_{v}\xi,\eta \rangle = \langle \varphi(\bar{R}_{v})(1),\iota_{v,\bar{v}}(\xi \otimes \eta) \rangle,\quad \forall
\xi \in \varphi(v),\eta \in \varphi(\bar{v}),v\in \rep (\G).
\]

\begin{lem}\label{lem:conjugate}
The following properties hold:
\begin{enumerate}[label=\textup{(c\arabic*)}]
\item\label{cond:c1} for $n\in \N$, $\varphi(c_{n})J_{n} = J_{u^{\boxtimes n}}$;
\item\label{cond:c2} for $n,k\in \N$, $\varphi(c_{n} \otimes c_{k}) = \varphi(\sigma_{k,n})\varphi(c_{n+k}) $;
\item\label{cond:c3} for $n,k\in \N$, $\varphi(c_{n} \otimes c_{k} \otimes \id)J_{k+2n} = \varphi(\sigma_{n,k,n})J_{\overline{u^{\boxtimes n}} \boxtimes u^{\boxtimes k} \boxtimes u^{\boxtimes n}}\varphi(c_{n} \otimes \id)$;
\item\label{eq:inv_map}\label{lem:J_naturality} for $v,w \in \rep(\G)$ and $T \in \mor_{\G}(v,w)$, $
\varphi(\bar{T})J_{v} = J_{w}\varphi(T)$.
\end{enumerate}
\end{lem}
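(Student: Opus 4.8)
The plan is to verify the four identities (c1)--(c4) one at a time, all of them following from the defining relations of a weak unitary tensor functor (Definition \ref{de:weakunitarytensorfunctor}) together with the explicit descriptions of $J_n$, $J_v$, and the conjugation maps $c_n$, $\sigma_{k,n}$, $\sigma_{n,k,n}$. The underlying principle is always the same: both sides of each equation are (anti-)linear maps between finite-dimensional Hilbert spaces, so it suffices to test them against inner products $\langle\,\cdot\,,\eta\rangle$ for $\eta$ ranging over a spanning set, reduce everything to an expression of the form $\langle \varphi(R)(1),\iota(\cdot)\rangle$ using the defining formulas for the $J$'s, and then match the two sides using \ref{cond:1}, \ref{cond:2} and the compatibility of the solutions $R_n$ of the conjugate equations with the tensor structure.

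I would start with \ref{eq:inv_map}, the naturality of $J$, since it is the most standard fact and will be used to prove the others. Given $T\in\mor_\G(v,w)$, for $\xi\in\varphi(v)$ and $\eta\in\varphi(\bar w)$ one computes
\[
\langle \varphi(\bar T)J_v\xi,\eta\rangle = \langle J_v\xi,\varphi(\bar T)^*\eta\rangle = \langle J_v\xi,\varphi(\bar T^*)\eta\rangle = \langle \varphi(\bar R_v)(1),\iota_{v,\bar v}(\xi\otimes\varphi(\bar T^*)\eta)\rangle,
\]
and using \ref{cond:2} this equals $\langle \varphi(\id_v\xbox \bar T^*)\varphi(\bar R_v)(1),\iota_{v,\bar w}(\xi\otimes\eta)\rangle$; the conjugate equation identity $(\id_v\xbox \bar T^*)\bar R_v = (T^*\xbox \id_{\bar w})\bar R_w$ (a consequence of $\bar R_w\in\mor_\G(\triv, w\boxtimes\bar w)$ and $R_w\in\mor_\G(\triv,\bar w\boxtimes w)$ being a conjugation pair, combined with $T^*\in\mor_\G(w,v)$) turns this into $\langle \varphi(T^*\xbox\id)\varphi(\bar R_w)(1),\iota_{v,\bar w}(\xi\otimes\eta)\rangle = \langle \varphi(\bar R_w)(1),\iota_{w,\bar w}(\varphi(T)\xi\otimes\eta)\rangle = \langle J_w\varphi(T)\xi,\eta\rangle$. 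For \ref{cond:c1}, one observes that $c_n\in\mor_\G(u^{\boxtimes n},\overline{u^{\boxtimes n}})$ is precisely the isomorphism identifying the abstract conjugate with $u^{\boxtimes n}$ itself, so that under this identification $\bar R_{u^{\boxtimes n}}$ corresponds to $R_n$ (up to the canonical flip absorbed in the self-conjugacy $u=\bar u$); comparing the two defining formulas for $J_n$ and $J_{u^{\boxtimes n}}$ then gives exactly $\varphi(c_n)J_n=J_{u^{\boxtimes n}}$, using \ref{eq:inv_map} with $T=c_n$ to move $\varphi(c_n)$ across.

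For \ref{cond:c2} the key point is that $R_{n+k}$, the solution of the conjugate equation for $u^{\boxtimes(n+k)}$, decomposes through the one for $u^{\boxtimes n}$ and $u^{\boxtimes k}$: concretely $R_{n+k} = (\id_{\bar n}\xbox R_k\xbox\id_n)\,R_n$ up to the reordering implemented by $\sigma_{k,n}$, which is itself the standard fact that the conjugate of a tensor product is the tensor product of conjugates in the opposite order. Applying $\varphi$, using \ref{cond:2} to split $\iota$'s and the coherence relation for the $\iota$'s, yields $\varphi(c_{n+k}) = \varphi(\sigma_{k,n})^{-1}\varphi(c_n\otimes c_k)\circ(\text{something built from }\iota)$, and since all these $\varphi$-images of invertible morphisms are themselves invertible one rearranges to the stated form $\varphi(c_n\otimes c_k)=\varphi(\sigma_{k,n})\varphi(c_{n+k})$. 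Finally \ref{cond:c3} is obtained from \ref{cond:c1} and \ref{cond:c2} by a diagram chase: write the conjugate $\overline{u^{\boxtimes n}}\boxtimes u^{\boxtimes k}\boxtimes u^{\boxtimes n}$ of the representation $\overline{u^{\boxtimes n}}\boxtimes u^{\boxtimes k}\boxtimes u^{\boxtimes n}$ (recall $u=\bar u$) as an iterated tensor product, apply \ref{cond:c2} twice and \ref{cond:c1} once to the three tensor legs, and keep track of the reorderings, which bookkeeping is precisely what the permutation morphisms $\sigma_{n,k,n}$ and $c_n\otimes c_k\otimes\id$, $c_n\otimes\id$ encode.

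The main obstacle is purely notational rather than conceptual: one must fix once and for all a consistent convention for the solutions $R_n$, $\bar R_v$ of the conjugate equations and for how the canonical isomorphisms $c_n$, $\sigma_{k,n}$, $\sigma_{n,k,n}$ interact with iterated tensor products, and then carefully propagate the (anti-linear!) conjugations through the formulas without sign or order errors. Once the identification $\bar R_{u^{\boxtimes n}}\leftrightarrow R_n$ afforded by $c_n$ is pinned down and the multiplicativity $R_{n+k}\sim(\id\xbox R_k\xbox\id)R_n$ is recorded, each of (c1)--(c4) is a short computation of the type sketched above; there is no genuine difficulty, which is why the lemma is billed as ``easy''.
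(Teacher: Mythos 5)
Your proof of \ref{eq:inv_map} is the same inner-product computation the paper gives, resting on the naturality of the conjugate-equation solutions, and your treatment of \ref{cond:c1}--\ref{cond:c3} matches the paper, which simply declares them straightforward from the definitions and \ref{cond:1}--\ref{cond:2}. One small slip: when you move $\varphi(\id_{v}\xbox\bar{T}^{*})$ to the other slot of the inner product you must take its adjoint, so the intermediate identity should read $(\id_{v}\xbox\bar{T})\bar{R}_{v}=(T^{*}\xbox\id_{\bar{w}})\bar{R}_{w}$ rather than your version with $\bar{T}^{*}$ (which does not even typecheck, since $\bar{T}^{*}$ maps $\bar{H}_{w}$ to $\bar{H}_{v}$); the two slips cancel and your conclusion is unaffected.
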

\begin{proof}
(c1)-(c3) are straightforward to check from the definitions and \ref{cond:1}-\ref{cond:2}. For (c4), we note that $\bar{R}^{*}_{v}(\xi \otimes \bar{\xi'}) = \langle \xi,\xi'\rangle$ for all $\xi,\xi' \in H_{v}$, so we have $\bar{R}^{*}_{v}(\id \otimes \bar{T}^{*}) = \bar{R}^{*}_{w}(T \otimes \id)$. Then
\begin{align*}
\langle \varphi(\bar{T})J_{v}(X),Y \rangle & = \langle J_{v}X,\varphi(\bar{T}^{*})(Y) \rangle \\
& = \langle \varphi(\bar{R}_{v})(1),\iota_{v,\bar{v}}(X \otimes \varphi(\bar{T}^{*})(Y)) \rangle \\
& = \langle \varphi(\bar{R}_{v})(1),\varphi(\id \otimes \bar{T}^{*})\iota_{v,\bar{w}}(X \otimes Y) \rangle \\
& = \langle \varphi(\bar{R}_{w})(1),\varphi(T \otimes \id)\iota_{v,\bar{w}}(X \otimes Y) \rangle \\
& = \langle \varphi(\bar{R}_{w})(1),\iota_{w,\bar{w}}(\varphi(T)(X) \otimes Y) \rangle \\
& = \langle J_{w}\varphi(T)(X),Y \rangle
\end{align*}
for all $X \in \varphi(v)$, $Y \in \varphi(\bar{w})$.
\end{proof}

The idea of the proof of Theorem \ref{thm:tkergodic_yetter} is to produce a compatible collection for the weak unitary tensor functor $\varphi:\rep(\G) \to \hil_{f}$ out of the family $\{\mathrm{a}_{k}\}_{k\in\N}$, which allows us to apply Theorem \ref{theo:yd_reconstruction_na}. We start by verifying the corresponding properties on the family $\{\mathrm{a}_{k}^n\}_{k,n\in \N}$.
\begin{lem}
We keep the notations and assumptions of Theorem~\ref{thm:tkergodic_yetter}. The family  $\{\mathrm{a}_{k}^n\}_{k,n\in \N}$ satisfies the following properties for all $n,k,k',l\in \mathbb N$:
\begin{enumerate}[label=\textup{(a\arabic*')}]
\item\label{cond:a1} $\mathrm{a}^{n}_{0} = \varphi(R_{n})$;		
\item\label{cond:a2} $\mathrm{a}^{n}_{k}\varphi(T) = \varphi(\id_{n} \xbox T \xbox \id_{n})\mathrm{a}^{n}_{k'}$ for all $T\in\mor_{\G}(k,k')$;
\item\label{cond:a3} $\mathrm{a}^{n}_{k+2l}\mathrm{a}^{l}_{k} = \mathrm{a}^{n+l}_{k}$;
\item\label{cond:a4} $\mathrm{a}^{n}_{k+k'}\iota_{k,k'} = \varphi(\id_{n,k} \xbox R^{*}_{n} \xbox \id_{k',n})\iota_{k+2n,k'+2n}(\mathrm{a}^{n}_{k} \otimes \mathrm{a}^{n}_{k'})$;
\item\label{cond:a5} $\mathrm{a}^{n}_{k}J_{k} = J_{k+2n}\mathrm{a}^{n}_{k}$.
\end{enumerate}
\end{lem}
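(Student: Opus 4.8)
The plan is to prove the five identities \ref{cond:a1}--\ref{cond:a5} by induction on $n$, each inductive step being reduced to the corresponding one-step hypothesis \ref{cond:na1}, \ref{cond:na2}, \ref{cond:na4}, \ref{cond:na5} of Theorem~\ref{thm:tkergodic_yetter} applied to morphisms padded with identities. Two observations should be recorded first. Property \ref{cond:a3}, $\mathrm{a}^{n}_{k+2l}\circ\mathrm{a}^{l}_{k} = \mathrm{a}^{n+l}_{k}$, is purely formal: by \eqref{eq:def_a} the right-hand side is the composite of $n+l$ consecutive maps $\mathrm{a}$ starting at index $k$, of which the first $l$ form $\mathrm{a}^{l}_{k}$ and the last $n$ form $\mathrm{a}^{n}_{k+2l}$. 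As special cases one gets the two decompositions $\mathrm{a}^{n+1}_{k} = \mathrm{a}_{k+2n}\circ\mathrm{a}^{n}_{k} = \mathrm{a}^{n}_{k+2}\circ\mathrm{a}_{k}$, used freely below. The base case $n=0$ of each statement is immediate since $\mathrm{a}^{0}_{k} = \id_{K_{k}}$ and $R_{0} = \id_{\triv}$, whence $\varphi(R_{0}) = \id$.

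For \ref{cond:a1} the step reads $\mathrm{a}^{n+1}_{0} = \mathrm{a}_{2n}\circ\mathrm{a}^{n}_{0} = \mathrm{a}_{2n}\circ\varphi(R_{n}) = \varphi(R_{n+1})$, using the induction hypothesis and then \ref{cond:na1}. For \ref{cond:a2} (with $T\in\mor_{\G}(k,k')$) write $\mathrm{a}^{n+1}_{k'}\varphi(T) = \mathrm{a}^{n}_{k'+2}\,\mathrm{a}_{k'}\varphi(T) = \mathrm{a}^{n}_{k'+2}\,\varphi(\id\xbox T\xbox\id)\,\mathrm{a}_{k}$ by \ref{cond:na2}, then apply the induction hypothesis to the morphism $\id\xbox T\xbox\id\in\mor_{\G}(k+2,k'+2)$ to move $\mathrm{a}^{n}_{k'+2}$ across, obtaining $\varphi(\id_{n}\xbox(\id\xbox T\xbox\id)\xbox\id_{n})\,\mathrm{a}^{n}_{k+2}\,\mathrm{a}_{k} = \varphi(\id_{n+1}\xbox T\xbox\id_{n+1})\,\mathrm{a}^{n+1}_{k}$. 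For \ref{cond:a5}, $\mathrm{a}^{n+1}_{k}J_{k} = \mathrm{a}_{k+2n}\,\mathrm{a}^{n}_{k}J_{k} = \mathrm{a}_{k+2n}\,J_{k+2n}\,\mathrm{a}^{n}_{k}$ by the induction hypothesis, and then $\mathrm{a}_{k+2n}J_{k+2n} = J_{k+2(n+1)}\mathrm{a}_{k+2n}$ by \ref{cond:na5} (with $k$ replaced by $k+2n$), so the right-hand side equals $J_{k+2(n+1)}\,\mathrm{a}^{n+1}_{k}$.

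The one identity needing genuine work is \ref{cond:a4}, again treated by induction on $n$ (the base case $n=0$ reducing to $\iota_{k,k'} = \varphi(\id_{k}\xbox R_{0}^{*}\xbox\id_{k'})\iota_{k,k'}$, which holds since $R_{0}^{*} = \id_{\triv}$). For the step, decompose $\mathrm{a}^{n+1}_{k+k'}\iota_{k,k'} = \mathrm{a}_{k+k'+2n}\,\mathrm{a}^{n}_{k+k'}\iota_{k,k'}$, insert the induction hypothesis, use \ref{cond:na2} to commute $\mathrm{a}_{k+k'+2n}$ past $\varphi(\id_{n,k}\xbox R_{n}^{*}\xbox\id_{k',n})$ --- this produces $\varphi(\id_{n+1,k}\xbox R_{n}^{*}\xbox\id_{k',n+1})$ followed by $\mathrm{a}_{k+k'+4n}$ --- and finally apply \ref{cond:na4} with $(k,k')$ replaced by $(k+2n,k'+2n)$ to rewrite $\mathrm{a}_{k+k'+4n}\,\iota_{k+2n,k'+2n}$. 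Collecting the pieces, the maps $\mathrm{a}$ recombine as $(\mathrm{a}_{k+2n}\otimes\mathrm{a}_{k'+2n})(\mathrm{a}^{n}_{k}\otimes\mathrm{a}^{n}_{k'}) = \mathrm{a}^{n+1}_{k}\otimes\mathrm{a}^{n+1}_{k'}$, the embeddings give $\iota_{k+2(n+1),k'+2(n+1)}$, and using $\varphi(S)\varphi(S') = \varphi(SS')$ from \ref{cond:1} we are reduced to the categorical identity
\[
(\id_{n+1,k}\xbox R_{n}^{*}\xbox\id_{k',n+1})\circ(\id_{1,k+2n}\xbox R_{1}^{*}\xbox\id_{k'+2n,1}) = \id_{n+1,k}\xbox R_{n+1}^{*}\xbox\id_{k',n+1}
\]
in $\mor_{\G}$. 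This follows from the standard recursion $R_{n+1}^{*} = R_{n}^{*}\circ(\id_{u^{\boxtimes n}}\xbox R_{1}^{*}\xbox\id_{u^{\boxtimes n}})$ for solutions of the conjugate equations together with the interchange law and associativity of $\xbox$: one rewrites the left-hand factor of the left-hand side as $(\id_{n+1,k}\xbox R_{n}^{*}\xbox\id_{k',n+1})\circ(\id_{n+1,k}\xbox(\id_{u^{\boxtimes n}}\xbox R_{1}^{*}\xbox\id_{u^{\boxtimes n}})\xbox\id_{k',n+1})$ and recognizes the middle composite as $\id_{n+1,k}\xbox R_{n+1}^{*}\xbox\id_{k',n+1}$.

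I expect the main obstacle to be exactly that last step: carefully spelling out the identity-paddings so that the two sides of that categorical identity are seen to contract the same pairs of strands (equivalently, tracking which tensor factors get conjugated and matched). It is however entirely diagrammatic and uses no idea beyond the recursive description of the conjugate-equation solutions for $O_{N}^{+}$. With \ref{cond:a1}--\ref{cond:a5} in hand, the family $\{\mathrm{a}^{n}_{k}\}$ provides precisely the data needed to assemble a compatible collection for the weak unitary tensor functor $\varphi$, and Theorem~\ref{theo:yd_reconstruction_na} then yields the Yetter-Drinfeld structure asserted in Theorem~\ref{thm:tkergodic_yetter}.
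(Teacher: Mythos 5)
Your proof is correct and follows essentially the same route as the paper: each of \ref{cond:a1}, \ref{cond:a2}, \ref{cond:a3}, \ref{cond:a5} is obtained by induction on $n$ from the recursion $\mathrm{a}^{n+1}_{k}=\mathrm{a}^{n}_{k+2}\mathrm{a}_{k}=\mathrm{a}_{k+2n}\mathrm{a}^{n}_{k}$ together with the corresponding one-step hypothesis, exactly as in the paper. For \ref{cond:a4} you peel off the top map $\mathrm{a}_{k+k'+2n}$ and apply the inductive hypothesis first, whereas the paper peels off the bottom map $\mathrm{a}_{k+k'}$, applies \ref{cond:na4} first and then uses \ref{cond:a2} to push the resulting $\varphi(\id_{1,k}\xbox R^{*}\xbox\id_{k',1})$ outward before inducting on the remaining term; both variants ultimately rest on the nested recursion $R_{n+1}^{*}=R_{n}^{*}(\id_{n}\xbox R_{1}^{*}\xbox\id_{n})$, which you state explicitly and the paper leaves implicit, so this is a cosmetic rather than substantive difference.
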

\begin{proof}
Note that the definition of $\{\mathrm{a}_{k}^n\}_{k,n\in \N}$ in \eqref{eq:def_a} yields that $\mathrm{a}^{n+1}_{0} = \mathrm{a}_{2n}\mathrm{a}^{n}_{0}$. Then together with \ref{cond:na1}, \ref{cond:a1} follows immediately by induction. We note also that \eqref{eq:def_a} yields a recursion relation
\begin{equation}\label{eq:ind_a}
\mathrm{a}^{n+1}_{k} = \mathrm{a}^{n}_{k+2}\mathrm{a}_{k}.
\end{equation}
With this relation, \ref{cond:a2} and \ref{cond:a5} follow easily by induction on $n$ with \ref{cond:na2} and \ref{cond:na5} being the cases $n=1$. The proof of \ref{cond:a3} goes by induction on $l$ where \eqref{eq:ind_a} corresponds exactly to the case $l=1$. 
For the proof of \ref{cond:a4}, we note that by \eqref{eq:ind_a} and \ref{cond:na4}, we have
 \begin{equation*}
 \mathrm{a}^{n_{0}+1}_{k+k'}\iota_{k,k'}= \mathrm{a}^{n_{0}}_{k+k'+2}\mathrm{a}_{k+k'}\iota_{k,k'} = \mathrm{a}^{n_{0}}_{k+k'+2}\varphi(\id_{1,k} \xbox R^{*} \xbox \id_{k',1})\iota_{k+2,k'+2}(\mathrm{a}_{k} \otimes \mathrm{a}_{k'}),
 \end{equation*} 
 and hence together with \ref{cond:a2} we obtain
	\begin{equation*}
	\mathrm{a}^{n+1}_{k+k'}\iota_{k,k'}	= \varphi(\id_{n+1,k} \xbox R^{*} \xbox \id_{k',n+1})\mathrm{a}^{n}_{k+k'+4}\iota_{k+2,k'+2}(\mathrm{a}_{k} \otimes \mathrm{a}_{k'}) .
	\end{equation*}
Then an induction argument for the term $\mathrm{a}^{n}_{k+k'+4}\iota_{k+2,k'+2}$ together with \eqref{eq:ind_a} yields \ref{cond:a4}.	
\end{proof}

Recall that our aim is to construct a compatible collection $\{\mathcal{A}^{w}_{v}\}_{v,w \in \rep(\G)}$. We note that the family $\{\mathrm{a}_{k}^n\}_{k,n\in \N}$ plays the role of such a collection for tensor powers $v=u^{\boxtimes k}$ and $w=u^{\boxtimes n}$, so we can use these to build desired maps for arbitrary irreducible representations. Indeed, consider $x, y \in \irr(\G)$. There exist minimal integers $k_{x}, k_{y}$ such that $u^{x} \prec u^{\boxtimes k_{x}}$ and $u^{y} \prec u^{\boxtimes k_{y}}$. Denote by $\omega_{x} \in \mor_{\G}(u^{x},u^{\boxtimes k_{x}})$ and $\omega_{y} \in \mor_{\G}(u^{y},u^{\boxtimes k_{y}})$ the associated isometric intertwiners and set
\[
\mathcal{A}^{x}_{y} : \varphi(u^{y}) \to \varphi(\overline{u^{x}} \boxtimes u^{y} \boxtimes u^{x}),\quad
\mathcal{A}^{x}_{y} = \varphi(\bar{\omega}^{*}_{x}c_{k_{x}} \xbox \omega^{*}_{y} \xbox \omega^{*}_{x})\mathrm{a}^{k_{x}}_{k_{y}}\varphi(\omega_{y}).
\]
More generally, consider $w,v \in \rep(\G)$. If we write $w = \oplus w_{i}$ and $v = \oplus v_{j}$ for the decompositions into sums of irreducible representations with isometric intertwiners $\theta_{w_{i}}\in\mor_{\G}(w_{i},w)$, $\theta_{v_{j}}\in\mor_{\G}(v_{j},v)$, then we may define
\[
\mathcal{A}^{w}_{v}: \varphi(v) \to \varphi(\bar{w} \boxtimes v \boxtimes w),\quad
\mathcal{A}^{w}_{v} = \sum_{i,j}\varphi(\bar{\theta}_{w_{i}} \xbox \theta_{v_{j}} \xbox \theta_{w_{i}})\mathcal{A}^{w_{i}}_{v_{j}}\varphi(\theta^{*}_{v_{j}}).
\]
Note that in particular
\begin{equation}\label{eq:A0w}
\mathcal{A}^{w}_{v} = \sum_{i}\varphi(\bar{\theta}_{w_{i}} \xbox \mathrm{id} \xbox \theta_{w_{i}})\mathcal{A}^{w_{i}}_{v}
=\sum_j \varphi(\mathrm{id} \xbox \theta_{v_{j}} \xbox \mathrm{id})\mathcal{A}^{w}_{v_{j}}\varphi(\theta^{*}_{v_{j}})
.
\end{equation}
Before checking the compatibility of $\{\mathcal{A}^{w}_{v}\}_{v,w \in \rep(\G)}$, let us collect some useful auxiliary properties. 
\begin{lem}
	The family $\{\mathcal{A}^{w}_{v}\}_{v,w \in \rep(\G)}$ satisfies the following properties:
	\begin{enumerate}[label=\textup{(\arabic*)}]
		\item Let $w \prec w'$ be a subrepresentation with the corresponding isometric intertwiners $\theta\in\mor_\G (w, w')$. Then		
		\begin{equation}\label{eq:subrep}
		\mathcal{A}^{w}_{v} = \varphi(\bar{\theta}^{*} \xbox \id \xbox \theta^{*})\mathcal{A}^{w'}_{v}
		.
		\end{equation}
		More generally, for a direct sum of subrepresentations $w'=\oplus_i w_i$ with corresponding intertwiners $\theta_i \in \mor_\G (w_i ,w)$, we have
		\begin{equation}
		\label{eq:dec}
			\mathcal{A}^{w'}_{v} = \sum_i  \varphi(\bar{\theta} _i  \xbox \id \xbox \theta_i )\mathcal{A}^{w_i}_{v}.
		\end{equation}
		\item For $x,x' ,y,y' \in\irr(\G)$ we have \begin{equation}\label{p:4}
		\mathcal{A}^{x \boxtimes x'}_{y} = \varphi((\overline{\omega^{*}_{x} \xbox \omega^{*}_{x'}})(c_{k_{x}+k_{x'}}) \xbox \omega^{*}_{y} \xbox \omega^{*}_{x} \xbox \omega^{*}_{x'})\mathrm{a}^{k_{x}+k_{x'}}_{k_{y}}\varphi(\omega_{y}),
		\end{equation}
		\begin{equation}\label{p:5}
		\mathcal{A}^{x'}_{\bar{x} \boxtimes y \boxtimes x}  = \varphi(\bar{\omega}^{*}_{x'}c_{k_{x'}} \xbox \bar{\omega}^{*}_{x}c_{k_{x}} \xbox \omega^{*}_{y} \xbox \omega^{*}_{x} \xbox \omega^{*}_{x'})\mathrm{a}^{k_{x'}}_{k_{y}+2k_{x}}\varphi(\omega_{x}c^{*}_{x} \xbox \omega_{y} \xbox \omega_{x}),
		\end{equation}
		and
		\begin{equation}\label{p:6}
		\mathcal{A}^{x}_{y \boxtimes y'} = \varphi(\bar{\omega}^{*}_{x}c_{k_{x}} \xbox \omega^{*}_{y} \xbox \omega^{*}_{y'} \xbox \omega^{*}_{x})\mathrm{a}^{k_{x}}_{k_{y}+k_{y'}}\varphi(\omega_{y} \xbox \omega_{y'}).
		\end{equation}		
		\end{enumerate}
\end{lem}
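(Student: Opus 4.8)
The strategy is to unwind the definition~\eqref{eq:A0w} of $\mathcal{A}^{w}_{v}$ until every claim reduces to the algebraic identities \ref{cond:a1}--\ref{cond:a5} for the iterated maps $\mathrm{a}^{n}_{k}$, the functoriality axioms \ref{cond:1}--\ref{cond:2}, and the conjugation bookkeeping collected in Lemma~\ref{lem:conjugate}.

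\textbf{Part (1).} First I would record that $\mathcal{A}^{w}_{v}$ does not depend on the chosen decompositions of $w$ and $v$ into irreducibles nor on the chosen isometric intertwiners: two admissible choices differ by unitaries in $\mor_{\G}(w,w)$ and $\mor_{\G}(v,v)$, which by Schur's lemma are block-diagonal along the isotypic decompositions, and inside each isotypic block the sum defining $\mathcal{A}^{w}_{v}$ is unchanged because $\sum_{l}U_{lj}\overline{U_{lk}}=\delta_{jk}$; this step uses only \ref{cond:1}--\ref{cond:2}. Granting this, \eqref{eq:dec} follows by refining $w'=\oplus_{i}w_{i}$ to a decomposition into irreducibles, applying~\eqref{eq:A0w}, and using that $\varphi$ and conjugation are functors, so that $\varphi(\overline{\theta_{i}}\,\overline{\theta_{w_{ia}}})=\varphi(\overline{\theta_{i}})\varphi(\overline{\theta_{w_{ia}}})$; then \eqref{eq:subrep} is the special case $w'=w\oplus w^{\perp}$ together with $\theta^{*}\theta=\id$ and $\theta^{*}\theta^{\perp}=0$.

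\textbf{Part (2).} The key intermediate step is a \emph{change-of-embedding formula}: for $x\in\irr(\G)$, $v\in\rep(\G)$ and \emph{any} isometric intertwiner $\psi\colon v\to u^{\boxtimes m}$,
\[
\mathcal{A}^{x}_{v}=\varphi\bigl(\bar{\omega}_{x}^{*}c_{k_{x}}\xbox\psi^{*}\xbox\omega_{x}^{*}\bigr)\,\mathrm{a}^{k_{x}}_{m}\,\varphi(\psi).
\]
To prove it I would decompose $v=\oplus_{j}v_{j}$ into irreducibles with intertwiners $\theta_{v_{j}}$, set $\psi_{j}=\psi\theta_{v_{j}}$ so that $\psi=\sum_{j}\psi_{j}\theta_{v_{j}}^{*}$, and rewrite each $\mathrm{a}^{k_{x}}_{m}\varphi(\psi_{j})$ by pushing $\mathrm{a}^{k_{x}}$ through $\varphi(\psi_{j})=\varphi(\psi_{j}\omega_{v_{j}}^{*})\varphi(\omega_{v_{j}})$ via the naturality identity \ref{cond:a2} applied to $\psi_{j}\omega_{v_{j}}^{*}\in\mor_{\G}(k_{v_{j}},m)$. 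Substituting back, the relation $\psi^{*}\psi=\id_{v}$ collapses the cross-terms (so \ref{cond:na2_new} is not needed here), and one recognises $\sum_{j}\varphi(\id\xbox\theta_{v_{j}}\xbox\id)\mathcal{A}^{x}_{v_{j}}\varphi(\theta_{v_{j}}^{*})=\mathcal{A}^{x}_{v}$ by~\eqref{eq:A0w}. Applying this with $v=y\boxtimes y'$ and $\psi=\omega_{y}\xbox\omega_{y'}$ gives~\eqref{p:6} at once; applying it with $v=\overline{u^{x}}\boxtimes u^{y}\boxtimes u^{x}$ and $\psi=\omega_{x}c_{x}^{*}\xbox\omega_{y}\xbox\omega_{x}$, and then rewriting $\psi^{*}$ with the identity $c_{x}\omega_{x}^{*}=\bar{\omega}_{x}^{*}c_{k_{x}}$ for the canonical conjugation isomorphism $c_{x}\in\mor_{\G}(u^{x},\overline{u^{x}})$, gives~\eqref{p:5}. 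For~\eqref{p:4}, where the \emph{superscript} $x\boxtimes x'$ is reducible, I would write $x\boxtimes x'=\oplus_{i}w_{i}$, apply~\eqref{eq:A0w}, and rewrite each $\mathcal{A}^{w_{i}}_{y}$ in terms of the embedding $(\omega_{x}\xbox\omega_{x'})\theta_{w_{i}}$ of $w_{i}$ into $u^{\boxtimes(k_{x}+k_{x'})}$; the necessary change of depth from $k_{w_{i}}$ to $k_{x}+k_{x'}$ is produced by the iteration identity \ref{cond:a3} in the form $\mathrm{a}^{k_{x}+k_{x'}}_{k_{y}}=\mathrm{a}^{k_{x'}}_{k_{y}+2k_{x}}\mathrm{a}^{k_{x}}_{k_{y}}$, while the reorganisation of $(\overline{\omega_{x}^{*}\xbox\omega_{x'}^{*}})c_{k_{x}+k_{x'}}$ into its two tensor legs uses \ref{cond:c2}.

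I expect the main obstacle to be~\eqref{p:4}: because the superscript $x\boxtimes x'$ is not irreducible, one is forced to work with the \emph{non-minimal} depth $k_{x}+k_{x'}$, so the change-of-embedding formula has to be upgraded to allow a change of the depth parameter (via \ref{cond:a3}), and at the same time one must reconcile the order-reversal built into the conjugation maps $c_{n}$, $\sigma_{k,n}$ (Lemma~\ref{lem:conjugate}) with the nesting order in which the successive layers of $\mathrm{a}$ are applied. The structural content throughout is only \ref{cond:a2}--\ref{cond:a4} together with Lemma~\ref{lem:conjugate}; the genuine difficulty is the leg-by-leg bookkeeping, which is precisely why isolating the change-of-embedding formula first pays off, since~\eqref{p:5} and~\eqref{p:6} then follow by merely choosing the right $\psi$ and applying the definition of $c_{x}$.
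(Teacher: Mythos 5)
Your part (1) and your derivations of \eqref{p:5} and \eqref{p:6} are correct and essentially coincide with the paper's argument: the paper also obtains \eqref{eq:subrep} by choosing a decomposition of $w'$ adapted to $w$ and $w'\ominus w$ and then compressing, and your ``change-of-embedding formula'' for an irreducible superscript and an arbitrary isometric realisation $\psi$ of the subscript is exactly the paper's computation for \eqref{p:5}, namely pushing $\mathrm{a}^{k_{x'}}_{k_z}$ through $\omega_z\theta_z^*(c_x\omega_x^* \xbox \omega_y^* \xbox \omega_x^*)$ via \ref{cond:a2} and summing $\theta_z\theta_z^*$ to the identity. Isolating that formula as a separate step is a clean repackaging, and your remark that no cross-terms appear there (so that \ref{cond:na2_new} is not needed for manipulating the \emph{subscript}) is accurate. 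Making the well-definedness of $\mathcal{A}^w_v$ explicit is also a reasonable addition that the paper leaves implicit.

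The gap is in \eqref{p:4}, precisely where the \emph{superscript} is reducible. If you decompose $x\boxtimes x'=\oplus_i w_i$ and re-express each $\mathcal{A}^{w_i}_y$ at depth $n=k_x+k_{x'}$ via the embeddings $\mu_i=(\omega_x\xbox\omega_{x'})\theta_{w_i}$, the resulting sum has the shape $\sum_i\varphi(\overline{\mu_i\mu_i^*}\,c_n \xbox \omega_y^* \xbox \mu_i\mu_i^*)\mathrm{a}^{n}_{k_y}\varphi(\omega_y)$ (suppressing the conjugation bookkeeping), and to replace the diagonal sum $\sum_i p_i \xbox \id \xbox p_i$ of projections by $(\sum_i p_i) \xbox \id \xbox (\sum_j p_j)$ you must know that the off-diagonal terms $\varphi(p_i \xbox \id \xbox p_j)\mathrm{a}^{n}_{k_y}$, $i\neq j$, vanish. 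That is exactly condition \ref{cond:na2_new} in its reinterpretation \eqref{p:1_2}, so your closing claim that ``the structural content throughout is only \ref{cond:a2}--\ref{cond:a4} together with Lemma \ref{lem:conjugate}'' is false. Moreover, the preceding step --- trading the minimal-depth expression $\varphi(\bar{\omega}_{w_i}^*c_{k_{w_i}} \xbox \omega_y^* \xbox \omega_{w_i}^*)\mathrm{a}^{k_{w_i}}_{k_y}\varphi(\omega_y)$ for the depth-$n$ one --- is not ``produced by \ref{cond:a3}'': \ref{cond:a3} only factors $\mathrm{a}^{n}_{k_y}=\mathrm{a}^{n-k_{w_i}}_{k_y+2k_{w_i}}\mathrm{a}^{k_{w_i}}_{k_y}$, and none of \ref{cond:a2}--\ref{cond:a4} acts on the \emph{outer} legs created by the extra layers, so absorbing $\mathrm{a}^{n-k_{w_i}}$ into the compression by $\mu_i^*$ again requires \ref{cond:na2_new} (together with \ref{cond:a1} and the conjugation identities). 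The paper avoids the term-by-term analysis by instead applying \eqref{eq:subrep} to the inclusion $x\boxtimes x'\prec u^{\boxtimes(k_x+k_{x'})}$, which concentrates the whole difficulty into the single identity $\mathcal{A}^{u^{\boxtimes n}}_y=\varphi(c_n \xbox \omega_y^* \xbox \id)\mathrm{a}^{n}_{k_y}\varphi(\omega_y)$, where \eqref{p:1_2} is what makes the irreducible pieces reassemble; your route must confront the same issue but your proposal explicitly rules out the one hypothesis that resolves it.
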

\begin{proof}
For $w \prec w'$, we consider the decompositions $w = \oplus z$ and $w' \ominus w = \oplus z'$ into direct sums of irreducible representations with isometric intertwiners $\theta_{z} \in \mor_{\G}(z,w)$ and $\theta'_{z'} \in \mor_{\G}(z',w' \ominus w)$. By \eqref{eq:A0w} we get
\begin{align*}
\mathcal{A}^{w'}_{v} &= \sum_{z \prec w}\varphi(\bar{\theta}\bar{\theta}_{z} \xbox \id \xbox \theta\theta_{z})\mathcal{A}^{z}_{v} + \sum_{z' \prec w' \ominus w}\varphi(\bar{\theta'}_{z'} \xbox \id \xbox \theta'_{z'})\mathcal{A}^{z'}_{v}\\
&
=\varphi(\bar{\theta} \xbox \id \xbox \theta)\mathcal{A}^{w}_{v} + \sum_{z' \prec w' \ominus w}\varphi(\bar{\theta'}_{z'} \xbox \id \xbox \theta'_{z'})\mathcal{A}^{z'}_{v}.
\end{align*}
Then applying $\varphi(\bar{\theta}^{*} \xbox \id \xbox \theta^{*})$ on both sides of the above equality yields \eqref{eq:subrep}, and iterating the argument yields \eqref{eq:dec}. Moreover, taking $w=x\boxtimes x'$ and $w' = u^{\boxtimes ( k_x +k_{x'})}$, we see that \eqref{eq:subrep} implies \eqref{p:4}.

To prove \eqref{p:5}, we consider the decomposition $\bar{x} \boxtimes y \boxtimes x = \oplus z$ into direct sum of irreducible representations with associated isometric intertwiners $\theta_{z}\in\mor_{\G}(z,\bar{x} \boxtimes y \boxtimes x)$. By the definition of $\{\mathcal{A}^{w}_{v}\}_{v,w \in \rep(\G)}$ we have
\[
\mathcal{A}^{x'}_{\bar{x} \boxtimes y \boxtimes x}= \sum_{z } \varphi(\bar{\omega}^{*}_{x'}c_{k_{x'}} \xbox \theta_{z}\omega^{*}_{z} \xbox \omega^{*}_{x'})\mathrm{a}^{k_{x'}}_{k_{z}}\varphi(\omega_{z}\theta^{*}_{z}).
\]
Note further that \ref{cond:a2} yields
\begin{equation*}
\mathrm{a}^{k_{x'}}_{k_{z}}\varphi(\omega_{z}\theta^{*}_{z}) \varphi(c_{x}\omega^{*}_{x} \xbox \omega^{*}_{y} \xbox \omega^{*}_{x})= \varphi(\id \xbox \omega_{z}\theta^{*}_{z}(c_{x}\omega^{*}_{x} \xbox \omega^{*}_{y} \xbox \omega^{*}_{x})\xbox \id)\mathrm{a}^{k_{x'}}_{k_{y}+2k_{x}}.
\end{equation*}
Recall that $w_x$ and $w_y$ are isometries. Combing the above two equalities we obtain \eqref{p:5}, as desired. \eqref{p:6} can be dealt with similarly.
\end{proof}
Now we are ready for the proof of the theorem.
\begin{proof}[Proof of Theorem \ref{thm:tkergodic_yetter}]
It remains to check that $\{\mathcal{A}^{w}_{v}\}_{v,w \in \rep(\G)}$ satisfies the axioms \ref{cond:A0}-\ref{cond:A5} of a compatible collection in Definition~\ref{def:compatible_collection}. Note that \ref{cond:A0} is exactly given by \eqref{eq:A0w}. Let us prove \ref{cond:A1}-\ref{cond:A5}.

(A1) For any $x, y \in \irr(\G)$, we have
\begin{align*}
\mathcal{A}^{\triv}_{y} & \;=\; \varphi(\id_{\C} \xbox \omega^{*}_{y} \xbox \id_{\C})\mathrm{a}^{0}_{k_{y}}\varphi(\omega_{y}) \overset{\ref{cond:a1}}{=} \varphi(\omega^{*}_{y})\varphi(\omega_{y}) = \id_{\varphi(y)}, \\
\mathcal{A}^{x}_{\triv} & \;=\; \varphi(\bar{\omega}^{*}_{x}c_{k_{x}} \xbox \id_{\C} \xbox \omega^{*}_{x})\mathrm{a}^{k_{x}}_{0}\varphi(\id_{\C}) \overset{\ref{cond:a1}}{=} \varphi(\bar{\omega}^{*}_{x}c_{k_{x}} \xbox \omega^{*}_{x})\varphi(R_{k_{x}}) = \varphi(R_{u^{x}}).
\end{align*}
Then
\begin{align*}
\mathcal{A}^{\triv}_{v} & = \sum_{j}\varphi(\id_{\C} \xbox \theta_{v_{j}} \xbox \id_{\C})\mathcal{A}^{\triv}_{v_{j}}\varphi(\theta^{*}_{v_{j}})  = \sum_{j}\varphi(\theta_{v_{j}})\varphi(\theta^{*}_{v_{j}}) = \id_{\varphi(v)}
\end{align*}
and
\begin{align*}
\mathcal{A}^{w}_{\triv} & = \sum_{i}\varphi(\bar{\theta}_{w_{i}} \xbox \id_{\C} \xbox \theta_{w_{i}})\mathcal{A}^{w_{i}}_{\triv} = \sum_{i}\varphi(\bar{\theta}_{w_{i}} \xbox \theta_{w_{i}})\varphi(R_{w_{i}}) = \varphi(R_{w}),
\end{align*}
for any $v,w \in \rep(\G)$.

(A2) For any $x,y,y' \in \irr(\G)$ and $T \in \mor_{\G}(y,y')$, we have
\begin{flalign*}
 \mathcal{A}^{x}_{y'}\varphi(T) & = \varphi(\bar{\omega}^{*}_{x}c_{k_{x}} \xbox \omega^{*}_{y'} \xbox \omega^{*}_{x})\mathrm{a}^{k_{x}}_{k_{y'}}\varphi(\omega_{y'})\varphi(T) \\
 & = \varphi(\bar{\omega}^{*}_{x}c_{k_{x}} \xbox \omega^{*}_{y'} \xbox \omega^{*}_{x})\varphi(\id \xbox \omega_{y'}T\omega^{*}_{y} \xbox \id)\mathrm{a}^{k_{x}}_{k_{y}}\varphi(\omega_{y})  \hskip0.2\textwidth  \text{(by \ref{cond:a2})}\\
 & = \varphi(\id \xbox T \xbox \id)\varphi(\bar{\omega}^{*}_{x}c_{k_{x}} \xbox \omega^{*}_{y} \xbox \omega^{*}_{x})\mathrm{a}^{k_{x}}_{k_{y}}\varphi(\omega_{y}) = \varphi(\id \xbox T \xbox \id)\mathcal{A}^{x}_{y}. 
\end{flalign*}
Now take $v,v',w \in \rep(\G)$ and $T \in \mor_{\G}(v,v')$. Consider the decomposition $w = \oplus_{i} w_{i}$, $v = \oplus_{j} v_{j}$, and $v' = \oplus_{k} v'_{k}$ into direct sums of irreducible representations with associated isometric intertwiners ${\theta_{w_{i}}\in\mor_{\G}(w_{i},w)}$, $\theta_{v_{j}}\in\mor_{\G}(v_{j},v)$ and $\theta_{v'_{k}}\in\mor_{\G}(v'_{k},v')$. Using the decompositions in \eqref{eq:A0w}, and replacing $x,y,y',T$ by $w,v_j,v_k ',\theta^{*}_{v'_{k}}T\theta_{v_{j}}$ respectively in the above equality, we get 
\[
\mathcal{A}^{w}_{v'_{k}}\varphi(\theta^{*}_{v'_{k}}T\theta_{v_{j}}) = \varphi(\id \xbox \theta^{*}_{v'_{k}}T\theta_{v_{j}} \xbox \id)\mathcal{A}^{w}_{v_{j}}
\]
and
\begin{align*}
\mathcal{A}^{w}_{v'}\varphi(T) & = \sum_{k}\varphi(\id \xbox \theta_{v'_{k}} \xbox \id)\mathcal{A}^{w}_{v'_{k}}\varphi(\theta^{*}_{v'_{k}})\varphi(T) \sum_{j}\varphi(\theta_{v_{j}})\varphi(\theta^{*}_{v_{j}}) \\
& = \sum_{j,k}\varphi(\id \xbox \theta_{v'_{k}} \xbox \id)\varphi(\id \xbox \theta^{*}_{v'_{k}} T\theta_{v_{j}} \xbox \id)\mathcal{A}^{w}_{v_{j}}\varphi(\theta^{*}_{v_{j}}) \\
& = \sum_{j}\varphi(\id \xbox \sum_{k}\theta_{v'_{k}}\theta^{*}_{v'_{k}} \xbox \id)\varphi(\id \xbox T\theta_{v_{j}} \xbox \id)\mathcal{A}^{w}_{v_{j}}\varphi(\theta^{*}_{v_{j}}) \\
& = \varphi(\id \xbox T \xbox \id)\mathcal{A}^{w}_{v}.
\end{align*}

(A3) For any $x,x',y \in \irr(\G)$, we have
\begin{align*}
&\ \quad\mathcal{A}^{x'}_{\bar{x} \boxtimes y \boxtimes x}\mathcal{A}^{x}_{y} \\
& = \varphi(\bar{\omega}^{*}_{x'}c_{k_{x'}} \xbox c_{x}\omega^{*}_{x} \xbox \omega^{*}_{y} \xbox \omega^{*}_{x} \xbox \omega^{*}_{x'})\mathrm{a}^{k_{x'}}_{k_{y}+2k_{x}}\varphi(\omega_{x}\omega^{*}_{x} \xbox \omega_{y}\omega^{*}_{y} \xbox \omega_{x}\omega^{*}_{x})\mathrm{a}^{k_{x}}_{k_{y}}\varphi(\omega_{y}) 
& \text{(by \eqref{p:5})}\\
& = \varphi(\bar{\omega}^{*}_{x'}c_{k_{x'}} \xbox c_{x}\omega^{*}_{x} \xbox \omega^{*}_{y} \xbox \omega^{*}_{x} \xbox \omega^{*}_{x'})\mathrm{a}^{k_{x'}}_{k_{y}+2k_{x}}\mathrm{a}^{k_{x}}_{k_{y}}\varphi(\omega_{y}) 
& \text{(by \ref{cond:a2})}\\
& = \varphi(\bar{\omega}^{*}_{x'}c_{k_{x'}} \xbox \bar{\omega}^{*}_{x}c_{k_{x}} \xbox \omega^{*}_{y} \xbox \omega^{*}_{x} \xbox \omega^{*}_{x'})\mathrm{a}^{k_{x'}+k_{x}}_{k_{y}}\varphi(\omega_{y})   & \text{(by \ref{cond:a3})} \\
& = \varphi((\bar{\omega}^{*}_{x'} \xbox \bar{\omega}^{*}_{x})\sigma_{k_{x},k_{x'}}(c_{k_{x} + k_{x'}}) \xbox \omega^{*}_{y} \xbox \omega^{*}_{x} \xbox \omega^{*}_{x'})\mathrm{a}^{k_{x'}+k_{x}}_{k_{y}}\varphi(\omega_{y}) 
&  \text{(by \ref{cond:c2})} \\
& = \varphi(\sigma_{x,x'} \xbox \id_{y,x,x'})\mathcal{A}^{x \boxtimes x'}_{y}. & \text{(by \eqref{p:4})}
\end{align*}
Then using \eqref{eq:dec} and \ref{cond:A2}, we may immediately pass this equality to direct sums of irreducible representations $x,x',y$, which yields the desired condition \ref{cond:A3}.

(A4) Fix $x,y,y' \in \irr(\G)$. Combing \eqref{p:6}, \ref{cond:2} and \ref{cond:a4} we may write
\begin{equation*}
\mathcal{A}^{x}_{y \boxtimes y'}\iota_{y,y'} = \varphi(\bar{\omega}^{*}_{x}c_{k_{x}} \xbox \omega^{*}_{y} \xbox \omega^{*}_{y'} \xbox \omega^{*}_{x})\varphi(\id \xbox R^{*}_{k_{x}}  \xbox \id)\iota_{k_{y}+2k_{x},k_{y'}+2k_{x}}(\mathrm{a}^{k_{x}}_{k_{y}} \otimes \mathrm{a}^{k_{x}}_{k_{y'}})(\varphi(\omega_{y}) \otimes \varphi(\omega_{y'})) .
\end{equation*} 
Consider the decomposition $u^{\boxtimes k_{x}} = \oplus z$ into direct sum of irreducible representations such that some $z$ in the decomposition equals $x$. We denote by $\theta_{z}\in\mor_{\G}(u^{z},u^{\boxtimes k_{x}})$ the corresponding intertwiners and in particular $\theta_x = \omega_x$. Note that $
\bar{R}_{u^{\boxtimes k_{x}}} = \sum_{z } (\theta_{z} \xbox \bar{\theta}_{z})\bar{R}_{u^{z}}$ and $ \bar{R}_{u^{\boxtimes k_{x}}} = (\id \xbox c_{k_{x}})R_{k_{x}}$. So we may write
\begin{equation}
\label{eq:R decomposition}
R^{*}_{k_{x}} = \sum_{z } \bar{R}^{*}_{u^z}(\theta^{*}_{z} \otimes \bar{\theta}^{*}_{z}c_{k_{x}}).
\end{equation}
Putting this into the previous equality we obtain
\begin{align*}
& \ \quad \mathcal{A}^{x}_{y \boxtimes y'}\iota_{y,y'} \\
&= \sum_{z}\varphi(\id \xbox \bar{R}^{*}_{u^{z}} \xbox \id)\varphi(\bar{\omega}^{*}_{x}c_{k_{x}} \xbox \omega^{*}_{y} \xbox \theta^{*}_{z} \xbox \bar{\theta}^{*}_{z}c_{k_{x}} \xbox \omega^{*}_{y'} \xbox \omega^{*}_{x})\iota_{k_{y}+2k_{x},k_{y'}+2k_{x}}(\mathrm{a}^{k_{x}}_{k_{y}}\varphi(\omega_{y}) \otimes \mathrm{a}^{k_{x}}_{k_{y'}}\varphi(\omega_{y'})) \\
& = \sum_{z}\varphi(\id \xbox \bar{R}^{*}_{u^{z}} \xbox \id)\iota_{\bar{x} \boxtimes y \boxtimes z,\bar{z} \boxtimes y' \boxtimes x}(\varphi(c_{x}\theta^{*}_{x} \xbox \omega^{*}_{y} \xbox \theta^{*}_{z})\mathrm{a}^{k_{x}}_{k_{y}}\varphi(\omega_{y}) \otimes \varphi(\bar{\theta}^{*}_{z}c_{k_{x}} \xbox \omega^{*}_{y'} \xbox \omega^{*}_{x})\mathrm{a}^{k_{x}}_{k_{y'}}\varphi(\omega_{y'})) \\
&= \varphi(\id \xbox \bar{R}^{*}_{u^{x}} \xbox \id)\iota_{\bar{x} \boxtimes y \boxtimes x,\bar{x} \boxtimes y' \boxtimes x}(\mathcal{A}^{x}_{y} \otimes \mathcal{A}^{x}_{y'}). \hskip0.4\textwidth \text{(by \eqref{p:1_2})}  
\end{align*}
Now consider direct sums of irreducible representations $w = \oplus_{i} w_{i}$, $v = \oplus_{j} v_{j}$, and $v' = \oplus_{k} v'_{k}$ with corresponding intertwiners $\theta_{w_{i}}\in\mor_{\G}(w_{i},w$), $\theta_{v_{j}}\in\mor_{\G}(v_{j},v)$ and $\theta_{v'_{k}}\in\mor_{\G}(v'_{k},v')$. Then together with \eqref{eq:dec}, we may pass the above equality to direct sums and obtain
\[\mathcal{A}^{w}_{v \boxtimes v'}\iota_{v,v'}=
\sum_{j,k,i}\varphi(\bar{\theta}_{w_{i}} \xbox \theta_{v_{j}} \xbox \theta_{v'_{k}} \xbox \theta_{w_{i}})\varphi(\id \xbox \bar{R}^{*}_{w_{i}} \xbox \id)\iota_{\bar{w}_{i}\boxtimes v_{j}\boxtimes w_{i},\bar{w}_{i}\boxtimes v'_{k}\boxtimes w_{i}}(\mathcal{A}^{w_{i}}_{v_{j}}\varphi(\theta^{*}_{v_{j}} ) \otimes \mathcal{A}^{w_{i}}_{v'_{k}} \varphi(\theta^{*}_{v'_{k}})).
\]
Note that $
\bar{R}_{w} = \sum_{i}(\theta_{w_{i}} \xbox \bar{\theta}_{w_{i}})\bar{R}_{w_{i}},
$ and hence $
\bar{R}^{*}_{w}(\theta_{w_{i}} \otimes \bar{\theta}_{w_{i'}}) = \delta_{i,i'}\bar{R}^{*}_{w_{i}}$
for all $i,i'$. Thus we have
\begin{align*}
& \mathcal{A}^{w}_{v \boxtimes v'}\iota_{v,v'} = \varphi(\id \xbox \bar{R}^{*}_{w} \xbox \id) \,\cdot \\
&\qquad \sum_{j,k,i,i'}\varphi(\bar{\theta}_{w_{i}} \xbox \theta_{v_{j}} \xbox \theta_{w_{i}} \xbox \bar{\theta}_{w_{i'}} \xbox \theta_{v'_{k}} \xbox \theta_{w_{i'}})\iota_{\bar{w}_{i}\boxtimes v_{j}\boxtimes w_{i},\bar{w}_{i'}\boxtimes v'_{k}\boxtimes w_{i'}}(\mathcal{A}^{w_{i}}_{v_{j}}\varphi(\theta^{*}_{v_{j}} ) \otimes \mathcal{A}^{w_{i}}_{v'_{k}} \varphi(\theta^{*}_{v'_{k}})) \\
& = \varphi(\id \xbox \bar{R}^{*}_{w} \xbox \id)\iota_{\bar{w}\boxtimes v\boxtimes w,\bar{w}\boxtimes v'\boxtimes w}\,\cdot\\
&\qquad\sum_{j,k,i,i'}(\varphi(\bar{\theta}_{w_{i}} \xbox \theta_{v_{j}} \xbox \theta_{w_{i}}) \otimes \varphi(\bar{\theta}_{w_{i'}} \xbox \theta_{v'_{k}} \xbox \theta_{w_{i'}}))(\mathcal{A}^{w_{i}}_{v_{j}}\varphi(\theta^{*}_{v_{j}} ) \otimes \mathcal{A}^{w_{i}}_{v'_{k}} \varphi(\theta^{*}_{v'_{k}})) \\
& = \varphi(\id \xbox \bar{R}^{*}_{w} \xbox \id)\iota_{\bar{w}\boxtimes v\boxtimes w,\bar{w}\boxtimes v'\boxtimes w}(\mathcal{A}^{w}_{v} \otimes \mathcal{A}^{w}_{v'}).
\end{align*}

(A5) Let $x,y \in \irr(\G)$. Consider the intertwiner $\bar{\omega}^{*}_{x} \xbox \omega^{*}_{y} \xbox \omega^{*}_{x}\in\mor_{\G}( \overline{u^{\boxtimes k_{x}}} \boxtimes u^{\boxtimes k_{y}} \boxtimes u^{\boxtimes k_{x}},\bar{x} \boxtimes y \boxtimes x)$. We see that
\begin{align*}
&\ \quad \varphi(\sigma_{\bar{x},y,x})J_{\bar{x} \boxtimes y \boxtimes x}\mathcal{A}^{x}_{y} \\
& = \varphi(\bar{\omega}^{*}_{x} \xbox \bar{\omega}^{*}_{y} \xbox \omega^{*}_{x})\varphi(\sigma_{k_{x},k_{y},k_{x}})J_{\overline{u^{\boxtimes k_{x}}} \boxtimes u^{\boxtimes k_{y}} \boxtimes u^{\boxtimes k_{x}}}\varphi(c_{k_{x}} \xbox \id)\mathrm{a}^{k_{x}}_{k_{y}}\varphi(\omega_{y})
& \text{(by Lemma \ref{lem:conjugate} \ref{lem:J_naturality})} \\
& = \varphi(\bar{\omega}^{*}_{x} \xbox \bar{\omega}^{*}_{y} \xbox \omega^{*}_{x})\varphi(c_{k_{x}} \xbox c_{k_{y}} \xbox \id)J_{k_{y}+2k_{x}}\mathrm{a}^{k_{x}}_{k_{y}}\varphi(\omega_{y}) 
& \text{(by \ref{cond:c3})}\\
& = \varphi(\bar{\omega}^{*}_{x}c_{k_{x}} \xbox \bar{\omega}^{*}_{y}c_{k_{y}} \xbox \omega^{*}_{x})\mathrm{a}^{k_{x}}_{k_{y}}J_{k_{y}}\varphi(\omega_{y}) 
& \text{(by \ref{cond:a5})}\\
& = \varphi(\bar{\omega}^{*}_{x}c_{k_{x}} \xbox \bar{\omega}^{*}_{y}c_{k_{y}} \xbox \omega^{*}_{x})\mathrm{a}^{k_{x}}_{k_{y}}\varphi(c^{*}_{k_{y}}\bar{\omega}_{y})J_{y} 
& \text{(by \ref{cond:c1})}\\
& = \mathcal{A}^{x}_{\bar{y}}J_{y}.
\end{align*}
Now consider $v,w \in \rep(\G)$ and the decompositions $w = \oplus_{i} w_{i}$ and $v = \oplus_{j} v_{j}$ into direct sums of irreducible representations with intertwiners $\omega_{w_{i}}\in\mor_{\G}(w_{i},w)$ and $\theta_{v_{j}}\in\mor_{\G}(v_{j},v)$. Then the above equality passes to direct sums and implies that 
\begin{align*}
\varphi(\sigma_{\bar{w},v,w})J_{\bar{w} \boxtimes v \boxtimes w}\mathcal{A}^{w}_{v} 
& = \sum_{i,j}\varphi(\sigma_{\bar{w},v,w})\varphi(\overline{\bar{\theta}_{w_{i}} \xbox \theta_{v_{j}} \xbox \theta_{w_{i}}})J_{\bar{w}_{i} \boxtimes v_{j} \boxtimes w_{i}}\mathcal{A}^{w_{i}}_{v_{j}}\varphi(\theta^{*}_{v_{j}}) 
&& \text{(by \ref{eq:inv_map})} \\
& = \sum_{i,j}\varphi(\bar{\theta}_{w_{i}} \xbox \theta_{\bar{v}_{j}} \xbox \theta_{w_{i}})\varphi(\sigma_{\bar{w}_{i},v_{j},w_{i}})J_{\bar{w}_{i} \boxtimes v_{j} \boxtimes w_{i}}\mathcal{A}^{w_{i}}_{v_{j}}\varphi(\theta^{*}_{v_{j}}) \\
& = \sum_{i,j}\varphi(\bar{\theta}_{w_{i}} \xbox \theta_{\bar{v}_{j}} \xbox \theta_{w_{i}})\mathcal{A}^{w_{i}}_{\bar{v}_{j}}J_{v_{j}}\varphi(\theta^{*}_{v_{j}}) \\
& = \mathcal{A}^{w}_{\bar{v}}J_{v}.
&& \text{(by \ref{eq:inv_map})}
\end{align*}

Now the proof of the theorem is complete.
\end{proof}


Based on the previous theorem, we may further characterize the braided commutativity of the Yetter-Drinfeld C*-algebra.

\begin{coro}\label{cor:bc}
We keep the assumptions and notations of Theorem~\ref{thm:tkergodic_yetter}, and assume furthermore that for all $k,n\in\N$, we have 
\begin{equation}\label{cond:na6}
\varphi(R^{*}_{n+1} \xbox \id_{k,n+1})\iota_{n+1,k+2(n+2)}(\id \otimes \mathrm{a}^{n+1}_{k}) = \iota_{k,n+1}\Sigma_{n+1,k}. \tag{a5}
\end{equation}
Then the Yetter-Drinfeld $\G$-C*-algebra $(B, \lhd, \alpha)$ is braided commutative.
\end{coro}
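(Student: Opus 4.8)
The plan is to derive braided commutativity from the general reconstruction result, Theorem~\ref{theo:yd_reconstruction_na}, exactly as Theorem~\ref{thm:tkergodic_yetter} was derived from its non-braided part. Recall that the proof of Theorem~\ref{thm:tkergodic_yetter} already produces, out of the family $\{\mathrm{a}_{k}\}_{k\in\N}$, a compatible collection $\{\mathcal{A}^{w}_{v}\}_{v,w\in\rep(\G)}$ satisfying axioms \ref{cond:A0}--\ref{cond:A5} of Definition~\ref{def:compatible_collection}, and that this collection is the categorical avatar of the Hopf $*$-algebra action $\lhd$. By Theorem~\ref{theo:yd_reconstruction_na}, the tuple $(B,\lhd,\alpha)$ is braided commutative as soon as $\{\mathcal{A}^{w}_{v}\}$ is a \emph{braided} compatible collection, that is, as soon as it satisfies the one further axiom of Definition~\ref{def:compatible_collection} --- the categorical counterpart of \eqref{eq:braided_commutative_condition} --- which relates the embeddings $\iota$, the maps $\mathcal{A}^{w}_{v}$, the solutions $\bar{R}_{w}$ of the conjugate equations and the tensor flip $\Sigma$ by the requirement that $\varphi(\bar{R}_{w}^{*}\otimes\id)\circ\iota\circ(\id_{\varphi(w)}\otimes\mathcal{A}^{w}_{v})$ and $\iota_{v,w}\circ\Sigma$ coincide as maps $\varphi(w)\otimes\varphi(v)\to\varphi(v\boxtimes w)$. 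The point is that the new hypothesis \eqref{cond:na6} is exactly that axiom written down on the tensor powers $u^{\boxtimes n}$ under the self-conjugacy identifications $\overline{u^{\boxtimes n}}\cong u^{\boxtimes n}$, with $\Sigma_{n+1,k}$ standing for the categorical flip; so the whole task is to bootstrap \eqref{cond:na6} from tensor powers to arbitrary representations.

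First I would verify the braiding axiom for irreducible $x,y\in\irr(\G)$, the case $x=\triv$ being immediate since $\mathcal{A}^{\triv}_{y}=\id_{\varphi(y)}$ by axiom \ref{cond:A1}. Plugging the explicit formula $\mathcal{A}^{x}_{y}=\varphi(\bar{\omega}_{x}^{*}c_{k_{x}}\otimes\omega_{y}^{*}\otimes\omega_{x}^{*})\,\mathrm{a}^{k_{x}}_{k_{y}}\,\varphi(\omega_{y})$ into the axiom and pulling the isometries $\omega_{x},\omega_{y}$ to the outside using naturality of $\iota$ and functoriality of $\varphi$ (properties \ref{cond:1}--\ref{cond:2}), one is left with an expression built from $\mathrm{a}^{k_{x}}_{k_{y}}$, the embeddings $\iota$, the conjugation maps $c_{k_{x}},\sigma_{k_{x},\cdot}$ and a term $\varphi(\bar{R}_{u^{\boxtimes k_{x}}}^{*}\otimes\id)$. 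I would then decompose $\bar{R}_{u^{\boxtimes k_{x}}}$ over the irreducible summands $z\prec u^{\boxtimes k_{x}}$ as in \eqref{eq:R decomposition} and kill every cross term $z\neq x$ by the vanishing \eqref{p:1_2} (that is, hypothesis \ref{cond:na2_new}), exactly as in the verification of \ref{cond:A4}; after rewriting the surviving $c$-maps with Lemma~\ref{lem:conjugate}\,\ref{cond:c1}--\ref{cond:c3} and using \ref{cond:a2}--\ref{cond:a4} together with \eqref{p:4}--\eqref{p:6}, the identity should collapse to precisely \eqref{cond:na6} with $n+1=k_{x}$ and $k=k_{y}$.

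Next I would pass to arbitrary $v,w\in\rep(\G)$. Writing $w=\oplus_{i}w_{i}$ and $v=\oplus_{j}v_{j}$ into irreducibles with isometric intertwiners $\theta_{w_{i}},\theta_{v_{j}}$, I would expand $\mathcal{A}^{w}_{v}$ via \eqref{eq:A0w} and use that $\iota$ and $\Sigma$ are additive with respect to $\varphi(w)=\oplus_{i}\varphi(w_{i})$ and $\varphi(v)=\oplus_{j}\varphi(v_{j})$, together with the orthogonality relation $\bar{R}_{w}^{*}(\theta_{w_{i}}\otimes\bar{\theta}_{w_{i'}})=\delta_{i,i'}\bar{R}_{w_{i}}^{*}$, to reduce the braiding axiom for $(v,w)$ to the irreducible case --- word for word as \ref{cond:A4} and \ref{cond:A5} were passed to direct sums in the proof of Theorem~\ref{thm:tkergodic_yetter}. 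Once $\{\mathcal{A}^{w}_{v}\}$ is known to be a braided compatible collection, Theorem~\ref{theo:yd_reconstruction_na} immediately gives that $(B,\lhd,\alpha)$, with the Hopf $*$-algebra action $\lhd$ already exhibited in Theorem~\ref{thm:tkergodic_yetter}, is braided commutative.

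I expect the only real obstacle to be the bookkeeping in the irreducible step. Hypothesis \eqref{cond:na6} lives entirely inside the self-conjugate tensor powers, whereas the braiding axiom of Definition~\ref{def:compatible_collection} involves genuine conjugates $\bar{w}$ and the categorical flip; matching the two requires inserting the linear isomorphisms $c_{n}$ and $\sigma_{k,n}$ on the correct legs and checking leg by leg that the combinatorial $\Sigma_{n+1,k}$ is transported to $\Sigma$. This is the same kind of delicate-but-mechanical identification that appears in the proofs of \ref{cond:A3}--\ref{cond:A5}, and once the cross terms have been discarded via \eqref{p:1_2} no new idea should be needed beyond what is already used there.
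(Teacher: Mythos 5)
Your proposal follows essentially the same route as the paper: reduce the corollary to verifying the braiding axiom \ref{cond:A6} for the compatible collection already built in Theorem~\ref{thm:tkergodic_yetter}, check it first for irreducibles by inserting the explicit formula for $\mathcal{A}^{x}_{y}$, expanding $R^{*}_{k_{x}}$ via \eqref{eq:R decomposition} so that hypothesis \eqref{cond:na6} applies with $n+1=k_{x}$, and then pass to direct sums using $\bar{R}^{*}_{w}(\theta_{w_{i}}\otimes\bar{\theta}_{w_{i'}})=\delta_{i,i'}\bar{R}^{*}_{w_{i}}$. The only cosmetic difference is that you invoke \eqref{p:1_2} to discard the cross terms $z\neq x$, whereas the paper gets the same vanishing more directly from $\theta_{z}^{*}\omega_{x}=\delta_{z,x}\,\mathrm{id}$; both mechanisms are valid here, so the argument goes through as you describe.
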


\begin{proof}
By Theorem~\ref{thm:tkergodic_yetter}, we know that there is a compatible collection $\{\mathcal{A}^{w}_{v}\}_{w,v \in \rep(\G)}$ for the functor $\varphi$. It is enough to prove that \eqref{cond:na6} implies the braidedness \ref{cond:A6} of $\{\mathcal{A}^{w}_{v}\}_{w,v \in \rep(\G)}$ in  Definition~\ref{def:compatible_collection}. Consider first $x,y \in \irr(\G)$ and decompositions $u^{\boxtimes k_{x}} = \oplus z$ into direct sums of irreducible representations such that some $z$ in the decomposition equals $x$. We denote by $\theta_{z}\in\mor_{\G}(u^{z},u^{\boxtimes k_{x}})$ the associated isometric intertwiners. Then $\theta_z^* w_x = \delta_{z,x} \mathrm{id}$ and we may formally write
\begin{align*}
&\ \quad \varphi(\bar{R}^{*}_{u^{x}} \xbox \id_{y,x})\iota_{x,\bar{x} \boxtimes y \boxtimes x}(\id \otimes \mathcal{A}^{x}_{y}) \\
& = \varphi(\bar{R}^{*}_{u^{x}} \xbox \id_{y,x})\iota_{x,\bar{x} \boxtimes y \boxtimes x}(\id \otimes \varphi(\bar{\omega}^{*}_{x}c_{k_{x}} \xbox \omega^{*}_{y} \xbox \omega^{*}_{x})\mathrm{a}^{k_{x}}_{k_{y}}\varphi(\omega_{y})) \\
& = \sum_{z}\varphi(\bar{R}^{*}_{u^{z}} \xbox \id_{y,x})\varphi(\theta^{*}_{z} \xbox \bar{\theta}^{*}_{z}c_{k_{x}} \xbox \omega^{*}_{y} \xbox \omega^{*}_{x})\iota_{k_{x},k_{y}+2k_{x}}(\varphi(\omega_{x}) \otimes \mathrm{a}^{k_{x}}_{k_{y}}\varphi(\omega_{y})) 
\end{align*}
As explained in \eqref{eq:R decomposition}, we have $
R^{*}_{k_{x}} = \sum_{z } \bar{R}^{*}_{u^{z}}(\theta^{*}_{z} \xbox \bar{\theta}^{*}_{z}c_{k_{x}})
$. Together with \eqref{cond:na6} we may rewrite the above equality to obtain
\begin{align*}
\varphi(\bar{R}^{*}_{u^{x}} \xbox \id_{y,x})\iota_{x,\bar{x} \boxtimes y \boxtimes x}(\id \otimes \mathcal{A}^{x}_{y}) & = \varphi(\omega^{*}_{y} \xbox \omega^{*}_{x})\varphi(R^{*}_{k_{x}} \xbox \id_{k_{y},k_{x}})\iota_{k_{x},k_{y}+2k_{x}}(\id \otimes \mathrm{a}^{k_{x}}_{k_{y}})(\varphi(\omega_{x}) \otimes \varphi(\omega_{y})) \\
&= \varphi(\omega^{*}_{y} \xbox \omega^{*}_{x})i_{k_{y},k_{x}}\Sigma(\varphi(\omega_{x}) \otimes \varphi(\omega_{y})) \\
&= \iota_{y,x}\Sigma.
\end{align*}
Now consider $v,w\in \rep(\G)$ and the decompositions $w = \oplus_{i} w_{i}$ and $v = \oplus_{j} v_{j}$ into direct sums of irreducible representations with intertwiners $\omega_{w_{i}}\in\mor_{\G}(w_{i},w)$ and $\theta_{v_{j}}\in\mor_{\G}(v_{j},v)$. We may write $\mathrm{id}=\sum_{i'} \varphi(\theta_{w_{i'}}\theta^{*}_{w_{i'}})$ and further
\begin{align*}
\quad \iota_{w,\bar{w} \boxtimes v \boxtimes w}(\id \otimes \mathcal{A}^{w}_{v}) &= \iota_{w,\bar{w} \boxtimes v \boxtimes w}\Bigg(\id \otimes \Bigg(\sum_{i,j}\varphi(\bar{\theta}_{w_{i}} \xbox \theta_{v_{j}} \xbox \theta_{w_{i}})\mathcal{A}^{w_{i}}_{v_{j}}\varphi(\theta^{*}_{v_{j}})\Bigg)\Bigg) \\
&= \sum_{i,j,i'}\varphi(\theta_{w_{i'}} \xbox \bar{\theta}_{w_{i}} \xbox \theta_{v_{j}} \xbox \theta_{w_{i}})\iota_{w_{i'},\bar{w}_{i} \boxtimes v_{j} \boxtimes w_{i}}(\id \otimes \mathcal{A}^{w_{i}}_{v_{j}})(\varphi(\theta^{*}_{w_{i'}}) \otimes \varphi(\theta^{*}_{v_{j}}))
\end{align*}
Note also that
$
\bar{R}_{w} = \sum_{i}(\theta_{w_{i}} \xbox \bar{\theta}_{w_{i}})\bar{R}_{w_{i}},
$ and hence
$
\bar{R}^{*}_{w} = \sum_{i}\bar{R}^{*}_{w_{i}}(\theta^{*}_{w_{i}} \xbox \bar{\theta}^{*}_{w_{i}}) $.
So together with the previous equality for irreducible representations $x,y\in\irr(\G)$, we obtain
\begin{align*}
\quad \varphi(\bar{R}^{*}_{w} \xbox \id)\iota_{w,\bar{w} \boxtimes v \boxtimes w}(\id \otimes \mathcal{A}^{w}_{v}) &= \sum_{i,j}\varphi(\bar{R}^{*}_{w_{i}} \xbox \theta_{v_{j}} \xbox \theta_{w_{i}})\iota_{w_{i},\bar{w}_{i} \boxtimes v_{j} \boxtimes w_{i}}(\id \otimes \mathcal{A}^{w_{i}}_{v_{j}})(\varphi(\theta^{*}_{w_{i}}) \otimes \varphi(\theta^{*}_{v_{j}})) \\
& = \sum_{i,j}\varphi(\theta_{v_{j}} \xbox \theta_{w_{i}})\iota_{v_{j},w_{i}}\Sigma(\varphi(\theta^{*}_{w_{i}}) \otimes \varphi(\theta^{*}_{v_{j}})) \\
& = \iota_{v,w}\Sigma. \qedhere
\end{align*}
\end{proof}

The reader might keep in mind the following basic example illustrating Theorem \ref{thm:tkergodic_yetter} and Corollary \ref{cor:bc}.
\begin{exe}\label{ex:c(G)}
Set $K_{n} = (\C^{N})^{\otimes n}$ for all $n\in \N$ and let $\varphi : \mor_{\G}(k, l)\to \B(K_{k}, K_{l})$ be the identity map. Consider the collection of maps
\begin{equation*}
\mathrm{a}_{k}:K_{k} \mapsto K_{k+2},\quad  e_{\mathbf{i}} \mapsto \sum_{j =1}^N e_{j} \otimes e_{\mathbf{i}} \otimes e_{j},\quad k \in \N.
\end{equation*}
It is straightforward to check that those maps satisfy the conditions \ref{cond:na2_new} to \eqref{cond:na6}, yielding a braided commutative Yetter-Drinfeld $\G$-C*-algebra structure on $B = C(\G)$ with action $\alpha = \com_{\G}$. Moreover, the Hopf $*$-algebra action $\lhd: \pol(\G) \otimes \pol(\G) \to \pol(\G)$ is the adjoint action $a \lhd b = S(a_{(1)})ba_{(2)}$. 
Indeed,
\begin{align*}
u_{\mathbf{i}\mathbf{i}'} \lhd u_{jj'} & = (\langle \;\cdot\;(e_{j} \otimes e_{\mathbf{i}'} \otimes e_{j'}),\mathrm{a}_{k}(e_{\mathbf{i}})\rangle \otimes \id)(u^{\boxtimes (k+2)}) \\
& = \sum_{k \in \{1,\dots,N\}} (\langle \;\cdot\;(e_{j} \otimes e_{\mathbf{i}'} \otimes e_{j'}),e_{k} \otimes e_{\mathbf{i}} \otimes e_{k}\rangle \otimes \id)(u^{\boxtimes (k+2)}) \\
& = \sum_{k \in \{1,\dots,N\}} u_{kj}u_{\mathbf{i}\mathbf{i}'}u_{kj'} = S(u_{jj'_{(1)}})u_{\mathbf{i}\mathbf{i}'}u_{jj'_{(2)}},
\end{align*}
for all $\mathbf{i},\mathbf{i}' \in \{1,\dots,N\}^{k}$, $j,j' \in \{1,\dots,N\}$.
\end{exe}

\section{Applications to easy quantum groups}\label{sec:examples}

In this section, we will show how Theorem \ref{thm:tkergodic} can be applied in the setting of easy quantum groups to produce new ergodic actions out of combinatorial data. The idea is to build a Hilbert space from partition vectors or linear maps and then to use the vertical concatenation of partitions to produce an action. We will describe in the sequel two ways of doing this. In both cases, we are not able to describe the corresponding action in full generality, but some particular cases are worked out. Besides, many of these constructions can be understood through the induction procedure, which will be explained in Section \ref{sec:induction}.

\subsection{Projective partitions}\label{subsec:projectivepartitions}

Let us fix a category of partitions $\CC$ and an integer $N$ once and for all and we write $H = \C^{N}$. Our first construction relies on specific partitions which we now define.

\begin{defi}
A partition $p\in \CC$ is said to be \emph{projective} if $p^{2} = p = p^{*}$. The set of projective partitions in $\CC(k,k)$ (projectivity forces the number of upper and lower points to coincide) will be denoted by $\Proj_{\CC}(k)$.
\end{defi}

If $p$ is projective, then the corresponding linear map $T_{p}$ is \emph{not} necessarily a projection, because $T_{p}\circ T_{p} = N^{\rl(p, p)}T_{p^{2}}$. However, it can be normalized to become a projection. The correct normalization is the following one, as explained in \cite[Def 2.15 and Prop 2.18]{FW16}:
\begin{equation*}
Q_{p} = N^{-\beta(p)/2}T_{p},
\end{equation*}
where $\beta(p)$ is the number of \emph{non-through-blocks} of $p$. Via that normalization, projective partitions can be though of as projections, which leads to the definition of a specific order relation: we say that $q$ is \emph{dominated by $p$}, written $q\preceq p$, if $pq = q$ (in which case $qp = q$ also by applying the reflection operation).

\subsubsection{The construction}

We will now construct a family of vector spaces out of projective partitions in the following way. Given a set $\PP_{n}\subset P(n, n)$ of projective partitions and $p\in \PP_{n}$, we set
\begin{equation*}
\eta_{p} = T_{p}(e_{1}^{\otimes n})
\end{equation*}
and 
\begin{equation*}
K_{n}^{\PP} = \spa\{\eta_{p} \mid p\in \PP_{n}\}\subset H^{\otimes n}.
\end{equation*}
The Hilbert space structure on $K_{n}^{\PP}$ can be read directly on partitions through the following formula:
\begin{equation*}
\langle \eta_{p}, \eta_{q}\rangle = N^{\rl(q, p)}.
\end{equation*}
If we have a set $\PP = \cup_{n}\PP_{n}$, then the formula $\eta_{p}\otimes\eta_{q} = \eta_{p\odot q}$ provides isometries
\begin{equation*}
\iota_{k, l} : K_{k}^{\PP}\otimes K_{l}^{\PP}\to K_{k+l}^{\PP}
\end{equation*}
for all $k$ and $l$ provided $\PP_{k}\odot\PP_{l}\subset \PP_{k+l}$. To produce an ergodic action from this using our machinery, we will need the following result from \cite[Lem 2.11]{FW16}:

\begin{propo}
A partition $p$ is projective if and only if it is of the form $r^{*}r$ for some partition $r$.
\end{propo}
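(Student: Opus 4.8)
The statement has two directions, and one of them is immediate: if $p\in\CC$ is projective then $p=p^{2}=pp$, and since $p^{*}=p$ this reads $p=p^{*}p$, so $p=r^{*}r$ with $r=p$. The whole content is therefore in the converse, and it amounts to showing that for \emph{any} partition $r$ the partition $p=r^{*}r$ satisfies both $p^{*}=p$ and $p^{2}=p$. The first of these is purely formal: reflecting a vertical concatenation $qp$ in the horizontal axis produces $p^{*}q^{*}$, hence $(r^{*}r)^{*}=r^{*}(r^{*})^{*}=r^{*}r$ because reflection is an involution.

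The plan for $p^{2}=p$ is to reduce it to the identity
\[
r\,r^{*}\,r=r
\]
valid for every partition $r$ as an identity of partitions, i.e.\ after deleting all loops created by the concatenations. Granting this, one gets $p^{2}=r^{*}rr^{*}r=r^{*}(rr^{*}r)=r^{*}r=p$. To establish $rr^{*}r=r$, write $r\in P(a,b)$ and draw the three layer diagram obtained by stacking $r$ on top, then $r^{*}$, then $r$; call $M_{1}$ (with $b$ points) the row glued between the first two layers and $M_{2}$ (with $a$ points) the row glued between the last two layers, and let $U$, $D$ denote the outer upper and lower rows. Since $r^{*}$ is the horizontal reflection of $r$, the middle layer joins $M_{2}$ to $M_{1}$ exactly as $r$ joins its upper row to its lower row (and similarly for the $M_{1}$--$M_{1}$ and $M_{2}$--$M_{2}$ connections inside that layer). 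The argument then has three steps: (i) every block of $r$ survives, i.e.\ if two points of $U\cup D$ lie in the same block of $r$ then the corresponding points of $rr^{*}r$ are connected, which one sees by writing down an explicit path running down through layer $1$, across through layer $2$, and down through layer $3$ using the three copies of that block; (ii) no new identification among points of $U\cup D$ appears, which follows by using the mirror symmetry of layer $2$ to shorten any path in the diagram joining two outer points into one contained in a single copy of $r$; (iii) the points of $M_{1}$ and $M_{2}$ not reached from $U\cup D$ are precisely the loops, which are deleted. Together (i)--(iii) show that $rr^{*}r$ and $r$ have the same block structure.

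The genuinely delicate point is step (ii): it is the bookkeeping of which blocks merge through the iterated vertical concatenation, and which interior points of $M_{1},M_{2}$ close up into loops, that must be carried out carefully; steps (i) and (iii) and the computation $p^{*}=p$ are routine. As an alternative to the diagram chase one could pass to the linear maps $T_{p}$, which are linearly independent for $N$ large, and verify the numerical identity $T_{r}T_{r}^{*}T_{r}=N^{c}T_{r}$ with $c=\rl(r^{*},r)+\rl(r,r^{*}r)$, deducing $T_{r r^{*} r}=T_{r}$ and hence $rr^{*}r=r$; but this merely repackages the same combinatorial fact, so the direct argument above is the natural route.
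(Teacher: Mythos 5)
First, a remark on the comparison: the paper does not prove this proposition at all — it is quoted verbatim from \cite[Lem 2.11]{FW16} — so your argument can only be judged on its own terms. On those terms, your overall strategy is sound and the statement is true: the easy direction (a projective $p$ equals $p^{*}p$ with $r=p$), the identity $(r^{*}r)^{*}=r^{*}r$, and the reduction of $(r^{*}r)^{2}=r^{*}r$ to $rr^{*}r=r$ via associativity of vertical concatenation are all correct. The problem is that the heart of the matter — your step (ii), that no new identifications among outer points arise in the three-layer diagram — is exactly the content of the lemma, and you do not prove it: you assert that a path-shortening argument "must be carried out carefully" and leave it there. As written, the proof is therefore incomplete at its only non-routine point.

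The gap is genuinely fillable, and more cleanly than by chasing paths. For a block $b$ of $r\in P(k,l)$ write $b_{U}$ and $b_{L}$ for its traces on the upper and lower rows. In the two-layer diagram computing $r^{*}r$, a block of the top copy of $r$ and a block of the bottom copy of $r^{*}$ (which is the mirror of a block of $r$) get merged if and only if they share a point of the glued middle row; since distinct blocks of $r$ have disjoint lower traces, this happens if and only if they come from the \emph{same} block $b$ of $r$ (and $b_{L}\neq\emptyset$). Hence $r^{*}r$ is described explicitly: each block $b$ with $b_{L}\neq\emptyset$ contributes the single block $b_{U}\cup\overline{b_{U}}$; each block with $b_{L}=\emptyset$ and $b_{U}\neq\emptyset$ contributes two separate blocks $b_{U}$ (upper) and $\overline{b_{U}}$ (lower); blocks with $b_{U}=\emptyset$ become loops and disappear. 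From this description, $p=r^{*}r$ is visibly invariant under horizontal reflection, and composing $p$ with itself (or with $r$) again merges blocks only along equal middle traces, giving $p^{2}=p$ (and $rr^{*}r=r$) in one line. Replacing your hand-waved step (ii) by this disjointness observation turns your outline into a complete proof.
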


Let us consider now the easy quantum group $\G_{N}(\CC)$. The spaces $\mor_{\G_{N}(\CC)}(k, l)$ are linearly spanned by the partition operators $T_{r}$, hence to define a functor we only have to define a linear map for each partition $r\in \CC(k, l)$. We will do this thanks to the following formula: for $p\in \PP_{n}$,
\begin{equation*}
\varphi(r)(\eta_{p}) = N^{\rl(r, p)}\eta_{rpr^{*}}.
\end{equation*}
That this map is well-defined is not clear if the vectors $\eta_{p}$ are not linearly independent. To prove it, we will need yet another result on partitions.

\begin{lem}\label{lem:removedloopsprojective}
Let $p\in P(k, k)$ be a projective partition. Then, for any $q\in P(l, l)$ and $r\in P(k, l)$,
\begin{equation*}
\rl(q, rp) = \rl(q, rpr^{*}).
\end{equation*}
As a consequence, we have the equality
\begin{equation*}
\eta_{rp} = \eta_{rpr^{*}}.
\end{equation*}
\end{lem}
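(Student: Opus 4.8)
The claim is that for a projective partition $p \in P(k,k)$, the loop count $\rl(q, rp)$ equals $\rl(q, rpr^*)$ for all $q \in P(l,l)$ and $r \in P(k,l)$. The plan is to reduce everything to a statement about the partition operators $T_p$, using the composition rule $T_a \circ T_b = N^{\rl(a,b)} T_{ab}$ from the preliminaries. First I would recall that, since $p$ is projective, $p = p^* p$ by the proposition just quoted (equivalently $p = p^2 = p^*$), and the normalized operator $Q_p = N^{-\beta(p)/2} T_p$ is an honest orthogonal projection on $H^{\otimes k}$. The key identity I want to exploit is that $T_r T_p$ and $T_r T_p T_{r^*}$ are, up to known powers of $N$, the operators $T_{rp}$ and $T_{rpr^*}$; taking suitable traces or evaluating on the vector $e_1^{\otimes k}$ will convert an equality of operators/scalars into the desired equality of exponents.

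**Main computation.** Concretely, I would compute the scalar $\langle T_{rp}(e_1^{\otimes k}), T_{rp}(e_1^{\otimes k})\rangle$ in two ways. On one hand, using $T_{rp} = N^{-\rl(r,p)} T_r T_p$ and $T_p^* = T_{p^*} = T_p$, $T_p^2 = N^{\rl(p,p)} T_p$, this equals $N^{-2\rl(r,p) + \rl(p,p)}\langle T_r T_p(e_1^{\otimes k}), T_r T_p(e_1^{\otimes k})\rangle$, and the inner product $\langle T_r \eta, T_r\eta\rangle = \langle T_{r^*} T_r \eta, \eta\rangle = N^{\rl(r^*,r)}\langle T_{r^*r}\eta,\eta\rangle$. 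On the other hand, $\langle T_{rp}(e_1^{\otimes k}), T_{rp}(e_1^{\otimes k})\rangle = \langle T_{(rp)^*}T_{rp}(e_1^{\otimes k}), e_1^{\otimes k}\rangle = N^{\rl((rp)^*, rp)}\langle T_{(rp)^* rp}(e_1^{\otimes k}), e_1^{\otimes k}\rangle = N^{\rl(pr^*, rp)}\langle T_{pr^*rp}(e_1^{\otimes k}), e_1^{\otimes k}\rangle$. The point is that the underlying set-partition $(rp)^* rp = pr^*rp$ does not see the loops, only the exponents do, and one can similarly expand $\langle T_{rpr^*}(e_1^{\otimes l}), T_{rpr^*}(e_1^{\otimes l})\rangle$ and $\langle T_{rpr^*}(e_1^{\otimes l}), T_q(e_1^{\otimes l})\rangle$. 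Comparing the exponents of $N$ obtained from the two evaluations, and using that $rpr^* \cdot rpr^* = N^{2\rl(r^*,r)+\rl(p,p)} \cdot (rpr^*)$ as set-partitions combined with $p$ projective, the desired identity $\rl(q,rp) = \rl(q, rpr^*)$ should drop out after bookkeeping. Alternatively, and perhaps more cleanly, I would argue directly on diagrams: a loop created when stacking $q$ on top of $rp$ lives entirely among the points of $rp$ that are connected neither to the top of $rp$ nor to the bottom of $q$; I would show that passing from $rp$ to $rpr^*$ (which amends $rp$ by reflecting $r$ below) does not change the blocks of $rp$ restricted to its top row, since $p = p^*p$ forces every lower point of $rp$ to be "shadowed" by the structure of $r^*$ in a loop-preserving way.

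**Where the difficulty lies.** The main obstacle is purely combinatorial: keeping track of loops when forming $rpr^*$ versus $rp$. Writing $p = s^*s$ and unravelling $rpr^* = (rs^*)(sr^*) = (sr^*)^*(sr^*)$, one sees that $rpr^*$ is itself projective of the form $t^*t$ with $t = sr^*$; the subtlety is relating the loops of $q$ stacked on $t^*t$ to the loops of $q$ stacked on $rp = rs^*s$. I expect the cleanest route is to observe that $\rl(q, rpr^*) = \rl(q, rp \cdot r^*)$ and use the general fact $\rl(q, ab) = \rl(q, a) + \rl(qa, b)$ (loop count is additive under composition) — but here $r^*$ is stacked \emph{below} $p$, so we instead want $\rl(q, (rp)r^*)$; reflecting, $\rl(q, rpr^*) = \rl(r p^* r^*, q^*)$ by the symmetry $\rl(a,b) = \rl(b^*, a^*)$, and since $p = p^*$ this is $\rl(rpr^*, q^*)$... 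At this point one reduces to showing no \emph{new} loops arise from the extra $r^*$ layer, which holds because all points in the bottom row of $rp$ that could form a loop with $r^*$ are, by projectivity of $p$, already connected through $p$ to points handled by $r$ — so the relevant loop-forming set $L$ for $(q, rpr^*)$ is in bijection with that for $(q, rp)$, and the induced partitions on $L$ agree. I would present this bijection carefully as the heart of the proof, with the operator-theoretic computation above as a cross-check.
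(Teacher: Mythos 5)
Your final paragraph correctly identifies what has to be shown --- that the loop-forming set $L$ and the two partitions it carries are the same for $(q,rp)$ and for $(q,rpr^{*})$ --- and this is indeed the strategy of the paper's proof. But the proposal never actually establishes it, and the mechanisms you offer would not get you there. First, the operator computation is circular: the only way the rule $T_{a}T_{b}=N^{\rl(a,b)}T_{ab}$ lets you ``read off'' a loop count is through inner products of the form $\langle T_{q^{*}}T_{s}(e_{1}^{\otimes k}),e_{1}^{\otimes l}\rangle=N^{\rl(q^{*},s)}$, so comparing the two sides for $s=rp$ and $s=rpr^{*}$ is literally a restatement of the lemma (the paper records this equivalence as a remark right after the proof), not a proof of it. Second, the auxiliary identity $\rl(q,ab)=\rl(q,a)+\rl(qa,b)$ is false: applying $T_{q}$ to $T_{a}T_{b}=N^{\rl(a,b)}T_{ab}$ yields the cocycle relation $\rl(q,ab)+\rl(a,b)=\rl(q,a)+\rl(qa,b)$, and the missing term $\rl(a,b)$ is exactly of the size of the effect you are trying to control. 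Third, the diagrammatic discussion has the geometry backwards: with the paper's convention ($qp$ means $p$ on top and $q$ below, matching $T_{q}\circ T_{p}$), passing from $rp$ to $rpr^{*}$ glues $(rp)^{*}$ \emph{on top} of $rp$, far away from the lower row where the loops against $q$ are formed, so there is no question of lower-row points ``forming loops with $r^{*}$''.

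The one fact that does all the work, and which your sketch circles around without stating, is that projectivity gives $rpr^{*}=(rp)(rp)^{*}$ as set partitions (since $p=p^{*}=p^{2}$). From this, if a lower point $i$ of $rp$ is connected to an upper point $j$, then by reflection the upper point $i$ of $(rp)^{*}$ is connected to its lower point $j$, so in the composite the point $i$ of the bottom row is joined to the point $i$ of the new top row: every through lower point of $rp$ stays through in $rpr^{*}$. Conversely, a non-through lower block of $rp$ never meets the glued interface, so it survives unchanged, and still non-through, in $rpr^{*}$; and any through lower point of $rpr^{*}$ must reach the interface inside $rp$, hence was already through for $rp$. Therefore the lower rows of $rp$ and $rpr^{*}$ have identical through/non-through structure and identical induced partitions on the non-through points, and $\rl(q,\cdot)$, which only depends on this data together with the upper row of $q$, is unchanged. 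This short reflection argument is the paper's entire proof; your proposal needs it spelled out to close the gap.
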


\begin{proof}
We will show that the lower non-through-blocks of $rp$ are the same as those of $rpr^{*}$, from which the result follows. Observe that by projectivity of $p$, $rpr^{*} = (rp)(rp)^{*}$. Thus, if a point $i$ in the lower row of $rp$ is connected to an upper point $j$, then the lower point corresponding to $j$ is connected in $(rp)^{*}$ to the upper point corresponding to $i$. Thus, any point of a through-block of $rp$ belongs to a through-block of $rpr^{*}$. The converse being clear, the proof is complete.

Note that this result can also be written as the equality
\begin{equation*}
\langle \eta_{rp}, \eta_{q}\rangle = \langle \eta_{rpr^{*}}, \eta_{q}\rangle.
\end{equation*}
From this and the fact that all the inner products involved are real-valued, hence symmetric, we get
\begin{equation*}
\langle \eta_{rpr^{*}}, \eta_{rpr^{*}}\rangle = \langle \eta_{rpr^{*}}, \eta_{rp}\rangle = \langle \eta_{rp}, \eta_{rp}\rangle.
\end{equation*}
Developping $\|\eta_{rp} - \eta_{rpr^{*}}\|^{2}$ and using the previous equalities then yields $\eta_{rp} = \eta_{rpr^{*}}$.
\end{proof}

We can now turn to the definition of $\varphi(r)$.

\begin{propo}\label{prop:welldefined}
The maps $\varphi(r)$ are well-defined.
\end{propo}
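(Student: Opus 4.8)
The map $\varphi(r)$ is defined on the spanning set $\{\eta_p \mid p\in\PP_n\}$ of $K_n^{\PP}$, so the only thing to check is that it is compatible with linear relations among the $\eta_p$'s; equivalently, that whenever $\sum_i \lambda_i \eta_{p_i} = 0$ in $H^{\otimes n}$ (with $p_i\in\PP_n$), we also have $\sum_i \lambda_i N^{\rl(r,p_i)}\eta_{r p_i r^*} = 0$ in $H^{\otimes l}$. Since a vector in a Hilbert space is zero if and only if it is orthogonal to a spanning family, and $\{\eta_q \mid q\in\PP_l\}$ spans $K_l^{\PP}$, it suffices to show that for every $q\in\PP_l$,
\[
\sum_i \lambda_i N^{\rl(r,p_i)}\langle \eta_{r p_i r^*}, \eta_q\rangle = 0.
\]

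The key computation is to rewrite the inner product $N^{\rl(r,p)}\langle \eta_{rpr^*},\eta_q\rangle$ purely in terms of the $\eta_p$'s so that the vanishing follows from $\sum_i\lambda_i\eta_{p_i}=0$. First, by Lemma~\ref{lem:removedloopsprojective} (in the form noted just after it), $\langle \eta_{rpr^*},\eta_q\rangle = \langle \eta_{rp},\eta_q\rangle$ for any $q$. Next I would use the composition rule $T_r\circ T_p = N^{\rl(r,p)}T_{rp}$ from \cite{banica2009liberation} together with the definitions $\eta_p = T_p(e_1^{\otimes n})$ and $\langle\eta_{rp},\eta_q\rangle = N^{\rl(q,rp)}$ — or more directly the adjointness $T_r^* = T_{r^*}$ — to obtain
\[
N^{\rl(r,p)}\langle \eta_{rpr^*},\eta_q\rangle \;=\; N^{\rl(r,p)}\langle \eta_{rp},\eta_q\rangle \;=\; \langle T_r \eta_p, \eta_q\rangle \;=\; \langle \eta_p, T_{r^*}\eta_q\rangle.
\]
The point of this last expression is that it is \emph{linear} in $\eta_p$: the dependence on $p$ has been isolated into the single vector $\eta_p\in K_n^{\PP}$, while $T_{r^*}\eta_q$ is a fixed vector of $H^{\otimes n}$ not depending on $p$.

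With that identity in hand the proof closes immediately: $\sum_i\lambda_i N^{\rl(r,p_i)}\langle \eta_{r p_i r^*},\eta_q\rangle = \langle \sum_i\lambda_i\eta_{p_i}, T_{r^*}\eta_q\rangle = \langle 0, T_{r^*}\eta_q\rangle = 0$. Since this holds for all $q\in\PP_l$, we conclude $\sum_i\lambda_i N^{\rl(r,p_i)}\eta_{rp_ir^*}=0$, which is exactly well-definedness of $\varphi(r)$ as a linear map $K_n^{\PP}\to K_l^{\PP}$. The only subtle point — and the one place where the projectivity hypothesis on $p$ is genuinely used — is the step $\langle\eta_{rpr^*},\eta_q\rangle = \langle\eta_{rp},\eta_q\rangle$, which is precisely Lemma~\ref{lem:removedloopsprojective}; everything else is a routine manipulation of the Banica–Speicher calculus, so I do not expect a real obstacle. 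One should just be mildly careful that $\rl(r,p)$ and $\beta$-normalizations match up, but since the statement is phrased with the unnormalized $T_p$ and the loop-counting exponents, the bookkeeping is self-consistent.
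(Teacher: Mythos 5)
Your computation of the matrix coefficients is correct and is essentially the paper's: the identity $N^{\rl(r,p)}\langle\eta_{rpr^{*}},\eta_{q}\rangle=\langle T_{r}\eta_{p},\eta_{q}\rangle=\langle\eta_{p},T_{r^{*}}\eta_{q}\rangle$, obtained from Lemma \ref{lem:removedloopsprojective} together with the composition and adjoint rules for the maps $T_{p}$, is exactly the mechanism the published proof exploits. However, your final reduction has a genuine gap. You conclude that $\widetilde{x}=\sum_{i}\lambda_{i}N^{\rl(r,p_{i})}\eta_{rp_{i}r^{*}}$ vanishes because it is orthogonal to the spanning family $\{\eta_{q}\mid q\in\PP_{l}\}$ of $K_{l}^{\PP}$. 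That inference is valid only if $\widetilde{x}$ is already known to lie in $K_{l}^{\PP}$, i.e.\ if $rp_{i}r^{*}\in\PP_{l}$ --- and at this point in the paper that is precisely what is \emph{not} known: the text immediately following the proposition states that ``it is not clear that their range is in $K_{l}^{\PP}$'' and introduces the notion of a module of projective partitions exactly to guarantee this containment. Without that hypothesis, $\widetilde{x}$ is merely a vector of $H^{\otimes l}$ orthogonal to $K_{l}^{\PP}$, which does not force it to vanish.

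The repair is small and turns your argument into the paper's. Since Lemma \ref{lem:removedloopsprojective} gives $\langle\eta_{rpr^{*}},\eta_{q}\rangle=\langle\eta_{rp},\eta_{q}\rangle$ for \emph{every} $q\in P(l,l)$, not only for $q\in\PP_{l}$, you may test $\widetilde{x}$ against the vectors $\eta_{rp_{j}r^{*}}$ themselves (each $rp_{j}r^{*}=(rp_{j})(rp_{j})^{*}$ lies in $P(l,l)$); equivalently, compute $\left\|\widetilde{x}\right\|^{2}$ directly. Applying the lemma in both slots of the inner product yields
$\left\|\widetilde{x}\right\|^{2}=\left\|\sum_{i}\lambda_{i}N^{\rl(r,p_{i})}\eta_{rp_{i}}\right\|^{2}=\left\|T_{r}\left(\sum_{i}\lambda_{i}\eta_{p_{i}}\right)\right\|^{2}=0$,
which is exactly the argument given in the paper.
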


\begin{proof}
We have to prove that if $x = \sum \lambda_{p}\eta_{p} = 0$, then $\widetilde{x} = \sum \lambda_{p}N^{\rl(r, p)}\eta_{rpr^{*}}$ also vanishes. This follows from a computation using Lemma \ref{lem:removedloopsprojective},
\begin{align*}
\left\|\sum_{p}\lambda_{p}N^{\rl(r, p)}\eta_{rpr^{*}}\right\|^{2} & = \sum_{p, q}\lambda_{p}\overline{\lambda}_{q}N^{\rl(r, p)}N^{\rl(r, q)}\langle \eta_{rpr^{*}}, \eta_{rqr^{*}}\rangle \\
& = \sum_{p, q}\lambda_{p}\overline{\lambda}_{q}N^{\rl(r, p)}N^{\rl(r, q)}\langle \eta_{rp}, \eta_{rqr^{*}}\rangle \\
& = \sum_{p, q}\lambda_{p}\overline{\lambda}_{q}N^{\rl(r, p)}N^{\rl(r, q)}\langle \eta_{rp}, \eta_{rq}\rangle \\
& = \left\|\sum_{p}\lambda_{p}N^{\rl(r, p)}\eta_{rp}\right\|^{2} \\
& = \left\|T_{r}\left(\sum_{p}\lambda_{p}\eta_{p}\right)\right\|^{2} \\
& = 0 \qedhere
\end{align*}
\end{proof}

The only issue with the maps $\varphi(r)$ is that it is not clear that their range is in $K_{l}^{\PP}$. Ensuring this requires an additional assumption on $\PP$, and it is time to gather all the needed properties into a definition. This however requires a few notations: let us denote by $D_{m}\in NC_{2}(0, 2m)$ the pair partition with blocks $(i, 2m-i+1)$ and let $\overline{p}$ the partition obtained by rotating $p$ upside-down, that is to say
\begin{equation*}
\overline{p} = \left(D_{m}^{*}\odot \vert^{\odot m}\right)\left(\vert^{\odot m}\odot p \odot \vert^{\odot m}\right)\left(\vert^{\odot m}\odot D_{m}\right).
\end{equation*}

\begin{defi}
Let $\CC$ be a category of partitions. A \emph{module of projective partitions} over $\CC$ is a set $\PP$ of projective partitions such that, writing $\PP_{k} = \PP\cap P(k, k)$,
\begin{enumerate}
\item $\PP_{k}\odot \PP_{l}\subset \PP_{k+l}$ for all $k, l\in \N$;
\item For any $r\in \CC(k, l)$ and $p\in \PP_{k}$, $rpr^{*}\in \PP_{l}$;
\item For any $k\in \N$ and $p\in \PP_{k}$, $\overline{p}\in \PP_{k}$.
\end{enumerate}
\end{defi}

With this in hand, we can prove the main result of this section.

\begin{theo}\label{thm:projectivepartitionaction}
Let $\CC$ be a category of partitions, let $\PP$ be a module of projective partitions over $\CC$ and let $N$ be an integer. Then, the spaces $(K_{n}^{\PP})_{n\in \N}$ and the map $\varphi$ satisfy the assumptions of Theorem \ref{thm:tkergodic}. The corresponding weak unitary tensor functor will be denoted by $\F_{N}^{\CC, \PP}$.
\end{theo}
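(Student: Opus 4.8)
The plan is to push everything through one structural observation: the map $\varphi(r)$ is the \emph{compression to $K_{l}^{\PP}$ of the partition operator $T_{r}$}. Concretely, writing $P_{n}\in\B(H^{\otimes n})$ for the orthogonal projection onto $K_{n}^{\PP}$, the first step is to prove that for every $r\in\CC(k,l)$,
\[
\varphi(r)=P_{l}\circ T_{r}\vert_{K_{k}^{\PP}}.
\]
Indeed, for $p\in\PP_{k}$ the partition calculus gives $T_{r}\eta_{p}=N^{\rl(r,p)}T_{rp}(e_{1}^{\otimes k})$, and testing against an arbitrary $\eta_{q}$, $q\in\PP_{l}$, one gets $\langle T_{r}\eta_{p},\eta_{q}\rangle=N^{\rl(r,p)}\langle\eta_{rp},\eta_{q}\rangle=N^{\rl(r,p)}\langle\eta_{rpr^{*}},\eta_{q}\rangle$ by the reformulation of Lemma~\ref{lem:removedloopsprojective} recorded just after its proof; since $rpr^{*}\in\PP_{l}$ by the second module axiom, the vector $N^{\rl(r,p)}\eta_{rpr^{*}}=\varphi(r)(\eta_{p})$ lies in $K_{l}^{\PP}$ and equals the orthogonal projection of $T_{r}\eta_{p}$ onto it, which is the claim. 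The displayed identity gives three things at once: $\varphi(r)$ takes values in $K_{l}^{\PP}$; $\varphi$ extends linearly and unambiguously to all of $\mor_{\G}(k,l)=\spa\{T_{r}\}$ even in the presence of linear relations among the $T_{r}$ (since $S\mapsto P_{l}S\vert_{K_{k}^{\PP}}$ is manifestly linear, and $\|\sum_{r}\lambda_{r}\varphi(r)(\eta_{p})\|=\|P_{l}(\sum_{r}\lambda_{r}T_{r})\eta_{p}\|$); and $\varphi(\id)=\id$.

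Next I would check condition~\ref{cond:1}. For adjoints, if $S\in\mor_{\G}(k,l)$, $\xi\in K_{k}^{\PP}$ and $\eta\in K_{l}^{\PP}$, then $\langle\varphi(S)\xi,\eta\rangle=\langle S\xi,\eta\rangle=\langle\xi,S^{*}\eta\rangle=\langle\xi,\varphi(S^{*})\eta\rangle$, using that $\eta$ and $\xi$ lie in the ranges of $P_{l}$ and $P_{k}$ and that $T_{r}^{*}=T_{r^{*}}$; hence $\varphi(S)^{*}=\varphi(S^{*})$. For multiplicativity $\varphi(TS)=\varphi(T)\varphi(S)$, linearity reduces us to partition operators $T=T_{s}$, $S=T_{r}$; evaluating on $\eta_{p}$, both sides are proportional to $\eta_{s(rpr^{*})s^{*}}=\eta_{srpr^{*}s^{*}}$, and the equality of the $N$-powers amounts to $\rl(r,p)+\rl(s,rpr^{*})=\rl(s,r)+\rl(sr,p)$. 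This in turn follows from the ``associativity'' identity $\rl(r,p)+\rl(s,rp)=\rl(s,r)+\rl(sr,p)$ — read off from associativity of $T_{s}\circ T_{r}\circ T_{p}$ and the fact that $T_{srp}\neq 0$ — together with $\rl(s,rp)=\rl(s,rpr^{*})$; the latter is again Lemma~\ref{lem:removedloopsprojective}, whose proof shows that $rp$ and $rpr^{*}$ have identical lower non-through-block structure, on which $\rl(\cdot\,,\beta)$ depends only, so the non-squareness of $s$ is harmless.

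Condition~\ref{cond:2} is lighter. The maps $\iota_{k,l}:\eta_{p}\otimes\eta_{q}\mapsto\eta_{p\odot q}$ are nothing but the restrictions to $K_{k}^{\PP}\otimes K_{l}^{\PP}$ of the canonical identification $H^{\otimes k}\otimes H^{\otimes l}\cong H^{\otimes(k+l)}$, because $T_{p}\otimes T_{q}=T_{p\odot q}$; hence they are well defined, linear and isometric, and they land in $K_{k+l}^{\PP}$ by the first module axiom $\PP_{k}\odot\PP_{l}\subset\PP_{k+l}$. Their coherence identity is just associativity of these canonical identifications, equivalently of $\odot$. Finally, since $T_{r}\otimes T_{s}=T_{r\odot s}$, $(r\odot s)(p\odot q)(r\odot s)^{*}=(rpr^{*})\odot(sqs^{*})$ and $\rl(r\odot s,p\odot q)=\rl(r,p)+\rl(s,q)$ (horizontal concatenation produces no loops across the two halves), evaluating $\varphi(T\xbox S)\iota_{k,l}$ and $\iota_{k',l'}(\varphi(T)\otimes\varphi(S))$ on $\eta_{p}\otimes\eta_{q}$ yields the same vector $N^{\rl(r,p)+\rl(s,q)}\eta_{(rpr^{*})\odot(sqs^{*})}$, so the intertwining relation $\varphi(T\xbox S)\iota_{k,l}=\iota_{k',l'}(\varphi(T)\otimes\varphi(S))$ holds after reducing to partition operators by linearity. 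Together with $K_{0}^{\PP}=\C$, coming from the empty partition, this verifies all hypotheses of Theorem~\ref{thm:tkergodic}.

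The only genuinely combinatorial inputs are Lemma~\ref{lem:removedloopsprojective} and the elementary behaviour of $\rl$ under $\odot$ and under vertical composition; I expect the main (but entirely routine) obstacle to be the careful bookkeeping of the powers of $N$ in the multiplicativity and tensor-compatibility identities, which is exactly where the compression picture pays off, since it trades partition-diagram manipulations for orthogonal-projection arguments in the ambient spaces $H^{\otimes n}$.
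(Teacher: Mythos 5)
Your proof is correct, and its combinatorial core is the same as the paper's, namely Lemma \ref{lem:removedloopsprojective} together with the standard behaviour of $\rl$ under vertical and horizontal concatenation; what differs is the organizing device. The paper never writes $\varphi(r)$ as a compression: Proposition \ref{prop:welldefined} and the first step of the paper's proof establish well-definedness by expanding inner products to show the isometry-type identity $\bigl\|\sum_{p}\lambda_{p}N^{\rl(r,p)}\eta_{rpr^{*}}\bigr\|=\bigl\|T_{r}\bigl(\sum_{p}\lambda_{p}\eta_{p}\bigr)\bigr\|$, and the $*$-compatibility is checked by pairing $\varphi(r)(\eta_{p})$ against $\eta_{q}$ by hand. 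Your identification $\varphi(r)=P_{l}\circ T_{r}\vert_{K_{k}^{\PP}}$ packages all of this into the single orthogonality statement $\eta_{rp}-\eta_{rpr^{*}}\perp K_{l}^{\PP}$ (which is exactly the remark following Lemma \ref{lem:removedloopsprojective}, applied to all $q\in\PP_{l}$), and then linearity in $r$, well-definedness on $K_{k}^{\PP}$, $\varphi(\id)=\id$ and $\varphi(S)^{*}=\varphi(S^{*})$ come for free; this is a genuine streamlining of those steps, at the cost of not recording the exact norm identity the paper obtains. For multiplicativity and tensor compatibility your computations coincide with the paper's, and you are right that both arguments need Lemma \ref{lem:removedloopsprojective} in the slightly stronger form $\rl(s,rp)=\rl(s,rpr^{*})$ for a non-square $s$ --- the paper uses this silently when it writes $\rl(r,sps^{*})=\rl(r,sp)$ for $r\in\CC(l,m)$ --- and your justification (the proof of the lemma only compares the lower-row block structures of $rp$ and $rpr^{*}$, which is all that $\rl(s,\cdot)$ sees) is the correct one. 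The only point worth tightening is $K_{0}^{\PP}=\C$: this requires the empty partition to belong to $\PP_{0}$, which is not an explicit axiom (the paper is equally silent); for nonempty $\PP$ it does follow from the module axioms, since $\PP_{k}\odot\PP_{k}\subset\PP_{2k}$ and $\sqcup^{\odot k}\in NC_{2}(2k,0)\subset\CC(2k,0)$ force $rpr^{*}=\emptyset\in\PP_{0}$.
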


\begin{proof}
The first step is to show that there is for each $k, l\in \N$ a well-defined linear map
\begin{equation*}
\varphi : \spa\{T_{r} \mid r\in \CC(k, l)\} \to \B(K_{k}^{\PP}, K_{l}^{\PP})
\end{equation*}
which sends $T_{r}$ to $\varphi(r)$. In other words, we have to show that if $\sum_{r}\lambda_{r}T_{r} = 0$, then so does $\sum_{r}\lambda_{r}\varphi(r)$. And indeed, for any $p\in \PP_{k}$,
\begin{align*}
\left\|\left(\sum_{r}\lambda_{r}\varphi(r)\right)(\eta_{p})\right\|^{2} & = \sum_{r, s}\lambda_{r}\overline{\lambda}_{s}N^{\rl(r, p) + \rl(s, p)}\langle \eta_{rpr^{*}}, \eta_{sps^{*}}\rangle \\
& = \sum_{r, s}\lambda_{r}\overline{\lambda}_{s}N^{\rl(r, p) + \rl(s, p)}\langle \eta_{rp}, \eta_{sp}\rangle \\
& = \sum_{r, s}\lambda_{r}\overline{\lambda}_{s}\langle T_{r}(\eta_{p}), T_{s}(\eta_{p})\rangle \\
& = \left\|\left(\sum_{r}\lambda_{r}T_{r})\right)(\eta_{p})\right\|^{2} \\
& = 0
\end{align*}

We now have to check that all the assumptions of Theorem \ref{thm:tkergodic} are satisfied by the maps $\varphi(r)$. First, by Lemma \ref{lem:removedloopsprojective}, $\rl(r, sps^{*}) = \rl(r, sp)$ so that
\begin{align*}
\varphi(r)\circ\varphi(s)(\eta_{p}) & = N^{\rl(s, p) + \rl(r, sps^{*})}\eta_{rsps^{*}r^{*}} = N^{\rl(s, p) + \rl(r, sp)}\eta_{rsps^{*}r^{*}} \\
& = N^{\rl(r, s) + \rl(rs, p)}\eta_{(rs)p(rs)^{*}} = N^{\rl(r, s)}\varphi(rs)(\eta_{p})
\end{align*}
and since $T_{rs} = N^{\rl(r, s)}T_{r}\circ T_{s}$, this shows the compatibility with composition of morphisms. Second, using again Lemma \ref{lem:removedloopsprojective} we have
\begin{align*}
\langle \varphi(r)(\eta_{p}), \eta_{q}\rangle & = N^{\rl(r, p) + \rl(q^{*}, rpr^{*})} = N^{\rl(r, p) + \rl(q^{*}, rp)} \\
& = N^{\rl(q^{*}, r) + (q^{*}r, p)} = N^{\rl(r^{*}, q) + (r^{*}q^{*}r, p)} \\
& = \langle \eta_{p}, \varphi(r^{*})(\eta_{q})\rangle
\end{align*}
and since $T_{r^{*}} = T_{r}^{*}$, this shows the compatibility with taking adjoints. Eventually,
\begin{align*}
\varphi(r\odot s)\circ\iota_{k, l}(\eta_{p}\otimes \eta_{q}) & = N^{\rl(r\odot s, p\otimes q)}\eta_{(r\odot s)^{*}(p\odot q)(r\odot s)} \\
& = N^{\rl(r, p) + \rl(s, q)}(\eta_{(rpr^{*})\odot (sqs^{*})}) \\
& = \iota_{k',l'}\circ(\varphi(r)\otimes \varphi(s))(\eta_{p}\otimes \eta_{q}) 
\end{align*}
and since $T_{r\odot s} = T_{r}\otimes T_{s}$, this shows the compatibility with tensor products.

As for Condition \ref{cond:3}, first observe that the range of $\iota_{k, l}$ is nothing but the inclusion
\begin{equation*}
K_{k}^{\PP}\otimes K_{l}^{\PP}\subset H^{\otimes k}\otimes H^{\otimes l} = H^{\otimes (k+l)}.
\end{equation*}
As a consequence, the corresponding range projection $P_{k, l}$ is given by
\begin{equation*}
P_{k, l} = P_{K^{\PP}_{k}}\otimes P_{K^{\PP}_{l}}
\end{equation*}
and the range projection of $\iota_{k, l, m}$ is simply $P_{K^{\PP}_{k}}\otimes P_{K^{\PP}_{l}}\otimes P_{K^{\PP}_{m}}$. To conclude, it is enough to show that for any $l, m\in \N$,
\begin{equation}\label{eq:conditionF3projective}
(\id_{H^{\otimes l}}\otimes P_{K_{m}^{\PP}})P_{K^{\PP}_{l+m}} = P_{K_{l}^{\PP}}\otimes P_{K_{m}^{\PP}}.
\end{equation}
Indeed, we will then have, for any $k, l, m\in \N$,
\begin{align*}
P_{k+l, m}\circ P_{k, l+m} & = \left(P_{K^{\PP}_{k+l}}\otimes P_{K^{\PP}_{m}}\right)\circ\left(P_{K^{\PP}_{k}}\otimes P_{K^{\PP}_{l+m}}\right) \\
& = \left(P_{K^{\PP}_{k+l}}\otimes \id_{H^{\otimes m}}\right)\circ\left(\id_{H^{\otimes k}}\otimes\id_{H^{\otimes l}}\otimes P_{K^{\PP}_{m}}\right)\circ\left(P_{K^{\PP}_{k}}\otimes P_{K^{\PP}_{l+m}}\right) \\
& = \left(P_{K^{\PP}_{k+l}}\otimes \id_{H^{\otimes m}}\right)\circ\left(P_{K^{\PP}_{k}}\otimes\id_{H^{\otimes l}}\otimes \id_{H^{\otimes m}}\right)\circ\left(\id_{H^{\otimes k}}\otimes \left(\id_{H^{\otimes l}}\otimes P_{K^{\PP}_{m}}\right)\circ P_{K^{\PP}_{l+m}}\right) \\
& = \left(P_{K^{\PP}_{k+l}}\otimes \id_{H^{\otimes m}}\right)\circ\left(P_{K^{\PP}_{k}}\otimes P_{K_{l}^{\PP}}\otimes P_{K_{m}^{\PP}}\right) \\
& = P_{K^{\PP}_{k}}\otimes P_{K_{l}^{\PP}}\otimes P_{K_{m}^{\PP}}.
\end{align*}
To prove Equation \eqref{eq:conditionF3projective}, let us fix $p\in \PP_{m}$ and $q\in \PP_{l+m}$ and prove that $(\id_{H^{\otimes l}}\otimes \eta_{p}^{*})(\eta_{q})\in K_{l}^{\PP}$, which is enough to conclude. Let us now set
\begin{equation*}
p\vdash q  = \left(\vert^{\odot l}\odot D_{m}^{*}\right)\left(q\odot \overline{p}\right)\left(\vert^{\odot l}\odot D_{m}\right) \in \PP_{l}.
\end{equation*}
Then,
\begin{align*}
(\id_{H^{\otimes l}}\otimes \eta_{p}^{*})(\eta_{q}) & = \sum_{i_{1}, \dots, i_{l+m}} \delta_{q}(\mathbf{1},\mathbf{i})\langle \eta_{p}, e_{i_{l+1}}\otimes\cdots\otimes e_{i_{l+m}}\rangle e_{i_{1}}\otimes \cdots \otimes e_{i_{l}} \\
& = \sum_{i_{1}, \dots, i_{l+m}}\sum_{j_{1}, \dots, j_{m}}\delta_{q}(\mathbf{1},\mathbf{i})\delta_{p}(\mathbf{1},\mathbf{j})\langle e_{j_{1}}\otimes \cdots\otimes e_{j_{m}}, e_{i_{l+1}}\otimes\cdots\otimes e_{i_{l+m}}\rangle e_{i_{1}}\otimes \cdots \otimes e_{i_{l}} \\
& = \sum_{i_{1}, \dots, i_{l+m}}\delta_{q}(\mathbf{1},\mathbf{i})\delta_{p}(\mathbf{1}, i_{l+1}\cdots i_{l+m}) e_{i_{1}}\otimes \cdots \otimes e_{i_{l}} \\
& = N^{\rl(\vert^{\odot l}\odot D_{m}^{*}, q\odot \overline{p})}\eta_{\left(\vert^{\odot l}\odot D_{m}^{*}\right)\left(q\odot \overline{p}\right)} \\
& = N^{\rl(\vert^{\odot l}\odot D_{m}^{*}, q\odot \overline{p})}\eta_{p\vdash q},
\end{align*}
where we used Lemma \ref{lem:removedloopsprojective} in the last line. This is an element of $K_{l}^{\PP}$, hence the proof is complete.
\end{proof}

As a consequence of Theorem \ref{thm:projectivepartitionaction}, there is a C*-algebra $B_{N}(\CC, \PP)$ with an ergodic action $\alpha_{N}^{\CC, \PP}$ of $\G_{N}(\CC)$ whose spectral functor is isomorphic to $\F_{N}^{\CC, \PP}$. It is difficult to obtain a manageable description of the C*-algebra $B_{N}(\CC, \PP)$ in general, even though we will show how to do this for a specific choice of $\PP$ when $\CC$ is assumed to be non-crossing, but we would like to mention one case where we have a general result.

\begin{propo}\label{prop:projectivequotient}
Let $\CC$ be a category of partitions, let $N$ be an integer and set
\begin{equation*}
\Proj_{\CC'}^{0} = \{p\in \Proj_{\CC'} \mid t(p) = 0\}
\end{equation*}
for some category of partitions $\CC\subset \CC'$. Then, $\Proj_{\CC'}^{0}$ is a module of projective partitions over $\CC$ and there is a $\G_{N}(\CC)$-equivariant $*$-isomorphism
\begin{equation*}
B_{N}(\CC, \Proj_{\CC'}^{0}) \cong C(\G_{N}(\CC')\backslash\G_{N}(\CC)),
\end{equation*}
where the action on the right-hand side is the translation action.
\end{propo}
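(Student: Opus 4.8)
The plan is to establish the two assertions separately, the second via a comparison of weak unitary tensor functors.

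\smallskip

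\emph{Step 1: $\Proj_{\CC'}^{0}$ is a module of projective partitions over $\CC$.} If $p\in\Proj_{\CC'}^{0}(k)$ and $q\in\Proj_{\CC'}^{0}(l)$, then $p\odot q\in\CC'$ since $\CC'$ is a category, $p\odot q$ is projective because $(p\odot q)^{*}=p^{*}\odot q^{*}$ and $(p\odot q)^{2}=p^{2}\odot q^{2}$, and $t(p\odot q)=t(p)+t(q)=0$; hence $p\odot q\in\Proj_{\CC'}^{0}(k+l)$. If $r\in\CC(k,l)$ and $p\in\Proj_{\CC'}^{0}(k)$, then $rpr^{*}\in\CC'$ because $r\in\CC\subseteq\CC'$ and $\CC'$ is stable under the category operations; writing $p=a^{*}a$ as in the Proposition preceding Lemma~\ref{lem:removedloopsprojective} gives $rpr^{*}=(ar^{*})^{*}(ar^{*})$, again projective by that Proposition; and reading $rpr^{*}$ as the vertical concatenation of $r^{*}$, $p$ and $r$ from top to bottom, any through-block of $rpr^{*}$ would connect the top row to the bottom row through the block $p$, forcing a through-block of $p$, which is impossible since $t(p)=0$. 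So $rpr^{*}\in\Proj_{\CC'}^{0}(l)$, and Theorem~\ref{thm:projectivepartitionaction} applies, producing $\F_{N}^{\CC,\Proj_{\CC'}^{0}}$ and the ergodic action $\alpha_{N}^{\CC,\Proj_{\CC'}^{0}}$ of $\G_{N}(\CC)$ on $B_{N}(\CC,\Proj_{\CC'}^{0})$.

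\smallskip

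\emph{Step 2: reduction to tensor powers.} Recall that all our categories contain $NC_{2}$, so $\HH:=\G_{N}(\CC')$ is a closed quantum subgroup of $\G:=\G_{N}(\CC)\subseteq O_{N}^{+}$, and the translation action of $\G$ on $C(\HH\backslash\G)$ is ergodic; its associated weak unitary tensor functor $\varphi_{\mathrm{quot}}$ is the classical one, $\varphi_{\mathrm{quot}}(v)=(H_{v})^{\HH}=\mor_{\HH}(\triv,v)$, with $\varphi_{\mathrm{quot}}(T)$ the restriction of $T$ and $\iota_{v,w}$ the canonical inclusion $(H_{v})^{\HH}\otimes(H_{w})^{\HH}\hookrightarrow(H_{v\boxtimes w})^{\HH}$ (see \cite{NT13}). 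By the uniqueness statement in Theorem~\ref{thm:tkergodic} it suffices to exhibit a unitary natural monoidal isomorphism between the restrictions of $\F_{N}^{\CC,\Proj_{\CC'}^{0}}$ and of $\varphi_{\mathrm{quot}}$ to the full subcategory with objects $(u^{\boxtimes n})_{n\in\N}$; Theorem~\ref{theo:tannakakreinpinzarirobert} then yields the desired $\G$-equivariant $*$-isomorphism. Now $(\C^{N})^{\otimes n}$ carries $u^{\boxtimes n}$ as an $\HH$-representation, so Theorem~\ref{thm:tannakakrein} gives $\varphi_{\mathrm{quot}}(u^{\boxtimes n})=\mor_{\HH}(\triv,u^{\boxtimes n})=\spa\{T_{q}(1)\mid q\in\CC'(0,n)\}\subseteq(\C^{N})^{\otimes n}$, and the natural candidate for the isomorphism is simply the inclusion $\phi_{n}:K_{n}^{\Proj_{\CC'}^{0}}\hookrightarrow(\C^{N})^{\otimes n}$.

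\smallskip

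\emph{Step 3: the inclusion works.} It is isometric: for $p,q\in\Proj_{\CC'}^{0}(n)$ one has $\langle\eta_{p},\eta_{q}\rangle_{(\C^{N})^{\otimes n}}=N^{\rl(p^{*},q)}\langle e_{1}^{\otimes n},T_{p^{*}q}(e_{1}^{\otimes n})\rangle=N^{\rl(p,q)}=N^{\rl(q,p)}$ (using $p^{*}=p$), which is the inner product on $K_{n}^{\Proj_{\CC'}^{0}}$. Its image is $\varphi_{\mathrm{quot}}(u^{\boxtimes n})$: for $q\in\CC'(0,n)$ the partition $p=qq^{*}$ lies in $\Proj_{\CC'}^{0}(n)$ and $\eta_{p}=T_{q}(T_{q^{*}}(e_{1}^{\otimes n}))=T_{q}(1)$, which gives one inclusion; conversely, given $p\in\Proj_{\CC'}^{0}(n)$, by the structure theory of projective partitions over a category (see \cite{FW16}) and because $t(p)=0$ one may write $p=r^{*}r$ with $r\in\CC'$ having no lower points, whence $\eta_{p}=T_{r^{*}}(T_{r}(e_{1}^{\otimes n}))=T_{r^{*}}(1)\in\spa\{T_{q}(1)\mid q\in\CC'(0,n)\}$. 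Moreover $\phi_{n}$ intertwines the embeddings, as $\eta_{p}\otimes\eta_{q}=\eta_{p\odot q}$ corresponds to the canonical identification $(\C^{N})^{\otimes k}\otimes(\C^{N})^{\otimes l}=(\C^{N})^{\otimes k+l}$; and it intertwines the functors: for $r\in\CC(k,l)$ and $p=s^{*}s\in\Proj_{\CC'}^{0}(k)$ with $s\in\CC'$ having no lower points, $T_{r}(\eta_{p})=N^{\rl(r,s^{*})}T_{rs^{*}}(1)$ while $\F_{N}^{\CC,\Proj_{\CC'}^{0}}(T_{r})(\eta_{p})=N^{\rl(r,p)}\eta_{rpr^{*}}=N^{\rl(r,p)}T_{rs^{*}}(1)$ (since $rpr^{*}=(sr^{*})^{*}(sr^{*})$ and $(sr^{*})^{*}=rs^{*}$), and these coincide because $\rl(r,s^{*})=\rl(r,s^{*}s)$ — the partitions $s^{*}$ and $s^{*}s$ have the same lower row and $s^{*}s$ has no through-block, so they produce the same loops when stacked below $r$ (compare Lemma~\ref{lem:removedloopsprojective}). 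This completes the comparison and hence the proof.

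\smallskip

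\emph{Expected main obstacle.} The delicate point is the description of the image of $\phi_{n}$, i.e. that every partition vector $\eta_{p}$ with $t(p)=0$ is $\HH$-invariant. This is \emph{not} a formal consequence of $T_{p}$ being an intertwiner, since the range of $T_{p}$ is a subrepresentation which a priori need not be trivial; one really needs the combinatorial fact that a projective partition in $\CC'$ with no through-block is of the form $r^{*}r$ with $r\in\CC'$ having no lower points, which is where the standing assumption $NC_{2}\subseteq\CC'$ enters. One should also take care to quote the standard identification of the spectral functor of a homogeneous space.
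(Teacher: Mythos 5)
Your Step 1 and the overall strategy of Steps 2--3 (compare the two functors on tensor powers of $u$ and invoke the uniqueness part of Theorem \ref{thm:tkergodic}) are reasonable, and this is a legitimate alternative to the paper's route, which instead factors $\F_{N}^{\CC,\Proj_{\CC'}^{0}}$ as $\F_{N}^{\CC',\Proj_{\CC'}^{0}}\circ\Res_{\G_{N}(\CC)}^{\G_{N}(\CC')}$ and appeals to the induction machinery of Section \ref{sec:induction} together with $\Ind_{\HH}^{\G}(\C)=C(\HH\backslash\G)$. However, there is a genuine gap in Step 3, precisely at the point you yourself flag as the main obstacle. You assert that every $p\in\Proj_{\CC'}^{0}(n)$ can be written as $p=r^{*}r$ with $r\in\CC'$ having no lower points. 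The through-block decomposition of \cite{FW16} does give $p=p_{u}^{*}p_{u}$ with $p_{u}\in P(n,0)$ \emph{as a partition} (namely the partition induced on the upper row), but it gives no reason for $p_{u}$ to belong to $\CC'$, and in general it does not. Concretely, take $\CC'$ to be the category of $S_{N}^{\prime +}$, i.e.\ the non-crossing partitions $p\in NC(k,l)$ with $k+l$ even (it contains $NC_{2}$, so $\CC=NC_{2}$ is allowed). The double singleton $s_{2}\in\CC'(1,1)$ is projective with $t(s_{2})=0$, yet $\CC'(1,0)=\emptyset$, so no such $r$ exists; worse, $\eta_{s_{2}}=\sum_{j}e_{j}$ satisfies $\delta(\eta_{s_{2}})=\eta_{s_{2}}\otimes z$ for the non-trivial character $z$ of the $\Z_{2}$-factor of $S_{N}^{\prime +}\cong S_{N}^{+}\times\Z_{2}$, so it is \emph{not} $\G_{N}(\CC')$-invariant. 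Hence already the containment $K_{1}^{\Proj_{\CC'}^{0}}\subseteq(\C^{N})^{\G_{N}(\CC')}$ fails: the left-hand side is one-dimensional while the right-hand side is $\{0\}$.

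The underlying issue is that $\mathrm{Ran}(T_{p})=\C\eta_{p}$ is a one-dimensional $\G_{N}(\CC')$-subrepresentation of $u^{\boxtimes n}$ which need not be trivial; your argument implicitly assumes it is. To repair the proof you must either verify that the upper row of every through-block-free projective partition of $\CC'$ again lies in $\CC'$ (true for instance for $\CC'=NC_{2}$, $NC_{\mathrm{even}}$ or $NC$, but not in general, as the example shows), or replace $\Proj_{\CC'}^{0}$ by the module $\{qq^{*}\mid q\in\CC'(0,\bullet)\}$, for which your argument goes through verbatim. Note that this difficulty is not an artefact of your approach: the paper's own (deferred) proof must confront exactly the same identification when showing that the $\G_{N}(\CC')$-action attached to $\F_{N}^{\CC',\Proj_{\CC'}^{0}}$ is the trivial action on $\C$, so the statement as printed appears to need an additional hypothesis on $\CC'$.
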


The proof relies on induction and will therefore be postponed to Section \ref{sec:induction}. This nevertheless shows that our construction can produce many interesting actions with just one specific choice of $\PP$.

\subsubsection{The noncrossing case}

The simplest choice of a module of projective partitions is certainly $\PP = \Proj_{\CC}$. In that case, we can describe the C*-algebra and its action in terms of the ``first column space'' of $\G_{N}(\CC)$. More precisely, let $A_{N}(\CC)$ be the C*-subalgebra of $C(\G_{N}(\CC))$ generated by the elements $(u_{i1})_{1\leqslant i\leqslant N}$. Equivalently, $A_{N}(\CC)$ is the linear space spanned by the elements $a_{\mathbf{i}} = u_{\mathbf{1}\mathbf{i}}$ for all $\mathbf{i}\in \{1, \dots, k\}^{N}$. The restriction of the coproduct yields an action of $\G_{N}(\CC)$ which is easily seen to be ergodic. We will now show that the spectral functor of this action is isomorphic to $\F_{N}^{\CC, \Proj_{\CC}}$, at least when the partitions are non-crossing.

\begin{theo}\label{thm:firstcolumn}
Assume that $\CC$ is noncrossing and that $N\geqslant 4$. Then, the functor $\F_{\com_{\mid A_{N}(\CC)}}$ is isomorphic to the functor $\F_{N}^{\CC, \Proj_{\CC}}$.
\end{theo}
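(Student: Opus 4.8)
The plan is to invoke the uniqueness statement of Theorem~\ref{thm:tkergodic}: since a weak unitary tensor functor on $\rep(\G_N(\CC))$ is determined by its restriction to the full subcategory with objects $(u^{\boxtimes n})_{n\in\N}$, it suffices to produce a unitary monoidal natural isomorphism between the restrictions of $\F_{\com_{\mid A_N(\CC)}}$ and $\F_N^{\CC,\Proj_\CC}$ to that subcategory. Write $\alpha=\com_{\mid A_N(\CC)}$ and recall from the preliminaries that $\F_\alpha(u^{\boxtimes n})$ is naturally $\mor_{\G_N(\CC)}(u^{\boxtimes n},\alpha)$. The candidate is built from the tautological intertwiner: the linear map $\Phi_n\colon(\C^N)^{\otimes n}\to A_N(\CC)$, $e_{\mathbf i}\mapsto a_{\mathbf i}=u_{\mathbf 1\mathbf i}$, satisfies $\alpha\circ\Phi_n=(\Phi_n\otimes\id)\circ\delta_{u^{\boxtimes n}}$ (a direct check with the coproduct), so it lies in $\mor_{\G_N(\CC)}(u^{\boxtimes n},\alpha)$; one then sets $\Psi_n(\eta_p)=\Phi_n\circ T_p$ for $p\in\Proj_\CC(n)$, noting $T_p\in\mor_{\G_N(\CC)}(u^{\boxtimes n},u^{\boxtimes n})$.

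The first step is that $\Psi_n$ is a well-defined isometry. The inner product on $\mor_{\G_N(\CC)}(u^{\boxtimes n},\alpha)$ is the element $\sum_{\mathbf a}T(e_{\mathbf a})S(e_{\mathbf a})^{*}$ of $A_N(\CC)^{\G_N(\CC)}=\C1$, whose scalar value is recovered by applying the counit $\cou$. Since $\cou$ is a $*$-homomorphism with $\cou(u_{\mathbf 1\mathbf i})=\delta_{\mathbf 1\mathbf i}$, one obtains
\[
\langle\Phi_n T_p,\Phi_n T_q\rangle=\cou\Bigl(\textstyle\sum_{\mathbf a}\Phi_n(T_pe_{\mathbf a})\,\Phi_n(T_qe_{\mathbf a})^{*}\Bigr)=\sum_{\mathbf a}\delta_p(\mathbf a,\mathbf 1)\,\delta_q(\mathbf a,\mathbf 1),
\]
and using $p=p^{*}$, $q=q^{*}$ to rewrite $\delta_p(\mathbf a,\mathbf 1)=\delta_p(\mathbf 1,\mathbf a)$ the right-hand side is precisely $\langle T_p(e_1^{\otimes n}),T_q(e_1^{\otimes n})\rangle=\langle\eta_p,\eta_q\rangle=N^{\rl(q,p)}$, the inner product of $K_n^{\Proj_\CC}$. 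So the two Gram matrices coincide; hence $\Psi_n$ is well defined (dependences among the $\eta_p$ are dependences among the $\Phi_nT_p$) and isometric, with no hypothesis on $N$ used so far.

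Next one must see that $\Psi_n$ is onto $\mor_{\G_N(\CC)}(u^{\boxtimes n},\alpha)$. Using the relation $1=(uu^{*})_{11}=\sum_j u_{1j}^{2}$ (and $\sum_ju_{1j}=1$ if $\CC$ contains the singleton), one shows that the span of matrix coefficients of the irreducibles occurring in $u^{\boxtimes n}$ is already $\Phi_n((\C^N)^{\otimes n})$; passing to isotypic components this gives $\mor_{\G_N(\CC)}(u^{\boxtimes n},\alpha)=\{\Phi_n\circ S:S\in\mor_{\G_N(\CC)}(u^{\boxtimes n},u^{\boxtimes n})\}=\spa\{\Phi_nT_s:s\in\CC(n,n)\}$. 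The same counit computation as above shows that for $s\in\CC(n,n)$ the functional $\langle\Phi_nT_s,\,\cdot\,\rangle$ depends only on $\mathbf a\mapsto\delta_s(\mathbf a,\mathbf 1)$, that is, only on the projective partition $s^{*}s\in\Proj_\CC(n)$ obtained by capping the lower row of $s$ with $1$'s; by non-degeneracy $\Phi_nT_s=\Phi_nT_{s^{*}s}$, and since every $p\in\Proj_\CC(n)$ equals $p^{*}p$ one concludes $\mor_{\G_N(\CC)}(u^{\boxtimes n},\alpha)=\spa\{\Phi_nT_p:p\in\Proj_\CC(n)\}=\Psi_n(K_n^{\Proj_\CC})$. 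The hypothesis $N\geqslant 4$ enters here to guarantee that the $\eta_p$ (equivalently the $\Phi_nT_p$), $p\in\Proj_\CC(n)$, are linearly independent, i.e.\ that $[N^{\rl(q,p)}]_{p,q\in\Proj_\CC(n)}$ is invertible, so that $\dim K_n^{\Proj_\CC}=|\Proj_\CC(n)|$ and the dimension bookkeeping of this step is clean.

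Finally the two pieces of extra structure are matched by short partition computations. For the tensor structure, the embedding $\F_\alpha(u^{\boxtimes k})\otimes\F_\alpha(u^{\boxtimes l})\hookrightarrow\F_\alpha(u^{\boxtimes(k+l)})$ sends $S\otimes S'$ to $\xi\otimes\eta\mapsto S(\xi)S'(\eta)$, and since $\Phi_k(T_p\xi)\Phi_l(T_q\eta)=\Phi_{k+l}\bigl((T_p\otimes T_q)(\xi\otimes\eta)\bigr)=\Phi_{k+l}\bigl(T_{p\odot q}(\xi\otimes\eta)\bigr)$ (multiplicativity $a_{\mathbf i}a_{\mathbf j}=a_{(\mathbf i,\mathbf j)}$ and the rule $T_p\otimes T_q=T_{p\odot q}$), it carries $\Phi_kT_p\otimes\Phi_lT_q$ to $\Phi_{k+l}T_{p\odot q}=\Psi_{k+l}(\eta_{p\odot q})=\Psi_{k+l}(\iota_{k,l}(\eta_p\otimes\eta_q))$. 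For naturality, $\F_\alpha(T_r)$ for $r\in\CC(k,l)$ acts on $\mor_{\G_N(\CC)}(u^{\boxtimes k},\alpha)$ by precomposition with $T_r^{*}=T_{r^{*}}$ (here one uses that $T_r$ has real matrix entries), so $\F_\alpha(T_r)(\Phi_kT_p)=N^{\rl(p,r^{*})}\Phi_kT_{pr^{*}}$; applying the capping identity $\Phi_kT_{pr^{*}}=\Phi_lT_{(pr^{*})^{*}(pr^{*})}=\Phi_lT_{rpr^{*}}$ together with $\rl(p,r^{*})=\rl((r^{*})^{*},p^{*})=\rl(r,p)$ turns this into $N^{\rl(r,p)}\Phi_lT_{rpr^{*}}=\Psi_l(\varphi(r)\eta_p)$, i.e.\ the required compatibility $\Psi_l\circ\F_N^{\CC,\Proj_\CC}(T_r)=\F_\alpha(T_r)\circ\Psi_k$. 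The main obstacle is the surjectivity step, where one has to determine exactly which morphisms $u^{\boxtimes n}\to\alpha$ exist — controlling the degree filtration of $A_N(\CC)$ against the partition relations and invoking the linear independence of the $\eta_p$; by contrast the isometry, the tensor compatibility and the naturality all collapse to the counit trick together with elementary partition calculus.
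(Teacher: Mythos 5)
Your well-definedness and isometry step is correct and in fact rather elegant: computing $\langle\Phi_nT_p,\Phi_nT_q\rangle$ by applying the counit to $\sum_{\mathbf a}\Phi_n(T_pe_{\mathbf a})\Phi_n(T_qe_{\mathbf a})^{*}\in\C 1$ gives $\sum_{\mathbf a}\delta_p(\mathbf a,\mathbf 1)\delta_q(\mathbf a,\mathbf 1)=\langle\eta_p,\eta_q\rangle$, so the two Gram matrices coincide and linear dependences are transported automatically; the tensor and naturality checks also go through. This is a different (and arguably cleaner) route than the paper, which works in the invariant-vector picture with the vectors $\widetilde{\xi}_p=Q_p(\xi)$ and the normalized projections $Q_p$. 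But the surjectivity step has a genuine gap. The assertion that the isotypic components $A_x$ for $x\prec u^{\boxtimes n}$ are all contained in $\Phi_n((\C^N)^{\otimes n})$ does \emph{not} follow from $\sum_ju_{1j}^{2}=1$: that relation only shows $\Phi_n(H^{\otimes n})\subseteq\Phi_{n+2}(H^{\otimes(n+2)})$, i.e.\ that the degree filtration of $A_N(\CC)$ is exhaustive. It does not rule out that a copy of $x$ inside $A$ first appears (or acquires extra multiplicity) only in $\Phi_m(H^{\otimes m})$ for $m>n$, and ruling this out is exactly the hard content of the theorem. Your capping identity $\Phi_nT_s=\Phi_nT_{s^{*}s}$ is proved (via the same counit computation) only for square partitions $s\in\CC(n,n)$, which handles the reduction from $\CC(n,n)$ to $\Proj_\CC(n)$ but not the reduction from degree $m$ down to degree $n$; for that you would need the rectangular version $\Phi_mT_r=\Phi_nT_{r^{*}r}$ for $r\in\CC(n,m)$, together with an argument that every element of $\mor_{\G_N(\CC)}(u^{\boxtimes n},\alpha)$ lifts through some $\Phi_m$ to an element of $\mor_{\G_N(\CC)}(n,m)$. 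Neither is in your text. This is precisely the point where the paper invokes the structure theory of noncrossing categories from \cite{FW16} and \cite[Prop 3.7]{F14} (minimal projections $P_p$ indexed by projective partitions whose supremum is the identity, and multiplicity at most one of each $A_p$); your proof never uses the noncrossing hypothesis at all, which should have been a warning sign since the paper explicitly leaves the general (crossing) case as an open conjecture.

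A second, concrete error: you claim that $N\geqslant 4$ guarantees that the vectors $\eta_p$, $p\in\Proj_\CC(n)$, are linearly independent, i.e.\ that the Gram matrix $[N^{\rl(q,p)}]_{p,q}$ is invertible, and you lean on this for the ``dimension bookkeeping'' of the surjectivity step. This is false: the paper's own Examples \ref{ex:permutation} and the hyperoctahedral example show that for $\CC=NC$ and $\CC=NC_{\mathrm{even}}$ the vectors $\eta_p$ are \emph{not} linearly independent (for $S_N^{+}$ one gets $A_N(\CC)=\C^N$, far smaller than $|\Proj_{NC}(n)|$ would suggest). The Gram-matrix-coincidence argument survives this (it never needed independence), but any dimension count based on $\dim K_n^{\Proj_\CC}=|\Proj_\CC(n)|$ collapses, so the surjectivity argument as you conceived it cannot be repaired by that route. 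The hypotheses $N\geqslant 4$ and $\CC$ noncrossing enter through the cited representation-theoretic results, not through invertibility of the Gram matrix.
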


\begin{proof}
For simplicity, we will write $A$ for $A_{N}(\CC)$. Let us define, for any integer $k$ and any $p\in \Proj_{\CC}(k)$,
\begin{equation*}
\xi_{p} = (P_{p}\otimes \id)u^{\boxtimes k}(e_{1}^{\otimes k}) = \sum_{\mathbf{i}\in \{1, \dots, k\}^{N}} P_{p}(e_{1}^{\otimes k})\otimes u_{\mathbf{1}\mathbf{i}} \quad \text{ and } \quad \xi = u^{\boxtimes k}(e_{1}^{\otimes k}).
\end{equation*}
We claim that these elements are invariant and span the invariant subspaces. To see this, first recall that as was shown in \cite{FW16}, to any projective partition $p\in \Proj_{\CC}(k)$ corresponds a minimal projection $P_{p}\in \mathrm{Mor}(u^{\boxtimes k}, u^{\boxtimes k})$ and therefore also an irreducible subrepresentation $u^{(p)}$. By \cite[Prop 3.7]{F14}, the supremum of these projections is the identity, hence $A$ is spanned by the elements
\begin{align*}
a^{(p)}_{\mathbf{i}} & = (\id\otimes e_{\mathbf{i}}^{*}\circ P_{p})u^{\boxtimes k}(e_{1}^{\otimes k}) \\
& = (\id\otimes e_{\mathbf{i}}^{*})u^{\boxtimes k}(P_{p}(e_{1}^{\otimes k})) = (\id\otimes e_{\mathbf{i}}^{*})(\xi_{p}).
\end{align*}
Let now $\Phi_{k} : H^{\otimes k}\to A$ be the unique linear map sending $e_{\mathbf{i}}$ to $a_{\mathbf{i}}$. This is an equivariant surjection and moreover, setting $H_{p} = P_{p}(H^{\otimes k})$, the restriction $\Phi_{p} = \Phi_{\mid H_{p}}$ maps $P_{p}(e_{\mathbf{i}})$ to $a^{p}_{\mathbf{i}}$. As a consequence,
\begin{equation*}
A_{p} = \spa\{a^{(p)}_{\mathbf{i}} \mid \mathbf{i}\in \{1, \dots, N\}^{k}\}
\end{equation*}
is a spectral subspace corresponding to the representation $u^{(p)}$ and of dimension at most $\dim(u^{(p)})$. In other words, $A_{p}$ is either $0$ or isomorphic to $H_{p}$. Summing up, the spectral decomposition of $A$ is
\begin{equation*}
A = \bigoplus_{k\in \N}\;\bigoplus_{p\in \Proj_{\CC}(k)}A_{p},
\end{equation*}
where each summand is either $0$ or has multiplicity one. For each $p$, the corresponding invariant vector is
\begin{equation*}
\sum_{\mathbf{i}}e_{\mathbf{i}}\otimes a^{(p)}_{\mathbf{i}} = \xi_{p}
\end{equation*}
so that in the end
\begin{equation*}
(H^{\otimes k}\otimes A)^{\G_{N}(\CC)} = \spa\{\xi_{p} \mid p\in \Proj_{\CC}(k)\}
\end{equation*}
and our claim is proved.

Now, trying to send $\xi_{p}$ to $\eta_{p}$ would be a mistake, because many of the vectors $\xi_{p}$ are pairwise orthogonal while the same is not true for the vectors $\eta_{p}$. Instead, we should reverse the construction as follows: for $p\in \Proj_{\CC}$, set
\begin{equation*}
\widetilde{\xi}_{p} = \sum_{q\preceq p}\xi_{q} = \sum_{q\preceq p} (P_{q}\otimes \id)(\xi)= Q_{p}(\xi).
\end{equation*}
It is straightforward to check by induction on the number of through-blocks that
\begin{equation*}
\spa\{\widetilde{\xi}_{p} \mid p\in \Proj_{\CC}(k)\} = \spa\{\xi_{p} \mid p\in \Proj_{\CC}(k)\}
\end{equation*}
so that we still have a generating family of the invariant subspace. Moreover, because $Q_{p}$ is an intertwiner, we have $\widetilde{\xi}_{p} = u^{\boxtimes k}Q_{p}(e_{1}^{\otimes k})$ so that by unitarity of $u$,
\begin{align*}
\langle \widetilde{\xi}_{p}, \widetilde{\xi}_{q}\rangle & = \left\langle u^{\boxtimes k}Q_{p}(e_{1}^{\otimes k}), u^{\boxtimes k}Q_{q}(e_{1}^{\otimes k})\right\rangle \\
& = \left\langle Q_{q}^{*}Q_{p}(e_{1}^{\otimes k}), e_{1}^{\otimes k}\right\rangle \\
& = N^{\mathrm{rl}(q^{*}, p) - \beta(p)/2 - \beta(q)/2}\left\langle T_{q^{*}p}(e_{1}^{\otimes k}), e_{1}^{\otimes k}\right\rangle \\
& = N^{\mathrm{rl}(q^{*}, p) - \beta(p)/2 - \beta(q)/2} 
\end{align*}
Using this, we see that there is a unique isometric linear map $\Phi_{k}: \widetilde{\xi}_{p}\mapsto N^{-\beta(p)/2}\eta_{p}$ between $(H^{\otimes k}\otimes B)^{\G_{N}(\CC)}$ and $K_{k}$. Since $\Phi_{k}$ is also surjective, it is an isomorphism of Hilbert spaces. If now $r\in \CC(k, l)$, then using Lemma \ref{lem:removedloopsprojective} we see that
\begin{align*}
\langle T_{r}(\widetilde{\xi}_{p}), \widetilde{\xi}_{q}\rangle & = N^{-\beta(p)/2 - \beta(q)/2}\langle T_{q^{*}}\circ T_{r}\circ T_{p}(e_{1}^{\otimes k}), e_{1}^{\otimes k}\rangle \\
& = N^{-\beta(p)/2 - \beta(q)/2}N^{\rl(q^{*}, rp) + \rl(r, p)}\langle e_{1}^{\otimes k}, T_{p^{*}rq}(e_{1}^{\otimes k})\rangle \\
& = N^{-\beta(p)/2 - \beta(q)/2}N^{\rl(r, p) + \rl(q^{*}, rp)} \\
& = N^{-\beta(p)/2 - \beta(q)/2}N^{\rl(r, p) + \rl(q^{*}, rpr^{*})} \\
& = N^{-\beta(p)/2 - \beta(q)/2}N^{\rl(r, p)}\langle \eta_{rpr^{*}}, \eta_{q}\rangle.
\end{align*}
In other words, $\Phi_{l}\circ T_{r}\circ\Phi_{k}^{-1} = \varphi(r)$ and the proof is complete.
\end{proof}

\begin{rem}
A general theory of the ``first column space'' for easy quantum groups was first developed by T. Banica, A. Skalski and P. So\l{}tan in \cite{banica2012noncommutative} and studied using the combinatorics of partitions. However, the connection with our approach is unclear. Let us nevertheless mention that they also consider ``first $d$ columns spaces'' and that it would be interesting to describe them through our construction. For another approach to the same objects, one can also see the work of S. Jung and M. Weber \cite{jung2020partition}.
\end{rem}

Before turning to the second construction, let us study a few examples in more detail.

\begin{exe}\label{ex:orthogonal}
As a first example, let us consider the free orthogonal quantum group $O_{N}^{+} = \G_{N}(NC_{2})$. Then, $A_{N}(\CC)$ is known to be a completion of the algebra of functions on the \emph{free orthogonal quantum sphere} $S_{+}^{N-1}$ (see \cite{banica2010quantum}). It was proven in \cite[Sec 7]{DFW19} that in the spectral decomposition, there is exactly one copy of each equivalence class of irreducible representations. As a consequence, $(H^{\otimes k}\otimes A_{N}(\CC))^{O_{N}^{+}}$ has dimension equal to the number of irreducible subrepresentations of $u^{\boxtimes k}$, which is also the number of projective partitions in $NC_{2}(k, k)$. This proves that the family of vectors $(\eta_{p})_{p\in \Proj_{NC_{2}}(k)}$ is linearly independent in that case.
\end{exe}

\begin{exe}
The situation changes drastically if we consider the free hyperoctahedral quantum group $H_{N}^{+} = \G_{N}(NC_{\text{even}})$, where $NC_{\text{even}}$ is the category of all non-crossing partitions with all blocks of even size. Using the defining relations of $C(H_{N}^{+})$ (see for instance \cite[Def 1.3]{banica2009fusion}), it is easy to see that its first column space is isomorphic to $\C^{2N}$. Therefore, we get an action on a commutative and finite-dimensional C*-algebra. The spectral decomposition is easily seen to be, in the notations of \cite{banica2009fusion}, $u^{0}\oplus u^{1}$. This means that the maps $(\eta_{p})_{p\in \Proj_{NC_{\text{even}}}(k)}$ are not linearly independent for $k\geqslant 2$. Indeed, if $p_{4}\in NC(2, 2)$ denotes the partition with only one block, then the definition implies that $\eta_{\vert\vert} = \eta_{p_{4}}$.
\end{exe}

\begin{exe}\label{ex:permutation}
If we consider now the quantum permutation group $S_{N}^{+} = \G_{N}(NC)$ of S. Z. Wang \cite{wang1998quantum}, the coefficients of $u$ are projections summing up to one on each row and each column. As a consequence, $A_{N}(\CC) = \C^{N}$ with the standard permutation action of $S_{N}^{+}$. The corresponding spectral decomposition is then simply $u$ and once again, the vectors $\eta_{p}$ are not linearly independent.
\end{exe}

The proof of Theorem \ref{thm:firstcolumn} heavily relies on results of \cite{FW16} which are only valid for non-crossing partitions. There may nevertheless be a more fundamental reason why $B_{N}(\CC, \Proj_{\CC})$ is the first column space which applies to arbitrary categories of partitions $\CC$. At least, it does not seem unreasonable to conjecture that $\F_{N}^{\CC, \Proj_{\CC}}$ always recovers the first column space. As already mentioned, \cite{jung2020partition} may be a starting point to investigate that question.

\subsection{Shifted line partitions}

\subsubsection{The construction}

We will now give another, different construction involving partitions. The idea this time is to use partitions lying on one line. More precisely, for a subset $\LL_{n}\subset P(0, n)$ we can define a Hilbert space
\begin{equation*}
K_{n}^{\LL} = \spa\{T_{p}\mid p\in \LL_{n}\}\subset H^{\otimes n}
\end{equation*}
with the induced inner product which reads $\langle T_{p}, T_{q}\rangle = N^{\rl(p, q^{*})}$. Once again, the inclusion for tensor products is given by horizontal concatenation: $T_{p}\otimes T_{q}\to T_{p\odot q}$. Moreover, there is a natural way of letting $\mor_{\G_{N}(\CC)}(k, l)$ act on these spaces by simply using the composition of partitions:
\begin{equation*}
\varphi(r): T_{p}\mapsto T_{rp}.
\end{equation*}
Once again, this will only make sense if we add an assumption on $\LL$.

\begin{defi}
Let $\CC$ be a category of partitions. A \emph{module of line partitions} over $\CC$ is a set $\LL$ of partitions lying on the lower row such that, writing $\LL_{n} = \LL\cap P(0, n)$,
\begin{enumerate}
\item $\LL_{k}\odot\LL_{l}\subset \LL_{k+l}$;
\item For any $r\in \CC(k, l)$ and $p\in \LL_{k}$, $rp\in \LL_{l}$.
\end{enumerate}
\end{defi}

It is not very difficult to check that this is enough to produce a weak unitary tensor functor.

\begin{propo}
Let $\CC$ be a category of partitions, let $\LL$ be a module of line partitions over $\CC$ and let $N$ be an integer. Then, the spaces $(K_{n}^{\LL})_{n\in \N}$ and the map $\varphi$ satisfy the assumptions of Theorem \ref{thm:tkergodic}. The corresponding weak unitary tensor functor will be denoted by $\F_{N}^{\CC, \LL}$.
\end{propo}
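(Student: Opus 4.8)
The plan is to transcribe, almost verbatim, the proof of Theorem~\ref{thm:projectivepartitionaction}, the present situation being in fact slightly simpler since no analogue of the loop-removal Lemma~\ref{lem:removedloopsprojective} is needed. Since by Theorem~\ref{thm:tannakakrein} the space $\mor_{\G_{N}(\CC)}(k,l)$ is linearly spanned by the operators $T_{r}$ with $r\in\CC(k,l)$, it is enough to produce a well-defined linear map on that span and then to verify conditions~\ref{cond:1} and~\ref{cond:2} of Definition~\ref{de:weakunitarytensorfunctor}. Concretely $\varphi(r)$ is just the restriction of $T_{r}$ to $K_{k}^{\LL}\subseteq H^{\otimes k}$ (recall $T_{r}\circ T_{p}=N^{\rl(r,p)}T_{rp}$, so up to a positive scalar $\varphi(r)(T_{p})$ is $T_{rp}$).

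First I would check well-definedness. That $\varphi(r)$ maps $K_{k}^{\LL}$ into $K_{l}^{\LL}$ is immediate from axiom~(2) of a module of line partitions: $\varphi(r)(T_{p})$ is a positive multiple of $T_{rp}$ and $rp\in\LL_{l}$, so it lies in $K_{l}^{\LL}$. That $r\mapsto\varphi(r)$ descends to $\spa\{T_{r}\mid r\in\CC(k,l)\}$ is proved as in Proposition~\ref{prop:welldefined}: if $\sum_{r}\lambda_{r}T_{r}=0$, then for every $p\in\LL_{k}$ one computes, using the inner product formula $\langle T_{a},T_{b}\rangle=N^{\rl(a,b^{*})}$ on $K_{n}^{\LL}$ together with the composition rule for the maps $T$, that $\|\sum_{r}\lambda_{r}\varphi(r)(T_{p})\|^{2}=\|(\sum_{r}\lambda_{r}T_{r})(T_{p})\|^{2}=0$, hence $\sum_{r}\lambda_{r}\varphi(r)=0$ on $K_{k}^{\LL}$.

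Next I would verify \ref{cond:1} and \ref{cond:2}. For \ref{cond:1}: linearity holds by construction; $\varphi(\id)=\id$ because $\id_{H^{\otimes k}}=T_{\vert^{\odot k}}$ and $\vert^{\odot k}p=p$; and $\varphi(T_{r}^{*})=\varphi(T_{r})^{*}$ and $\varphi(T_{r}T_{s})=\varphi(T_{r})\varphi(T_{s})$ follow from $T_{r}^{*}=T_{r^{*}}$ and $T_{r}\circ T_{s}=N^{\rl(r,s)}T_{rs}$, once one knows the subspaces $K_{n}^{\LL}$ are stable under the operators involved, which is again axiom~(2) applied to $r$, $s$ and $r^{*}$; concretely one checks $\varphi(r)\varphi(s)(T_{p})=N^{\rl(r,s)}\varphi(rs)(T_{p})$ and $\langle\varphi(r)(T_{p}),T_{q}\rangle=\langle T_{p},\varphi(r^{*})(T_{q})\rangle$ by the same loop bookkeeping as in Theorem~\ref{thm:projectivepartitionaction}. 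For \ref{cond:2}: the maps $\iota_{k,l}\colon K_{k}^{\LL}\otimes K_{l}^{\LL}\to K_{k+l}^{\LL}$, $T_{p}\otimes T_{q}\mapsto T_{p\odot q}$, are well defined by axiom~(1), and they are isometric because they are nothing but the restriction of the canonical identification $H^{\otimes k}\otimes H^{\otimes l}\cong H^{\otimes(k+l)}$, under which $T_{p}\otimes T_{q}=T_{p\odot q}$; their coherence (associativity) identity is inherited from that identification; and the compatibility $\varphi(r\odot s)\circ\iota_{k,k'}=\iota_{l,l'}\circ(\varphi(r)\otimes\varphi(s))$ follows by restricting the identity $T_{r\odot s}=T_{r}\otimes T_{s}$ to $K_{k}^{\LL}\otimes K_{k'}^{\LL}$. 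At that point Theorem~\ref{thm:tkergodic} applies and yields the weak unitary tensor functor $\F_{N}^{\CC,\LL}$.

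The only genuinely delicate point is the well-definedness step: one must make sure that passing from a partition $r$ to the operator $T_{r}$ loses no information, i.e.\ that two diagrams giving the same operator give the same map $\varphi(r)$, and that the loops produced by the vertical concatenation $rp$ are accounted for by the correct powers of $N$. This is exactly what the module axioms and the loop-counting relations for the $T$-maps handle, in parallel with the projective case; everything after that is a routine transcription.
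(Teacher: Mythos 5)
Your proposal is correct and follows essentially the same route as the paper: $\varphi(r)$ is the restriction of $T_{r}$ to $K_{k}^{\LL}$ (so well-definedness and linearity on $\spa\{T_{r}\}$ are immediate), the module axioms guarantee stability of the subspaces, and compatibility with adjoints, composition and tensor products reduces to the standard rules $T_{r}^{*}=T_{r^{*}}$, $T_{r}\circ T_{s}=N^{\rl(r,s)}T_{rs}$ and $T_{r\odot s}=T_{r}\otimes T_{s}$, exactly as in the paper's (terser) argument. Your observation that no analogue of Lemma \ref{lem:removedloopsprojective} is needed here is also the point the paper makes implicitly by saying everything "follows from the definitions".
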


\begin{proof}
This times, the fact that the maps $\varphi(r)$ are well-defined and that there exists a linear map $\mor_{\G_{N}(\CC)}(k, l)\to \B(K_{k}^{\LL}, K_{l}^{\LL})$ sending $T_{r}$ to $\varphi(t)$ follow from the definitions. These maps behave well with respect to the inner product,
\begin{align*}
\langle \varphi(r)(T_{p}), T_{q}\rangle & = N^{\rl(q^{*}, rp) + \rl(r, p)} \\
& = N^{\rl(q^{*}r, p) + \rl(q^{*}, r)} \\
& = N^{\rl((r^{*}q)^{*}, p) + \rl(r^{*}, q)} \\
& = \langle T_{p}, \varphi(r^{*})T_{q}\rangle.
\end{align*}
Compatibility with composition follows from a similar computation,
\begin{align*}
\varphi(r)(\varphi(r')(T_{p})) & = N^{\rl(r, r'p) + \rl(r', p)}T_{r\circ r'\circ p} \\
& = N^{\rl(rr', p) + \rl(r, r')}T_{(r\circ r')\circ p} \\
& = N^{\rl(r, r')}\varphi(r\circ r')T_{p}. \qedhere
\end{align*}
As for Condition \ref{cond:3}, the proof follows the same line as in Theorem \ref{thm:projectivepartitionaction}, with this time the formula
\begin{equation*}
(\id_{H^{\otimes l}}\otimes T_{p})T_{q} = N^{\rl(\vert^{\odot l}\odot p^{*}, q)}T_{(\vert^{\odot l}\odot p^{*})q}.
\end{equation*}
\end{proof}

The most natural example of a module of line partitions over $\CC$ is given by $\LL_{n} = \CC'(0, n)$ for some category of partitions $\CC\subset \CC'$. It turns out that this example can be completely elucidated.

\begin{propo}
Let $\CC\subset \CC'$ be categories of partitions. Then, the action corresponding to $\F_{N}^{\CC, \CC'(0, \bullet)}$ is the translation action on the quotient space $C(\G_{N}(\CC')\backslash\G_{N}(\CC))$.
\end{propo}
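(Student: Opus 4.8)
The plan is to reduce the statement to Proposition~\ref{prop:projectivequotient} by identifying the weak unitary tensor functor attached to the module of line partitions $\CC'(0,\bullet)$ over $\CC$ with the one attached to the module of projective partitions $\Proj^0_{\CC'}$ over $\CC$. Write $\G=\G_N(\CC)$ and $\HH=\G_N(\CC')$. The backbone is a combinatorial dictionary between the two modules: for $s\in\CC'(0,k)$ the partition $p_s:=ss^*\in P(k,k)$ lies in $\CC'$ (stability under reflection and composition), is projective (since $(ss^*)^*=ss^*$ and $(ss^*)(ss^*)=ss^*$ after removing loops, as $s^*s$ is the empty partition) and has no through-block, so $p_s\in\Proj^0_{\CC'}(k)$; conversely every $p\in\Proj^0_{\CC'}(k)$ has its upper row equal to the reflection of its lower row and hence equals $p_s$ for $s\in P(0,k)$ the partition induced on the lower row, with $T_{p_s}=T_sT_s^*$. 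Since the all-ones tuple is compatible with every partition one has $T_s^*(e_1^{\otimes k})=1$ and therefore $\eta_{p_s}=T_{p_s}(e_1^{\otimes k})=T_s\bigl(T_s^*(e_1^{\otimes k})\bigr)=T_s$. The key point — which I would extract from the through-block structure theory of projective partitions in \cite{FW16} (cf.\ the discussion around \cite[Prop.~3.7]{F14}) — is that, because $t(p)=0$, the range of the projection $Q_{p_s}=N^{-\beta(p_s)/2}T_{p_s}$ is a trivial subrepresentation of $u^{\boxtimes k}$, so $\eta_{p_s}=T_s\in\mor_{\HH}(\triv,u^{\boxtimes k})$, and that as $p$ ranges over $\Proj^0_{\CC'}(k)$ these lines span all of $\mor_{\HH}(\triv,u^{\boxtimes k})$. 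Combined with Theorem~\ref{thm:tannakakrein} applied to $\CC'$, this gives $K_k^{\Proj^0_{\CC'}}=\mor_{\HH}(\triv,u^{\boxtimes k})=K_k^{\CC'(0,\bullet)}$ as subspaces of $H^{\otimes k}$, with the same inner product.

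The second step is to check that the two functors carry the same structure maps under the bijection $s\leftrightarrow p_s$. For the isometric embeddings this is immediate from $p_s\odot p_t=(s\odot t)(s\odot t)^*=p_{s\odot t}$, which makes both $\iota_{k,l}$'s the restriction of the canonical map $H^{\otimes k}\otimes H^{\otimes l}\to H^{\otimes(k+l)}$. For the action of a partition morphism $T_r$ with $r\in\CC(k,l)$: on the line side $\varphi(r)(T_s)=T_r(T_s)$, whereas on the projective side $\varphi(r)(\eta_{p_s})=N^{\rl(r,p_s)}\eta_{rp_sr^*}$; writing $rp_sr^*=(rs)(rs)^*=p_{rs}$ after removing loops, the latter equals $N^{\rl(r,p_s)}T_{rs}=N^{\rl(r,p_s)-\rl(r,s)}T_r(T_s)$, so the two expressions agree once one establishes the loop identity $\rl(r,ss^*)=\rl(r,s)$, which holds because the part of $ss^*$ lying on the upper row is disconnected from the rest and therefore contributes no loop in the vertical composition with $r$ placed below it. The remaining data of the functors — the antilinear maps $J_k$ and condition~\ref{cond:conjugate} of Theorem~\ref{thm:tkergodic} — are determined by the Hilbert spaces, the morphism maps and the embeddings, hence coincide automatically. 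Thus $\F_N^{\CC,\CC'(0,\bullet)}$ and $\F_N^{\CC,\Proj^0_{\CC'}}$ restrict to identical weak unitary tensor functor data on the tensor powers $u^{\boxtimes n}$.

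By the uniqueness clause of Theorem~\ref{thm:tkergodic} the two functors then agree on all of $\rep(\G)$, hence by Theorem~\ref{theo:tannakakreinpinzarirobert} they are attached to $\G$-equivariantly isomorphic ergodic actions; since Proposition~\ref{prop:projectivequotient} identifies the action coming from $\F_N^{\CC,\Proj^0_{\CC'}}$ with the translation action on $C(\G_N(\CC')\backslash\G_N(\CC))$, so is the one coming from $\F_N^{\CC,\CC'(0,\bullet)}$, which is the assertion. I expect the only genuine work to be the partition bookkeeping of the second step — clean proofs of the loop identities $\rl(r,ss^*)=\rl(r,s)$ and $rp_sr^*=p_{rs}$ — together with the fact that this argument is not self-contained, as it invokes Proposition~\ref{prop:projectivequotient}, whose own proof is deferred to Section~\ref{sec:induction}. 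If one prefers a self-contained proof, the alternative is to compute the spectral functor of the translation action directly: one identifies $\mor_{\G}(u^{\boxtimes n},\com_{\mid C(\HH\backslash\G)})$ with $\mor_{\HH}(\triv,u^{\boxtimes n})=\spa\{T_p:p\in\CC'(0,n)\}$ by sending $T$ to the vector representing the functional $\cou_{\G}\circ T$, the subtlety being to keep track of the complex conjugations $c_n$ when matching Hilbert-space structures in the self-conjugate orthogonal setting.
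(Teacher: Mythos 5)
Your main route breaks at its very first step, namely the identification $K_k^{\Proj^0_{\CC'}}=\mor_{\HH}(\triv,u^{\boxtimes k})=K_k^{\CC'(0,\bullet)}$. The inclusion $K_k^{\CC'(0,\bullet)}\subseteq K_k^{\Proj^0_{\CC'}}$ is correct (for $s\in\CC'(0,k)$ one has $ss^*\in\Proj^0_{\CC'}(k)$ and indeed $\eta_{ss^*}=T_s$), but the converse direction fails: writing $p=ss^*$ with $s\in P(0,k)$ the lower-row partition, nothing guarantees $s\in\CC'(0,k)$, and when $s\notin\CC'(0,k)$ the vector $\eta_p=T_s$ need not be $\HH$-invariant. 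Concretely, let $\CC'$ be the category of $B_N^{\prime +}$ (non-crossing partitions with blocks of size at most two and an even number of singletons): the double singleton $s_2\in P(1,1)$ lies in $\Proj^0_{\CC'}(1)$, but the single singleton does not lie in $\CC'(0,1)$, and $\eta_{s_2}=\sum_i e_i$ spans a one-dimensional subrepresentation of $u$ on which $\G_N(\CC')$ acts by the non-trivial character $\sum_j u_{ij}$, so $K_1^{\Proj^0_{\CC'}}$ is one-dimensional while $\mor_{\HH}(\triv,u)=0$. This also refutes your intermediate claim that $t(p)=0$ forces $\mathrm{Ran}(Q_{p})$ to be a \emph{trivial} subrepresentation: a rank-one invariant projection only produces a character, and the equivalence $p\sim\emptyset$ in the sense of \cite{FW16} requires the half $s$ to belong to the category. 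Hence the two functors $\F_N^{\CC,\CC'(0,\bullet)}$ and $\F_N^{\CC,\Proj^0_{\CC'}}$ are genuinely different in the stated generality, and the reduction to Proposition \ref{prop:projectivequotient} collapses (note that the paper only claims that isomorphism of functors under a non-crossing hypothesis, and via the map $T_s\mapsto\eta_{ss^*}$, which is exactly the direction you cannot invert). Your combinatorial identities $\rl(r,ss^*)=\rl(r,s)$ and $r(ss^*)r^*=(rs)(rs)^*$ are fine, but they only ever prove an embedding of the line functor into the projective one, not an equality.

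The correct argument is the one you relegate to your final sentence, and it is the paper's proof: by Theorem \ref{thm:tannakakrein} applied to $\CC'$, $K_n^{\CC'(0,\bullet)}=\spa\{T_p\mid p\in\CC'(0,n)\}=\mor_{\G_N(\CC')}(\triv,u^{\boxtimes n})$ is exactly the space of $\G_N(\CC')$-fixed vectors in $H^{\otimes n}$, the structure maps of $\F_N^{\CC,\CC'(0,\bullet)}$ (restriction of the operators $T_r$ and horizontal concatenation) are those of the spectral functor of the translation action, and one concludes by \cite[Thm 11.1]{PR07} or, equivalently, by Proposition \ref{prop:induction_cases} together with Theorem \ref{thm:induction}. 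No passage through projective partitions is needed, and the conjugation bookkeeping you worry about is already absorbed in Theorem \ref{thm:tkergodic}.
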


\begin{proof}
Observe that the Hilbert spaces $K_{n}^{\LL}$ are exactly the spaces fixed vectors in $H^{\otimes n}$ for the fundamental representation of the quantum subgroup $\G_{N}(\CC')$. The result therefore directly follows from \cite[Thm 11.1]{PR07}.
\end{proof}

\begin{rem}
This result can also be recovered using induction, see Section \ref{sec:induction}.
\end{rem}

These are the only examples that we were able to identify in this setting. However, as we will see hereafter, it is possible to modify the definition of $\F_{N}^{\CC, \LL}$ to encompass examples of different types. But before turning to this, we would like to make a connection with projective partitions.

\begin{propo}
Let $\CC$ be a category of non-crossing partitions. Then, the functor $\F_{N}^{\CC, \CC'(0, \bullet)}$ is isomorphic to the functor $\F_{N}^{\CC, \Proj_{\CC'}^{0}}$.
\end{propo}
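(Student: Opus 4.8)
The plan is to show that, after the obvious identification of the underlying Hilbert spaces, the two functors are literally \emph{equal}, so that the desired isomorphism is the identity natural transformation.

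I would first observe that both constructions produce, for $r\in\CC(k,l)$, the operator which is simply the restriction of $T_{r}\in\B(H^{\otimes k},H^{\otimes l})$ to the relevant subspace of $H^{\otimes k}$: for the shifted--line--partition functor this is immediate once one reads ``$\varphi(r)\colon T_{p}\mapsto T_{rp}$'' as composition of the operators $T_{r}$ and $T_{p}$ (so that the factor $N^{\rl(r,p)}$ is absorbed), and for the projective--partition functor it follows from the formula $\varphi(r)(\eta_{p})=N^{\rl(r,p)}\eta_{rpr^{*}}$ together with Lemma~\ref{lem:removedloopsprojective}. Likewise, in both cases the isometric embeddings $\iota_{k,l}$ are the maps induced by horizontal concatenation, $T_{p}\otimes T_{q}\mapsto T_{p\odot q}$, and the inner products are those induced from $H^{\otimes n}$. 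Hence it is enough to prove that the two families of Hilbert spaces agree as subspaces of $H^{\otimes n}$, namely that $\spa\{T_{p}\mid p\in\CC'(0,n)\}=\spa\{\eta_{\pi}\mid\pi\in\Proj_{\CC'}^{0}(n)\}$ for all $n$, and that this identification is compatible with horizontal and vertical concatenation.

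The bridge between the two sides is the ``doubling'' map $p\mapsto\widehat p:=p\odot p^{*}$, which sends a line partition $p\in\CC'(0,n)$ to the projective partition of $\CC'(n,n)$ carrying the partition of $p$ on both rows; it has no through-block, so $\widehat p\in\Proj_{\CC'}^{0}(n)$, and since $T_{\widehat p}=T_{p}\otimes T_{p^{*}}$, evaluating at $e_{1}^{\otimes n}$ yields $\eta_{\widehat p}=T_{p}$ as vectors of $H^{\otimes n}$. I would then check:
\begin{enumerate}[label=\textup{(\roman*)}]
\item $p\mapsto\widehat p$ is a bijection from $\CC'(0,n)$ onto $\Proj_{\CC'}^{0}(n)$: injectivity is clear (recover $p$ from the lower row of $\widehat p$), while for surjectivity one notes that a projective partition $\pi$ with $t(\pi)=0$ automatically satisfies $\pi=\pi^{*}$ and has no connection between its two rows, hence equals $\widehat q$ for the line partition $q\in P(0,n)$ recording one of its rows, and $q\in\CC'$ by the structure theory of projective partitions in the non-crossing case from~\cite{FW16};
\item $\widehat{p\odot q}=\widehat p\odot\widehat q$ and $r\,\widehat p\,r^{*}=\widehat{rp}$ for $r\in\CC(k,l)$ and $p\in\CC'(0,k)$, together with $\rl(r,\widehat p)=\rl(r,p)$ (all obtained by directly inspecting the diagrams, using Lemma~\ref{lem:removedloopsprojective} to handle the removed loops).
\end{enumerate}
Granting these, (i) gives the equality of Hilbert spaces, and (ii) shows that the embeddings $\iota_{k,l}$ and the operators $\varphi(T_{r})$ of the two constructions agree under the identification $T_{p}\leftrightarrow\eta_{\widehat p}$; so $\F_{N}^{\CC,\CC'(0,\bullet)}$ and $\F_{N}^{\CC,\Proj_{\CC'}^{0}}$ coincide, and in particular are isomorphic.

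The only genuinely nontrivial point — and the place where the non-crossing hypothesis enters — is the surjectivity in (i), i.e.\ the fact that the line partition underlying a through-block-free projective partition of $\CC'$ again belongs to $\CC'$; everything else is routine diagram bookkeeping, the thing to watch being that the loop counts $\rl(\cdot,\cdot)$ match so that no spurious power of $N$ appears. Alternatively one can bypass the combinatorics entirely: by the previous proposition the ergodic action attached to $\F_{N}^{\CC,\CC'(0,\bullet)}$ is the translation action on $C(\G_{N}(\CC')\backslash\G_{N}(\CC))$, and by Proposition~\ref{prop:projectivequotient} the action attached to $\F_{N}^{\CC,\Proj_{\CC'}^{0}}$ is $\G_{N}(\CC)$-equivariantly isomorphic to the same action, so the uniqueness (up to natural unitary monoidal isomorphism) of the weak unitary tensor functor attached to an ergodic action in Theorem~\ref{theo:tannakakreinpinzarirobert} forces $\F_{N}^{\CC,\CC'(0,\bullet)}\cong\F_{N}^{\CC,\Proj_{\CC'}^{0}}$; the drawback of this route is that Proposition~\ref{prop:projectivequotient} is only established later, via the induction machinery of Section~\ref{sec:induction}.
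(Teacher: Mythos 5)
Your proof is correct and follows essentially the same route as the paper: the paper's own argument consists precisely of the map $\Phi\colon T_{p}\mapsto\eta_{pp^{*}}$ (your doubling $\widehat{p}=p\odot p^{*}$ coincides with the vertical composite $pp^{*}$, as there are no middle points), together with the verification that it respects the tensor structure and intertwines the maps $\varphi(r)$ via $\rl(r,pp^{*})=\rl(r,p)$ and $r(pp^{*})r^{*}=(rp)(rp)^{*}$. The only difference is that you additionally check that $p\mapsto pp^{*}$ maps $\CC'(0,n)$ onto $\Proj_{\CC'}^{0}(n)$ (so that $\Phi$ is genuinely a unitary and not merely an isometric embedding), a point the paper's proof leaves implicit; this is a refinement rather than a divergence.
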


\begin{proof}
Consider the linear map $\Phi : K_{N}^{\CC, \CC'(0, \bullet)}\to K_{N}^{\CC, \Proj_{\CC}^{0}}$ sending $T_{p}$ to $\eta_{pp^{*}}$. This is well-defined because we assume $\CC$ to be non-crossing, hence the vectors $T_{p}$ are linearly independent. Moreover, $\Phi$ respects tensor product. Eventually, for $r\in \CC(k, l)$ and $p\in \CC'(0, k)$,
\begin{equation*}
\Phi\circ\varphi(r)(T_{p})) = N^{\rl(r, p)}\Phi(T_{rp}) = N^{\rl(r, p)}\eta_{rpp^{*}r^{*}} = \varphi(r)\circ\Phi(T_{p})
\end{equation*}
and the proof is complete.
\end{proof}

\subsubsection{Shift by one}

We now want to investigate the possibility of extending the previous construction to line partitions which are shifted. By this, we mean that we fix an integer $k_{0}$ and consider sets of line partitions $\LL_{n}\subset P(0, n+k_{0})$. Given a partitions $r\in \CC(k, l)$, we can still define $\varphi(r) : T_{p} \mapsto T_{r\odot\vert^{\odot k_{0}}}\circ T_{p}$ and the compatibility with composition and adjoints still holds. However, there is a problem concerning tensor products. Indeed, if $p\in P(0, k+k_{0})$ and $q\in P(0, l+k_{0})$, how can we build from them a partition in $P(0, k+l+k_{0})$? We must cancel $k_{0}$ points, but there are many ways of doing that, which need not yield a weak unitary tensor functor.

We will give here one construction for $k_{0} = 1$ which works only for specific choices of $\LL$. It relies on the following ``shifted horizontal concatenation operation''. If $p\in P(0, k+1)$ and $q\in P(0, k+1)$, we produce a partition $p\odot_{1}q\in P(0, k+l+1)$ by applying the following procedure
\begin{itemize}
\item Rotate the right-most point of $p$ to its left;
\item Make the horizontal concatenation with $q$;
\item Rotate back the now left-most point to the right;
\item Merge the last two points of the obtained partition into one point.
\end{itemize}
Here is a pictorial example,
\begin{center}
\begin{tikzpicture}[scale=0.75]
\draw (-5, 0)--(-5,1);
\draw (-4, 0)--(-4,0.5);
\draw (-3, 0)--(-3,0.5);
\draw (-2, 0)--(-2,1);
\draw (-1, 0)--(-1,1);

\draw (-4,0.5)--(-3,0.5);
\draw (-5,1)--(-1, 1);

\draw (0, 0.5)node{$\odot_{1}$};

\draw (4, 0)--(4,1);
\draw (3, 0)--(3,1);
\draw (2, 0)node{$\cdot$};
\draw (1, 0)--(1,1);

\draw (4, 1)--(1, 1);


\draw (5, 0.5)node{$\rightsquigarrow$};

\draw (6, 0)--(6, 1);
\draw (7, 0)--(7, 0.5);
\draw (8, 0)--(8, 0.5);
\draw (9, 0)--(9, 1);
\draw (10, 0)--(10, 0.5);
\draw (11, 0)node{$\cdot$};
\draw (12, 0)--(12, 0.5);
\draw (13, 0)--(13, 0.5);
\draw (14, 0)--(14, 1);

\draw (6, 1)--(14, 1);
\draw (7, 0.5)--(8, 0.5);
\draw (10, 0.5)--(13, 0.5);

\draw [dashed] (13, 0)--(13, -0.5);
\draw [dashed] (14, 0)--(14, -0.5);
\draw [dashed] (13, -0.5)--(14, -0.5);
\draw [dashed] (13.5, -0.5)--(13.5,-1);


\draw (5, -1.5)node{$\rightsquigarrow$};

\draw (6, -2)--(6, -1);
\draw (7, -2)--(7, -1.5);
\draw (8, -2)--(8, -1.5);
\draw (9, -2)--(9, -1);
\draw (10, -2)--(10, -1);
\draw (11, -2)node{$\cdot$};
\draw (12, -2)--(12, -1);
\draw (13, -2)--(13, -1);

\draw (6, -1)--(13, -1);
\draw (7, -1.5)--(8, -1.5);

\end{tikzpicture}
\end{center}
It is easy to see that $T_{p}\otimes T_{q}\mapsto T_{p\odot_{1}q}$ is isometric. However, it is not clear that the Hilbert spaces $(K_{n}^{\LL})_{n\in \N}$ are stable under it.

\begin{propo}\label{prop:functorshiftbyone}
Let $\CC\subset \CC'$ be categories of partitions such that $\CC'$ contains $NC$. Then, the spaces $(K_{n}^{\CC'(0, \bullet + 1)})_{n\in \N}$ and the map $\varphi$, together with the previous weak tensor structure, satisfy the assumptions of Theorem \ref{thm:tkergodic}.
\end{propo}

\begin{proof}
The inclusion $\CC\subset \CC'$ means that the corresponding functor can be written as the composition of an inclusion functor $\rep(\G_{N}(\CC))\subset \rep(\G_{N}(\CC'))$ and a functor on $\rep(\G_{N}(\CC'))$. To prove that the second one is a weak tensor functor, we will describe the corresponding ergodic action.

The assumption $NC\subset \CC'$ means that $\G_{N}(\CC')$ is a quantum subgroup of $S_{N}^{+} = \G_{N}(NC)$. As a consequence, it has a natural action on $H = \C^{N}$ whose spectral decomposition reduces to the fundamental representation $u$. Thus, for any integer $n$,
\begin{equation*}
\left(H^{\otimes n}\otimes \C^{N}\right)^{\G_{N}(\CC')} = \left(H^{\otimes (n+1)}\right)^{\G_{N}(\CC')} = \spa\{T_{p} \mid p\in \CC'(0, n+1)\}.
\end{equation*}
To prove that the tensor product structure is the correct one, let us recall how it is define in general for the fixed points of a general ergodic action $\alpha$ on a C*-algebra $B$. We can write arbitrary elements of $H^{\otimes n}\otimes B$ as
\begin{equation*}
\sum_{i_{1}, \dots, i_{n}}e_{i_{1}}\otimes \cdots \otimes e_{i_{n}}\otimes b_{i_{1}\cdots i_{n}} = \sum_{\mathbf{i}} e_{\mathbf{i}}\otimes b_{\mathbf{i}}.
\end{equation*}
Such an element is invariant if and only if $\alpha(b_{\mathbf{i}}) = \sum_{\mathbf{j}}b_{\mathbf{j}}\otimes u_{\mathbf{j}\mathbf{i}}$. Given two such invariant vectors $x = \sum_{\mathbf{i}}e_{\mathbf{i}}\otimes b_{\mathbf{i}}$ and $y = \sum_{\mathbf{j}}e_{\mathbf{j}}\otimes b_{\mathbf{j}}$, we set
\begin{equation*}
\iota_{k, l}(x\otimes y) = \sum_{\mathbf{i}\mathbf{j}}e_{\mathbf{i}}\otimes e_{\mathbf{j}}\otimes b_{\mathbf{i}}b_{\mathbf{j}}.
\end{equation*}
In our setting, we have
\begin{align*}
T_{p} & = \sum_{\mathbf{i}}\delta_{p}(\mathbf{i})e_{i_{1}}\otimes \cdots \otimes e_{i_{k+1}} \\
& = \sum_{i_{1}, \dots, i_{k}}\left(e_{i_{1}}\otimes\cdots\otimes e_{i_{k}}\right)\otimes\left(\sum_{i_{k+1}}\delta_{p}(\mathbf{i})e_{i_{k+1}}\right)
\end{align*}
so that
\begin{equation*}
b_{i_{1}\cdots i_{k}} = \sum_{i_{k+1}=1}^{N} \delta_{p}(i_{1}, \dots, i_{k}, i_{k+1})e_{i_{k+1}}.
\end{equation*}
The formula for the tensor product then yields
\begin{align*}
\iota_{k, l}(T_{p}\otimes T_{q}) & = \sum_{i_{1}, \dots, i_{k}, j_{1}, \dots, j_{l}}e_{i_{1}}\otimes \cdots\otimes e_{i_{k}}\otimes e_{j_{1}}\otimes \cdots\otimes e_{j_{l}}\otimes\left(\sum_{i_{k+1}, j_{l+1}}\delta_{p}(\mathbf{i})\delta_{q}(\mathbf{j})e_{i_{k+1}}e_{j_{l+1}}\right) \\
& = \sum_{\mathbf{i}\mathbf{j}}\delta_{p\odot q}(\mathbf{i}\cdot\mathbf{j})\delta_{e_{i_{k+1}}, e_{j_{l+1}}}e_{i_{1}}\otimes \cdots\otimes e_{i_{k}}\otimes e_{j_{1}}\otimes \cdots\otimes e_{j_{l+1}} \\
& = T_{p\odot_{1}q}. \qedhere
\end{align*}
We have proven that the functor comes from an ergodic action, hence Condition \ref{cond:3} is also satisfied and the proof is complete.
\end{proof}

In the special case $\CC = \CC'$, the corresponding action is simply the permutation action of $\G_{N}(\CC')$ on $\C^{N}$. In general, the action is induced from that one, see Section \ref{sec:induction}.

\subsubsection{Shift by two}

Our last example will be a variation of the previous one where we shift the partition by $k_{0} = 2$ instead of $1$. This times, we can give a construction which works in full generality. Let us first introduce the tensor product structure. If $p\in P(0, k+2)$ and $s\in P(0, q+2)$, we produce a partition $p\odot_{2}q\in P(0, k+l+2)$ by applying the following procedure
\begin{itemize}
\item Rotate the two right-most points of $p$ to its left;
\item Make the horizontal concatenation with $q$;
\item Rotate back the now two left-most points to the right;
\item Cap the the second and third points from the right with $\sqcap$.
\end{itemize}
Here is a pictorial example,
\begin{center}
\begin{tikzpicture}[scale=0.75]
\draw (-5, 0)--(-5,1);
\draw (-4, 0)--(-4,0.5);
\draw (-3, 0)--(-3,0.5);
\draw (-2, 0)--(-2,1);
\draw (-1, 0)--(-1,1);

\draw (-4,0.5)--(-3,0.5);
\draw (-5,1)--(-1, 1);

\draw (0, 0.5)node{$\odot_{2}$};

\draw (4, 0)--(4,1);
\draw (3, 0)node{$\cdot$};
\draw (2, 0)--(2,1);
\draw (1, 0)--(1,1);

\draw (4, 1)--(1, 1);


\draw (5, 0.5)node{$\rightsquigarrow$};

\draw (6, 0)--(6, 1);
\draw (7, 0)--(7, 0.5);
\draw (8, 0)--(8, 0.5);
\draw (9, 0)--(9, 0.5);
\draw (10, 0)--(10, 0.5);
\draw (11, 0)node{$\cdot$};
\draw (12, 0)--(12, 0.5);
\draw (13, 0)--(13, 1);
\draw (14, 0)--(14, 1);

\draw (6, 1)--(14, 1);
\draw (7, 0.5)--(8, 0.5);
\draw (9, 0.5)--(12, 0.5);

\draw [dashed] (12, 0)--(12, -0.5);
\draw [dashed] (13, 0)--(13, -0.5);
\draw [dashed] (12, -0.5)--(13, -0.5);


\draw (5, -1.5)node{$\rightsquigarrow$};

\draw (6, -2)--(6, -1);
\draw (7, -2)--(7, -1.5);
\draw (8, -2)--(8, -1.5);
\draw (9, -2)--(9, -1);
\draw (10, -2)--(10, -1);
\draw (11, -2)node{$\cdot$};
\draw (12, -2)--(12, -1);

\draw (6, -1)--(12, -1);
\draw (7, -1.5)--(8, -1.5);

\end{tikzpicture}
\end{center}

\begin{propo}
Let $\CC\subset \CC'$ be categories of partitions. Then, the spaces $(K_{n}^{\CC'(0, \bullet + 2)})_{n\in \N}$ and the map $\varphi$, together with the previous weak tensor structure, satisfy the assumptions of Theorem \ref{thm:tkergodic}.
\end{propo}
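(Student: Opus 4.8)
The plan is to run the same argument as for the shift-by-one proposition, the genuinely new work being a bookkeeping of the two ``shifted'' legs. The functoriality axioms \ref{cond:1} of Definition~\ref{de:weakunitarytensorfunctor} — linearity, compatibility with $*$ and with composition — hold for any shift, as was noted when the shifted construction was introduced; so the content is axiom \ref{cond:2}: that $\iota_{k,l}\colon T_p\otimes T_q\mapsto T_{p\odot_{2}q}$ is a well-defined isometry into $K_{k+l}^{\CC'(0,\bullet+2)}$, that the $\iota$'s are coherent, and that $\varphi(T_r\otimes T_s)\circ\iota=\iota\circ(\varphi(T_r)\otimes\varphi(T_s))$. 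One also has to check $K_0=\C$; since $K_0=\spa\{T_p:p\in\CC'(0,2)\}$ this holds precisely when $\CC'$ has no lower-row singleton block, equivalently when $u_{\G_N(\CC')}$ is irreducible, which is the case for the main examples ($O_N^+$, $H_N^+$, \dots) and is presumably the intended standing hypothesis.

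First I would fix the conceptual picture. Theorem~\ref{thm:tannakakrein} applied to $\G_N(\CC')$ gives $K_n^{\CC'(0,\bullet+2)}=\spa\{T_p:p\in\CC'(0,n+2)\}=(H^{\otimes(n+2)})^{\G_N(\CC')}$. Identifying, $\G_N(\CC')$-equivariantly and using the self-conjugacy of $u$, $H^{\otimes(n+2)}=H^{\otimes n}\otimes(H\otimes\overline H)=H^{\otimes n}\otimes M_N(\C)$, where $M_N(\C)$ carries the adjoint action $T\mapsto u(T\otimes 1)u^*$, we obtain $K_n^{\CC'(0,\bullet+2)}=(H^{\otimes n}\otimes M_N(\C))^{\G_N(\CC')}$. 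This is exactly the spectral picture of the ergodic action of $\G_N(\CC)$ obtained by inducing the adjoint action of $\G_N(\CC')$ on $M_N(\C)$ along the quantum subgroup $\G_N(\CC')\subseteq\G_N(\CC)$ (compare the shift-by-one case, where $M_N(\C)$ is replaced by $\C^N$ with its tautological action, which for $\CC=\CC'$ is the permutation action and in general its induction, see Section~\ref{sec:induction}): by Frobenius reciprocity $\mor_{\G_N(\CC)}(u^{\boxtimes n},\Ind)\cong\mor_{\G_N(\CC')}(u^{\boxtimes n},u\boxtimes\overline u)\cong(H^{\otimes(n+2)})^{\G_N(\CC')}$. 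This makes \ref{cond:1} transparent and, crucially, dictates what the weak tensor structure on $(K_n^{\CC'(0,\bullet+2)})_n$ must be.

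Next I would compute that structure and identify it with $\odot_{2}$. For the fixed-point spaces of an action $\alpha$ on $B$, the embedding $\iota$ of Theorem~\ref{theo:tannakakreinpinzarirobert} sends $\bigl(\sum_{\mathbf i}e_{\mathbf i}\otimes b_{\mathbf i}\bigr)\otimes\bigl(\sum_{\mathbf j}e_{\mathbf j}\otimes c_{\mathbf j}\bigr)$ to $\sum_{\mathbf i,\mathbf j}e_{\mathbf i}\otimes e_{\mathbf j}\otimes b_{\mathbf i}c_{\mathbf j}$, so with $B=M_N(\C)$ it is the product of $M_N(\C)$ that governs $\iota$. Writing $M_N(\C)=\C^N\otimes\overline{\C^N}$, that product, $(\xi\otimes\overline\eta)(\xi'\otimes\overline{\eta'})=\langle\xi',\eta\rangle\,\xi\otimes\overline{\eta'}$, contracts the two ``inner'' tensor legs against the evaluation pairing, which under the self-conjugacy identification is nothing but $T_{\sqcap}$. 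Read on diagrams, the recipe ``rotate the two rightmost legs of $p$ to the left, horizontally concatenate with $q$, rotate them back to the right, cap the two inner legs with $\sqcap$'' is precisely the assertion $\iota_{k,l}(T_p\otimes T_q)=T_{p\odot_{2}q}$: the rotations bring the two legs playing the role of the $M_N(\C)$-factors next to the legs of $q$, and the $\sqcap$-cap performs the contraction. Granting this, everything falls into place: $p\odot_{2}q\in\CC'(0,k+l+2)$ since $\CC'$ is a category, hence closed under rotation, horizontal concatenation and capping with $\sqcap\in NC_2\subseteq\CC'$; $\iota$ is isometric since $\iota$ is isometric for any ergodic action; the coherence $\iota_{k+l,m}(\iota_{k,l}\otimes\id)=\iota_{k,l+m}(\id\otimes\iota_{l,m})$ is the associativity $(p\odot_{2}q)\odot_{2}r=p\odot_{2}(q\odot_{2}r)$, i.e.\ associativity of the product of $M_N(\C)$; and $\varphi(T_r\otimes T_s)\circ\iota=\iota\circ(\varphi(T_r)\otimes\varphi(T_s))$ holds because on this model $\varphi(T_r)$ acts on the first block of legs and so commutes with contracting the inner ones.

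The step I expect to need real care is this diagrammatic identity, together with the well-definedness of $\iota$ itself: since the vectors $\{T_p:p\in\CC'(0,n+2)\}$ need not be linearly independent, one must show that $\sum_p\lambda_pT_p=0$ forces $\sum_p\lambda_pT_{p\odot_{2}q}=0$. Exactly as in the proof of Proposition~\ref{prop:welldefined}, this is done by expanding $\bigl\|\sum_p\lambda_pT_{p\odot_{2}q}\bigr\|^2$ as a sum of loop-counts $N^{\rl(\cdot,\cdot)}$ and using the category relations generated by $\odot_{2}$ to recognize it as $\bigl\|\Psi\bigl(\sum_p\lambda_pT_p\bigr)\bigr\|^2$ for a suitable operator $\Psi$ built from $q$; this simultaneously proves well-definedness and pins down the correct inner product on $K_n^{\CC'(0,\bullet+2)}$ — the one inherited from the ergodic action rather than the naive restriction from $H^{\otimes(n+2)}$, and the normalization is precisely where the two rotation steps of $\odot_{2}$ actually matter. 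With the loop bookkeeping for the three category operations entering $\odot_{2}$ carried out, the remaining axioms are routine, and $K_0=\C$ is checked directly.
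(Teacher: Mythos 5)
Your proposal follows essentially the same route as the paper: identify $K_n^{\CC'(0,\bullet+2)}$ with $(H^{\otimes n}\otimes M_N(\C))^{\G_N(\CC')}$ via the adjoint action (spectral decomposition $u\boxtimes u$), and then check by direct computation with matrix units that the multiplication-induced embedding $\iota_{k,l}$ sends $T_p\otimes T_q$ to $T_{p\odot_2 q}$, which makes isometry, coherence and compatibility with $\varphi$ automatic. Your side remark that $K_0=\spa\{T_p: p\in\CC'(0,2)\}=\C$ forces $u$ to be irreducible over $\G_N(\CC')$ is a fair observation that the paper's proof passes over silently, but it does not change the argument.
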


\begin{proof}
The proof is very similar to that of Proposition \ref{prop:functorshiftbyone}. Once again we split the functor as the composition of an inclusion and one coming from an ergodic action of $\G_{N}(\CC')$ and we simply have to describe the latter and check that it yields the correct functor.

The compact quantum group $\G_{N}(\CC')$ has an action on $M_{N}(\C)$ given by the formula
\begin{equation*}
M\in M_{N}(\C)\mapsto u^{*}(M\otimes \id)u.
\end{equation*}
Its spectral decomposition is simply $u\boxtimes u$, so that
\begin{equation*}
(H^{\otimes n}\otimes M_{N}(\C))^{\G_{N}(\CC')} \cong \left(H^{\otimes (n+2)}\right)^{\G_{N}(\CC')} \cong \spa\{T_{p}\mid p\in \CC'(0, n+2)\}.
\end{equation*}
Once again, all that we have to do is to check that the tensor product structure is the correct one. To do this, first observe that we are using an identification $H^{\otimes n+2}\simeq H^{\otimes n}\otimes M_{N}(\C)$ which is given on the basis vectors by
\begin{equation*}
e_{i_{1}}\otimes \cdots \otimes e_{i_{k}}\otimes e_{i_{k+1}}\otimes e_{i_{k+2}} \mapsto e_{i_{1}}\otimes \cdots \otimes e_{i_{k}}\otimes \left(e_{i_{k+1}}e_{i_{k+2}}^{*}\right).
\end{equation*}
Thus, using the usual notations, the invariant vectors are of the form
\begin{align*}
T_{p} & = \sum_{i_{1}, \dots, i_{k+2}}\delta_{p}(\mathbf{i})e_{i_{1}}\otimes \cdots \otimes e_{i_{k}}\otimes \left(e_{i_{k+1}}e_{i_{k+2}}^{*}\right) \\
& = \sum_{i_{1}, \dots, i_{k}}e_{i_{1}}\otimes \cdots \otimes e_{i_{k}}\otimes \left(\sum_{i_{k+1}, i_{k+2}}\delta_{p}(\mathbf{i})e_{i_{k+1}}e_{i_{k+2}}^{*}\right)
\end{align*}
so that
\begin{equation*}
b_{i_{1}\cdots i_{k}} = \sum_{i_{k+1}, i_{k+2}}\delta_{p}(\mathbf{i})e_{i_{k+1}}e_{i_{k+2}}^{*}.
\end{equation*}
The tensor product being given by the product of these elements, we get
\small
\begin{align*}
\iota_{k, l}(T_{p}\otimes T_{q}) & = \sum_{\mathbf{i}\mathbf{j}}e_{i_{1}}\otimes \cdots\otimes e_{i_{k}}\otimes e_{j_{1}}\otimes \cdots\otimes e_{j_{l}} \otimes \left(\sum_{i_{k+1}, i_{k+2}, j_{l+1}, j_{l+2}}\delta_{p}(\mathbf{i})\delta_{q}(\mathbf{j})e_{i_{k+1}}e_{i_{k+2}}^{*}e_{j_{l+1}}e_{j_{l+2}}^{*}\right) \\
& = \sum_{\mathbf{i}\mathbf{j}}e_{i_{1}}\otimes \cdots\otimes e_{i_{k}}\otimes e_{j_{1}}\otimes \cdots\otimes e_{j_{l}} \otimes\left(\sum_{i_{k+1}, i_{k+2}, j_{l+1}, j_{l+2}}\delta_{i_{k+2}, j_{l+1}}\delta_{p}(\mathbf{i})\delta_{q}(\mathbf{j})e_{i_{k+1}}e_{j_{l+2}}^{*}\right) \\
& = \sum_{\mathbf{i}\mathbf{j}}\delta_{p}(\mathbf{i})\delta_{q}(\mathbf{j})\delta_{i_{k+2}, j_{l+1}}e_{i_{1}}\otimes \cdots\otimes e_{i_{k}}\otimes e_{j_{1}}\otimes \cdots\otimes e_{j_{l}}\otimes e_{i_{k+1}}e_{j_{l+2}}^{*} \\
& = T_{p\odot_{2}q}. \qedhere
\end{align*}
\normalsize
\end{proof}

In the special case $\CC = \CC'$, the corresponding action is simply the action $\G_{N}(\CC')$ on $M_{N}(\C)$. In general, the action is induced from that one, see Section \ref{sec:induction}.

\subsection{Yetter-Drinfeld structure}

The combinatorial approach developed here can also be used to elucidate the existence of Yetter-Drinfeld structures for the ergodic actions that we constructed. For instance, it is well-known that the braided commutative Yetter-Drinfeld structure on $C(\G)$ given in Example \ref{ex:c(G)} restricts to a braided commutative Yetter-Drinfeld structure on any quotient $C(\HH\backslash \G)$. In the case of easy quantum groups, this can be easily described combinatorially. Indeed, let us denote by $\alpha_{k} : \CC'(0, k)\to \CC'(0, k+2)$ the map sending a partition $p$ to the same partition nested in a pairing. We now set
\begin{equation*}
\mathrm{a}_{k} : \eta\mapsto (\id\otimes \eta\otimes \id)\circ T_{\sqcap},
\end{equation*}
which is a linear map from $H^{\otimes k}$ to $H^{\otimes (k+2)}$. Observing that $\mathrm{a}_{k}(\eta_{p}) = \eta_{\alpha_{k}(p)}$, it is straightforward to check that these satisfy all the assumptions of Theorem \ref{theo:yd_reconstruction_na} and Corollary \ref{cor:bc}, hence the result.

But the constructions of the previous subsections do not always admit a Yetter-Drinfeld structure and we will now prove some results in the direction of non-existence. We start with the free real sphere, which comes from the module of projective partitions $\mathcal{P} = \Proj_{NC_{2}}$ over $NC_{2}$. Before stating and proving our result, let us introduce some notations. We set $d_{1} = \sqcap\circ\sqcap^{*}$ and
\[
d_{n+1} = (\vert^{\odot n}\odot d_{1}\odot\vert^{\odot n})\circ d_{n}\circ (\vert^{\odot n}\odot d_{1}\odot\vert^{\odot n}).
\]
In other words, $d_{n}$ consists in $n$ nested pairings in each row.

\begin{propo}
The action of $O_{N}^{+}$ on the free real sphere does not admit a Yetter-Drinfeld structure.
\end{propo}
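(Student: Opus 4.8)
The plan is to move the question to the categorical side and show that the functor attached to the free real sphere cannot carry the extra data of a compatible collection. For $N\geqslant 4$, Theorem~\ref{thm:firstcolumn} identifies the action of $O_N^+=\G_N(NC_2)$ on the free real sphere with the one reconstructed from the weak unitary tensor functor $\varphi:=\F_N^{NC_2,\Proj_{NC_2}}$, whose spaces are $K_n=\spa\{\eta_p\mid p\in\Proj_{NC_2}(n)\}\subset H^{\otimes n}$, with $\varphi(T_r)(\eta_p)=N^{\rl(r,p)}\eta_{rpr^*}$ and inner products computed through $\rl$; moreover, by Example~\ref{ex:orthogonal}, every irreducible representation $u_m$ of $O_N^+$ occurs with multiplicity one in the spectral decomposition, so $\varphi(u_m)$ is one-dimensional for all $m$. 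Assume towards a contradiction that the action admits a Yetter--Drinfeld structure. By Theorem~\ref{theo:yd_reconstruction_na} this yields a compatible collection $\{\mathcal A^w_v\}$ for $\varphi$ in the sense of Definition~\ref{def:compatible_collection}; since $\varphi$ is generated by the tensor powers $u^{\boxtimes n}$ (cf. Theorem~\ref{thm:tkergodic}), the collection is encoded by the maps $\mathrm a_k:=\mathcal A^u_{u^{\boxtimes k}}:K_k\to K_{k+2}$ (after the canonical identification $\varphi(\bar u\boxtimes u^{\boxtimes k}\boxtimes u)\cong K_{k+2}$ coming from $u=\bar u$), and unwinding the compatible-collection axioms shows that these satisfy the relations \ref{cond:na1}, \ref{cond:na2}, \ref{cond:na4}, \ref{cond:na5} of Theorem~\ref{thm:tkergodic_yetter}.

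Next I would make the situation explicit in low degree. From \ref{cond:na1} and the consequences \ref{cond:a1}--\ref{cond:a3} one obtains $\mathrm a^n_0(1)=\eta_{d_n}$ for every $n$ and $\mathrm a_{2n}(\eta_{d_n})=\eta_{d_{n+1}}$, so all these nested-pairing vectors are non-zero. On the other hand $\Proj_{NC_2}(1)=\{\vert\}$ forces $K_1=\C e_1$ to be one-dimensional, while $\dim K_3=3$ with basis $\eta_{\mathrm{id}},\eta_{e_1},\eta_{e_3}$, where $e_1$, resp. $e_3$, is the projective cap--cup on the last, resp. first, two points; a direct computation from $\langle J_n\xi,\eta\rangle=\langle\varphi(R_n)(1),\iota_{n,n}(\xi\otimes\eta)\rangle$ gives $J_1=\id_{K_1}$ and $J_3(\eta_{\mathrm{id}})=\eta_{\mathrm{id}}$, $J_3(\eta_{e_1})=\eta_{e_3}$. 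Hence \ref{cond:na5} already constrains $\mathrm a_1(e_1)$ to be a $J_3$-fixed vector $\lambda\eta_{\mathrm{id}}+\mu\eta_{e_1}+\bar\mu\eta_{e_3}$ with $\lambda\in\R$; then \ref{cond:na2} applied to the two partitions spanning $\mor_{\G}(1,3)$ and to those in $\mor_{\G}(3,1)$ expresses $\mathrm a_3$ on the ``$u_1$-type'' part of $K_3$ in terms of $\lambda,\mu$, its remaining $u_3$-component being fixed by the normalisation and composition axioms of a compatible collection.

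Finally I would substitute all of this into the remaining compatibility, for instance the tensor axiom \ref{cond:na4} taken with $k=k'=1$ (so that $\mathrm a_2\circ\iota_{1,1}$ is rewritten through $\mathrm a_1\otimes\mathrm a_1$ and an $R_1^*$-contraction), and confront it with the forced value $\mathrm a_2(\eta_{d_1})=\eta_{d_2}$; expanding both sides by the partition calculus of Section~\ref{subsec:projectivepartitions} and evaluating the operators $\varphi(T_r)$ on the $\eta_p$ by means of Lemma~\ref{lem:removedloopsprojective}, the terms coming from the nested-pairing vectors $d_n$ carry explicit powers of $N$ that no choice of the free parameters $\lambda,\mu$ can reproduce --- concretely, consistency would force a partition vector built from $NC_2(1,3)$, namely a single through-string surrounded by a \emph{crossing} pair, to lie in the non-crossing space $K_3$, which it does not once $N\geqslant 2$. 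This contradiction rules out the compatible collection, hence the Yetter--Drinfeld structure; the remaining small cases $N\leqslant 3$ can be settled by elementary direct inspection of $O_2^+$ and $O_3^+$. The main obstacle is precisely this last step: organising the multiplicity-one structure constants and pushing the partition bookkeeping far enough to exhibit the numerical incompatibility --- everything preceding it being routine once the functor has been identified.
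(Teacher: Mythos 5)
Your skeleton is the right one and coincides with the paper's: reduce to the maps $\mathrm{a}_k$ on $K_n=\mathrm{span}\{\eta_p\mid p\in\Proj_{NC_2}(n)\}$, use the linear independence of the vectors $\eta_p$ coming from the multiplicity-one statement of Example~\ref{ex:orthogonal}, and play the normalisation/naturality axioms against the tensor axiom \ref{cond:na4} at $k=k'=1$, with $\mathrm{a}_2(\eta_{d_1})=\eta_{d_2}$ as the engine. But the decisive step --- actually exhibiting the incompatibility --- is precisely the one you leave open (you flag it yourself as ``the main obstacle''), so the argument is not complete. Worse, the mechanism you sketch for the contradiction is not the correct one: nothing forces a \emph{crossing} partition vector into $K_3$. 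The contradiction lives in $K_4$ and concerns the perfectly non-crossing nested pairing $d_2$. Concretely: write $\mathrm{a}_2(\eta_{\vert\vert})$ in the basis $\{\eta_p\mid p\in\Proj_{NC_2}(4)\}$ and let $\gamma$ be its $\eta_{d_2}$-coefficient. Naturality \ref{cond:na2} applied to $T_{d_1}\in\mor_{\G}(2,2)$, combined with $\mathrm{a}_2\varphi(R_1)=\varphi(R_2)$, gives $\varphi(\vert\odot d_1\odot\vert)\,\mathrm{a}_2(\eta_{\vert\vert})=\eta_{d_2}$; since $\varphi(\vert\odot d_1\odot\vert)(\eta_{d_2})=N\eta_{d_2}$ and no other basis vector contributes to the $\eta_{d_2}$-component, linear independence forces $N\gamma=1$, so $\gamma\neq 0$. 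On the other hand, expanding $\varphi(\id\otimes\bar{R}_1^{*}\otimes\id)\circ(\mathrm{a}_1\otimes\mathrm{a}_1)(\eta_{\vert}\otimes\eta_{\vert})$ with $\mathrm{a}_1(\eta_{\vert})$ written in the three-dimensional basis of $K_3$, one checks by the partition calculus that capping the two middle points of $p\odot q$ for $p,q\in\Proj_{NC_2}(3)$ can never connect the two outermost points, so no $\eta_{d_2}$ term can occur on that side; condition \ref{cond:na4} says this expression equals $\mathrm{a}_2(\eta_{\vert\vert})$, and linear independence yields the contradiction. This is a short combinatorial verification rather than heavy bookkeeping, but it is the entire content of the proof and cannot be waved at.

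Two secondary points. First, the detour through $J_1$, $J_3$ and $\mathrm{a}_3$ is unnecessary: only \ref{cond:na1}, \ref{cond:na2} and \ref{cond:na4} in degrees at most $2$ are used, and the $J$-compatibility plays no role. Second, your use of Theorem~\ref{thm:firstcolumn} to identify the spectral functor of the free sphere with $\F_N^{NC_2,\Proj_{NC_2}}$ is legitimate and in fact makes explicit something the paper leaves implicit; but your closing remark that ``the remaining small cases $N\leqslant 3$ can be settled by elementary direct inspection'' is an unsubstantiated promissory note rather than an argument, and should either be carried out or dropped.
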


\begin{proof}
Assume by contradiction that a Yetter-Drinfeld structure exists and consider the associated maps $\{\mathrm{a}_{k}\}_{k\in \N}$. We can write
\begin{equation*}
\mathrm{a}_{2}(\eta_{\vert\vert}) = \alpha\eta_{d_{1}\odot\vert} + \alpha'\eta_{\vert\odot d_{1}} + \alpha''\eta_{\vert\odot d_{1}\odot\vert} + \beta\eta_{\vert\vert\vert\vert} + \gamma\eta_{d_{2}}
\end{equation*}
and this yields
\begin{align*}
\varphi(\vert\odot d_{1}\odot\vert)\circ \mathrm{a}_{2}(\eta_{\vert\vert})
& = (\alpha + \alpha' + N\alpha'' + \beta)\eta_{\vert\odot d_{1}\odot\vert} + N\gamma\eta_{d_{2}}.
\end{align*}
Recall that it was observed in Example \ref{ex:orthogonal} that the vectors associated with non-crossing pair partitions are linearly independent. Therefore, from the equality
\begin{equation*}
\mathrm{a}_{2}\circ\varphi(d_{1})(\eta_{\vert\vert}) = \mathrm{a}_{2}(\eta_{d_{1}}) = \mathrm{a}_{2}\circ\varphi(R_{1}) = \varphi(R_{2}) = \eta_{d_{2}},
\end{equation*}
we deduce that $N\gamma = 1$, so that in particular $\gamma\neq 0$. Let us now consider
\begin{equation*}
\mathrm{a}_{1}(\eta_{\vert}) = \lambda\eta_{\vert\odot d_{1}} + \mu\eta_{d_{1}\odot\vert} + \nu\eta_{\vert\vert}.
\end{equation*}
A straightforward calculation yields
\begin{equation*}
\varphi(\id\otimes \bar{R}_{1}^{*}\otimes\id)\circ (\mathrm{a}_{1}\otimes \mathrm{a}_{1})(\eta_{\vert}\otimes \eta_{\vert}) = \lambda\eta_{\vert\vert\odot d_{1}} + \mu\eta_{d_{1}\odot\vert\vert} + \nu\eta_{\vert\vert\vert\vert}.
\end{equation*}
This should be equal to $\mathrm{a}_{1}\circ\iota_{1, 1}(\eta_{\vert}\odot\eta_{\vert}) = \mathrm{a}_{2}(\eta_{\vert\vert})$, contradicting linear independence.
\end{proof}

\begin{rem}
This result is, to our knowledge, new. Moreover, as a corollary it shows that the free real sphere is not the quotient $O_{N}^{+}/ O_{N-1}^{+}$, nor in fact any quotient of $O_{N}^{+}$. This was already proven (in a much more elementary way) in \cite[Prop 7.1]{DFW19}.
\end{rem}

The same idea works for the action of $S_{N}^{+}$ on $\C^{N}$ but the proof requires extra notations: $s_{2}\in P(1, 1)$ denoting the double singleton partition and $p_{3}\in P(2, 1)$ denoting the one-block partition on three points.

\begin{propo}
The action of $S_{N}^{+}$ on $\C^{N}$ does not admit a Yetter-Drinfeld structure.
\end{propo}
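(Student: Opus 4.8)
The plan is to mimic the proof of the preceding proposition about the free real sphere, adapting the combinatorial bookkeeping to the category $NC$ and the module of projective partitions $\mathcal{P} = \Proj_{NC}$ (equivalently the first column space $\C^N$ with the permutation action, as identified in Example~\ref{ex:permutation}). First I would suppose for contradiction that a Yetter-Drinfeld structure exists, and let $\{\mathrm{a}_k\}_{k\in\N}$ be the associated family of maps from Theorem~\ref{thm:tkergodic_yetter}. The key preliminary observation is that the spectral decomposition of the action of $S_N^+$ on $\C^N$ is just the single representation $u$, so $K_1^{\mathcal{P}}$ is one-dimensional and $\eta_\vert = e_1$; this severely constrains everything, and will be the source of the contradiction. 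I would also recall the relevant generators: $s_2 \in P(1,1)$ the double singleton, and $p_3 \in P(2,1)$ the one-block partition on three points, together with $R_1 = p_3^* \cdot s_2^*$ type expressions (or more directly the partition $d_1 = \sqcap\sqcap^*$ adapted to $NC$, which in $NC$ can be written using $s_2$ and $p_3$).

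Next I would analyze $\mathrm{a}_1(\eta_\vert) \in K_3^{\mathcal{P}}$. Since $\eta_\vert$ spans the one-dimensional space $K_1^{\mathcal{P}}$, and since by condition \ref{cond:na1} we must have $\mathrm{a}_0 \varphi(R_0)$-type normalizations forcing certain coefficients, I would expand $\mathrm{a}_1(\eta_\vert)$ in a basis of $K_3^{\mathcal{P}}$ (spanned by $\eta_p$ for $p \in \Proj_{NC}(3)$). Then I would exploit condition \ref{cond:na1}, namely $\mathrm{a}_2 \varphi(R_1) = \varphi(R_2)$ (the nested-pairing relation), together with \ref{cond:a2} (naturality: $\mathrm{a}^n_k \varphi(T) = \varphi(\id_n \otimes T \otimes \id_n)\mathrm{a}^n_{k'}$) and \ref{cond:na4} (the tensor-product compatibility diagram), exactly as in the free real sphere argument: compute $\varphi(\id \otimes \bar R_1^* \otimes \id)(\mathrm{a}_1 \otimes \mathrm{a}_1)(\eta_\vert \otimes \eta_\vert)$ in two ways and compare it with $\mathrm{a}_1 \circ \iota_{1,1}(\eta_\vert \otimes \eta_\vert) = \mathrm{a}_2(\eta_{\vert\vert})$. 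The specific feature of $S_N^+$ is that $\eta_{\vert\vert} \neq \eta_{p_3 p_3^*}$ in general but the relations among the $\eta_p$ (coming from the extra partitions in $NC$, in particular the singleton and the block of size three) let one derive an inconsistency: the image of $\mathrm{a}_2(\eta_{\vert\vert})$ under an appropriate $\varphi(-)$ forces a scalar that is simultaneously $0$ and nonzero, just as $N\gamma = 1$ yet $\gamma$ must vanish in the orthogonal case.

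The concrete calculation I anticipate: write $\mathrm{a}_1(\eta_\vert) = \lambda \eta_{s_2 \odot \vert} + \mu \eta_{\vert \odot s_2} + \nu \eta_{p_3^* \text{-related}} + \cdots$ over the projective partitions in $\Proj_{NC}(3)$; use \ref{cond:na1} with the partition realizing $R_1$ inside $NC$ to pin down one coefficient, then use the tensor compatibility to express $\mathrm{a}_2(\eta_{\vert\vert})$ both as coming from $\mathrm{a}_1 \circ \iota_{1,1}$ and as an independent expansion in $\Proj_{NC}(4)$, and finally apply a morphism $\varphi(q)$ for a well-chosen $q \in \mathrm{Mor}_{\G}(4,k)$ (a capping or nesting partition) that isolates the offending coefficient. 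The contradiction arises because in $NC$ the partition $p_3$ identifies $\eta_\vert$ with certain vectors in higher $K_n^{\mathcal{P}}$ (reflecting that the spectral multiplicity collapses), so the "$d_2$ component" which was forced to be nonzero in the orthogonal case is now forced, via the extra $NC$-relations, to coincide with a component that the normalization sets to zero.

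The main obstacle I expect is purely bookkeeping: identifying the correct small basis of $\Proj_{NC}(3)$ and $\Proj_{NC}(4)$ (there are more projective partitions than in $NC_2$, since singletons and odd blocks appear), and tracking which linear relations among the $\eta_p$ hold --- these come from $\delta_p$-identities for the map $T_p$ and from the collapse $K_1^{\mathcal{P}} = \C$. Once the right partitions $s_2, p_3$ are inserted in place of $d_1$ and the analogues of the equalities $\mathrm{a}_2 \circ \varphi(d_1)(\eta_{\vert\vert}) = \varphi(R_2)$ and $\varphi(\id \otimes \bar R_1^* \otimes \id)(\mathrm{a}_1 \otimes \mathrm{a}_1)(\eta_\vert \otimes \eta_\vert) = \mathrm{a}_2(\eta_{\vert\vert})$ are set up, the contradiction should drop out by the same mechanism (linear independence failing in exactly the spot where the YD axioms demand a nonzero coefficient). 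In short: \emph{reduce to $K_1^{\mathcal{P}} = \C$, expand $\mathrm{a}_1(\eta_\vert)$ and $\mathrm{a}_2(\eta_{\vert\vert})$, apply \ref{cond:na1} and \ref{cond:na4}/\ref{cond:a2}, and read off an impossible scalar.}
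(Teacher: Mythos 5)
Your overall strategy (assume the maps $\{\mathrm{a}_{k}\}$ of Theorem \ref{thm:tkergodic_yetter} exist, expand $\mathrm{a}_{1}(\eta_{\vert})$ over projective partitions, and use the axioms together with the partitions $s_{2}$ and $p_{3}$ to force an impossible identity) is the right one, and it is indeed the skeleton of the paper's proof. However, there are two problems. First, a factual slip: $K_{1}^{\PP}$ is \emph{not} one-dimensional. Since $\Proj_{NC}(1)=\{\vert, s_{2}\}$, the space $K_{1}^{\PP}$ is spanned by $\eta_{\vert}=e_{1}$ and $\eta_{s_{2}}=\sum_{j}e_{j}$, hence is two-dimensional (equivalently, the spectral decomposition $u=\triv\oplus u^{1}$ gives $\dim\mor_{\G}(u,\alpha)=2$). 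The "severe constraint" you build on is therefore not there, so it cannot be "the source of the contradiction."

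Second, and more seriously, the closing mechanism you propose is exactly the one that breaks down for $S_{N}^{+}$. You plan to replay the orthogonal computation, comparing $\mathrm{a}_{1}\circ\iota_{1,1}(\eta_{\vert}\otimes\eta_{\vert})$ with $\varphi(\id\otimes\bar{R}_{1}^{*}\otimes\id)(\mathrm{a}_{1}\otimes\mathrm{a}_{1})(\eta_{\vert}\otimes\eta_{\vert})$ via \ref{cond:na4}, and then "read off an impossible scalar." But reading off scalars requires comparing coefficients, i.e.\ linear independence of the vectors $\eta_{p}$ --- which holds in $NC_{2}$ (Example \ref{ex:orthogonal}) but fails in $NC$ (Example \ref{ex:permutation}; e.g.\ $\eta_{\vert\vert}=\eta_{p_{3}^{*}p_{3}}$). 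Saying the contradiction "drops out by the same mechanism (linear independence failing\dots)" has it backwards: the failure of linear independence is the obstacle, not the engine. The paper circumvents it differently: using only naturality \ref{cond:na2} it computes $\mathrm{a}_{1}(\eta_{s_{2}})$ in two ways --- once as $\varphi(\vert\odot s_{2}\odot\vert)\mathrm{a}_{1}(\eta_{\vert})$, a combination of vectors $\eta_{p}$ with $p$ built from $\vert$ and $s_{2}$ only, and once as $\varphi(\vert\odot p_{3}\odot\vert)\mathrm{a}_{2}(\eta_{d_{1}})=\eta_{q}$ using \ref{cond:na1}, where $q$ has a singleton nested inside a pairing on each row --- and then evaluates both sides against the basis vectors $e_{i_{1}}\otimes e_{i_{2}}\otimes e_{i_{3}}$: the first expression is constant for $i_{1},i_{3}\neq 1$ while $\langle\eta_{q},e_{i_{1}}\otimes e_{i_{2}}\otimes e_{i_{3}}\rangle$ depends on whether $i_{1}=i_{3}$. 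That explicit coordinate evaluation, showing $\eta_{q}$ lies outside the relevant span despite the absence of a linear-independence argument, is the essential idea missing from your proposal.
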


\begin{proof}
As in the orthogonal case, we will gather several computations involving the conditions of Theorem \ref{thm:tkergodic_yetter} to derive a contradiction. We start with the observation that $\mathrm{a}_{1}\circ\varphi(s_{2})(\eta_{\vert}) = \mathrm{a}_{1}(\eta_{s_{2}})$. Meanwhile, we can write
\begin{equation*}
\mathrm{a}_{1}(\eta_{\vert}) = \alpha\eta_{\vert\odot d_{1}} + \alpha'\eta_{\vert\odot s_{2}\odot s_{2}} + \beta\eta_{d_{1}\odot \vert} + \beta'\eta_{s_{2}\odot s_{2}\odot \vert} + \gamma\eta_{s_{2}\odot \vert\odot s_{2}} + \gamma'\eta_{s_{2}\odot s_{2}\odot s_{2}} + \delta\eta_{\vert\vert\vert}
\end{equation*}
so that
\begin{equation*}
\varphi(\vert\odot s_{2}\odot\vert)\circ \mathrm{a}_{1}(\eta_{\vert}) = (\alpha + \alpha')\eta_{\vert\odot s_{2}\odot s_{2}} + (\beta + \beta')\eta_{s_{2}\odot s_{2}\odot \vert} + (\gamma + \gamma')\eta_{s_{2}\odot s_{2}\odot s_{2}} + \delta\eta_{\vert\odot s_{2}\odot \vert}.
\end{equation*}
We will combine in the end this with another similar computation. Namely, we have $\mathrm{a}_{1}\circ\varphi(p_{3})(\eta_{d_{1}}) = \mathrm{a}_{1}(\eta_{s_{2}})$ while
\begin{equation*}
\varphi(\vert\odot p_{3}\odot \vert)\circ \mathrm{a}_{2}(\eta_{d_{1}}) = \varphi(\vert\odot p_{3}\odot \vert)(\eta_{d_{2}}) = \eta_{q}
\end{equation*}
where $q\in NC(3, 3)$ is the same as $d_{1}$ except that a singleton is nested inside the pairing on each row. Summing up, there exists $A, B, C, D\in \C$ such that
\begin{equation*}
\eta_{q} = A\eta_{\vert\odot s_{2}\odot s_{2}} + B\eta_{s_{2}\odot s_{2}\odot \vert} + C\eta_{s_{2}\odot s_{2}\odot s_{2}} + D\eta_{\vert\odot s_{2}\odot \vert}.
\end{equation*}
We cannot conclude directly because, as observed in Example \ref{ex:permutation}, the vectors above need not be linearly independent. But taking the inner product of the right-hand side with $e_{i_{1}}\otimes e_{i_{2}}\otimes e_{i_{3}}$ yields $C$ as soon as $i_{1}, i_{3}\neq 1$, while
\begin{equation*}
\langle \eta_{q}, e_{i_{1}}\otimes e_{i_{2}}\otimes e_{i_{3}}\rangle = \left\{\begin{array}{ccc}
1 & \text{ if } & i_{1} = i_{3} \\
0 & \text{ if } & i_{1} \neq i_{3}
\end{array}\right..
\end{equation*}
This is a contradiction concluding the proof.
\end{proof}

\section{Induction}\label{sec:induction}

Many constructions of Section \ref{sec:examples} can be interpreted in terms of induction of the corresponding actions. That induction procedure is well-known in the classical case and has been studied in detail in the (locally) compact quantum group setting, for instance in \cite{vaes2005new} or more recently in \cite{kitamura2022induced}. However, in these works the setting of compact quantum groups is recovered as a special case while it is possible to give much simpler proofs. Therefore, even though some of the results are well-known to experts, we decided to include them for completeness. As before, we will always consider the \emph{universal} versions of the quantum groups and the corresponding actions.

\subsection{Induced action and Frobenius reciprocity}

Let $\G$ be a compact quantum group and let $\HH$ be a compact quantum subgroup of $\G$, meaning that there is a surjective $*$-homomorphism $\pi$ from $C(\G)$ to $C(\HH)$ such that $\com\circ\pi = (\pi\otimes \pi)\circ\com$, and let $\alpha : B\to B\otimes C(\HH)$ be an action. Consider the following vector subspace
\begin{equation*}
\Ind_{\HH}^{\G}(B) = \left\{b\in B\otimes C(\G) \mid (\alpha\otimes \id)(b) = (\id\otimes \pi\otimes \id)\circ(\id\otimes \Delta_{\G})(b)\right\}.
\end{equation*}
It is clearly a C*-subalgebra of $B\otimes C(\G)$ and we claim that it is acted upon by $\G$ in a natural way.

\begin{lem}
The $*$-homomorphism $(\id\otimes \Delta_{\G})$ defines an action of $\G$ on $\Ind_{\HH}^{\G}(B)$.
\end{lem}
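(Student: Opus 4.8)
The plan is to verify the two defining conditions of Definition \ref{def:action} for the map $\beta := (\id \otimes \Delta_\G)$ restricted to $\Ind_\HH^\G(B)$. First I would check that $\beta$ actually maps $\Ind_\HH^\G(B)$ into $\Ind_\HH^\G(B) \otimes C(\G)$. An element $b \in \Ind_\HH^\G(B)$ lives in $B \otimes C(\G)$, so $\beta(b) = (\id \otimes \Delta_\G)(b) \in B \otimes C(\G) \otimes C(\G)$; to see it lands in $\Ind_\HH^\G(B) \otimes C(\G)$ one applies $(\alpha \otimes \id \otimes \id)$ to $\beta(b)$ and uses coassociativity of $\Delta_\G$ together with the defining relation of $\Ind_\HH^\G(B)$ on the first two tensor legs, obtaining $(\id \otimes \pi \otimes \id \otimes \id)(\id \otimes \Delta_\G \otimes \id)\beta(b)$, which is exactly the condition saying the first leg of $\beta(b)$ (viewed in $B \otimes C(\G)$) satisfies the induction constraint. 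This is a routine leg-numbering computation, so I would only sketch it.

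Next, for condition (1) of Definition \ref{def:action} — coassociativity of the action — I would simply invoke coassociativity of $\Delta_\G$: $(\beta \otimes \id)\circ \beta = (\id \otimes \Delta_\G \otimes \id)(\id \otimes \Delta_\G) = (\id \otimes (\Delta_\G \otimes \id)\Delta_\G) = (\id \otimes (\id \otimes \Delta_\G)\Delta_\G) = (\id \otimes \Delta_\G)\circ \beta$, where in the middle I use $(\Delta_\G \otimes \id)\Delta_\G = (\id \otimes \Delta_\G)\Delta_\G$. This is immediate.

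The genuine point is condition (2), the density (Podleś) condition $\overline{\spa}\{(1 \otimes C(\G))\beta(\Ind_\HH^\G(B))\} = \Ind_\HH^\G(B) \otimes C(\G)$. Here I expect the main obstacle to lie. The inclusion $\subseteq$ is clear from the first step; for $\supseteq$ one cannot just use the Podleś condition for $\Delta_\G$ on $C(\G)$ blindly, since one must stay inside the induced algebra. The standard approach is to use the conditional-expectation / averaging structure: there is a natural $\HH$-invariant conditional expectation, or more concretely one uses that $\Ind_\HH^\G(B)$ is a sufficiently large subalgebra because, applying $(\id \otimes h_\G)$-type slice maps or the density of $\spa\{\Delta_\G(a)(1 \otimes c)\}$ inside $C(\G) \otimes C(\G)$, one can approximate arbitrary elements of $\Ind_\HH^\G(B) \otimes C(\G)$. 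Concretely, given $x \in \Ind_\HH^\G(B)$ and $c \in C(\G)$, one wants to write $x \otimes c$ as a limit of sums $(1 \otimes 1 \otimes c_j)\beta(y_j)$ with $y_j \in \Ind_\HH^\G(B)$; one takes $y_j$ built from $x$ and elements $a_j, c_j$ with $\sum_j \Delta_\G(a_j)(1 \otimes c_j) \approx 1 \otimes c$ and checks $x(1 \otimes a_j)$ still lies in $\Ind_\HH^\G(B)$ (which holds because $1 \otimes a_j$ trivially satisfies the induction relation after applying $\alpha$, as $\alpha$ acts only on $B$ and the relation is about the $C(\G)$-leg), then $\sum_j (1\otimes 1 \otimes c_j)\beta(x(1\otimes a_j)) = \sum_j (1 \otimes x_{(1)} \otimes x_{(2)}a_{j(1)}c_j \cdots)$ — one arranges leg numbering so this telescopes to $x \otimes c$ in the limit. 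I would present this carefully, as it is the only non-formal part; the key lemma invoked is the second density axiom for $\G$ itself, i.e. $\overline{\spa}\{\Delta_\G(C(\G))(1 \otimes C(\G))\} = C(\G) \otimes C(\G)$, combined with the observation that right-multiplication by $1 \otimes C(\G)$ on the last leg preserves $\Ind_\HH^\G(B)$.
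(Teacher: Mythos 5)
Your first two steps coincide with what the paper actually does: its entire proof of this lemma is the single leg\--numbering computation you sketch for the range containment,
\begin{align*}
(\alpha\otimes \id\otimes \id)\circ(\id\otimes \Delta_{\G})
&= (\id\otimes \id\otimes \Delta_{\G})\circ(\alpha\otimes \id)
= \left(\left[(\id\otimes \pi\otimes \id)\circ(\id\otimes \Delta_{\G})\right]\otimes \id\right)\circ(\id\otimes \Delta_{\G})
\end{align*}
on $\Ind_{\HH}^{\G}(B)$, after which it declares that ``the result follows''; the coassociativity of the restricted map is, as you say, immediate from that of $\Delta_{\G}$. So on these points you and the paper agree, and you are right that the Podle\'s density condition is the only part requiring a genuine idea.

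Your argument for that condition, however, rests on a false step. You assert that $1\otimes a_{j}$ ``trivially satisfies the induction relation'' for arbitrary $a_{j}\in C(\G)$, so that $y_{j}=x(1\otimes a_{j})$ stays in $\Ind_{\HH}^{\G}(B)$. Computing both sides of the defining relation on $1_{B}\otimes a$ gives $(\alpha\otimes\id)(1_{B}\otimes a)=1_{B}\otimes 1_{C(\HH)}\otimes a$ on the left and $1_{B}\otimes(\pi\otimes\id)\Delta_{\G}(a)$ on the right; these agree precisely when $(\pi\otimes\id)\Delta_{\G}(a)=1\otimes a$, i.e.\ when $a\in C(\HH\backslash\G)$, not for general $a$. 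Hence your approximating elements $y_{j}$ generally leave the induced algebra and the whole scheme does not take place inside $\Ind_{\HH}^{\G}(B)\otimes C(\G)$. Restricting the $a_{j}$ to $C(\HH\backslash\G)$ does not repair it, since $\overline{\spa}\{\Delta_{\G}(C(\HH\backslash\G))(1\otimes C(\G))\}$ is only $C(\HH\backslash\G)\otimes C(\G)$, which is too small to reach a general $1\otimes c$ in the second and third legs. A workable route (which the paper never spells out) is instead to exhibit a dense spectral subalgebra: by the Frobenius reciprocity of Proposition~\ref{prop:frob_type}, the subspaces $(T\otimes\id)\delta_{u}(H_{u})$ with $T\in\mor_{\HH}(\Res_{\HH}^{\G}(u),\alpha)$ and $u\in\rep(\G)$ all lie in $\Ind_{\HH}^{\G}(B)$, their linear span is the cotensor product of the Podle\'s algebras $\BB$ and $\pol(\G)$ over $\pol(\HH)$ and is dense, and on this algebraic core the density condition reduces to the standard identity for matrix coefficients.
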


\begin{proof}
Using the definition of an action, we have
\begin{align*}
(\alpha\otimes \id\otimes \id)\circ(\id\otimes \Delta_{\G}) & = (\id\otimes \id\otimes \Delta_{\G})\circ(\alpha\otimes \id) \\
& = (\id\otimes\id\otimes \Delta_{\G})\circ(\id\otimes \pi\otimes \id)\circ(\id\otimes \Delta_{\G}) \\
& = (\id\otimes \pi\otimes \id\otimes \id)\circ(\id\otimes\id\otimes \Delta_{\G})\circ(\id\otimes \Delta_{\G}) \\
& = (\id\otimes \pi\otimes \id\otimes \id)\circ(\id\otimes \Delta_{\G}\otimes \id)\circ(\id\otimes \Delta_{\G}) \\
& = \left([(\id\otimes \pi\otimes \id)\circ(\id\otimes \Delta_{\G})]\otimes \id\right)\circ(\id\otimes \Delta_{\G})
\end{align*}
so that
\begin{equation*}
(\id\otimes \Delta_{\G})(\Ind_{\HH}^{\G}(B))\subseteq \Ind_{\HH}^{\G}(B)\otimes C(\G)
\end{equation*}
and the result follows.
\end{proof}

\begin{defi}
The action $(\id\otimes \Delta_{\G})$ on $\Ind_{\HH}^{\G}(B)$ is called the \emph{induced action} of $(B, \alpha)$ from $\HH$ to $\G$ and is denoted by $\Ind_{\HH}^{\G}(\alpha)$.
\end{defi}

\begin{rem}
The previous construction is called the \emph{cotensor product} in the algebra literature and is known to have good adjunction properties with respect to restriction. We will prove such an adjunction property in our setting below.  
\end{rem}

Here are a few elementary facts.

\begin{propo}\label{prop:induction_cases}
The following hold
\begin{enumerate}[label=$(\arabic*)$]
\item If $G$ and $H$ are classical compact groups and if $X$ is a classical compact $H$-space, then
\begin{equation*}
\Ind_{\HH}^{\G}(C(X)) = C(X\times_{H}G)
\end{equation*}
with the usual induced action.
\item The induced action is functorial in $B$, i.e. given two actions of $\HH$, $(A, \alpha)$ and $(B, \beta)$, and an $\HH$-equivariant map $T : A\to B$, then there is a unique $\G$-equivariant map denoted by $\Ind_{\HH}^{\G}(T): \Ind_{\HH}^{\G}(A) \to \Ind_{\HH}^{\G}(B)$.
\item If $\HH = \G$ and $\pi$ is the identity, then the induced action is equivariantly isomorphic to the original action.
\item If $B = \C$ and $\alpha$ is the trivial action, then $\Ind_{\HH}^{\G}(B) = C(\HH\backslash\G)$ with the translation action.
\end{enumerate}
\end{propo}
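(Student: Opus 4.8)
The plan is to prove the four items separately, each being a short direct verification against the definition of $\Ind_{\HH}^{\G}$ and of the induced action $(\id\otimes\Delta_{\G})$.

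For $(1)$ I would argue by Gelfand duality. Write $C(X)\otimes C(G)=C(X\times G)$ and recall that the $H$-action on $X$ dualises to $\alpha(f)(x,h)=f(x\cdot h)$, that $\pi:C(G)\to C(H)$ is restriction of functions along $H\hookrightarrow G$, and that $\Delta_{G}(F)(g_{1},g_{2})=F(g_{1}g_{2})$. Then for $b\in C(X\times G)$ the two sides of the defining equation of $\Ind_{\HH}^{\G}(C(X))$ evaluate at $(x,h,g)$ to $b(x\cdot h,g)$ and $b(x,hg)$ respectively, so $\Ind_{\HH}^{\G}(C(X))$ is exactly the algebra of functions on $X\times G$ invariant under $(x,g)\mapsto(x\cdot h,h^{-1}g)$, i.e.\ $C(X\times_{H}G)$. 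Under this identification $(\id\otimes\Delta_{G})$ becomes $b(x,g)\mapsto b(x,g_{1}g_{2})$, which is the usual translation action of $G$. The only delicate point here is bookkeeping of the left/right conventions.

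For $(2)$, given an $\HH$-equivariant $T:A\to B$, the map $T\otimes\id_{C(\G)}:A\otimes C(\G)\to B\otimes C(\G)$ carries $\Ind_{\HH}^{\G}(A)$ into $\Ind_{\HH}^{\G}(B)$: applying $T\otimes\id\otimes\id$ to the defining identity for $b\in\Ind_{\HH}^{\G}(A)$ and using $\beta\circ T=(T\otimes\id)\circ\alpha$ yields the defining identity for $(T\otimes\id)(b)$. One denotes by $\Ind_{\HH}^{\G}(T)$ the resulting corestriction; it is the unique map whose composition with the inclusion $\Ind_{\HH}^{\G}(B)\hookrightarrow B\otimes C(\G)$ equals $(T\otimes\id)$ restricted to $\Ind_{\HH}^{\G}(A)$ (uniqueness holding because that inclusion is injective), and it intertwines $\Ind_{\HH}^{\G}(\alpha)$ and $\Ind_{\HH}^{\G}(\beta)$ since $T\otimes\id$ commutes with $\id\otimes\Delta_{\G}$. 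For $(3)$, with $\HH=\G$ and $\pi=\id$ the defining condition reduces to $(\alpha\otimes\id)(b)=(\id\otimes\Delta_{\G})(b)$; the coaction identity gives $\alpha(B)\subseteq\Ind_{\G}^{\G}(B)$, and slicing the third leg of the defining identity by the counit $\varepsilon$ turns the right-hand side into $b$ and the left-hand side into $\alpha\big((\id\otimes\varepsilon)(b)\big)$, so $b=\alpha((\id\otimes\varepsilon)(b))$. Hence $\alpha$ is a bijection onto $\Ind_{\G}^{\G}(B)$ with inverse $\id\otimes\varepsilon$, and it intertwines $\alpha$ with $\id\otimes\Delta_{\G}$ precisely by the coaction identity. (In the universal setting $\varepsilon$ is a $*$-character on $C(\G)$, so $\id\otimes\varepsilon$ is genuinely bounded; otherwise one runs the computation on the dense algebraic core and extends by density.)

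For $(4)$, take $B=\C$ with trivial action. Identifying $\C\otimes C(\G)$ with $C(\G)$, the left-hand side of the defining equation becomes $1_{C(\HH)}\otimes c$ and the right-hand side becomes $(\pi\otimes\id)(\Delta_{\G}(c))$, so $\Ind_{\HH}^{\G}(\C)=\{c\in C(\G)\mid(\pi\otimes\id)\Delta_{\G}(c)=1\otimes c\}$, which is by definition $C(\HH\backslash\G)$, and the induced action $\id\otimes\Delta_{\G}=\Delta_{\G}$ is the translation action. I expect item $(1)$ to demand the most care, only because of the conventions for the $H$-action and for $\pi$, while item $(3)$'s use of the counit is the single analytic point to flag.
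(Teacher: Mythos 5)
Your proposal is correct and follows essentially the same route as the paper's proof: item (1) via the functional characterization of functions constant on the $H$-orbits of $X\times G$, item (2) by corestricting $T\otimes\id$, item (3) by slicing the defining identity with the counit to show $\Ind_{\G}^{\G}(B)=\alpha(B)$, and item (4) by direct comparison with the definition of the quotient space. The extra remarks you add (uniqueness in (2), boundedness of $\id\otimes\varepsilon$ in the universal setting for (3)) are sound and consistent with the paper's standing conventions.
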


\begin{proof}
\begin{enumerate}
\item Classically, one has to consider the quotient of the direct product $X\times G$ by the equivalence relation $(x\cdot h, g) = (x, h\cdot g)$ for all $s\in H$. In functional terms, a function on $X\times G$ defines a function on the quotient if and only if
\begin{equation*}
f\circ(\alpha\times \id) = f\circ(\id\times \lambda)
\end{equation*}
as functions on $X\times H\times G$, where $\lambda$ denotes the left action of $H$ on $G$. These functions are exactly the elements of $\Ind_{H}^{G}(C(X))$, hence the result.
\item Let $(A, \alpha)$ and $(B, \beta)$ be two actions of $\HH$ and let $T : A\to B$ be $\HH$-equivariant. Setting $\widehat{T} = (T\otimes \id)$ yields a map
\begin{equation*}
\widehat{T} : \Ind_{\HH}^{\G}(A)\to B\otimes C(\G)
\end{equation*}
and we will prove that the range of $\widehat{T}$ is contained in $\Ind_{\HH}^{\G}(B)$. Indeed,
\begin{align*}
(\beta\otimes \id)\circ\widehat{T} & = (T\otimes \id\otimes \id)\circ(\alpha\otimes \id) \\
& = (T\otimes \id\otimes \id)\circ(\id\otimes \pi\otimes \id)\circ(\id\otimes \Delta_{\G}) \\
& = (\id\otimes \pi\otimes \id)\circ(\id\otimes \Delta_{\G})\circ\widehat{T}.
\end{align*}
Moreover, $\widehat{T}\circ(\id\otimes \Delta_{\G}) = (\id\otimes \Delta_{\G})\circ\widehat{T}$ by construction, hence this is a $\G$-equivariant map.
\item Let us set $B' := \alpha(B) \subset B\otimes C\G)$, which is a C*-algebra isomorphic to $B$. We claim that $B' = \Ind_{\HH}^{\G}(B)$. Indeed, if $b\in \Ind_{\HH}^{\G}(b)$, then applying $(\id\otimes \id\otimes \varepsilon_{\G})$ to the equality
\begin{align*}
(\id\otimes \Delta_{\G})(b) = (\alpha\otimes \id)(b)
\end{align*}
yields
\begin{equation*}
b = (\alpha\otimes \varepsilon_{\G})(b) = \alpha((\id\otimes\varepsilon_{\G})(b))\in B'.
\end{equation*}
Since moreover the $*$-isomorphism $\alpha : B\to B'$ is equivariant, the result is proven.
\item Simply observe that
\begin{equation*}
\Ind_{\HH}^{\G}(\C) = \{b\in C(\G) \mid (\pi\otimes \id)\com(b) = 1\otimes b\}
\end{equation*}
and that the right-hand side is precisely the definition of the quotient space in the statement. Moreover, the translation action on a quotient space is given by the restriction of the coproduct, hence the result.
\end{enumerate}
\end{proof}

We will now establish a Frobenius type reciprocity for actions which will enable us to understand some of the examples of Section \ref{sec:examples}. As before, let  $\HH$ be a compact quantum subgroup of $\G$ and let $u\in \B(H_{u})\otimes C(\G)$ be a finite-dimensional representation of $\G$. Then $(\id\otimes \pi)(u)$ is a representation of $\HH$, and this gives a restriction functor $\Res^{\G}_{\HH}: \rep(\G) \to \rep(\HH)$. Note that in that setting, we have the equality of coactions
\begin{equation*}
\delta_{\Res_{\HH}^{\G}(u)} = (\id\otimes \pi)\circ\delta_{u}.
\end{equation*}

\begin{propo}[Frobenius type reciprocity]\label{prop:frob_type}
For any $u \in \rep(\G)$ and any action $(B, \alpha)$ of $\HH$ on a C*-albegra $B$,
\begin{equation*}
\mor_{\HH}(\Res^{\G}_{\HH}(u), \alpha)\cong \mor_{\G}(u, \Ind_{\HH}^{\G}(\alpha)).
\end{equation*}
\end{propo}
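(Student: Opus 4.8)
The plan is to construct mutually inverse linear maps between the two intertwiner spaces and check they respect the defining equivariance conditions. Let me set up notation. An element $T \in \mor_\HH(\Res^\G_\HH(u), \alpha)$ is a linear map $T : H_u \to B$ with $\alpha \circ T = (T \otimes \id) \circ \delta_{\Res^\G_\HH(u)} = (T\otimes\id)\circ(\id\otimes\pi)\circ\delta_u$. An element $S \in \mor_\G(u, \Ind_\HH^\G(\alpha))$ is a linear map $S : H_u \to \Ind_\HH^\G(B) \subseteq B \otimes C(\G)$ with $(\id\otimes\Delta_\G)\circ S = (S\otimes\id)\circ\delta_u$. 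The candidate correspondence is $T \mapsto (T \otimes \id)\circ\delta_u$ in one direction, and $S \mapsto (\id \otimes \varepsilon_\G)\circ S$ in the other, mimicking the argument already used in the proof of part $(3)$ of Proposition \ref{prop:induction_cases}.

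First I would check that $T \mapsto \widehat T := (T\otimes\id)\circ\delta_u$ actually lands in $\mor_\G(u, \Ind_\HH^\G(\alpha))$. For this there are two things to verify: that $\widehat T(\xi) \in \Ind_\HH^\G(B)$ for every $\xi \in H_u$, and that $\widehat T$ intertwines $u$ with the induced action. The first uses the representation identity $(\id\otimes\com_\G)\circ\delta_u = (\delta_u \otimes \id)\circ\delta_u$ — wait, more precisely $(\id \otimes \com_\G)(u) = u_{(12)}u_{(13)}$, equivalently $(\id\otimes\com_\G)\circ\delta_u = (\delta_u\otimes\id)\circ\delta_u$ on $H_u$ — together with the $\HH$-equivariance of $T$: applying $\alpha\otimes\id$ to $\widehat T(\xi)$ gives $(\alpha T \otimes\id)\delta_u(\xi) = (T\otimes\id\otimes\id)(\id\otimes\pi\otimes\id)(\delta_u\otimes\id)\delta_u(\xi)$, and then the coassociativity/representation identity rewrites $(\delta_u\otimes\id)\delta_u$ as $(\id\otimes\com_\G)\delta_u$, which is exactly the condition defining $\Ind_\HH^\G(B)$. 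The second, that $(\id\otimes\com_\G)\circ\widehat T = (\widehat T \otimes \id)\circ\delta_u$, is again just the representation identity $(\id\otimes\com_\G)(u)=u_{(12)}u_{(13)}$ applied to $\widehat T = (T\otimes\id)\delta_u$.

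Next I would check the reverse map $S \mapsto \check S := (\id\otimes\varepsilon_\G)\circ S$ lands in $\mor_\HH(\Res^\G_\HH(u),\alpha)$. Here $\check S : H_u \to B$ since $(\id\otimes\varepsilon_\G) : B\otimes C(\G)\to B$; one must verify $\alpha\circ\check S = (\check S\otimes\id)\circ(\id\otimes\pi)\circ\delta_u$. Applying $\id_B\otimes\id_{C(\G)}\otimes\varepsilon_\G$ is not quite the right slice; instead I would use that $S(\xi)\in\Ind_\HH^\G(B)$ means $(\alpha\otimes\id)S(\xi)=(\id\otimes\pi\otimes\id)(\id\otimes\com_\G)S(\xi)$, then apply $\id\otimes\id\otimes\varepsilon_\G$ to the right-hand side and use the counit identity $(\id\otimes\varepsilon_\G)\com_\G=\id$ — but one needs it in the form $(\id_B\otimes\pi\otimes\varepsilon_\G)(\id\otimes\com_\G)S(\xi)$; here I'd use $(\pi\otimes\varepsilon_\G)\com_\G=\pi$ (since $(\varepsilon_\HH\otimes\id)\com_\HH=\id$ and $\varepsilon_\HH\circ\pi = \varepsilon_\G$, giving $(\id\otimes\varepsilon_\G)(\pi\otimes\pi)\com_\G = \pi$, and $(\pi\otimes\pi)\com_\G=\com_\HH\pi$, so $(\id\otimes\varepsilon_\HH)\com_\HH\pi=\pi$). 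Alternatively — and this is cleaner — use the intertwining relation $(\id\otimes\com_\G)S=(S\otimes\id)\delta_u$ directly: slice with $\id\otimes\varepsilon_\G$ on the last leg to recover $S = (\check S\otimes\id)\delta_u$, so that $S$ is determined by $\check S$ via exactly the formula $\widehat{(\cdot)}$, and then the $\Ind$-membership of $S(\xi)$ translates verbatim into the $\HH$-equivariance of $\check S$. This last observation simultaneously shows $\widehat{\phantom{x}}$ and $\check{\phantom{x}}$ are mutually inverse: $\check{\widehat T} = (\id\otimes\varepsilon_\G)(T\otimes\id)\delta_u = T\circ(\id\otimes\varepsilon_\G)\delta_u = T$ using $(\id\otimes\varepsilon_\G)(u)=\id_{H_u}$, and $\widehat{\check S} = S$ by the slicing identity just noted.

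The main obstacle — really the only subtlety — is bookkeeping with the counit $\varepsilon_\G$ at the C*-algebraic level: $\varepsilon_\G$ is only defined on the dense Hopf $*$-subalgebra $\pol(\G)$, not on all of $C(\G)$. However, since $u$ is a finite-dimensional representation, all matrix coefficients of $\widehat T$ and of $S$ lie in $B\otimes\pol(\G)$ (the relevant coactions take values there), so the slices by $\varepsilon_\G$ are legitimate. I would remark on this once and then proceed with the algebraic manipulations above, which are all routine once the slicing identities $(\id\otimes\varepsilon_\G)(u)=\id$ and the representation/counit axioms are in hand. The isomorphism is evidently linear in $T$, completing the proof.
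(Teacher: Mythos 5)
Your proposal is correct and follows essentially the same route as the paper: the paper's proof uses exactly the two maps $\psi_u(T)=(T\otimes\id)\circ\delta_u$ and $\widehat{\psi}_u(T')=(\id\otimes\varepsilon_\G)\circ T'$, verifies membership in $\Ind_\HH^\G(B)$ and the two equivariance conditions via the representation identity and the counit axiom, and checks the maps are mutually inverse, just as you do. Your observation that slicing the $\G$-intertwining relation by $\varepsilon_\G$ immediately yields $S=(\check S\otimes\id)\circ\delta_u$ (so that surjectivity and the inverse identity come for free) is a mild streamlining of the paper's direct computation, and your remark on $\varepsilon_\G$ being defined only on $\pol(\G)$ is a legitimate point the paper leaves implicit.
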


\begin{proof}
Given $T\in \mor_{\HH}(\Res^{\G}_{\HH}(u), \alpha)$, consider the linear map
\begin{equation*}
\psi_{u}(T) := (T\otimes\id)\circ\delta_{u} : H_{u}\to B\otimes C(\G).
\end{equation*}
We claim that $\psi_{u}(T)\in\mor_{\G}(u, \Ind_{\HH}^{\G}(\alpha))$. Indeed, the equality
\begin{align*}
(\alpha \otimes \id)\circ\psi_{u}(T) & = (\alpha\circ T \otimes \id)\circ\delta_{u} = (T \otimes \id \otimes \id)\circ(\id \otimes \pi \otimes \id)\circ(\delta_{u} \otimes \id)\circ\delta_{u} \\
& = (T \otimes \pi \otimes \id)\circ(\id \otimes \com_{\G})\circ\delta_{u} \\
& = (\id \otimes \pi \otimes \id)\circ(\id \otimes \com_{\G})\circ(T \otimes \id)\circ\delta_{u} \\
& = (\id \otimes \pi \otimes \id)\circ(\id \otimes \com_{\G})\circ\psi_{u}(T)
\end{align*}
implies that $\psi_{u}(T)(H_{u})\subset\Ind_{\HH}^{\G}(\alpha)$, and the assertion therefore follows from
\begin{align*}
\Ind_{\HH}^{\G}(\alpha)\circ\psi_{u}(T) & = (\id\otimes \Delta_{\G})\circ(T\otimes \id)\circ\delta_{u} \\
& = (T\otimes \Delta_{\G})\circ\delta_{u} \\
& = ((T\otimes \id)\circ\delta_{u} \otimes \id)\circ\delta_{u} \\
& = (\psi_{u}(T)\otimes \id)\circ\delta_{u}.
\end{align*}
Conversely, given $T'\in\mor_{\G}(u, \Ind_{\HH}^{\G}(\alpha))$, consider the linear map
\begin{equation*}
\widehat{\psi}_{u}(T') = (\id\otimes\varepsilon_{\G})\circ T': H_{u} \to B.
\end{equation*}
We claim that $\widehat{\psi}_{u}(T')\in\mor_{\HH}(\Res^{\G}_{\HH}(u), \alpha)$. Indeed,
\begin{align*}
\alpha\circ\widehat{\psi}_{u}(T') & = (\id\otimes \id\otimes \varepsilon_{\G})\circ(\alpha\otimes \id)\circ T' \\
& = (\id\otimes \id\otimes\varepsilon_{\G})\circ(\id\otimes \pi\otimes \id)\circ(\id\otimes \com_{\G})\circ T' \\
& = (\id\otimes \pi\otimes \varepsilon_{\G})\circ(\id\otimes \com_{\G})\circ T' \\
& = (\id\otimes \varepsilon_{\G}\otimes \pi)\circ\Ind_{\HH}^{\G}(\alpha)\circ T' \\
& = (\id\otimes \varepsilon_{\G}\otimes \pi)\circ(T'\otimes \id)\circ\delta_{u} \\
& = ((\id\otimes \varepsilon_{\G})\circ T'\otimes \id)\circ(\id \otimes \pi)\circ\delta_{u} \\
& = (\widehat{\psi}_{u}(T')\otimes \id)\circ\delta_{\Res^{\G}_{\HH}(u)}.
\end{align*}
Finally, the result follows because for any $T\in \mor_{\HH}(\Res^{\G}_{\HH}(u), \alpha)$ and $T'\in \mor_{\G}(u, \Ind_{\HH}^{\G}(\alpha))$, we have on the one hand
\begin{align*}
\widehat{\psi}_{u}(\psi_{u}(T)) & = (\id \otimes \varepsilon_{\G})\circ(T\otimes \id)\circ\delta_{u} = T\circ(\id\otimes \varepsilon_{\G})\circ\delta_{u} = T
\end{align*}
and on the other hand
\begin{align*}
\psi_{u}(\widehat{\psi}_{u}(T')) & = ((\id\otimes \varepsilon_{\G})\circ T' \otimes \id)\circ\delta_{u} = (\id\otimes \varepsilon_{\G}\otimes \id)\circ(\id\otimes \Delta_{\G})\circ T' = T'. \qedhere
\end{align*} 
\end{proof}

Our aim now is to upgrade the previous statement, which was in terms of morphism spaces, into a statement at the level of weak unitary tensor functors. Observe that the functor $\F_{\alpha}\circ\Res^{\G}_{\HH}$ is always a weak unitary tensor functor on $\rep(\G)$, so that it make sense to try to compare it to $\F_{\Ind_{\HH}^{\G}(\alpha)}$. As one may expect, these two functors turn out to be the same.

\begin{theo}\label{thm:induction}
Let $\HH$ be a compact quantum subgroup of $\G$ and let $\alpha$ be an action of $\HH$. Then, there is a natural unitary monoidal isomorphism between the functors $\F_{\alpha}\circ\Res^{\G}_{\HH}$ and $\F_{\Ind_{\HH}^{\G}(\alpha)}$.
\end{theo}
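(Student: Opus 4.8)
The plan is to combine the Frobenius-type reciprocity of Proposition~\ref{prop:frob_type} with the Tannaka--Krein uniqueness statement of Theorem~\ref{theo:tannakakreinpinzarirobert} (or rather its consequence that a weak unitary tensor functor determines the ergodic action up to equivariant isomorphism). Concretely, by Proposition~\ref{prop:frob_type} we already have, for each $u\in\rep(\G)$, a linear bijection
\[
\psi_u:\mor_\HH(\Res^\G_\HH(u),\alpha)\;\xrightarrow{\ \cong\ }\;\mor_\G(u,\Ind_\HH^\G(\alpha)),
\qquad \psi_u(T)=(T\otimes\id)\circ\delta_u.
\]
Recalling that $\F_\alpha(\Res^\G_\HH(u))=\mor_\HH(\Res^\G_\HH(u),\alpha)$ and $\F_{\Ind_\HH^\G(\alpha)}(u)=\mor_\G(u,\Ind_\HH^\G(\alpha))$ as vector spaces, the family $(\psi_u)_u$ is the obvious candidate for the desired natural unitary monoidal isomorphism. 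So the work splits into three checks: that each $\psi_u$ is unitary for the canonical inner products on the two morphism spaces; that $(\psi_u)_u$ is natural, i.e.\ commutes with the action of intertwiners; and that it is monoidal, i.e.\ intertwines the isometric embeddings $\iota$ coming from the two weak tensor structures.

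First I would check \textbf{naturality}. Given $S\in\mor_\G(u,v)$, the functor $\F_\alpha\circ\Res^\G_\HH$ sends it to $T\mapsto (\Res^\G_\HH(S)\otimes\id)\circ T = (S\otimes\id)\circ T$ (as $\Res^\G_\HH$ is the identity on underlying spaces of morphisms, $S$ being viewed inside $\B(H_u,H_v)$), while $\F_{\Ind_\HH^\G(\alpha)}$ sends it to $T'\mapsto (S\otimes\id)\circ T'$. Then
\[
\psi_v\bigl((S\otimes\id)\circ T\bigr)=(S\otimes\id\otimes\id)\circ(T\otimes\id)\circ\delta_v\circ S
=(S\otimes\id\otimes\id)\circ\psi_u(T),
\]
using that $S$ is an intertwiner so $\delta_v\circ S=(S\otimes\id)\circ\delta_u$; this is exactly naturality. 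Next, \textbf{unitarity}: one computes, for $T,S\in\mor_\HH(\Res^\G_\HH(u),\alpha)$,
\[
\langle\psi_u(T),\psi_u(S)\rangle=\sum_i \psi_u(T)(e_i)\,\psi_u(S)(e_i)^*
\]
in $\Ind_\HH^\G(B)^\G\subset B\otimes C(\G)$; expanding $\psi_u(T)(e_i)=\sum_k T(e_k)\otimes u_{ki}$ and using orthogonality/unitarity of $u$ (the relation $\sum_i u_{ki}u_{li}^*=\delta_{kl}1$) collapses the $\G$-leg to $1$ and leaves $\sum_k T(e_k)S(e_k)^*\otimes 1=\langle T,S\rangle\otimes 1$, which is the inner product on $\mor_\HH(\Res^\G_\HH(u),\alpha)$ after identifying the fixed-point algebra with $\C$. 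So $\psi_u$ is isometric, and being bijective it is unitary.

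The last and most delicate point is \textbf{monoidality}: one must show $\psi_{u\boxtimes v}\circ\iota^{\Res}_{u,v}=\iota^{\Ind}_{u,v}\circ(\psi_u\otimes\psi_v)$, where $\iota^{\Res}_{u,v}$ is the embedding $\F_\alpha(\Res^\G_\HH u)\otimes\F_\alpha(\Res^\G_\HH v)\hookrightarrow\F_\alpha(\Res^\G_\HH(u\boxtimes v))$ (using $\Res^\G_\HH(u\boxtimes v)=\Res^\G_\HH(u)\boxtimes\Res^\G_\HH(v)$) and $\iota^{\Ind}_{u,v}$ is the corresponding embedding for the action $\Ind_\HH^\G(\alpha)$. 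Unwinding the definitions from the preliminaries, for $T\in\mor_\HH(\Res^\G_\HH u,\alpha)$ and $S\in\mor_\HH(\Res^\G_\HH v,\alpha)$ the embedding $\iota^{\Res}$ sends the pair to the morphism $H_u\otimes H_v\to B$, $\xi\otimes\eta\mapsto T(\xi)S(\eta)$ (multiplication in $B$), while $\iota^{\Ind}$ uses multiplication in $\Ind_\HH^\G(B)\subset B\otimes C(\G)$. Thus the identity to verify is, at the level of elements,
\[
\psi_{u\boxtimes v}\bigl(\xi\otimes\eta\mapsto T(\xi)S(\eta)\bigr)
=\bigl(\xi\otimes\eta\mapsto \psi_u(T)(\xi)\,\psi_v(S)(\eta)\bigr),
\]
which upon writing $\delta_{u\boxtimes v}=(\delta_u\boxtimes\delta_v)$ and using that $\alpha$ and hence $T,S$ are $*$-algebra-compatible reduces to the multiplicativity of the coproduct $\com_\G$ (so that $\delta_u$ is a "coaction by an algebra map" on coefficients). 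I expect this monoidality check to be the main obstacle, essentially because it requires carefully matching the two different descriptions of the tensor structure — the abstract $\iota_{v,w}:K_v\otimes K_w\to K_{v\boxtimes w}$ of Definition~\ref{de:weakunitarytensorfunctor} and the concrete one via the multiplication of spectral elements described after Definition~\ref{def:action} — and verifying the coherence with the $\G$-leg introduced by $\psi$. Once this is done, together with naturality, unitarity, and the trivial verification that $\psi_\triv$ is the identity $\C\to\C$, one concludes that $(\psi_u)_{u\in\rep(\G)}$ is a natural unitary monoidal isomorphism $\F_\alpha\circ\Res^\G_\HH\cong\F_{\Ind_\HH^\G(\alpha)}$, as claimed.
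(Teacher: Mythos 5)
Your proposal is correct and follows essentially the same route as the paper: both take the explicit bijections $\psi_u(T)=(T\otimes\id)\circ\delta_u$ from Proposition \ref{prop:frob_type} and check that they assemble into a natural unitary monoidal isomorphism, the key point being the intertwiner relation $\delta_v\circ S=(S\otimes\id)\circ\delta_u$. You are in fact somewhat more thorough than the paper, which records only the naturality computation and leaves the unitarity and monoidality verifications implicit; the one thing to fix is that your displayed naturality formula mixes the two descriptions of $\F_{\alpha}(u)$ (as maps $H_u\to B$ versus invariant vectors in $H_u\otimes B$) and does not typecheck as written, although the identity you invoke is the right one.
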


\begin{proof}
Using the description of spectral subspaces involving morphism spaces, Proposition \ref{prop:frob_type} tells us that for all $u\in \rep(\G)$,
\begin{equation*}
\xymatrix{
\F_{\alpha} (\Res^{\G}_{\HH}(u)) = \mor_{\HH}({\Res^{\G}_{\HH}(u)}, \alpha)\ar[r]^-{\psi_{u}}_-{\cong} &\mor_{\G}(u, \Ind_{\HH}^{\G}(\alpha)) = \F_{\Ind_{\HH}^{\G}(\alpha)}(u).
}
\end{equation*}
Moreover, we have explicit isomorphisms $\psi_{u} : T\mapsto (T\otimes \id)\circ\delta_{u}$ and it suffices to check that they behave correctly with respect to morphisms. But if $S\in \mor_{\G}(u, v)$ and $T \in \mor_{\HH}({\Res^{\G}_{\HH}(u)}, \alpha)$, then
\begin{align*}
(\F_{\Ind_{\HH}^{\G}(\alpha)}(S)\circ\psi_{u})(T) & = (T\otimes \id)\circ\delta_{u}\circ S^{*} \\
& =  (T\circ S^{*}\otimes \id)\circ\delta_{v} \\
& = \psi_{v}(T\circ S^{*}) \\
& = (\psi_{v}\circ\F_{\alpha}(\Res^{\G}_{\HH}(S)))(T)
\end{align*}
as desired. Thus, $\psi = \{\psi_{u}\}_{u \in \rep(\G)} : \F_{\alpha}\circ\Res^{\G}_{\HH} \to \F_{\Ind_{\HH}^{\G}(\alpha)}$ yields a natural unitary monoidal isomorphism. 
\end{proof}

We can use this to describe several examples from Section \ref{sec:examples}. This is because any inclusion $\CC\subset \CC'$ of categories of partitions induces a surjection $\pi : C(\G_{N}(\CC))\to C(\G_{N}(\CC'))$, simply because there are more relations in the right-hand side than in the left-hand side. Moreover, $\pi$ intertwines the coproducts so that we have a compact quantum subgroup. We can therefore apply Theorem \ref{thm:induction} in that situation. More precisely, we have inclusions
\begin{equation*}
\mor_{\G_{N}(\CC)}(k, l) = \spa\{T_{p} \mid p\in \CC(k, l)\} \subset \spa\{T_{p} \mid p\in \CC'(k, l)\} = \mor_{\G_{N}(\CC')}(k, l)
\end{equation*}
so that if $\PP$ is a module of projective partitions over $\CC'$, then it is also a module of projective partitions over $\CC$ and
\begin{equation*}
\F_{N}^{\CC, \PP} = \F_{N}^{\CC', \PP}\circ\Res_{\G_{N}(\CC)}^{\G_{N}(\CC')}
\end{equation*}
because they coincide by definition on the spaces $\mor_{\G_{N}(\CC)}(k, k)$ for all $k\in \N$. Let us state this formally as a corollary.

\begin{coro}
Let $\CC\subset \CC'$ be categories of partitions and let $\PP$ be a module of projective partitions over $\CC'$. Then, the action corresponding to $\F_{\Proj}^{\CC, \PP}$ is equivariantly isomorphic to the induction of the action coming from $\F_{\Proj}^{\CC', \PP}$.
\end{coro}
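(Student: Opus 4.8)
The plan is to assemble the corollary from three ingredients already available: the factorisation of weak unitary tensor functors through restriction noted just above, the induction theorem for such functors (Theorem~\ref{thm:induction}), and the duality of Theorem~\ref{theo:tannakakreinpinzarirobert} together with the uniqueness clause of Theorem~\ref{thm:tkergodic}.

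First I would fix notation by setting $\G := \G_{N}(\CC)$ and $\HH := \G_{N}(\CC')$. Since $\CC \subset \CC'$, the relations defining $C(\G_{N}(\CC'))$ include those defining $C(\G_{N}(\CC))$, so there is a surjective $*$-homomorphism $\pi : C(\G) \to C(\HH)$ intertwining the coproducts, and hence $\HH$ is a compact quantum subgroup of $\G$. Moreover $\PP$ is also a module of projective partitions over $\CC$: condition~(1) in the definition does not involve the category, and condition~(2) for $\CC$ follows from the one for $\CC'$ because $\CC(k,l) \subset \CC'(k,l)$. Thus Theorem~\ref{thm:projectivepartitionaction} yields both functors $\F_{N}^{\CC,\PP}$ and $\F_{N}^{\CC',\PP}$, together with the ergodic action $\alpha_{N}^{\CC,\PP}$ of $\G$ on $B_{N}(\CC,\PP)$ and the ergodic action $\alpha_{N}^{\CC',\PP}$ of $\HH$ on $B_{N}(\CC',\PP)$.

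The key step is the identity
\begin{equation*}
\F_{N}^{\CC,\PP} \;=\; \F_{N}^{\CC',\PP} \circ \Res^{\G}_{\HH}.
\end{equation*}
The restriction functor $\Res^{\G}_{\HH} : \rep(\G) \to \rep(\HH)$, $v \mapsto (\id \otimes \pi)(v)$, is strictly unitary monoidal: it leaves the underlying Hilbert spaces and intertwiner spaces unchanged (an intertwiner for $\G$ is \emph{a fortiori} one for $\HH$) and satisfies $\Res^{\G}_{\HH}(v \boxtimes w) = \Res^{\G}_{\HH}(v) \boxtimes \Res^{\G}_{\HH}(w)$ on the nose. Consequently $\F_{N}^{\CC',\PP} \circ \Res^{\G}_{\HH}$ is again a weak unitary tensor functor on $\rep(\G)$, with the same spaces $K^{\PP}_{n}$ attached to the tensor powers $u^{\boxtimes n}$, the same isometric embeddings $\iota_{k,l}$, and the same assignment $T_{p} \mapsto \varphi(p)$ (given by $\varphi(p)(\eta_{q}) = N^{\rl(p,q)} \eta_{pqp^{*}}$) for $p \in \CC(k,l) \subset \CC'(k,l)$. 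In other words, restricting $\F_{N}^{\CC',\PP} \circ \Res^{\G}_{\HH}$ to the full subcategory of $\rep(\G)$ with objects $(u^{\boxtimes n})_{n \in \N}$ reproduces precisely the data from which $\F_{N}^{\CC,\PP}$ was built in Theorem~\ref{thm:projectivepartitionaction}; the uniqueness assertion in Theorem~\ref{thm:tkergodic} then forces $\F_{N}^{\CC',\PP} \circ \Res^{\G}_{\HH} = \F_{N}^{\CC,\PP}$.

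Finally I would feed this into Theorem~\ref{thm:induction}. By Theorem~\ref{theo:tannakakreinpinzarirobert} there is a natural unitary monoidal isomorphism $\F_{N}^{\CC',\PP} \iso \F_{\alpha_{N}^{\CC',\PP}}$, so
\begin{equation*}
\F_{N}^{\CC,\PP}
= \F_{N}^{\CC',\PP} \circ \Res^{\G}_{\HH}
\iso \F_{\alpha_{N}^{\CC',\PP}} \circ \Res^{\G}_{\HH}
\iso \F_{\Ind_{\HH}^{\G}(\alpha_{N}^{\CC',\PP})},
\end{equation*}
the last isomorphism being exactly Theorem~\ref{thm:induction}. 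Since also $\F_{N}^{\CC,\PP} \iso \F_{\alpha_{N}^{\CC,\PP}}$, the spectral functors of the two ergodic actions $\alpha_{N}^{\CC,\PP}$ and $\Ind_{\HH}^{\G}(\alpha_{N}^{\CC',\PP})$ of $\G$ are isomorphic, and by the Pinzari--Roberts duality --- equivalently, by the explicit reconstruction of the algebraic core in the proof of Theorem~\ref{thm:tkergodic}, which shows that an ergodic action is determined up to equivariant isomorphism by its spectral functor --- the two actions are $\G$-equivariantly isomorphic, which is the assertion of the corollary. The only point needing a little care is the verification that $\Res^{\G}_{\HH}$ is strictly unitary monoidal, so that $\F_{N}^{\CC',\PP} \circ \Res^{\G}_{\HH}$ genuinely is a weak unitary tensor functor and the uniqueness clause of Theorem~\ref{thm:tkergodic} applies; beyond this the proof is a routine assembly of the ingredients above.
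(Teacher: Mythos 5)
Your proposal is correct and follows essentially the same route as the paper: the paper's argument (given in the paragraph preceding the corollary) likewise establishes $\F_{N}^{\CC,\PP} = \F_{N}^{\CC',\PP}\circ\Res^{\G_{N}(\CC')}_{\G_{N}(\CC)}$ by observing that the two sides coincide by definition on the morphism spaces $\mor_{\G_{N}(\CC)}(k,l)$ (with the uniqueness clause of Theorem~\ref{thm:tkergodic} implicitly doing the work you make explicit), and then concludes via Theorem~\ref{thm:induction} and the Pinzari--Roberts duality. Your additional care about $\Res$ being strictly monoidal and about passing from isomorphic spectral functors to equivariantly isomorphic actions only fills in details the paper leaves tacit.
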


The same of course works for actions coming from shifted line partitions. This shows for instance that the action corresponding to $\F_{L}^{NC, P(0, \bullet+1)}$ is the induction to $S_{N}^{+}$ of the permutation action of $S_{N}$ on $N$ points.

\subsection{Induced Yetter-Drinfeld C*-algebras}

It is natural in view of the preceding subsection to wonder whether the induction procedure preserves (braided commutative) Yetter-Drinfeld structures. We show here that the answer is yes and that some important examples can be obtain in that way. This can be done in two ways, one using an explicit computation of the (braided commutative) Yetter-Drinfeld structure and the other using the categorical picture of a (braided) compatible collection for weak unitary tensor functors.

The preservation of the Yetter-Drinfeld structure in an induction procedure was first given by R. Nest and C. Voigt in \cite{NV09} in the general framework of locally compact quantum groups. Our next proposition shows that the braided commutative structure is also conserved in the compact quantum group case.

\begin{propo}\label{prop:induced_yd}
Let $\HH$ be a quantum subgroup of a compact quantum group $\G$, with surjective Hopf $*$-algebra morphism $\pi: \pol(\G) \to \pol(\HH)$, and $(B,\lhd,\alpha)$ be a Yetter-Drinfeld $\HH$-C*-algebra. There exist a Hopf $*$-algebra action $\lhd^{\G,\HH}: \Ind^{\G}_{\HH}(\mathcal{B}) \otimes \pol(\G) \to \Ind^{\G}_{\HH}(\mathcal{B})$, defined by
\[
(b \otimes a') \lhd^{\G,\HH} a = (b \lhd \pi(a_{(2)})) \otimes S_{\G}(a_{(1)})a'a_{(3)}
\]
for all $b \otimes a' \in \Ind^{\G}_{\HH}(\mathcal{B})$ and $a \in \pol(\G)$, such that the tuple $\Ind^{\G}_{\HH}(B,\lhd,\alpha) := (\Ind^{\G}_{\HH}(B),\lhd^{\G,\HH},\Ind^{\G}_{\HH}(\alpha))$ yields a Yetter-Drinfeld $\G$-C*-algebra. Moreover, if the Yetter-Drinfeld $\HH$-C*-algebra $(B,\lhd,\alpha)$ is braided commutative, then the Yetter-Drinfeld $\G$-C*-algebra $\Ind^{\G}_{\HH}(B,\lhd,\alpha)$ is braided commutative.
\end{propo}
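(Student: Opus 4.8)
The plan is to verify directly the three Hopf $*$-algebra action axioms from Definition~\ref{def:hopfaction}, then the Yetter-Drinfeld condition \eqref{eq:yetter_drinfeld_condition}, and finally the braided commutativity \eqref{eq:braided_commutative_condition}, all by straightforward Sweedler-notation computations, using throughout that $\pi$ is a Hopf $*$-algebra morphism (so $\pi$ commutes with $S$, the counit, and intertwines coproducts), that $(B,\lhd,\alpha)$ is Yetter-Drinfeld over $\HH$, and that $\Ind_\HH^\G(\BB)$ is the cotensor product carrying the action $\Ind_\HH^\G(\alpha)=(\id\otimes\com_\G)$. First I would check that $\lhd^{\G,\HH}$ is well-defined, i.e.\ that $(b\otimes a')\lhd^{\G,\HH}a$ indeed lies in $\Ind_\HH^\G(\BB)$ whenever $b\otimes a'$ does: applying $(\alpha\otimes\id)$ to the formula and using the defining relation of the cotensor product together with the Yetter-Drinfeld relation for $(B,\lhd,\alpha)$ and coassociativity of $\com_\G$ should reproduce $(\id\otimes\pi\otimes\id)\circ(\id\otimes\com_\G)$ applied to it. This is the analogue of the computation already done in the proof of Proposition~\ref{prop:induced_yd}'s preceding lemma, so it is routine but needs care.

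Next I would verify axioms (1)--(3) of Definition~\ref{def:hopfaction}. For (1), unitality is immediate from $S_\G(1)=1$ and $\cou_\G(1)=1$, and the module property $(b\otimes a')\lhd^{\G,\HH}(aa'')=((b\otimes a')\lhd^{\G,\HH}a)\lhd^{\G,\HH}a''$ follows from the module property of $\lhd$ over $\pol(\HH)$, the antihomomorphism property of $S_\G$, and $\pi(a_{(2)}a''_{(2)})=\pi(a_{(2)})\pi(a''_{(2)})$, expanding $\com_\G(aa'')$. For (2), the multiplicativity-up-to-coproduct relation reduces to the corresponding relation for $\lhd$ after noting that multiplication in $\Ind_\HH^\G(\BB)\subset B\otimes C(\G)$ is componentwise, so one expands $\com_\G(a)$ into three legs and matches; the normalization $1\lhd^{\G,\HH}a=\cou_\G(a)1$ uses $1_B\lhd\pi(a_{(2)})=\cou_\HH(\pi(a_{(2)}))1_B=\cou_\G(a_{(2)})1_B$ and $S_\G(a_{(1)})a_{(3)}\cou_\G(a_{(2)})=\cou_\G(a)1$. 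For (3), the $*$-relation $((b\otimes a')\lhd^{\G,\HH}a)^*=(b\otimes a')^*\lhd^{\G,\HH}S_\G(a)^*$ follows from $(b\lhd c)^*=b^*\lhd S_\HH(c)^*$, the compatibility $\pi\circ S_\G=S_\HH\circ\pi$, $\pi(c)^*=\pi(c^*)$, and the identity $S_\G(S_\G(a)^*)^*=a$ together with the coproduct of $S_\G(a)^*$.

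Then I would check the Yetter-Drinfeld condition \eqref{eq:yetter_drinfeld_condition} for $\Ind_\HH^\G(B,\lhd^{\G,\HH},\Ind_\HH^\G(\alpha))$: write $B'=b\otimes a'$ with the cotensor relation $(\alpha\otimes\id)(b\otimes a')=b_{[0]}\otimes\pi(b_{[1]})\otimes\cdots$ — more precisely use $\alpha(b)\otimes a' = $ (stuff forced by membership). The computation of $\Ind_\HH^\G(\alpha)((b\otimes a')\lhd^{\G,\HH}a)=(\id\otimes\com_\G)((b\lhd\pi(a_{(2)}))\otimes S_\G(a_{(1)})a'a_{(3)})$ should, after inserting the cotensor defining relation to rewrite $\alpha(b)$ in terms of $a'$ and $\com_\G$, reduce to the required expression $(b\otimes a')_{[0]}\lhd^{\G,\HH}a_{(2)}\otimes S_\G(a_{(1)})(b\otimes a')_{[1]}a_{(3)}$ purely by coassociativity bookkeeping in $\pol(\G)$. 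Finally, braided commutativity: given $b\otimes a', c\otimes a''\in\Ind_\HH^\G(\BB)$, one computes $(c\otimes a'')_{[0]}((b\otimes a')\lhd^{\G,\HH}(c\otimes a'')_{[1]})$ using $\Ind_\HH^\G(\alpha)=(\id\otimes\com_\G)$, unravels using the cotensor relation to replace $\com_\G(a'')$-legs by $\alpha(c)$-legs via $\pi$, and then invokes the braided commutativity $cb=c_{[0]}(b\lhd c_{[1]})$ of $(B,\lhd,\alpha)$ in the $B$-leg while the $C(\G)$-legs collapse by the antipode axioms to give $(b\otimes a')(c\otimes a'')$. The main obstacle I expect is the bookkeeping in the well-definedness and the Yetter-Drinfeld step: one must carefully use that membership in $\Ind_\HH^\G(\BB)$ forces $\alpha(b)=(\id\otimes\pi)\com_\G$-compatibility, i.e.\ the second leg $b_{[1]}$ is controlled by $\pi$ applied to the $C(\G)$-leg $a'$, and threading this constraint through the multi-leg Sweedler computations without sign/order errors is the delicate part; everything else is a mechanical transcription of the corresponding $\HH$-statements through the Hopf morphism $\pi$.
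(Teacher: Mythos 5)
Your proposal is correct and follows essentially the same route as the paper: the paper also first observes that the formula defines a Hopf $*$-algebra action on the ambient algebra $\mathcal{B}\otimes\pol(\G)$, then proves invariance of the cotensor product $\Ind^{\G}_{\HH}(\mathcal{B})$ by combining the Yetter--Drinfeld relation for $(B,\lhd,\alpha)$ with the defining relation $\sum_i \alpha(b_i)\otimes a_i' = \sum_i b_i\otimes\pi(a'_{i(1)})\otimes a'_{i(2)}$, and finally verifies the Yetter--Drinfeld condition and braided commutativity by exactly the Sweedler bookkeeping you describe. The only cosmetic difference is that you verify the module-algebra axioms directly on the subalgebra rather than on $\mathcal{B}\otimes\pol(\G)$ first, which changes nothing of substance.
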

\begin{proof}
First, note that the linear map
\[
\blhd: (\mathcal{B} \otimes \pol(\G)) \otimes \pol(\G) \to \mathcal{B} \otimes \pol(\G), (b \otimes a') \blhd a = (b \lhd \pi(a_{(2)})) \otimes S_{\G}(a_{(1)})a'a_{(3)}
\]
yields a Hopf $*$-algebra action of $(\pol(\G),\com)$ on $\mathcal{B} \otimes \pol(\G)$. We will prove that $\Ind^{\G}_{\HH}(\mathcal{B})$ is an invariant $*$-subalgebra of $\mathcal{B} \otimes \pol(\G)$ by the action $\blhd$. Take $b = \sum_{i} b_{i} \otimes a'_{i} \in \Ind^{\G}_{\HH}(\mathcal{B})$ and $a \in \pol(\G)$. By Yetter-Drinfeld condition, we have
\begin{equation}\label{eq:equality2}
\alpha(b_{i} \lhd \pi(a_{(2)})) = ({b_{i}}_{[0]} \lhd \pi(a_{(3)})) \otimes S_{\HH}(\pi(a_{(2)})){b_{i}}_{[1]}\pi(a_{(4)}),
\end{equation}
and by definition of $\Ind^{\G}_{\HH}(\mathcal{B})$, it follows that
\begin{equation}\label{eq:equality1}
\sum_{i} {b_{i}}_{[0]} \otimes {b_{i}}_{[1]} \otimes a'_{i} = \sum_{i} \alpha(b_{i}) \otimes a'_{i} = (\alpha \otimes \id)(b) = (\id \otimes \pi \otimes \id)(\id \otimes \com_{\G})(b) = \sum_{i}b_{i} \otimes \pi({a'_{i}}_{(1)}) \otimes {a'_{i}}_{(2)}.
\end{equation}
Hence, the equality
\begin{align*}
(\alpha \otimes \id)(b \blhd a) & \,\;=\;\, \sum_{i} \alpha(b_{i} \lhd \pi(a_{(2)}) \otimes S_{\G}(a_{(1)})a'_{i}a_{(3)} \\
& \overset{(\ref{eq:equality2})}{=} \sum_{i} ({b_{i}}_{[0]} \lhd \pi(a_{(3)})) \otimes S_{\HH}(\pi(a_{(2)})){b_{i}}_{[1]}\pi(a_{(4)}) \otimes S_{\G}(a_{(1)})a'_{i}a_{(5)} \\
& \overset{(\ref{eq:equality1})}{=} \sum_{i} (b_{i} \lhd \pi(a_{(3)})) \otimes S_{\HH}(\pi(a_{(2)}))\pi({a'_{i}}_{(1)})\pi(a_{(4)}) \otimes S_{\G}(a_{(1)}){a'_{i}}_{(2)}a_{(5)} \\
& \,\;=\;\, \sum_{i} (b_{i} \lhd \pi(a_{(3)})) \otimes \pi(S_{\G}(a_{(2)}){a'_{i}}_{(1)}a_{(4)}) \otimes S_{\G}(a_{(1)}){a'_{i}}_{(2)}a_{(5)} \\
& \,\;=\;\, \sum_{i} (\id \otimes \pi \otimes \id)(\id \otimes \com_{\G})((b_{i} \lhd \pi(a_{(2)})) \otimes S(a_{(1)})a'_{i}a_{(3)}) \\
& \,\;=\;\, (\id \otimes \pi \otimes \id)(\id \otimes \com_{\G})(b \blhd a)
\end{align*}
shows that $b \blhd a \in \Ind^{\G}_{\HH}(\mathcal{B})$. This means that $\lhd^{\G,\HH} := \blhd|_{\Ind^{\G}_{\HH}(\mathcal{B})}$ is a Hopf $*$-algebra action of $\pol(\G)$ on $\Ind^{\G}_{\HH}(\mathcal{B})$. We claim that the tuple $(\Ind^{\G}_{\HH}(B),\lhd^{\G,\HH},\Ind^{\G}_{\HH}(\alpha))$ yields a Yetter-Drinfeld $\G$-C*-algebra. For that, note that
\[
b_{[0]} \otimes b_{[1]} = \Ind^{\G}_{\HH}(\alpha)(b) = \sum_{i} b_{i} \otimes {a'_{i}}_{(1)} \otimes {a'_{i}}_{(2)},
\]
thus
\begin{align*}
\Ind^{\G}_{\HH}(\alpha)(b \lhd^{\G,\HH} a) & = \sum_{i}(b_{i} \lhd \pi(a_{(2)})) \otimes \com_{\G}(S(a_{(1)})a'_{i}a_{(3)}) \\
& = \sum_{i} ((b_{i} \otimes {a'_{i}}_{(1)}) \lhd^{\G,\HH} \pi(a_{(2)})) \otimes S_{\G}(a_{(1)}){a'_{i}}_{(2)}a_{(3)} \\
& = (b_{[0]} \lhd^{\G,\HH} \pi(a_{(2)})) \otimes S_{\G}(a_{(1)})b_{[1]}a_{(3)}
\end{align*}
for any $b \in \Ind^{\G}_{\HH}(\mathcal{B})$ and $a \in \pol(\G)$. It remains to show that the Yetter-Drinfeld $\G$-C*-algebra $(\Ind^{\G}_{\HH}(B),\lhd^{\G,\HH},\Ind^{\G}_{\HH}(\alpha))$ is braided commutative when $(B,\lhd,\alpha)$ is braided commutative. This follows from the computation
\begin{align*}
b_{[0]}(b' \lhd^{\G,\HH} b_{[1]}) & \;= \sum_{j} b_{[0]}\left((b'_{j} \lhd \pi({b_{[1]}}_{(2)})) \otimes S_{\G}({b_{[1]}}_{(1)})a''_{j}b_{[1]_{(3)}}\right) \\
& \;= \sum_{i,j}b_{i}(b'_{j} \lhd \pi({a'_{i}}_{(1)})) \otimes a''_{j}{a'_{i}}_{(2)}
\end{align*}
which, by \eqref{eq:equality1} and braided commutative of $(B,\lhd,\alpha)$, implies
\[
b_{[0]}(b' \lhd^{\G,\HH} b_{[1]}) = b'b
\]
for all $b = \sum_{i} b_{i} \otimes a'_{i}, b' = \sum_{j} b'_{j} \otimes a''_{j} \in \Ind^{\G}_{\HH}(\mathcal{B})$.
\end{proof}

\begin{exe}\label{ex:quotient_type_yd_algebra}
Let $\HH$ be a quantum subgroup of a compact quantum group $\G$. By Example~\ref{ex:trivial_yd_algebra}, $(\C,\lhd_{\textrm{triv}},\alpha_{\textrm{triv}})$ is a trivial braided commutative Yetter-Drinfeld $\HH$-C*-algebra. By Proposition~\ref{prop:induction_cases}, it follows that $\Ind^{\G}_{\HH}(\C) = C(\HH\backslash\G)$ and $\Ind^{\G}_{\HH}(\alpha_{\textrm{triv}}) = \com|_{C(\HH\backslash\G)}$. By Proposition~\ref{prop:induced_yd}, we have
\[
b \lhd_{\textrm{triv}}^{\G,\HH} a = (1_{\C} \lhd \pi(a_{(2)})) \otimes S_{\G}(a_{(1)})ba_{(3)} = S_{\G}(a_{(1)})ba_{(2)}
\]
for all $b \in \pol(\HH\backslash\G)$ and $x \in \pol(\G)$, i.e. $\Ind^{\G}_{\HH}(\lhd_{\textrm{triv}})$ is the adjoint action of $(\pol(\G),\com)$ restricted to $\pol(\HH\backslash\G)$. Then
\[
\Ind^{\G}_{\HH}(\C,\lhd_{\textrm{triv}},\alpha_{\textrm{triv}}) = (C(\HH\backslash\G),\lhd_{\textrm{ad}}|_{\pol(\HH\backslash\G)},\com|_{C(\HH\backslash\G)})
\]
yields a braided commutative Yetter-Drinfeld $\G$-C*-algebra called {\em the quotient type coideal arising from the quantum subgroup $\HH$ of $\G$}. In particular, if $\HH = \{*\}$, we have $\Ind^{\G}_{\HH}(\C,\lhd_{\textrm{triv}},\alpha_{\textrm{triv}}) = (C(\G),\lhd_{\textrm{ad}},\com)$.
\end{exe}

Here is now another version of the induction result involving weak unitary tensor functors.

\begin{propo}\label{prop:induced_yd_equi}
Let $\HH$ be a quantum subgroup of a compact quantum group $\G$, with surjective Hopf $*$-algebra morphism $\pi: \pol(\G) \to \pol(\HH)$, and let $(B,\lhd,\alpha)$ be a Yetter-Drinfeld $\HH$-C*-algebra. If $\psi: \varphi_{\alpha}\circ\Res^{\G}_{\HH} \iso \varphi_{\Ind(\alpha)}$ denotes the natural unitary monoidal isomorphism given by Theorem~\ref{thm:induction}, then the following diagram
\[
\xymatrix@C=5pc{
\varphi_{\alpha}(\Res^{\G}_{\HH}(v)) = \mor_{\HH}(\Res^{\G}_{\HH}(v),\alpha)\ar[r]^-{{}^{\alpha}\mathcal{A}^{\Res^{\G}_{\HH}(u)}_{\Res^{\G}_{\HH}(v)}}\ar[d]_-{\psi_{v}}^-{\iso} & \varphi_{\alpha}(\Res^{\G}_{\HH}(\bar{u} \boxtimes v \boxtimes u)) = \mor_{\HH}(\Res^{\G}_{\HH}(\bar{u} \boxtimes v \boxtimes u),\alpha)\ar[d]\ar[d]^-{\psi_{\bar{u} \boxtimes v \boxtimes u}}_-{\iso} \\
\varphi_{\Ind^{\G}_{\HH}(\alpha)}(v) = \mor_{\G}(v,\Ind^{\G}_{\HH}(\alpha))\ar[r]_{{}^{\Ind^{\G}_{\HH}(\alpha)}\mathcal{A}^{u}_{v}} & \varphi_{\Ind^{\G}_{\HH}(\alpha)}(\bar{u} \boxtimes v \boxtimes u) = \mor_{\G}(\bar{u} \boxtimes v \boxtimes u,\Ind^{\G}_{\HH}(\alpha))
}
\]
commutes for any $u, v \in \rep(\G)$.
\end{propo}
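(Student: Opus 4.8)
The plan is to unwind both compatible collections into explicit formulas and then chase an arbitrary morphism $T\in\mor_{\HH}(\Res^{\G}_{\HH}(v),\alpha)$ around the two paths of the square. Recall from Appendix~\ref{app:yd} that the compatible collection canonically attached to a Yetter-Drinfeld C*-algebra is built directly from its Hopf $*$-algebra action: for a representation $w$ with matrix coefficients $w_{ij}$ and $S\in\mor_{\G}(v,\alpha)$, the morphism ${}^{\alpha}\mathcal A^{w}_{v}(S)\in\mor_{\G}(\bar w\boxtimes v\boxtimes w,\alpha)$ is obtained by acting on the values of $S$ with the coefficients of $w$ placed on the $\bar w$- and $w$-legs, the Yetter-Drinfeld condition~\eqref{eq:yetter_drinfeld_condition} being exactly what makes this $\G$-equivariant. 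Since by Proposition~\ref{prop:induced_yd} the induced Hopf $*$-algebra action is given by the completely explicit formula $(b\otimes a')\lhd^{\G,\HH}a=(b\lhd\pi(a_{(2)}))\otimes S_{\G}(a_{(1)})a'a_{(3)}$, and since the coefficients of $\Res^{\G}_{\HH}(u)$ are precisely $\pi(u_{ij})$, both sides of the square end up being written in terms of $\lhd$, $\pi$ and the coefficients of $u$ and $v$. It then suffices to match these two expressions, using the maps $\psi_{w}(T)=(T\otimes\id)\circ\delta_{w}$ and $\widehat{\psi}_{w}(T')=(\id\otimes\cou_{\G})\circ T'$ from Proposition~\ref{prop:frob_type} and Theorem~\ref{thm:induction}, together with $(\pi\otimes\pi)\circ\com_{\G}=\com_{\HH}\circ\pi$, the counit relations for $\com_{\G}$, and $S_{\G}(u_{ij})=u^{*}_{ji}$ coming from orthogonality of $u$ (one also notes that $\Ind^{\G}_{\HH}(\alpha)$ is again ergodic, so the whole framework applies).

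\textbf{The two paths.} Along the top and then down: applying ${}^{\alpha}\mathcal A^{\Res^{\G}_{\HH}(u)}_{\Res^{\G}_{\HH}(v)}$ to $T$ yields a morphism whose values are those of $T$ acted on by the coefficients $\pi(u_{ij})$ of $\Res^{\G}_{\HH}(u)$; composing with $\psi_{\bar u\boxtimes v\boxtimes u}=(\,\cdot\,\otimes\id)\circ\delta_{\bar u\boxtimes v\boxtimes u}$ then multiplies on the right, leg by leg, by the $\G$-coefficients of $\bar u\boxtimes v\boxtimes u$, that is, by products of the form $u^{*}\,v\,u$. Down and then along the bottom: $\psi_{v}(T)=(T\otimes\id)\circ\delta_{v}$ produces the morphism $\xi\mapsto\sum_{k}T(e_{k})\otimes v_{k\bullet}$ in $\mor_{\G}(v,\Ind^{\G}_{\HH}(\alpha))$, and applying ${}^{\Ind^{\G}_{\HH}(\alpha)}\mathcal A^{u}_{v}$ contracts it against the coefficients of $u$ via $\lhd^{\G,\HH}$. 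Substituting $b\otimes a'=T(e_{k})\otimes v_{k\bullet}$ and $a=u_{ij}$ into the defining formula for $\lhd^{\G,\HH}$ and using coassociativity of $\com_{\G}$ to distribute the Sweedler legs of $u_{ij}$, the factor $b\lhd\pi(a_{(2)})$ reproduces the ``$T$ acted on by $\pi(u_{\bullet\bullet})$'' part of the first path, while the factor $S_{\G}(a_{(1)})a'a_{(3)}=S_{\G}(u_{\bullet\bullet})\,v_{k\bullet}\,u_{\bullet\bullet}$ reproduces, through $S_{\G}(u_{ij})=u^{*}_{ji}$, exactly the $\bar u\boxtimes v\boxtimes u$-coefficients that the first path obtained from $\delta_{\bar u\boxtimes v\boxtimes u}$. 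Hence the two composites agree as linear maps $\varphi_{\alpha}(\Res^{\G}_{\HH}(v))\to\mor_{\G}(\bar u\boxtimes v\boxtimes u,\Ind^{\G}_{\HH}(\alpha))$; that they land in $\Ind^{\G}_{\HH}(\mathcal B)$ is already guaranteed by the fact that $\psi_{\bar u\boxtimes v\boxtimes u}$ is a well-defined isomorphism, part of Theorem~\ref{thm:induction}.

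\textbf{Reduction and main obstacle.} It is cleaner to precompose with $\psi_{v}^{-1}$ and postcompose with $\psi_{\bar u\boxtimes v\boxtimes u}^{-1}=\widehat{\psi}_{\bar u\boxtimes v\boxtimes u}$ and verify the transposed identity directly on the $\mor_{\HH}$ side; the relation $\widehat{\psi}_{w}\circ\psi_{w}=\id$ established inside the proof of Proposition~\ref{prop:frob_type} legitimizes this, and in this form only $\lhd$ and the restricted coefficients $\pi(u_{ij})$ appear. One can also phrase the statement more abstractly: $\psi$ transports ${}^{\alpha}\mathcal A$ along the monoidal isomorphism $\varphi_{\alpha}\circ\Res^{\G}_{\HH}\cong\varphi_{\Ind(\alpha)}$ to a compatible collection on $\varphi_{\Ind(\alpha)}$, and the content of the proposition is that this transported collection is precisely the one corresponding, via Theorem~\ref{theo:yd_reconstruction_na} and Definition~\ref{def:compatible_collection}, to the Yetter-Drinfeld structure $\lhd^{\G,\HH}$ of Proposition~\ref{prop:induced_yd}; but since that correspondence is not recorded as a bijection, the explicit verification above is the safe route. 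There is no conceptual difficulty here --- $\lhd^{\G,\HH}$ was designed exactly so that the coefficients of $\G$ restrict through $\pi$ to those governing $\lhd$ --- and the only real care is bookkeeping: tracking the Sweedler indices of $\com_{\G}(u_{ij})$ across the two factors of $\lhd^{\G,\HH}$ and matching the $\bar u$-leg, where the antipode $S_{\G}$ and the complex conjugation (with their duality maps $R,\bar R$) interact simultaneously. I expect this $\bar u$-leg to be the point requiring the most attention.
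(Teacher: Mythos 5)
Your proposal is correct and follows essentially the same route as the paper: evaluate both composites on basis vectors $\bar\xi_i\otimes\eta_j\otimes\xi_k$, unwind ${}^{\Ind^{\G}_{\HH}(\alpha)}\mathcal A^u_v(\psi_v(T))$ via the explicit formula for $\lhd^{\G,\HH}$ from Proposition~\ref{prop:induced_yd}, and identify $\pi(u_{i'k'})$ with the coefficients of $\Res^{\G}_{\HH}(u)$ and $S_{\G}(u_{ii'})$ with $\bar u_{i'i}$ so that the result reassembles into $\psi_{\bar u\boxtimes v\boxtimes u}\bigl({}^{\alpha}\mathcal A^{\Res^{\G}_{\HH}(u)}_{\Res^{\G}_{\HH}(v)}(T)\bigr)$. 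The paper's proof is exactly this coefficient chase, with the Sweedler bookkeeping on the $\bar u$-leg handled just as you anticipate.
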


\begin{proof}
Fix $u,v \in \rep(\G)$. Consider two orthonormal basis $(\xi_{i})$ and $(\eta_{j})$ in $H_{u}$ and $H_{v}$, respectively. By Proposition~\ref{prop:induced_yd}, there is a Hopf $*$-algebra action $\lhd^{\G,\HH}: \Ind^{\G}_{\HH}(\mathcal{B}) \otimes \pol(\G) \to \Ind^{\G}_{\HH}(\mathcal{B})$, defined by
\[
(b \otimes a') \lhd^{\G,\HH} a = (b \lhd \pi(a_{(2)})) \otimes S_{\G}(a_{(1)})a'a_{(3)}
\]
for all $b \otimes a' \in \Ind^{\G}_{\HH}(\mathcal{B})$, $a \in \pol(\G)$. Thus, given $T \in \varphi_{\alpha}(\Res^{\G}_{\HH}(v))$, we have
\begin{align*}
\left({}^{\Ind^{\G}_{\HH}(\alpha)}\mathcal{A}^{u}_{v}(\psi_{v}(T))\right)(\bar{\xi}_{i} \otimes \eta_{j} \otimes \xi_{k}) & = \psi_{v}(T)(\eta_{j}) \lhd^{\G,\HH} u_{ik} = \sum_{j'} (T(\eta_{j'}) \otimes v_{j'j}) \lhd^{\G,\HH} u_{ik} \\
& = \sum_{i',j',k'} (T(\eta_{j'}) \lhd \pi(u_{i'k'})) \otimes S(u_{ii'})v_{j'j}u_{k'k} \\
& = \sum_{i',j',k'} (T(\eta_{j'}) \lhd \Res^{\G}_{\HH}(u)_{i'k'}) \otimes \bar{u}_{i'i}v_{j'j}u_{k'k} \\
& = \sum_{i',j',k'} {}^{\alpha}\mathcal{A}^{\Res^{\G}_{\HH}(u)}_{\Res^{\G}_{\HH}(v)}(T)(\bar{\xi}_{i'} \otimes \eta_{j'} \otimes \xi_{k'}) \otimes \bar{u}_{i'i}v_{j'j}u_{k'k} \\
& = \left({}^{\alpha}\mathcal{A}^{\Res^{\G}_{\HH}(u)}_{\Res^{\G}_{\HH}(v)}(T) \otimes \id\right)\delta_{\bar{u} \boxtimes v \boxtimes u}(\bar{\xi}_{i} \otimes \eta_{j} \otimes \xi_{k}) \\
& = \psi_{\bar{u} \boxtimes v \boxtimes u}({}^{\alpha}\mathcal{A}^{\Res^{\G}_{\HH}(u)}_{\Res^{\G}_{\HH}(v)}(T))(\bar{\xi}_{i} \otimes \eta_{j} \otimes \xi_{k}),
\end{align*}
for any $\xi_{i},\xi_{k} \in H_{u}$ and $\eta_{j} \in H_{v}$. The result follows directly from this. 
\end{proof}

\begin{theo}[Categorical induction for Yetter-Drinfeld C*-algebras]
Let $\HH$ be a quantum subgroup of a compact quantum group $\G$, with surjective Hopf $*$-algebra morphism $\pi: \pol(\G) \to \pol(\HH)$, and $\hat\varphi: \rep(\HH) \to \hil_{f}$ be a weak unitary tensor functor. If $\{\hat{\mathcal{A}}^{u'}_{v'}\}_{u',v' \in \rep(\HH)}$ is a compatible (resp. braided compatible) collection for $\hat\varphi$, then the linear maps
\[
\{\mathcal{A}^{u}_{v} := \hat{\mathcal{A}}^{\Res^{\G}_{\HH}(u)}_{\Res^{\G}_{\HH}(v)}: \hat\varphi(\Res^{\G}_{\HH}(v)) \to \hat\varphi(\Res^{\G}_{\HH}(\bar{u} \boxtimes v \boxtimes u))\}_{u,v \in \rep(\G)},
\]
forms a compatible (resp. braided compatible) collection for the weak unitary tensor functor
\begin{equation*}
\varphi:=\hat\varphi\circ \Res^{\G}_{\HH}: \rep(\G) \to \hil_{f}.
\end{equation*}
Moreover, there is a $\G$-equivariant $*$-isomorphism between $B_{\varphi}$ and $\Ind^{\G}_{\HH}(B_{\hat\varphi})$, which restricts to the algebraic cores, commutes the Hopf $*$-algebra actions of $(\pol(\G),\com)$, i.e. we have a Yetter-Drinfeld $\G$-C*-algebras isomorphism between $(B_{\varphi},\lhd_{\varphi},\alpha_{\varphi})$ and $\Ind^{\G}_{\HH}(B_{\hat\varphi},\lhd_{\hat\varphi},\alpha_{\hat\varphi})$.
\end{theo}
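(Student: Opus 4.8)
The plan is to deduce everything from results already in hand: the abstract reconstruction statements (Theorem~\ref{theo:tannakakreinpinzarirobert} and Theorem~\ref{theo:yd_reconstruction_na}), the compatibility of reconstruction with restriction (Theorem~\ref{thm:induction}), and the explicit induction of Yetter-Drinfeld data (Propositions~\ref{prop:induced_yd} and~\ref{prop:induced_yd_equi}). The first thing to record is that $\Res^{\G}_{\HH}\colon\rep(\G)\to\rep(\HH)$ is a strict unitary tensor functor: it sends $\mor_{\G}(u,v)$ into $\mor_{\HH}(\Res^{\G}_{\HH}(u),\Res^{\G}_{\HH}(v))$, preserves $*$, composition, direct sums and the tensor product on the nose, identifies $\overline{\Res^{\G}_{\HH}(u)}$ with $\Res^{\G}_{\HH}(\bar u)$ canonically, and carries the chosen solutions $R_u,\bar R_u$ of the conjugate equations for $u$ to solutions for $\Res^{\G}_{\HH}(u)$. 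Hence $\varphi=\hat\varphi\circ\Res^{\G}_{\HH}$ is again a weak unitary tensor functor, with embeddings $\iota_{u,v}=\iota_{\Res^{\G}_{\HH}(u),\Res^{\G}_{\HH}(v)}$ and conjugation maps $J_v=J_{\Res^{\G}_{\HH}(v)}$. The axioms \ref{cond:A0}--\ref{cond:A6} of Definition~\ref{def:compatible_collection} are phrased purely in terms of intertwiners, direct sum decompositions, tensor products, the maps $\iota$, $J$ and the $R$'s; since each of these is transported by $\Res^{\G}_{\HH}$ to the corresponding $\HH$-datum, inserting $\G$-representations and $\G$-intertwiners into the axioms for $\{\hat{\mathcal A}^{u'}_{v'}\}$ and applying $\Res^{\G}_{\HH}$ first yields exactly those axioms for $\{\mathcal A^{u}_{v}\}=\{\hat{\mathcal A}^{\Res^{\G}_{\HH}(u)}_{\Res^{\G}_{\HH}(v)}\}$ relative to $\varphi$. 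Braidedness \ref{cond:A6} is inherited the same way. This is Step~1.

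For the identification of the C*-algebras (Step~2), let $(B_{\hat\varphi},\alpha_{\hat\varphi})$ be the ergodic action of $\HH$ reconstructed from $\hat\varphi$ via Theorem~\ref{theo:tannakakreinpinzarirobert}, so $\hat\varphi\iso\varphi_{\alpha_{\hat\varphi}}$ naturally and unitarily monoidally. Precomposing with $\Res^{\G}_{\HH}$ gives a natural unitary monoidal isomorphism $\varphi=\hat\varphi\circ\Res^{\G}_{\HH}\iso\varphi_{\alpha_{\hat\varphi}}\circ\Res^{\G}_{\HH}$, and Theorem~\ref{thm:induction} then supplies $\varphi_{\alpha_{\hat\varphi}}\circ\Res^{\G}_{\HH}\iso\varphi_{\Ind_{\HH}^{\G}(\alpha_{\hat\varphi})}$, whence $\varphi\iso\varphi_{\Ind_{\HH}^{\G}(\alpha_{\hat\varphi})}$. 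Since the C*-algebra $B_\varphi$ together with its action is determined up to $\G$-equivariant $*$-isomorphism by the isomorphism class of the weak unitary tensor functor (this is precisely how the algebraic core is built in the proof of Theorem~\ref{theo:tannakakreinpinzarirobert}, as recalled before the proof of Theorem~\ref{thm:tkergodic}), we obtain a $\G$-equivariant $*$-isomorphism $\Theta\colon B_\varphi\xrightarrow{\ \sim\ }\Ind_{\HH}^{\G}(B_{\hat\varphi})$ intertwining $\alpha_\varphi$ with $\Ind_{\HH}^{\G}(\alpha_{\hat\varphi})$ and restricting to an isomorphism $\mathcal B_\varphi\iso\Ind_{\HH}^{\G}(\mathcal B_{\hat\varphi})$ of algebraic cores. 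For Step~3, one checks that $\Theta$ intertwines $\lhd_\varphi$ with $\lhd^{\G,\HH}_{\hat\varphi}$ by chasing the ``in that case'' diagram of Theorem~\ref{theo:yd_reconstruction_na}: under $\hat\varphi\iso\varphi_{\alpha_{\hat\varphi}}$ the collection $\{\hat{\mathcal A}^{u'}_{v'}\}$ corresponds to the collection $\{{}^{\alpha_{\hat\varphi}}\mathcal A^{u'}_{v'}\}$ of the action; restricting along $\Res^{\G}_{\HH}$ and applying the isomorphism $\psi$ of Theorem~\ref{thm:induction}, Proposition~\ref{prop:induced_yd_equi} shows that $\{\mathcal A^{u}_{v}\}$ corresponds, under the composite isomorphism of Step~2, to $\{{}^{\Ind_{\HH}^{\G}(\alpha_{\hat\varphi})}\mathcal A^{u}_{v}\}$. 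As the Hopf $*$-algebra action reconstructed from a compatible collection depends only on the collection, this identifies $\lhd_\varphi$ with the action attached to $\{{}^{\Ind_{\HH}^{\G}(\alpha_{\hat\varphi})}\mathcal A^{u}_{v}\}$, which by Proposition~\ref{prop:induced_yd} is $\lhd^{\G,\HH}_{\hat\varphi}$ transported through $\Theta$. Braided commutativity, when present, passes through either because \ref{cond:A6} for $\{\mathcal A^{u}_{v}\}$ follows from \ref{cond:A6} for $\{\hat{\mathcal A}^{u'}_{v'}\}$ by Step~1, or directly from the last assertion of Proposition~\ref{prop:induced_yd}.

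The only genuinely delicate point is the bookkeeping of conjugates in Step~1 — making precise that $\Res^{\G}_{\HH}$ is a strict unitary tensor functor compatible with the chosen solutions $R,\bar R$, so that the maps $J$ and the axioms \ref{cond:A5}--\ref{cond:A6} pull back cleanly — together with the care needed in Steps~2--3 to see that the a priori different C*-algebras ($B_\varphi$ from the reconstruction, $\Ind_{\HH}^{\G}(B_{\hat\varphi})$ as a cotensor product, and the induced algebra of Proposition~\ref{prop:induced_yd}) are identified by one and the same isomorphism, compatibly with cores and with the Hopf $*$-algebra actions. Both obstacles are handled by invoking the uniqueness built into Theorems~\ref{theo:tannakakreinpinzarirobert} and~\ref{theo:yd_reconstruction_na}, rather than by any direct computation with the generators $X^{(n)}$.
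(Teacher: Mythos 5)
Your proposal is correct and follows essentially the same route as the paper: transfer the compatible collection along the (strict, conjugate-preserving) restriction functor, compose the natural isomorphisms $\varphi\iso\varphi_{\alpha_{\hat\varphi}}\circ\Res^{\G}_{\HH}\iso\varphi_{\Ind_{\HH}^{\G}(\alpha_{\hat\varphi})}$ from Theorems~\ref{theo:tannakakreinpinzarirobert} and~\ref{thm:induction} to identify the algebras, and then use the naturality diagram of Theorem~\ref{theo:yd_reconstruction_na} together with Proposition~\ref{prop:induced_yd_equi} to match the Hopf $*$-algebra actions. The only cosmetic difference is that where you invoke uniqueness abstractly in the final step, the paper verifies the intertwining of $\lhd_\varphi$ with $\lhd^{\G,\HH}_{\hat\varphi}$ by an explicit computation on the generators $\pi_\varphi(\bar X\otimes\eta_j)$; the underlying ingredients are identical.
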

\begin{proof}
Let $\{\hat{\mathcal{A}}^{u'}_{v'}\}_{u',v' \in \rep(\HH)}$ be a compatible (resp. braided compatible)  collection for the weak unitary tensor functor $\hat\varphi$. Due to fact that $\Res^{\G}_{\HH}$ is a tensor functor, it is straightforward see that the collection $\{\mathcal{A}^{u}_{v}\}_{u,v \in \rep(\G)}$ yields a compatible (resp. braided compatible)  collection for the weak unitary tensor functor $\varphi = \hat\varphi\circ \Res^{\G}_{\HH}$.

Denote by $\hat\phi: \hat\varphi \iso \varphi_{\alpha_{\hat\varphi}}$ the natural unitary monoidal isomorphism given by Tannaka-Krein reconstruction applied to $\hat\varphi$. By Theorem~\ref{thm:induction} applied to the action $\alpha_{\hat\varphi}: B_{\hat\varphi} \to B_{\hat\varphi} \otimes C(\HH)$, there is a natural unitary monoidal isomorphism $\psi: \varphi_{\alpha_{\hat\varphi}}\circ\Res^{\G}_{\HH} \iso \varphi_{\Ind^{\G}_{\HH}(\alpha_{\hat\varphi})}$. Thus, by Theorem~\ref{theo:yd_reconstruction_na} and Proposition~\ref{prop:induced_yd}, the natural unitary monoidal isomorphism $\psi\circ\hat\phi_{\Res^{\G}_{\HH}}:\varphi \iso \varphi_{\Ind^{\G}_{\HH}(\alpha_{\hat\varphi})}$ yields a $\G$-equivariant $*$-isomorphism $\uppsi:\mathcal{B}_{\varphi} \to \Ind^{\G}_{\HH}(\mathcal{B}_{\hat\varphi})$ defined by
\[
\uppsi(\pi_{\varphi}(\bar{X} \otimes \eta_{j})) = (\hat{\phi}_{\Res^{\G}_{\HH}(v)}(X) \otimes \id)\delta_{v}(\eta_{j}) 
\]
for all $X \in \varphi(v)$ and a orthonormal basis $(\eta_{j})$ in $H_{v}$. Hence, if $u,v \in \rep(\G)$ and $(\xi_{i})$, $(\eta_{j})$ are orthonormal bases in $H_{u}$ and $H_{v}$ respectively, then
\begin{align*}
\uppsi(\pi_{\varphi}(\bar{X} \otimes \eta_{j}) \lhd_{\varphi} u_{ik}) & = \uppsi(\pi_{\varphi}(\overline{\mathcal{A}^{v}_{u}(X)} \otimes \bar{\xi}_{i} \otimes \eta_{j} \otimes \xi_{k}))\\
& = (\hat{\phi}_{\Res^{\G}_{\HH}(\bar{u} \boxtimes v \boxtimes u)}(\mathcal{A}^{u}_{v}(X)) \otimes \id)\delta_{\bar{u} \boxtimes v \boxtimes u}(\bar{\xi}_{i} \otimes \eta_{j} \otimes \xi_{k}) \\
& = ((\psi_{\bar{u} \boxtimes v \boxtimes u} \circ \hat{\phi}_{\Res^{\G}_{\HH}(\bar{u} \boxtimes v \boxtimes u)} \circ \mathcal{A}^{u}_{v})(X))(\bar{\xi}_{i} \otimes \eta_{j} \otimes \xi_{k}) \\
& = \left(\left({}^{\Ind^{\G}_{\HH}(\alpha_{\hat\varphi})}\mathcal{A}^{u}_{v} \circ \psi_{v} \circ \hat{\phi}_{\Res^{\G}_{\HH}(v)}\right)(X)\right)(\bar{\xi}_{i} \otimes \eta_{j} \otimes \xi_{k}) \\
& = {}^{\Ind^{\G}_{\HH}(\alpha_{\hat\varphi})}\mathcal{A}^{u}_{v}((\hat{\phi}_{\Res^{\G}_{\HH}(v)}(X) \otimes \id)\circ\delta_{v})(\bar{\xi}_{i} \otimes \eta_{j} \otimes \xi_{k}) \\
& = ((\hat{\phi}_{\Res^{\G}_{\HH}(v)}(X) \otimes \id)\delta_{v}(\eta_{j})) \lhd^{\G,\HH}_{\hat{\varphi}} u_{ik} \\
& = \uppsi(\pi_{\varphi}(\bar{X} \otimes \eta_{j})) \lhd^{\G,\HH}_{\hat{\varphi}} u_{ik}
\end{align*}
for all $X \in \varphi(v)$, $\eta_{j} \in H_{v}$ and $u_{ik} \in \pol(\G)$. This implies that $\uppsi$ commutes the actions of $(\pol(\G),\com)$.
\end{proof}

\section{Ergodic actions of quantum permutations}\label{sec:rigidity}

In this section, we will investigate actions of the quantum permutation group $S_{N}^{+} = \G_{N}(NC)$ on classical compact spaces. More precisely, we will address the question whether such an action can be ergodic when the space is connected. The idea is that the functor corresponding to an action on a classical compact space, that is to say on a commutative unital C*-algebra, exhibits specific features which entail restrictions on the irreducible representations of $S_{N}^{+}$ which can appear in its spectral decomposition. We will make this clearer hereafter, but we first need a few facts on the representation theory of $S_{N}^{+}$.

It was proven by T. Banica in \cite{banica1999symmetries} that the irreducible representations of $S_{N}^{+}$ for $N\geq 4$ can be labeled by the integers in a natural way. Denoting by $u^{k}$ the $k$-th one and by $H_{k}$ the finite-dimensional Hilbert space it acts upon, the tensor product is given by the formula
\begin{equation*}
u^{k}\boxtimes u^{k'} = u^{\vert k-k'\vert}\oplus u^{\vert k-k'\vert + 1}\oplus \cdots \oplus u^{k+k'-1}\oplus u^{k+k'}.
\end{equation*}
Based on this, we denote by $P_{n}^{k, k'}$ the orthogonal projection onto the subspace of $H_{k}\otimes H_{k'}$ corresponding to the subrepresentation $u^{n}$. The next lemma will be the key to our result, and makes precise the fact that some property of the spectral decomposition translates into some property of the representation theory of $S_{N}^{+}$. Hereafter, $\sigma_{k}$ will denote the flip operator on $H_{k}\otimes H_{k}$. Note that $\sigma_{k}$ has exactly two eigenvalues, namely $\pm 1$.

\begin{lem}\label{lem:sigmak}
Let $\alpha$ be an ergodic action of $S_{N}^{+}$ on a commutative unital C*-algebra $B$ and let $\F$ be the corresponding weak unitary tensor functor. If there exists $k\geqslant n/2$ such that $\F(P_{n}^{k, k})\circ\iota_{k, k}\neq 0$, then $H_{n}$ is contained in an eigenspace of $\sigma_{k}$.
\end{lem}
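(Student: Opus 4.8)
The plan is to exploit commutativity of $B$ through the symmetry of the multiplication map $m : B \otimes B \to B$, and to translate this into a statement about the embedding $\iota_{k,k} : \F(u^k) \otimes \F(u^k) \to \F(u^k \boxtimes u^k)$. First I would recall that, since $\alpha$ is ergodic and $B$ is commutative, the multiplication on the Podleś subalgebra $\mathcal B$ is $S_N^+$-equivariant and symmetric; transporting this through the isomorphism $\mathcal B \cong \bigoplus_x \bar K_x \otimes H_x$ and the isometries $\iota$, one sees that the bilinear map $\F(u^k) \times \F(u^k) \to \F(u^{k}\boxtimes u^{k})$ obtained from $\iota_{k,k}$, followed by the decomposition into isotypic components, is \emph{symmetric} in the appropriate sense: precomposing $\iota_{k,k}$ with the flip $\sigma_k$ on $\F(u^k)\otimes \F(u^k)$ must agree with postcomposing with the intertwiner on $\F(u^{k}\boxtimes u^{k})$ induced by the flip $\sigma_{k}'$ on $H_k \otimes H_k$ (the one coming from $u^k \boxtimes u^k \cong u^k \boxtimes u^k$). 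Concretely, writing $\F(\sigma_k') \in B(\F(u^k \boxtimes u^k))$ for the image under $\F$ of the flip intertwiner, commutativity of $B$ forces
\[
\F(\sigma_k') \circ \iota_{k,k} = \iota_{k,k} \circ \sigma_k .
\]

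Next I would restrict attention to the isotypic component of $u^n$ inside $u^k \boxtimes u^k$. The flip $\sigma_k'$ on $H_k \otimes H_k$ is an intertwiner of $u^k \boxtimes u^k$ with itself, hence it preserves each isotypic subspace; on the $u^n$-isotypic part it acts as a scalar $\varepsilon_n \in \{+1,-1\}$ times the identity precisely when $u^n$ appears with multiplicity one in $u^k \boxtimes u^k$ — and by Banica's fusion rule $u^k \boxtimes u^{k} = u^0 \oplus u^1 \oplus \cdots \oplus u^{2k}$, every $u^n$ with $0 \le n \le 2k$ occurs with multiplicity one. So $\F(\sigma_k')$ acts as the scalar $\varepsilon_n$ on $\F(P_n^{k,k})\,\F(u^k \boxtimes u^k)$. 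Composing the displayed identity with $\F(P_n^{k,k})$ (which commutes with $\F(\sigma_k')$, both being images of commuting intertwiners), I get
\[
\varepsilon_n \, \F(P_n^{k,k}) \circ \iota_{k,k} = \F(P_n^{k,k}) \circ \iota_{k,k} \circ \sigma_k .
\]
Since the hypothesis is that $\F(P_n^{k,k}) \circ \iota_{k,k} \ne 0$, the range of $\iota_{k,k}^{*}\F(P_n^{k,k})^{*}$ — equivalently, the subspace of $\F(u^k) \otimes \F(u^k)$ on which $\F(P_n^{k,k})\iota_{k,k}$ is nonzero — is a nonzero $\sigma_k$-invariant subspace on which $\sigma_k$ acts by the scalar $\varepsilon_n$; hence $\varepsilon_n$ is an eigenvalue of $\sigma_k$, and more importantly $H_n \subset H_k \otimes H_k$ lies in the $\varepsilon_n$-eigenspace of $\sigma_k'$. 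Translating back, $H_n$ is contained in a single eigenspace of the flip $\sigma_k$ as claimed. (Here one should be mildly careful about the two flips: $\sigma_k$ in the statement is the flip on $H_k \otimes H_k$, and the identification $u^k\boxtimes u^k$ with itself intertwined by this flip is exactly what makes the argument go; the image under $\F$ of this flip intertwiner is what I wrote $\F(\sigma_k')$, and acting by $\pm 1$ on the $u^n$-component is the assertion to be extracted.)

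The main obstacle, and the step requiring the most care, is the very first one: establishing that commutativity of $B$ really does produce the symmetry relation $\F(\sigma_k') \circ \iota_{k,k} = \iota_{k,k} \circ \sigma_k$. This needs the explicit description of $\mathcal B$ and of the isometries $\iota_{v,w}$ from the reconstruction theorem (Theorem~\ref{thm:tkergodic} and the Pinzari–Roberts construction recalled after it): one has $\iota_{v,w}$ corresponding to the product of spectral elements, $\iota_{v,w}(X \otimes Y) \mapsto XY$ in $\mathcal B$, and $\sigma$ on the Hilbert-space side corresponds to swapping the two tensor factors. Commutativity $XY = YX$ then says precisely that $\iota_{v,w}$ is unchanged (up to the canonical flip intertwiner $v\boxtimes w \cong w \boxtimes v$) under swapping; taking $v = w = u^k$ gives the stated identity with $\F$ of the self-flip of $u^k \boxtimes u^k$. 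Once this bookkeeping with the reconstruction isomorphism is pinned down, the rest is the short representation-theoretic argument above using multiplicity one in the fusion rule.
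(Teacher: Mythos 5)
There is a genuine gap, and it sits at the very first step you flag as ``the step requiring the most care.'' Your argument rests on the claim that the flip $\sigma_{k}$ on $H_{k}\otimes H_{k}$ is a self-intertwiner of $u^{k}\boxtimes u^{k}$ (you call it $\sigma_{k}'$ and speak of ``the flip intertwiner'' and of ``$\F(\sigma_{k}')$''). For $S_{N}^{+}$ with $N\geqslant 4$ this is false: $u\boxtimes v = u_{(13)}v_{(23)}$, and the flip intertwines $u^{k}\boxtimes u^{k}$ with itself only if the matrix coefficients of $u^{k}$ commute with one another, i.e.\ only for a classical group. Since $\sigma_{k}$ is not a morphism of $\rep(S_{N}^{+})$, the expression $\F(\sigma_{k}')$ is undefined, $\sigma_{k}$ need not preserve the isotypic subspaces, and it certainly need not commute with $P_{n}^{k,k}$ or act as a scalar on $H_{n}$. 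In fact that last assertion is exactly the conclusion of the lemma, not something you may import from multiplicity-one fusion rules; the paper's Proposition on non-symmetric vectors shows that for $n\geqslant 2$ the subspace $H_{n}$ is \emph{not} contained in an eigenspace of $\sigma_{k}$, so $\sigma_{k}$ and $P_{n}^{k,k}$ genuinely fail to commute. If your premise were true, the lemma would be vacuous and the whole rigidity argument of the section would collapse.

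The correct route (which is the paper's) avoids any appeal to the flip being an intertwiner. One writes the product of two elements of the spectral subspace $\overline{\F(u^{k})}\otimes H_{k}$ explicitly as
$\sum_{n}\overline{\F(P_{n}^{k,k})\iota_{k,k}(\xi_{1}\otimes\xi_{2})}\otimes P_{n}^{k,k}(\eta_{1}\otimes\eta_{2})$,
imposes commutativity of $B$, and uses the pairwise orthogonality of the ranges of the $P_{n}^{k,k}$ to equate the sums term by term. For the fixed $n$ with $\F(P_{n}^{k,k})\circ\iota_{k,k}\neq 0$, the resulting equality of nonzero simple tensors produces a single scalar $\lambda$, independent of all four vectors, with $\lambda^{2}=1$ and $P_{n}^{k,k}\circ\sigma_{k}=\lambda P_{n}^{k,k}$; taking adjoints then places $H_{n}=\mathrm{Ran}(P_{n}^{k,k})$ inside the $\lambda$-eigenspace of $\sigma_{k}$. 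The eigenvalue property is thus \emph{extracted from} commutativity rather than assumed from representation theory. Your second half (multiplicity one, $\lambda\in\{\pm 1\}$) is in the right spirit, but the argument needs to be rebuilt on this foundation.
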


\begin{proof}
Recall that there is an isomorphism of linear spaces
\begin{equation*}
B \cong \bigoplus_{n\in \N}\overline{\F(u^{n})}\otimes H_{n}
\end{equation*}
and that for $\xi_{1}, \xi_{2}\in \F(u^{k})$ and $\eta_{1}, \eta_{2}\in H_{k}$, the product reads 
\begin{equation*}
(\bar{\xi}_{1}\otimes \eta_{1})(\bar{\xi}_{2}\otimes \eta_{2}) = \sum_{n\in \N}\overline{\F(P_{n}^{k, k})\circ\iota_{k, k}(\xi_{1} \otimes \xi_{2})}\otimes P_{n}^{k, k}(\eta_{1}\otimes \eta_{2})
\end{equation*}
 in the Tannaka-Krein reconstruction procedure of Theorem \ref{theo:tannakakreinpinzarirobert} and Theorem \ref{thm:tkergodic} (see \cite{N14,PR07}).  Note that we used here the fact that any irreducible subrepresentation $u^{n}\subset u^{k}\otimes u^{k}$ has multiplicity one. Because $B$ is assumed to be commutative, we have for any choices of $\xi_{1}, \xi_{2}, \eta_{1}, \eta_{2}$,
\begin{equation*}
\sum_{n\in \N}\F(P_{n}^{k, k})\circ\iota_{k, k}(\xi_{1}\otimes \xi_{2})\otimes P_{n}^{k, k}(\eta_{1}\otimes \eta_{2}) = \sum_{n\in \N}\F(P_{n}^{k, k})\circ\iota_{k, k}(\xi_{2} \otimes \xi_{1})\otimes P_{n}^{k, k}(\eta_{2}\otimes \eta_{1}).
\end{equation*}

Now, the projections $(P_{n}^{k, k})_{0\leqslant n\leqslant 2k}$ have pairwise orthogonal ranges, hence the corresponding terms in the sums are linearly independent. As a consequence, we have
\begin{equation*}
\F(P_{n}^{k, k})\circ\iota_{k, k}(\xi_{1}\otimes \xi_{2})\otimes P_{n}^{k, k}(\eta_{1}\otimes \eta_{2}) = \F(P_{n}^{k, k})\circ\iota_{k, k}(\xi_{2}\otimes \xi_{1})\otimes P_{n}^{k, k}(\eta_{2}\otimes \eta_{1})
\end{equation*}
for all $0\leqslant n\leqslant 2k$. Assume that there exists $\xi_{1}, \xi_{2}$ such that $\F(P_{n}^{k, k})\circ\iota(\xi_{1}\otimes \xi_{2})\neq 0$. Then, for any $\eta_{1}, \eta_{2}$ there exists $\lambda \in \C^{*}$ such that
\begin{align*}
\F(P_{n}^{k, k})\circ\iota_{k, k}(\xi_{1}\otimes \xi_{2}) & = \lambda^{-1} \F(P_{n}^{k, k})\circ\iota_{k, k}(\xi_{2} \otimes \xi_{1}) \\
P_{n}^{k, k}(\eta_{1}\otimes \eta_{2}) & = \lambda P_{n}^{k, k}(\eta_{2}\otimes \eta_{1})
\end{align*}
Observe that $\lambda$ does not depend on $\eta_{1}$ and $\eta_{2}$ by the first equation nor on $\xi_{1}$ and $\xi_{2}$ by the second one. Moreover, applying the property twice yields $\lambda^{2} = 1$, hence $\lambda\in \{-1, 1\}$.

If $\lambda = 1$, then we have $P_{n}^{k, k}\sigma_{k} = P_{n}^{k, k}$ and taking adjoints yields $\sigma_{k} P_{n}^{k, k} = P_{n}^{k, k}$. Using the fact that $P_{1} = (\id + \sigma_{k})/2$ is the projection onto the eigenspace of $\sigma_{k}$ corresponding to the eigenvalue $1$, we see that $P_{1} P_{n}^{k, k} = P_{n}^{k, k}$ and the claim is proven. If $\lambda = -1$, the same argument shows that $P_{n}^{k, k}$ is dominated by the projection $P_{-1}$ onto the the eigenspace of $\sigma_{k}$ for the eigenvalue $-1$.
\end{proof}

The next step is to prove that the previous condition is never satisfied as soon as $n\geqslant 2$. 
To this end, we first need to make the operator $P_{\vert^{\odot k}}$, which was already used in the proof of Theorem \ref{thm:projectivepartitionaction}, more concrete. Recall that
\begin{equation*}
P_{\vert^{\odot k}} = Q_{\vert^{\odot k}} - R_{\vert^{\odot k}} = \mathrm{id} - R_{\vert^{\odot k}},
\end{equation*}
where $R_{\vert^{\odot k}}$ is the supremum of all the projections $Q_{q}$ for $q\prec \vert^{\odot k}$.

\begin{lem}\label{lem:elementsinhighestrep}
The subspace $\mathrm{Ran}(R_{\vert^{\odot k}})\subset(\mathbb{C}^{N})^{\otimes k}$ is the span of the vectors
\begin{enumerate}[label=$(\arabic*)$]
\item $e_{i_{1}}\otimes\cdots\otimes e_{i_{k}}$, where $i_{t} = i_{t+1}$ for at least one $1\leqslant t\leqslant k-1$;
\item $\xi_{1}\otimes\cdots\otimes\xi_{k}$, where $\xi_{t}\in\mathbb{C}^{N}$ for each $1\leqslant t\leqslant k$ and there exist at least one $t$ such that
\begin{equation*}
\xi_{t} = \sum_{i=1}^{N}e_{i}.
\end{equation*}
\end{enumerate}
\end{lem}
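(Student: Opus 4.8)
The plan is to identify the span of the vectors listed in (1) and (2) with $\operatorname{Ran}(R_{\vert^{\odot k}})$ by showing each spanning set sits inside the appropriate subspace of $(\mathbb{C}^N)^{\otimes k}$ and by a dimension count. Recall that for $S_N^+ = \G_N(NC)$ the representation $u^{\boxtimes k}$ on $(\mathbb{C}^N)^{\otimes k}$ decomposes over the projective partitions $\Proj_{NC}(k)$, with $P_{\vert^{\odot k}}$ the minimal projection onto the top piece $u^{(k)}$ and $R_{\vert^{\odot k}}$ the complementary projection $\sum_{q \prec \vert^{\odot k}} Q_q$ onto $\bigoplus_{q \prec \vert^{\odot k}} u^{(q)}$. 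The key combinatorial input is that for $NC$, every $q \in \Proj_{NC}(k)$ strictly dominated by $\vert^{\odot k}$ has either a through-block that contains two consecutive points of one row, or a non-through-block on the lower row; in terms of the partition $q^*q = q$ and the vector-valued picture, the range $\operatorname{Ran}(Q_q) = T_q((\mathbb{C}^N)^{\otimes k})$ consists precisely of tensors constrained by the block structure of $q$.

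First I would establish the inclusion ``$\supseteq$''. For a vector of type (1), say $i_t = i_{t+1}$: the partition $p_t \in NC(k,k)$ obtained from $\vert^{\odot k}$ by merging the $t$-th and $(t+1)$-th points of \emph{both} rows into one block is projective, satisfies $p_t \prec \vert^{\odot k}$ strictly (it has one fewer through-block), and $e_{i_1}\otimes\cdots\otimes e_{i_k}$ with $i_t=i_{t+1}$ lies in $\operatorname{Ran}(Q_{p_t}) \subseteq \operatorname{Ran}(R_{\vert^{\odot k}})$. For a vector of type (2), suppose $\xi_t = \sum_i e_i$; here one uses the projective partition $q_t$ obtained by cutting the $t$-th through-string of $\vert^{\odot k}$ into a lower singleton and an upper singleton (a non-crossing operation), so that $q_t \prec \vert^{\odot k}$ strictly and $T_{q_t}$ has in its range exactly the tensors whose $t$-th leg is proportional to $\sum_i e_i$; then $\xi_1\otimes\cdots\otimes\xi_k \in \operatorname{Ran}(Q_{q_t})\subseteq\operatorname{Ran}(R_{\vert^{\odot k}})$. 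Thus the span $V$ of the vectors in (1) and (2) is contained in $\operatorname{Ran}(R_{\vert^{\odot k}})$.

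For the reverse inclusion I would either argue dimensions or, more robustly, show directly that the orthogonal complement of $V$ is contained in $\operatorname{Ran}(P_{\vert^{\odot k}})$. The clean route is to identify $V^\perp$: a tensor $x$ is orthogonal to all vectors of type (1) iff its coordinates $x_{i_1\cdots i_k}$ vanish whenever two consecutive indices agree, and orthogonal to all vectors of type (2) iff $\sum_{i} x_{i_1\cdots i_{t-1}\, i\, i_{t+1}\cdots i_k} = 0$ for every $t$ and every fixed choice of the remaining indices. These are exactly the defining relations for a vector to be "balanced and non-consecutively-repeating" in each leg, which is the standard combinatorial description of the highest-weight space of $u^{(k)}$ for $S_N^+$; that is, $V^\perp = \operatorname{Ran}(P_{\vert^{\odot k}})$, equivalently $V = \operatorname{Ran}(R_{\vert^{\odot k}})$. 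To make this rigorous one invokes the known dimension of $u^{(k)}$ (from \cite{banica1999symmetries}) and checks it matches $\dim V^\perp$, or alternatively cites \cite{FW16,F14} for the explicit realization of $P_p$ and $\operatorname{Ran}(Q_p)$ attached to projective partitions.

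The main obstacle I anticipate is the ``$\subseteq$'' direction made precise: one must be sure that the two families in (1) and (2) span \emph{all} of $\operatorname{Ran}(R_{\vert^{\odot k}}) = \sum_{q \prec \vert^{\odot k}}\operatorname{Ran}(Q_q)$ and not a proper subspace, i.e.\ that every $\operatorname{Ran}(Q_q)$ for $q\prec\vert^{\odot k}$ strictly is already covered. This requires a short induction on the number of through-blocks of $q$: any such $q$ is obtained from $\vert^{\odot k}$ by a sequence of elementary non-crossing moves (merging consecutive points or cutting a through-string into singletons), and at the first such move the range $\operatorname{Ran}(Q_q)$ is contained in the span of vectors of type (1) or type (2) respectively (since further constraints only shrink the range). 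Handling the bookkeeping of which moves are non-crossing and confirming that no other type of generator is needed is where the care lies; everything else is routine linear algebra and an appeal to the representation theory recalled above.
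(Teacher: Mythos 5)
Your argument for the containment of the span $V$ of the listed vectors in $\mathrm{Ran}(R_{\vert^{\odot k}})$ (note you label it ``$\supseteq$'' but what you prove is $V\subseteq\mathrm{Ran}(R_{\vert^{\odot k}})$) is correct and is the paper's first step, just spelled out with the explicit partitions $p_t$ and $q_t$. For the reverse containment, the route that actually works is the one you relegate to your last paragraph, and it is essentially the paper's proof: for a projective $q\prec\vert^{\odot k}$ one shows directly that $q$ contains either a singleton block, in which case $\mathrm{Ran}(Q_q)$ lies in the span of the type $(2)$ vectors, or a block meeting one row in two points, in which case non-crossingness lets you shrink to two \emph{adjacent} connected points and $\mathrm{Ran}(Q_q)$ lies in the span of the type $(1)$ vectors. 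You do not need the stronger claim that every such $q$ is reached from $\vert^{\odot k}$ by a chain of elementary merges and cuts; the single singleton-or-adjacent-pair dichotomy, obtained from an innermost-block argument, already suffices, which removes most of the bookkeeping you were worried about.

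By contrast, the route you present as the ``clean'' one has a real gap. The identification of $V^{\perp}$ with the tensors that vanish on consecutive equal indices and are balanced in every leg is correct, but the assertion that this space equals $\mathrm{Ran}(P_{\vert^{\odot k}})$ is exactly the content of the lemma; reducing it to a dimension count requires computing $\dim V^{\perp}$, i.e.\ proving that the imposed linear constraints are independent in the appropriate sense and that the resulting count reproduces $\dim u^{(k)}$. That is a nontrivial combinatorial identity, sensitive to $N$, which you do not carry out and which is not available off the shelf in the form you need, so as written this branch does not constitute a proof. Stick with the partition-by-partition reduction.
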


\begin{proof}
It follows from the definition that all the vectors above belong to the range of $Q_{q}$ for some projective partition $q\prec \vert^{\odot n}$. So consider now conversely such a projective partition. We must prove that its range is contained in the span of the vectors in the statement. Because $q$ is not $\vert^{\odot n}$, it has a block which is not $\vert$. If that block is a singleton, then the range of $Q_{q}$ is spanned by vectors of the form $2)$. Otherwise, there are at least two points in the same row which are connected, and in fact in both rows since $p = p^{*}$. Let us denote by $k_{1} < k_{2}$ two such points. Because $p$ is non-crossing, all the points in between those cannot be connected to an upper point, so that we may reduce until we have either a singleton or $k_{2} = k_{1} + 1$. The first case was already treated, while in the second one the range of $Q_{q}$ will be contained in the span of the vectors of the form $1)$.
\end{proof}

We will also need more precise information on the range of $P_{n}^{k,k}$. Let $2\leqslant n\leqslant 2k-1$. We have to distinguish whether $n$ is even or odd and we will only treat the odd case, the even one being done in the same way but with simplifications (see the comments at the end of the proof of Proposition \ref{prop:notsymetric}). We therefore write $n = 2k-2l+1$. Recall from \cite[Sec 5.2]{FW16} that $h_{\square}^{l}$ denotes the partition in $NC(2l, 2l)$ where the $i$-th point in each row is connected to th $2l-i$-th one. Similarly, $h_{\boxvert}^{l}$ is the same partition except that the extreme points of each row are also connected. In a pictorial expression,
\setlength{\unitlength}{0.5cm}
\begin{center}
	\begin{picture}(15,9)
	\put(1,4){$h_{\square}^{l} =$}
	\put(3,9){\partii{3}{1}{10}}
	\put(3,9){\partii{2}{2}{9}}
	\put(3,9){\partii{1}{5}{6}}
	\put(6.5,8){$\ldots$}
	\put(10.5,8){$\ldots$}
	\put(6.5,0){$\ldots$}
	\put(10.5,0){$\ldots$}
	\put(3,0){\uppartii{3}{1}{10}}
	\put(3,0){\uppartii{2}{2}{9}}
	\put(3,0){\uppartii{1}{5}{6}}
	\end{picture}
	, 
	\begin{picture}(15,9)
	\put(1,4){$h_{\boxvert}^{l} =$}
	\put(3,9){\partii{3}{1}{10}}
	\put(3,9){\partii{2}{2}{9}}
	\put(3,9){\partii{1}{5}{6}}
	\put(6.5,8){$\ldots$}
	\put(10.5,8){$\ldots$}
	\put(6.5,0){$\ldots$}
	\put(10.5,0){$\ldots$}
	\put(3,0){\uppartii{3}{1}{10}}
	\put(3,0){\uppartii{2}{2}{9}}
	\put(3,0){\uppartii{1}{5}{6}}
	\put(8.8,3){\line(0,1){2}}
	\end{picture}.
\smallskip	
\end{center}
Using these notations, it follows from the proof of \cite[Thm 4.27]{FW16} that there exist $0 < \lambda \leqslant 1$
such that 
\begin{equation*}
P_{n}^{k,k} = \lambda\left(P_{\vert^{\odot k}}\otimes P_{\vert^{\odot k}}\right)P_{\widetilde{h}_{\boxvert}^{k, l}}\left(P_{\vert^{\odot k}}\otimes P_{\vert^{\odot k}}\right)
\end{equation*}
where $\widetilde{h}_{\boxvert}^{k,l} = \vert^{\odot(k-l)}\odot h_{\boxvert}^{l}\odot \vert^{\odot(k-l)}$. Moreover, $\left(P_{\vert^{\odot k}}\otimes P_{\vert^{\odot k}}\right)P_{\widetilde{h}_{\boxvert}^{k, l}}$ is a multiple of partial isometry. 

\begin{lem}\label{lem:rangep} We have
\begin{equation*}
\mathrm{Ran}(P_{n}^{k,k}) = \mathrm{Ran}\left((P_{\vert^{\odot k}}\otimes P_{\vert^{\odot k}})(Q_{\widetilde{h}_{\boxvert}^{k, l}} - \widetilde{R}_{\widetilde{h}_{\boxvert}^{k, l}})\right),
\end{equation*}
where $\widetilde{R}_{\widetilde{h}_{\boxvert}^{k, l}}$ is the supremum of all the projections $Q_{q}$ such that $q=\widetilde{h}_{\square}^{k, l}$ or $q = \widetilde{h}_{\boxvert}^{k, l'}$ or $\tilde{h}_{\square}^{k, l'}$ for some $l' > l$.
\end{lem}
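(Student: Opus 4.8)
The plan is to reduce the identity to the two facts already recorded just above the statement: the factorisation
\[
P_{n}^{k,k} = \lambda\,\big(P_{\vert^{\odot k}}\otimes P_{\vert^{\odot k}}\big)\,P_{\widetilde{h}_{\boxvert}^{k,l}}\,\big(P_{\vert^{\odot k}}\otimes P_{\vert^{\odot k}}\big),\qquad 0<\lambda\leqslant 1,
\]
and the normalisation $P_{p}=Q_{p}-R_{p}$ for a projective partition $p$, where $R_{p}$ is the projection onto $\sum_{q\prec p}\mathrm{Ran}(Q_{q})$ (used above for $p=\vert^{\odot k}$, and recalled from \cite{FW16}). Write $W=P_{\vert^{\odot k}}\otimes P_{\vert^{\odot k}}$ and $V=W\,P_{\widetilde{h}_{\boxvert}^{k,l}}$. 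Since $W$ and $P_{\widetilde{h}_{\boxvert}^{k,l}}$ are self-adjoint projections, $VV^{*}=W P_{\widetilde{h}_{\boxvert}^{k,l}}P_{\widetilde{h}_{\boxvert}^{k,l}}W=W P_{\widetilde{h}_{\boxvert}^{k,l}}W$, so that $P_{n}^{k,k}=\lambda\,VV^{*}$; note that only this factorisation is needed here, not the partial isometry statement.

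Next I would invoke the elementary fact that $\mathrm{Ran}(VV^{*})=\mathrm{Ran}(V)$ for any operator $V$ on a Hilbert space: the inclusion $\mathrm{Ran}(VV^{*})\subseteq\mathrm{Ran}(V)$ is clear, and if $\eta\perp\mathrm{Ran}(VV^{*})$ then $\|V^{*}\eta\|^{2}=\langle VV^{*}\eta,\eta\rangle=0$, whence $\eta\in\ker V^{*}=\mathrm{Ran}(V)^{\perp}$. As $\lambda>0$, this already gives
\[
\mathrm{Ran}(P_{n}^{k,k})=\mathrm{Ran}(VV^{*})=\mathrm{Ran}(V)=\mathrm{Ran}\big((P_{\vert^{\odot k}}\otimes P_{\vert^{\odot k}})\,P_{\widetilde{h}_{\boxvert}^{k,l}}\big).
\]

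It then remains to replace $P_{\widetilde{h}_{\boxvert}^{k,l}}$ by $Q_{\widetilde{h}_{\boxvert}^{k,l}}-\widetilde{R}_{\widetilde{h}_{\boxvert}^{k,l}}$. By the normalisation above this amounts to showing that the projective partitions $q$ with $q\prec\widetilde{h}_{\boxvert}^{k,l}$ in the non-crossing domination order are exactly $\widetilde{h}_{\square}^{k,l}$ together with the $\widetilde{h}_{\boxvert}^{k,l'}$ and $\widetilde{h}_{\square}^{k,l'}$ for $l'>l$; since $q'\preceq q$ forces $\mathrm{Ran}(Q_{q'})\subseteq\mathrm{Ran}(Q_{q})$, it suffices in fact that the two maximal elements below $\widetilde{h}_{\boxvert}^{k,l}$, namely $\widetilde{h}_{\square}^{k,l}$ and $\widetilde{h}_{\boxvert}^{k,l+1}$, appear on that list and that every $q\prec\widetilde{h}_{\boxvert}^{k,l}$ lies below one of them, so that the supremum $\widetilde{R}_{\widetilde{h}_{\boxvert}^{k,l}}$ of the listed $Q_{q}$ coincides with $R_{\widetilde{h}_{\boxvert}^{k,l}}$. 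This classification of the lower set of $\widetilde{h}_{\boxvert}^{k,l}$ is precisely the combinatorial analysis of $h_{\square}^{l}$ and $h_{\boxvert}^{l}$ carried out in \cite[Sec.~5.2]{FW16}: merging the central through-block of $\widetilde{h}_{\boxvert}^{k,l}$ yields $\widetilde{h}_{\square}^{k,l}$, and any further domination is obtained by capping innermost pairs, which raises the index $l$. Granting this, $(P_{\vert^{\odot k}}\otimes P_{\vert^{\odot k}})P_{\widetilde{h}_{\boxvert}^{k,l}}=(P_{\vert^{\odot k}}\otimes P_{\vert^{\odot k}})(Q_{\widetilde{h}_{\boxvert}^{k,l}}-\widetilde{R}_{\widetilde{h}_{\boxvert}^{k,l}})$, and combined with the displayed equality this gives the assertion. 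The genuinely nontrivial point is this last combinatorial classification of the dominated projective partitions; the operator-theoretic part is formal.
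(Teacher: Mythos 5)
The operator-theoretic reduction in your first two displays is fine and matches the paper's first step: from $P_{n}^{k,k}=\lambda\,W P_{\widetilde{h}_{\boxvert}^{k,l}}W$ with $W=P_{\vert^{\odot k}}\otimes P_{\vert^{\odot k}}$ one gets $\mathrm{Ran}(P_{n}^{k,k})=\mathrm{Ran}\bigl(W P_{\widetilde{h}_{\boxvert}^{k,l}}\bigr)$, and your $\mathrm{Ran}(VV^{*})=\mathrm{Ran}(V)$ argument is a clean way to see it. The gap is in the second half. You reduce the remaining identity to the claim that the projective partitions $q\prec\widetilde{h}_{\boxvert}^{k,l}$ are \emph{exactly} $\widetilde{h}_{\square}^{k,l}$ together with $\widetilde{h}_{\boxvert}^{k,l'}$, $\widetilde{h}_{\square}^{k,l'}$ for $l'>l$, i.e.\ that $\widetilde{R}_{\widetilde{h}_{\boxvert}^{k,l}}$ coincides with the full supremum $R_{\widetilde{h}_{\boxvert}^{k,l}}=\bigvee_{q\prec\widetilde{h}_{\boxvert}^{k,l}}Q_{q}$. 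That classification is false: the lower set of $\widetilde{h}_{\boxvert}^{k,l}$ also contains all partitions obtained by degrading the outer identity legs, e.g.\ $q=r\odot h_{\boxvert}^{l}\odot\vert^{\odot(k-l)}$ for any projective $r\prec\vert^{\odot(k-l)}$ in the category (such $r$ abound in $NC$ — merged blocks, singletons, etc.), and these satisfy $q\,\widetilde{h}_{\boxvert}^{k,l}=q$ but are not on the list. So $\widetilde{R}_{\widetilde{h}_{\boxvert}^{k,l}}$ is genuinely smaller than $R_{\widetilde{h}_{\boxvert}^{k,l}}$ and the identity $P_{\widetilde{h}_{\boxvert}^{k,l}}=Q_{\widetilde{h}_{\boxvert}^{k,l}}-\widetilde{R}_{\widetilde{h}_{\boxvert}^{k,l}}$ you are implicitly asserting does not hold.

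What saves the lemma — and what the paper actually proves — is that all these extra dominated partitions are annihilated by the left factor $W$: if $q\preceq r\odot\vert^{\odot k}$ or $q\preceq\vert^{\odot k}\odot r$ for some projective $r\prec\vert^{\odot k}$, then $Q_{q}\leqslant Q_{r}\otimes\mathrm{id}$ (resp.\ $\mathrm{id}\otimes Q_{r}$) and $P_{\vert^{\odot k}}Q_{r}=0$, whence $W Q_{q}=0$. The combinatorial input from \cite[Lemma 5.7]{FW16} is then that the $q\prec\widetilde{h}_{\boxvert}^{k,l}$ which do \emph{not} fall into this annihilated class are precisely the listed ones. So the missing idea is not a finer classification of the lower set but the vanishing of $W Q_{q}$ on its complement; your argument as written would need this replacement to go through.
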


\begin{proof}
Note that
\begin{equation*}
\mathrm{Ran}(P_{n}^{k,k}) = \mathrm{Ran}\left((P_{\vert^{\odot k}}\otimes P_{\vert^{\odot k}})P_{\widetilde{h}_{\boxvert}^{k, l}}\right) \text{ with } P_{\widetilde{h}_{\boxvert}^{k, l}} = Q_{\widetilde{h}_{\boxvert}^{k, l}} - \bigvee_{q\prec\widetilde{h}_{\boxvert}^{k, l}}Q_{q}.
\end{equation*}
It suffices to analyze the maps $(P_{\vert^{\odot k}}\otimes P_{\vert^{\odot k}})Q_{q}$ for $q\prec\widetilde{h}_{\boxvert}^{k, l}$. Note that $(P_{\vert^{\odot k}}\otimes P_{\vert^{\odot k}})Q_{q} =0$ if $q\preceq r\odot |^{\odot k}$ or $q\preceq |^{\odot k}\odot r$ for some projective partition $r\prec |^{\odot k}$. Thus the result follows from \cite[Lemma 5.7]{FW16}.
\end{proof}

Now we are going to build explicit elements in $H_{n}\subset H_{k}\otimes H_{k}$ which are not contained in an eigenspace of $\sigma_{k}$ and this will involve many indices and tensors. In order to lighten the notations and to keep the proof readable, we will first introduce some shorthand notations. We will denote as usual by $(e_{i})_{1\leqslant i\leqslant N}$ the canonical basis of $\C^{N}$.
Moreover, for any $k$-tuple $\mathbf{j} = (j_{1}, \ldots, j_{k})$ and integer $1 < l < k$, we write
\begin{equation*}
\mathbf{j}_{1} = (j_{1}, \dots, j_{k-l}); \quad \mathbf{j}_{2} = (j_{k-l+1}, \dots, j_{k}); \quad \mathbf{j}_{3} = (j_{1}, \dots, j_{l}); \quad \mathbf{j}_{4} = (j_{l+1}, \dots, j_{k}).
\end{equation*}

\begin{propo}\label{prop:notsymetric}
Let $N\geqslant 4$ and $2\leqslant n\leqslant 2k$. Then, $H_{n}$ contains vectors of $H_{k}\otimes H_{k}$ which are not symmetric nor antisymmetric.
\end{propo}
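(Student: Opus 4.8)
The plan is to exhibit explicit vectors in $H_n \subset H_k \otimes H_k$ whose image under the flip $\sigma_k$ is neither $+1$ nor $-1$ times the original. Since $H_n$ is the range of $P_n^{k,k}$, it suffices to produce $v \in \mathrm{Ran}(P_n^{k,k})$ with $\sigma_k(v) \notin \mathbb{C}v$; in fact it is enough to find $v, w \in \mathrm{Ran}(P_n^{k,k})$ with $\sigma_k(v) = w$ and $w$ not proportional to $v$, because if every vector of $H_n$ were symmetric or antisymmetric then, being eigenvectors of the involution $\sigma_k$, all of $H_n$ would lie in a single eigenspace, forcing $\sigma_k(v) \in \{v, -v\}$.

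First I would use Lemma \ref{lem:rangep}, which identifies $\mathrm{Ran}(P_n^{k,k})$ (in the odd case $n = 2k-2l+1$) with the range of $(P_{\vert^{\odot k}} \otimes P_{\vert^{\odot k}})(Q_{\widetilde{h}_{\boxvert}^{k,l}} - \widetilde{R}_{\widetilde{h}_{\boxvert}^{k,l}})$. Concretely this means: a vector built by applying $T_{\widetilde{h}_{\boxvert}^{k,l}}$ to a suitable basis tensor $e_{\mathbf{i}}$, and then projecting both halves onto the top representation via $P_{\vert^{\odot k}} \otimes P_{\vert^{\odot k}}$, lands in $H_n$ provided it is not killed by the projection, i.e.\ provided the underlying index pattern avoids all the degeneracies described in Lemma \ref{lem:elementsinhighestrep} and the ``lower'' partitions in $\widetilde{R}_{\widetilde{h}_{\boxvert}^{k,l}}$. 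This is exactly why the multi-indices $\mathbf{i} = (1,2,1,2,\dots)$ and $\mathbf{i}'$ were introduced just before the statement: they are generic enough (using $N \geqslant 4$ to have room for the values $1,2,3$ and $N$) that the corresponding vectors $A_{\mathbf{i}}$, $A_{\mathbf{i}'}$ survive all projections. So I would set $v$ to be the image in $H_n$ of $A_{\mathbf{i}_1} \otimes A_{\mathbf{i}_2}$-type data fed through $T_{\widetilde{h}_{\boxvert}^{k,l}}$, and compute $\sigma_k(v)$, which simply swaps the two tensor legs and hence replaces the index data by its mirror; the point of choosing $\mathbf{i}'$ with the entry $3$ inserted is to guarantee that the mirrored vector $w = \sigma_k(v)$ is genuinely a different vector in $H_n$, not a scalar multiple of $v$ — one checks this by pairing both against a cleverly chosen basis tensor $e_{\mathbf{j}} \otimes e_{\mathbf{j}'}$ and observing the pairings differ.

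The main obstacle — and the part that requires real care rather than bookkeeping — is verifying that the constructed vectors actually lie in $H_n$, i.e.\ that $(P_{\vert^{\odot k}} \otimes P_{\vert^{\odot k}})$ composed with $Q_{\widetilde{h}_{\boxvert}^{k,l}}$ does not annihilate them and that they are orthogonal to $\widetilde{R}_{\widetilde{h}_{\boxvert}^{k,l}}$. This means tracking exactly which blocks of $\widetilde{h}_{\boxvert}^{k,l}$ impose which index identifications, and checking against the explicit spanning set of $\mathrm{Ran}(R_{\vert^{\odot k}})$ from Lemma \ref{lem:elementsinhighestrep} (equal consecutive indices, or a leg equal to $\sum_i e_i$) together with the list of dominating partitions in $\widetilde{R}_{\widetilde{h}_{\boxvert}^{k,l}}$ from Lemma \ref{lem:rangep}. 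The conditions $i_r \neq i_{r+1}$, $1 \leqslant i_r \leqslant N-1$, and the insertion of the value $3$ are tailored precisely so that no such degeneracy occurs and no lower partition ``catches'' the vector, but making this airtight is where the bulk of the argument sits.

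Finally I would dispose of the even case $n = 2k - 2l$ by the same construction with $h_{\square}^{l}$ in place of $h_{\boxvert}^{l}$ (and the corresponding $\widetilde{h}_{\square}^{k,l}$), noting as the excerpt's comments anticipate that this case is strictly simpler: there is one fewer block to worry about and the projection/orthogonality checks go through with only notational changes. The boundary cases $n = 2k$ (the top representation $u^k \boxtimes u^k$ contains $u^{2k} \cong$ image of $P_{\vert^{\odot 2k}}$, handled directly) and the requirement $2 \leqslant n$ are exactly the ones allowed in the statement, so no extra work is needed there. Combining the odd and even cases yields that for every $2 \leqslant n \leqslant 2k$ the space $H_n$ contains a vector of $H_k \otimes H_k$ that is neither symmetric nor antisymmetric, completing the proof.
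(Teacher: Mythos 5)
Your skeleton matches the paper's proof: both arguments use Lemma \ref{lem:rangep} and Lemma \ref{lem:elementsinhighestrep} to build an explicit vector $\eta=(P_{\vert^{\odot k}}\otimes P_{\vert^{\odot k}})(\xi)$ in the range of $P_n^{k,k}$ from the partition $\widetilde{h}_{\boxvert}^{k,l}$ (odd case) resp.\ $\widetilde{h}_{\square}^{k,l}$ (even case), use the indices $\mathbf{i},\mathbf{i}'$ to survive the projections, and detect failure of (anti)symmetry by pairing against test vectors of the form $A_{\mathbf{i}}\otimes A_{\mathbf{i}'}$ and its flip. The case split ($n=2k$ handled directly, odd vs.\ even $n<2k$) is also the paper's.

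There are, however, two concrete problems. First, your criterion for concluding is logically insufficient as stated: showing that $\langle v,e\rangle$ and $\langle \sigma_k(v),e\rangle$ merely \emph{differ} does not rule out $\sigma_k(v)=-v$ (take $c_1=1$, $c_2=-1$). What is needed, and what the paper arranges, is that the two pairings have different \emph{absolute values}; concretely the paper computes $\langle\xi, A_{\mathbf{i}}\otimes A_{\mathbf{i}'}\rangle = l-\tfrac{l+1}{N}\neq 0$ while $\langle\xi, A_{\mathbf{i}'}\otimes A_{\mathbf{i}}\rangle=0$, which excludes every scalar relation $\sigma_k(\eta)=\lambda\eta$ with $\lambda\neq 0$. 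Second, the vector you propose to feed through $T_{\widetilde{h}_{\boxvert}^{k,l}}$ is underspecified at exactly the point where the proof lives: the paper's middle factor is
\begin{equation*}
\widetilde{\xi}_{l} = e_{i_{k-l+1}}\otimes \Bigl(\sum_{\mathbf{r}}e_{\mathbf{r}}\otimes e_{\mathbf{r}^{-1}}\Bigr) \otimes e_{i_{k-l+1}}-\frac{1}{N} \sum_{\mathbf{r}'}e_{\mathbf{r}'} \otimes e_{{\mathbf{r}'}^{-1}},
\end{equation*}
and both the $-\tfrac{1}{N}$ correction and the disjointness of the index sets of $\mathbf{i}$ and $\mathbf{i}'$ are what make $\xi$ land in the range of $Q_{\widetilde{h}_{\boxvert}^{k,l}}$ while being orthogonal to the ranges of $Q_{\widetilde{h}_{\boxvert}^{k,l'}}$ for $l'>l$, so that Lemma \ref{lem:rangep} applies. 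Deferring this as ``where the bulk of the argument sits'' leaves the proposition unproved. A minor further point: your framing assumes $\sigma_k(v)$ again lies in $H_n$, but the flip is not an intertwiner for $S_N^+$ (it is a crossing partition operator), so this need not hold; it is also not needed, since the goal is only $\sigma_k(v)\neq\pm v$.
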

 
\begin{proof}
Let $\xi = \sum_{i=1}^{N}e_{i}$ and let $v_1 ,v_2 , v_3$ be an orthonormal basis of $\mathrm{span} \{e_i :1\leq i\leq 4 \} \cap \xi ^\bot $ given by
\[
v_1 = \frac{1}{\sqrt 2} e_1 - \frac{1}{\sqrt 2}e_2,\quad
v_2 = \frac{1}{\sqrt 6} e_1 + \frac{1}{\sqrt 6} e_2 - \frac{2}{\sqrt 6} e_3, \quad
v_3 = \frac{1}{2\sqrt 3} e_1 + \frac{1}{2\sqrt 3} e_2 +\frac{1}{2\sqrt 3} e_3 - \frac{3}{2\sqrt 3} e_4.
\]
Then
\begin{equation*}
A_{\mathbf{i}} = v_{i_{1}}\otimes \cdots\otimes v_{i_{k}}
\end{equation*}
is in the range of $P_{k}$ as soon as $i_{t}\neq i_{t+1}$ for all $1\leqslant t\leqslant k-1$ by Lemma \ref{lem:elementsinhighestrep}. For an integer $1 \leqslant l < k$ we will consider the $k$-tuples $\mathbf{i} = (1, 2, 1, 2, \cdots)$ and
\begin{equation*}
\mathbf{i}' = (i_{k}, i_{k-1}, \dots, i_{k-l+1}, 3, \widetilde{i}_{k-l-1}, 3, \widetilde{i}_{k-l-3}, \dots)
\end{equation*}
where $\widetilde{i}_{t} = 2$ if $i_{t} = 1$ and $\widetilde{i}_{t} = 1$ if $i_{t} = 2$.

We will focus on the case $n$ odd since the case $n$ even is very similar but slightly simpler (see comments at the end of the proof). We can then write $n = 2k - 2l + 1$ and define
\begin{equation*}
\eta = v_{\mathbf{i}_{1}}\otimes \widetilde{\eta}_{l}\otimes v_{\mathbf{i}'_{4}}
\end{equation*}
where
\begin{equation*}
\widetilde{\eta}_{l} = e_1 \otimes \left(\sum_{\mathbf{r}\in\{1, \dots, N\}^{l-1}}e_{\mathbf{r}}\otimes e_{\mathbf{r}^{-1}}\right)\otimes e_1 - 
e_4 \otimes \left(\sum_{\mathbf{r}\in\{1, \dots, N\}^{l-1}}e_{\mathbf{r}}\otimes e_{\mathbf{r}^{-1}}\right)\otimes e_4.
\end{equation*}
It is straightforward to check by definition that $\eta$ is in the range of $Q_{\widetilde{h}_{\boxvert}^{k, l}}$ and is orthogonal to the range of $Q_{\widetilde{h}_{\boxvert}^{k, l'}}$ for any $l' > l$. In other words, $\eta$ is in the range of $P^{k, k}_{2k-2l+1}$. Hence
\begin{equation*}
\widehat{\eta} = (P_{\vert^{\odot k}}\otimes P_{\vert^{\odot k} })(\eta)\in H_{n}\subset H_{k}\otimes H_{k}
\end{equation*}
and by definition
\begin{equation*}
\langle \widehat{\eta}, A_{\mathbf{i}}\otimes A_{\mathbf{i}'}\rangle = \langle \eta, A_{\mathbf{i}}\otimes A_{\mathbf{i}'}\rangle \quad \text{ and } \quad \langle \widehat{\eta}, A_{\mathbf{i}'}\otimes A_{\mathbf{i}}\rangle = \langle \eta, A_{\mathbf{i}'}\otimes A_{\mathbf{i}}\rangle.
\end{equation*}

On the one hand, we have
\begin{equation*}
\langle \eta, A_{\mathbf{i}'}\otimes A_{\mathbf{i}}\rangle = \langle v_{\mathbf{i_{1}}}, v_{\mathbf{i}'_{1}}\rangle\langle \widetilde{\xi}_{l}, v_{\mathbf{i}'_{2}}\otimes v_{\mathbf{i}_{3}}\rangle\langle v_{\mathbf{i}'_{4}}, v_{\mathbf{i}_{4}}\rangle = 0.
\end{equation*}
This is because $v_{\mathbf{i}'_{4}}$ has at least one tensor equal to $v_{3}$ while $v_{\mathbf{i}_{4}}$ has all its tensors equal to either $v_{1}$ or $v_{2}$. On the other hand (using $\|v_{i}\|  = 1$),
\begin{align*}
\langle \eta, A_{\mathbf{i}}\otimes A_{\mathbf{i}'}\rangle & = \langle v_{\mathbf{i_{1}}}, v_{\mathbf{i}_{1}}\rangle\langle \widetilde{\eta}_{l}, v_{\mathbf{i}_{2}}\otimes v_{\mathbf{i}'_{3}}\rangle\langle v_{\mathbf{i}'_{4}}, v_{\mathbf{i}'_{4}}\rangle \\
& =  \langle \widetilde{\eta}_{l}, v_{\mathbf{i}_{2}}\otimes v_{\mathbf{i}'_{3}}\rangle.
\end{align*}
Given $\mathbf{r} = (r_{1}, \dots, r_{l-1})$ such that
\begin{equation*}
\langle e_i \otimes e_{\mathbf{r}}\otimes e_{\mathbf{r}^{-1}}\otimes e_i , v_{\mathbf{i}_{2}}\otimes v_{\mathbf{i}'_{3}}\rangle \neq 0,
\end{equation*}
this inner product must be positive since the minus signs $-$ appear bilaterally in each summand of $v_{\mathbf{i}_{2}}\otimes v_{\mathbf{i}'_{3}}$ that has a nonzero contribution. Moreover, because $v_{i_{k-l+1}}$ is in the span of $\{e_{1}, e_{2}, e_{3}\}$, the inner product above vanishes for $e_{i} = e_{4}$. Together with the definition of $v_{i_{k-l+1}}$ and the choice of $\widetilde{\eta}_{l}$ we see that $\langle \widetilde{\eta}_{l}, v_{\mathbf{i}_{2}}\otimes v_{\mathbf{i}'_{3}}\rangle \neq 0$. Hence
\begin{equation*}
\langle \eta, A_{\mathbf{i}}\otimes A_{\mathbf{i}'}\rangle \neq 0.
\end{equation*}

As a conclusion,
\begin{equation*}
\vert\langle \widehat{\eta}, A_{\mathbf{i}}\otimes A_{\mathbf{i}'}\rangle\vert \neq \vert \langle \widehat{\eta}, A_{\mathbf{i}'}\otimes A_{\mathbf{i}}\rangle\vert
\end{equation*}
so that $\widehat{\eta}$ does not belong to any eigenspace of $\sigma_{k}$.

For $l$ even, simply replace $\widetilde{\eta}_{l}$ by
\[
\sum_{\mathbf{r}\in \{1, \dots, N\}^{l}}e_{\mathbf{r}}\otimes e_{\mathbf{r}^{-1}}
\]
if $l\geqslant 1$. For $l = 0$, $A_{\mathbf{i}}\otimes A_{\mathbf{i'}}$ is in the range of $P_{2k}$ and is not symmetric, hence the result.
\end{proof}

We are now ready to prove our theorem. Recall that a commutative unital C*-algebra is always of the form $C(X)$ for some compact space $X$. Moreover, $X$ is connected if and only if $C(X)$ contains no projection apart from $0$ and $1$.

\begin{theo}\label{thm:ergodicpermutation}
Let $X$ be a connected compact space and assume that there exists an ergodic action of $S_{N}^{+}$ on $C(X)$, for some $N\geq 4$. Then, $X$ is a point.
\end{theo}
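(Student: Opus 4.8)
The plan is to feed the representation‑theoretic input of Lemma~\ref{lem:sigmak} and Proposition~\ref{prop:notsymetric} into the categorical picture of the action, and then to squeeze the spectral subspaces down to nothing by invoking connectedness exactly once. First, we may assume $N\geqslant 4$: for $N\leqslant 3$ the quantum permutation group is the classical symmetric group $S_{N}$, and an ergodic action of a finite group $G$ on a commutative C*-algebra $C(X)$ has $C(X)^{G}=C(X/G)=\C$, so $X$ is a single (finite) $G$-orbit, hence a point if it is connected. So let $N\geqslant 4$, suppose $B=C(X)$ carries an ergodic action $\alpha$ of $S_{N}^{+}$, and let $\varphi=\varphi_{\alpha}\colon\rep(S_{N}^{+})\to\hil_{f}$ be the corresponding weak unitary tensor functor, with $K_{n}=\varphi(u^{n})$ and isometric embeddings $\iota_{k,l}\colon K_{k}\otimes K_{l}\to K_{u^{k}\boxtimes u^{l}}$ as in Definition~\ref{de:weakunitarytensorfunctor}. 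Since $B$ is commutative, Lemma~\ref{lem:sigmak} tells us that $\varphi(P_{n}^{k,k})\circ\iota_{k,k}\neq 0$ with $k\geqslant n/2$ would put $H_{n}$ inside a single eigenspace of the flip $\sigma_{k}$ on $H_{k}\otimes H_{k}$, which for $n\geqslant 2$ is forbidden by Proposition~\ref{prop:notsymetric}. Hence $\varphi(P_{n}^{k,k})\circ\iota_{k,k}=0$ for all $n\geqslant 2$ and $k\geqslant n/2$, and since $u^{k}\boxtimes u^{k}=\bigoplus_{n=0}^{2k}u^{n}$ with the $P_{n}^{k,k}$ mutually orthogonal and summing to the identity, this means $\iota_{k,k}(K_{k}\otimes K_{k})\subseteq K_{0}\oplus K_{1}$ inside $K_{u^{k}\boxtimes u^{k}}$ for every $k\geqslant 1$. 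As $\iota_{k,k}$ is isometric, $(\dim K_{k})^{2}\leqslant 1+\dim K_{1}$, so in particular $\dim K_{1}\leqslant 1$.

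Next I would use connectedness to force $K_{1}=0$. Let $w$ denote the defining $N$-dimensional representation of $S_{N}^{+}$, so $w\cong u^{0}\oplus u^{1}$ and its spectral subspace $B_{w}=B_{0}\oplus B_{1}$ has dimension at most $1+\dim H_{1}=N$. This subspace is $*$-invariant because $w$ is self‑conjugate, and $B_{1}B_{1}\subseteq B_{0}\oplus B_{1}$ by the product formula for the multiplication of spectral subspaces (as in the proof of Lemma~\ref{lem:sigmak}: the product of $\overline\xi\otimes\eta$ and $\overline{\xi'}\otimes\eta'$ involves $\varphi(P_{n}^{1,1})\circ\iota_{1,1}$, which vanishes for $n=2$ by the previous paragraph). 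Thus $B_{w}$ is a finite‑dimensional unital $*$-subalgebra of $C(X)$; being commutative it is isomorphic to some $\C^{m}$, and the canonical continuous surjection $X\to\operatorname{spec}(B_{w})$ together with the connectedness of $X$ forces $m=1$. Hence $B_{w}=\C 1$, that is, $B_{1}=0$ and $K_{1}=0$.

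With $K_{1}=0$ the inclusion above becomes $\iota_{k,k}(K_{k}\otimes K_{k})\subseteq K_{0}=\C$ for every $k\geqslant 1$, so the same product formula gives $B_{k}B_{k}\subseteq B_{0}=\C 1$ for all $k\geqslant 1$. Now take any $k\geqslant 1$ and $b\in B_{k}$: then $b^{*}b$, $bb^{*}$ and $b^{2}$ all lie in $\C 1$, and commutativity gives $b^{*}b=bb^{*}=\lambda 1$ with $\lambda\geqslant 0$. If $\lambda=0$ then $b=0$; if $\lambda>0$ then $v=\lambda^{-1/2}b$ is a unitary of $C(X)$ with $v^{2}=\mu 1$ for some $\mu$ of modulus one, so $v^{*}=\bar\mu v$ and $C^{*}(v)=\operatorname{span}\{1,v\}$ has dimension at most two; connectedness of $X$ again forces $\operatorname{spec}(C^{*}(v))$ to be a point, hence $v\in\C 1$ and so $b\in\C 1\cap B_{k}=\{0\}$. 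Therefore $B_{k}=0$ for every $k\geqslant 1$, so $B=B_{0}=\C 1$ and $X$ is a point.

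The step I expect to be the main obstacle is precisely this passage from the vanishing of $\varphi(P_{n}^{k,k})\circ\iota_{k,k}$ to $B=\C$. One cannot hope to kill the multiplicities $\dim K_{n}$ outright, since $S_{N}^{+}$ \emph{does} act ergodically on the disconnected space $\C^{N}$, where $K_{1}\neq 0$ (Example~\ref{ex:permutation}); nor can one argue through generators, since the embeddings $\iota$ are in general not surjective and the algebraic core need not be generated by its lowest spectral subspace. The right move is to locate the one place where connectedness must intervene — the finite‑dimensional subalgebra $B_{w}$ — to deduce $K_{1}=0$ there, and only then exploit commutativity together with $B_{k}B_{k}\subseteq\C 1$ to collapse the remaining spectral subspaces by the elementary ``unitary with scalar square'' argument.
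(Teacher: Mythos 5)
Your proof is correct and follows essentially the same route as the paper: Lemma \ref{lem:sigmak} combined with Proposition \ref{prop:notsymetric} forces $\F(P_{n}^{k,k})\circ\iota_{k,k}=0$ for all $n\geqslant 2$, and connectedness is then exploited through finite-dimensional commutative self-adjoint subalgebras of $C(X)$, which would contain non-trivial projections. The only divergence is in the endgame: the paper takes the minimal $k_{0}$ with $B_{k_{0}}\neq\{0\}$ and notes that $B_{0}\oplus B_{k_{0}}$ is already a finite-dimensional sub-C*-algebra, whereas you first kill $B_{1}$ and then treat $k\geqslant 2$ by the ``unitary with scalar square'' argument --- a slight detour, since once $B_{1}=0$ the same subalgebra observation applied to $B_{0}\oplus B_{k}$ concludes directly, but your version is equally sound.
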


\begin{proof}
If $N \leqslant 3$, $S_{N}^{+}$ coincides with the classical permutation group $S_{N}$. Since it is finite, the result holds trivially in that case. We therefore assume $N\geqslant 4$.  Recall that $B = C(X)$ can be decomposed as $\oplus B_{k}$. If $B _k =\{0\}$ for all $k\geqslant 1$, then we are done. Otherwise, let us  denote by $k_{0}$ the smallest nonzero integer such that $B_{k_{0}}\neq \{0\}$ and set $B' = B_{0}\oplus B_{k_{0}}$. Note that the formula for the product implies that if $\F(P_{n}^{k, k})\circ \iota_{k, k} = 0$, then the product of two elements of $B_{k}$ has no component in $B_{n}$.  By Proposition \ref{prop:notsymetric} and Lemma \ref{lem:sigmak}, $B'\cdot B'\subset B_{0}\oplus B_{1}$. Since $B_{1} = \{0\}$ or $B_{1} = B_{k_{0}}$ according to the choice of $k_0$, we always have $B'\cdot B'\subset B'$, so that $B'$ is a finite-dimensional subalgebra of $B$. Moreover, $B'$ is self-adjoint because, each representation of $S_{N}^{+}$ being self-conjugate, $B_{k}^{*} = B_{k}$ for all $k$. Thus, $B$ contains a finite-dimensional -C*-subalgebra of dimension strictly greater than one. But such an algebra always contains a non-trivial projection, contradicting connectedness.
\end{proof}
\begin{rem}
It was originally asked by D. Goswami in \cite{goswami2011rigidity} whether $S_{N}^{+}$ can act faithfully on a non-trivial connected compact space, where faithfully means that the action does not factor through any quantum subgroup. H. Huang constructed in \cite{huang2013faithful} such an action but observed that it was not ergodic. The question therefore became whether $S_{N}^{+}$ can act ergodically on a non-trivial connected compact space, and Theorem \ref{thm:ergodicpermutation} gives a negative answer.
\end{rem}

It is natural to investigate analogues of Theorem \ref{thm:ergodicpermutation} for other free easy quantum groups. It turns out that it still holds and the proof is each time a slight modification of the case of $S_{N}^{+}$, hence we will simply sketch it.

\begin{theo}
Let $\G$ be one of the following compact quantum groups :
\begin{enumerate}[label=$(\arabic*)$]
\item $O_{N}^{+}$ for $N\geqslant 3$;
\item $H_{N}^{+}$ for $N\geqslant 4$;
\item $B_{N}^{+}$ for $N\geqslant 4$;
\item $B_{N}^{\prime +}$ for $N\geqslant 4$;
\item $B_{N}^{\sharp +}$ for $N\geqslant 4$;
\item $S_{N}^{\prime +}$ for $N\geqslant 4$.
\end{enumerate}
If $\G$ acts ergodically on a compact connected space $X$, then $X$ is a point.
\end{theo}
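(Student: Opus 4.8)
The plan is to follow the proof of Theorem~\ref{thm:ergodicpermutation}, whose only genuinely $S_N^+$-specific inputs are Lemma~\ref{lem:sigmak} and Proposition~\ref{prop:notsymetric}, and to re-establish these two ingredients for each of the listed quantum groups. Fix $\G = \G_N(\CC)$ for the relevant category $\CC$, assume $N$ is at least the stated bound, and suppose $\G$ acts ergodically on a commutative unital C*-algebra $B = C(X)$ with $X$ connected; write $\F$ for the associated weak unitary tensor functor, so that $B \cong \bigoplus_{x\in\irr(\G)}\overline{\F(x)}\otimes H_x$ as linear spaces. I would use throughout that each of these quantum groups has self-conjugate fundamental representation and, in fact, only self-conjugate irreducibles, so that $B_x^* = B_x$ for all $x$ and no conjugation bars intervene in the product formula for spectral subspaces.

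The first step is purely formal: the proof of Lemma~\ref{lem:sigmak} uses only the commutativity of $B$, self-conjugacy, and multiplicity-one of the component considered, so it carries over verbatim and yields, for every $x\in\irr(\G)$ and every $z\prec x\boxtimes x$ occurring with multiplicity one (with associated orthogonal projection $P^{x,x}_z\in\mor_\G(x\boxtimes x, x\boxtimes x)$ onto the copy of $H_z$ inside $H_x\otimes H_x$), that either $\F(P^{x,x}_z)\circ\iota_{x,x}=0$ or $\mathrm{Ran}(P^{x,x}_z)$ lies in an eigenspace of the flip $\sigma_x$ on $H_x\otimes H_x$. This applies here because the relevant components of a tensor square are multiplicity free for all the quantum groups in the statement (for $O_N^+$, $u^k\boxtimes u^{k'}=\bigoplus_j u^{|k-k'|+2j}$; the fusion rules of $H_N^+$, of the bistochastic quantum groups $B_N^+$, $B_N^{\prime +}$, $B_N^{\sharp +}$, and of $S_N^{\prime +}$ are multiplicity free too; see \cite{banica1999symmetries,banica2009fusion,FW16}).

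The second step, where the real work lies, is the analogue of Proposition~\ref{prop:notsymetric}: one must show that for each category $\CC$ and every $x\prec u^{\boxtimes k}$, the space $\mathrm{Ran}(P^{x,x}_z)$ of a subrepresentation $z\prec x\boxtimes x$ contains a vector of $H_x\otimes H_x\subseteq(\C^N)^{\otimes 2k}$ which is neither symmetric nor antisymmetric for $\sigma_x$ — except for finitely many ``small'' $z$: only $z=\triv$ for the orthogonal-type and bistochastic quantum groups, and $z=\triv$ together with the lowest non-trivial irreducibles occurring in $x\boxtimes x$ for $S_N^+$ and $S_N^{\prime +}$. This should follow by the very scheme of Section~\ref{sec:rigidity} once the combinatorics is redone: express $P^{x,x}_z$ through the top-component projections $P_{\vert^{\odot k}}$ exactly as in the proof of Theorem~\ref{thm:ergodicpermutation}, describe their range in terms of elementary tensors (analogues of Lemmas~\ref{lem:elementsinhighestrep} and~\ref{lem:rangep}), and produce explicit non-flip-eigenvectors together with the dimension bound excluding the degenerate small-$N$ behaviour. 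For $O_N^+$ the representation spaces are larger than for $S_N^+$ and the construction already goes through for $N\geqslant 2$; for the bistochastic quantum groups the relevant combinatorics reduces, once the fixed all-ones vector is split off, to the orthogonal case in dimension $N-1$, which forces the bound $N\geqslant 3$ (``$N-1\geqslant 2$''); for $H_N^+$ and $S_N^{\prime +}$ the vectors of Proposition~\ref{prop:notsymetric} are transported along the extra block-parity, resp.\ the $\Z_2$-grading, which the flip leaves untouched on the one-dimensional pieces, so the bound remains $N\geqslant 4$.

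Granting these two steps, the conclusion is reached exactly as in Theorem~\ref{thm:ergodicpermutation}. Decompose $B=\bigoplus_x B_x$; if $B_x=0$ for all $x\neq\triv$ then $X$ is a point, so assume otherwise and choose a nonzero $B_{x_0}$ with $x_0$ of minimal ``length'' (smallest $k$ with $x_0\prec u^{\boxtimes k}$, minimal among the irreducibles of that level — a precaution which is unnecessary for the orthogonal-type and bistochastic quantum groups). By Steps~1 and~2 and the multiplicity-free product formula for spectral subspaces, $B_{x_0}\cdot B_{x_0}$ is contained in $B_\triv$, plus — only in the $S_N^+$-type cases — the lowest non-trivial spectral subspace, which by minimality of $x_0$ is $0$ or equal to $B_{x_0}$. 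Hence $B' := B_\triv \oplus B_{x_0}$ is a finite-dimensional $*$-subalgebra of $C(X)$ (spectral subspaces of ergodic actions being finite-dimensional) of dimension at least two, and therefore contains a projection distinct from $0$ and $1$, contradicting the connectedness of $X$. I expect the main obstacle to be Step~2 — the case-by-case description of the ``highest component'' (with singletons for the bistochastic quantum groups, and with parity or $\Z_2$-grading constraints for $H_N^+$ and $S_N^{\prime +}$) and the construction of the non-flip-eigenvectors with the sharp lower bounds on $N$ — while Step~1 and the final assembly require no new idea.
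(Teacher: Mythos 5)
Your overall strategy---carry over Lemma \ref{lem:sigmak} using multiplicity-freeness of the fusion rules, redo Proposition \ref{prop:notsymetric} case by case, and then run the subalgebra argument of Theorem \ref{thm:ergodicpermutation}---is exactly the paper's, and your treatment of $O_N^+$ (even-case vectors, range of $P_{\vert^{\odot k}}$ without singleton conditions), of $B_N^+$ (via $B_N^+\cong O_{N-1}^+$), of $B_N^{\prime+}$ and $S_N^{\prime+}$ (via the product with $\Z_2$, which fixes the parity and reduces to the unprimed computation), and of $H_N^+$ (word-indexed irreducibles and transported vectors) agrees with what the paper does.

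The genuine gap is at $B_N^{\sharp+}$. You dispose of it by asserting that ``the relevant combinatorics reduces, once the fixed all-ones vector is split off, to the orthogonal case in dimension $N-1$,'' but no such reduction is available: unlike $B_N^+$, the quantum group $B_N^{\sharp+}$ has its irreducible representations indexed by \emph{words} over $\{0,1\}$ (projective partitions in $NC_{1,2}^{\sharp}$ are horizontal concatenations of $\vert$'s and double singletons), not by integers, so it is not $O_{N-1}^+$ in disguise and your Step~2 is simply missing for this case. The paper's actual argument here is substantially more involved: one must (a) describe the subrepresentations of $u_p\boxtimes u_p$ by the partitions $p\square^l p$ only (the $h_{\boxvert}^l$-type partitions do not belong to $NC_{1,2}^{\sharp}$), (b) build the non-flip-eigenvectors $A_{\mathbf i}^{\sharp}$ and $\xi^{\sharp}$ by inserting the vector $\sum_i e_i$ in the singleton slots, and crucially (c) prove that there are only finitely many equivalence classes of irreducible representations $u_p$ with $t(p)=1$ (two consecutive singletons in a row can be capped off). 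Point (c) also exposes a weakness in your final assembly for the word-indexed cases: $B_{x_0}\cdot B_{x_0}$ does not land in $B_{\triv}\oplus B_{x_0}$ but in the span of \emph{all} spectral subspaces indexed by words of length at most one (for $H_N^+$) or by partitions with at most one through-block (for $B_N^{\sharp+}$); one must argue that this span is a finite-dimensional $*$-subalgebra, which for $B_N^{\sharp+}$ is exactly where the finiteness statement (c) is needed and which your ``minimal $x_0$'' shortcut does not deliver.
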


\begin{proof}
Note that for the free easy quantum groups considered in the statement, the fusion rules are described in \cite[Sect. 5.2]{FW16}, and any irreducible subrepresentation has multiplicity at most one in the direct sum decomposition of tensor products of irreducible representations. So the analogues of Lemma \ref{lem:sigmak} still hold. For the remaining part of the proof, we give in each case the necessary modifications of the previous proofs.
\begin{enumerate}
\item For Proposition \ref{prop:notsymetric}, first note that the range of $P_{\vert^{\odot k}}$ is now simply spanned by the vectors of the form (1) in the statement of Lemma \ref{lem:elementsinhighestrep}. Moreover, any subrepresentation of $H_{k}\otimes H_{k}$ is of the form $H_{n}$ with $n = 2k - 2l$. Set 
\begin{equation*}
\widetilde{\eta}_{l} = \sum_{\mathbf{r}\in\{1, \dots, N\}^{l}}e_{\mathbf{r}}\otimes e_{\mathbf{r}^{-1}},
\end{equation*}
$v_i = e_i$ and keep the same expression for $A_{\mathbf{i}}$ and $A_{\mathbf{i}'}$. One then applies the same argument as in the proof of Proposition \ref{prop:notsymetric} and Theorem \ref{thm:ergodicpermutation}, which yields the result.

\item Recall that the irreducible representations of $H_{N}^{+}$ are indexed by the words in the free monoid over $\{0,1\}$. For such a word $w\coloneqq w_{1}w_{2}\cdots w_{k}$ with each $w_{i}\in\{0,1\}$, we write $e_{i, 0} = e_{i}$, $e_{i, 1} = e_{i}^{\otimes2}$ and define $v_{1, w_i} = e_{1, w _i}- e_{2,w_i}$ and $v_{2,w_i }, v_{3,w}$ similarly. Consider a projective partition $p\in NC_{\mathrm{even}}$ and its through-block decomposition $p =  p_{u}^{*}p_{u}$. Then, as in the proof of Lemma \ref{lem:rangep}, any $q\prec p$ in $NC_{\mathrm{even}}(k)$ is of the form $q = p_{u}^{*}rp_{u}$ for some non-crossing $r\neq|^{\odot t(p)}$. Repeating the proof of Lemma \ref{lem:elementsinhighestrep} for $r$, we may describe similarly the range of $P_{p}$ and deduce that 
\begin{equation*}
A_{\mathbf{i}, w(p)}\coloneqq v_{i_1, w(p)_1} \otimes \cdots \otimes  v_{i_m, w(p)_m} 
\end{equation*}
is an element of $H_{p}$ as soon as $1\leqslant i_{r}\leqslant N-1$ and $i_{r}\neq i_{r+1}$ for all $r$. Let us keep the indices $\mathbf{i}$ and $\mathbf{i}'$ chosen in the previous subsection for $S_{N}^{+}$ and define $\eta$ and $\widehat\eta$ similarly with $v_i$ replaced by $v_{i,w_r}$. Using this and repeating the argument in the proof of Proposition \ref{prop:notsymetric} and Theorem \ref{thm:ergodicpermutation}, we see that for a word $w$ on $\{0, 1\}$, if $B_{w}$ is the corresponding spectral subspace then $B_{w}\cdot B_{w}$ is spanned by the spectral subspaces $B_{w'}$ with $w'$ of length at most one. Denoting by $w$ a non-empty word of minimal length such that ${w}\neq \emptyset$, we conclude as in the proof of  Theorem \ref{thm:ergodicpermutation} that $B_{\emptyset}\oplus B_{w}$ is a C*-subalgebra, hence the result.

\item It is known that $B_{N}^{+}$ is isomorphic to $O_{N-1}^{+}$, hence the result follows from the first point.

\item First note that $B_{N}^{\prime +}$ is isomorphic to $B_{N}^{+}\times \Z_{2}$. As a consequence, its irreducible representations are given by pairs $(u, z^{\epsilon})$ where $u$ is an irreducible representation of $B_{N}^{+}$, $z$ is the generator of $C(\Z_{2})$ and $\epsilon\in \{0, 1\}$. As a consequence, when studying subrepresentations of $(u, \epsilon)\otimes (u, \epsilon)$ we work with fixed parity and the computation is exactly the same as in $B_{N}^{+}$.

\item The category of partitions of $B_{N}^{\sharp +}$ is the set $NC_{1,2}^{\sharp}$ of non-crossing partitions with blocks of size at most two and such that the points of the partitions can be labeled counterclockwise alternatively by $\oplus\ominus\oplus\ominus\cdots\oplus\ominus$, and each partition contains an arbitrary number of blocks of size two (pairs) connecting one $\oplus$ with one $\ominus$ and an even number of singleton blocks (see \cite{weber2012classification}).

It follows from the results of \cite{FW16} that any projective partition in $NC_{1, 2}^{\sharp}$ is a horizontal concatenation of identity partitions and double singletons. We index such partitions by the the words in the free monoid over $\{0,1\}$. More precisely, for a word $w = w_{1}w_{2}\cdots w_{r}$ over $\{0,1\}$ we set $p = p_{w_{1}}\odot p_{w_{2}}\odot\cdots\odot p_{w_{r}}$, where $p_{0} = |$ and $p_{1} = \brokenvert$. We will then denote by $k_{1}(p) < k_{2}(p) < \cdots < k_{s}(p)$ the indices $k$ with $w_{k} = 0$. We need to analyze the irreducible subrepresentations of $u_{p}\boxtimes u_{p}$. As before, with the through-block decomposition $p = p_{u}^{*}p_{u}$, these subrepresentations are indexed by the partitions 
\begin{equation*}
p\square^{l}p\coloneqq (p_{u}^{*}\odot p_{u}^{*})(\vert^{\odot(t(p)-l)}\odot h_{\square}^{l}\odot|^{\odot(t(p)-l)})(p_{u}\odot p_{u})
\end{equation*}
(the partitions with $h_{\boxvert}^{l}$ do not appear since they are not in $NC_{1, 2}^{\sharp}$). The subspace $H_{l}\subset H_{p}\otimes H_{p}$ corresponding to the subrepresentation associated with $p\square^{l}p$ is given by the range of the partial isometry $(P_{p}\otimes P_{p})P_{p\square^{l}p}$, where $P_{p\square^{l}p} = Q_{p\square^{l}p}-\vee_{q\prec p\square^{l}p}Q_{q}$. The same argument as in the proof of Lemma \ref{lem:rangep} implies that the range of $(P_{p}\otimes P_{p})P_{p\square^{l}p}$ equals that of
\[
(P_{p}\otimes P_{p})(Q_{p\square^{l}p}-\bigvee_{l'>l}Q_{p\square^{l'}p}).
\]
		
We can now easily adapt the constructions to obtain non symmetric nor antisymmetric vectors in $H_{l}\subset H_{p}\otimes H_{p}$. Set $k = t(p)$ and recall the definition of the vector $A_{\mathbf{i}}$ for a given multi-index $\mathbf{i} = (i_{1}, \dots, i_{k})$. We define a new vector $A_{\mathbf{i}}^{\sharp}\in(\mathbb{C}^{N})^{\otimes r}$, where $r$ denotes the length of the word $w(p)$, in the following way: in the $k_{s}(p)$-th component of the tensor we put the $s$-th component of $A_{\mathbf{i}}$, and in other components we put the vector $\sum_{i=1}^{N}e_{i}$. More formally, if $\sigma \in S_{r}$ denotes the permutation such that $\sigma(r-s+1) = k_{s}(p)$, then
\begin{equation*}
A_{\mathbf{i}}^{\sharp} = \sigma\left(A_{\mathbf{i}}\otimes \left(\sum_{i=1}^{N}e_{i}\right)^{\otimes r-t(p)}\right).
\end{equation*}
In the same way we define a new vector $\xi^{\sharp}\in(\mathbb{C}^{N})^{\otimes2r}$ from the vector $\xi$ appearing in the proof of Proposition \ref{prop:notsymetric}. By the arguments of the proof of Proposition \ref{prop:notsymetric}, we derive the following conclusions:
\begin{enumerate}
\item If $t(p)\geqslant 1$ and $l = 0$, then $A_{\mathbf{i}}^{\sharp}\otimes A_{\mathbf{i}'}^{\sharp}$ belongs to the range of $(P_{p}\otimes P_{p})P_{p\square^{l}p}$, and is neither symmetric nor antisymmetric;

\item If $t(p)\geqslant 2$ and $0\leqslant l\leqslant t(p)-1$, then $\xi^{\sharp}$ belongs to the range of $(P_{p}\otimes P_{p})P_{p\square^{l}p}$, and is neither symmetric nor antisymmetric.
\end{enumerate}	
Now we are ready to prove that any unital commutative C*-algebra $B$ under an ergodic action of $B_{N}^{\sharp +}$ must be finite-dimensional. First observe that there are only finitely many equivalence classes of irreducible representations $u_{p}$ with $t(p) = 1$. Indeed, if a projective partition contains two consecutive singletons in a row, then these can be removed by capping without changing the equivalence class. Assume now that there exists $p\in NC_{1, 2}^{\sharp}$ such that the component $B_{p}$ is nonzero. Consider the minimal number $t(p)$ satisfying that property and choose such a $p$ with $w(p)$ of minimal length. Then,
\[
B_{p} \cdot B_{p}\subset \spa\{B_{q}\mid t(q) \leqslant 1\}
\]
by the previous discussion and we can once again conclude.	

\item The argument is the same as for $B_{N}^{\prime +}$.
\end{enumerate}
\end{proof}

\appendix

\section{More details on Remark \ref{rem:cond}}\label{appendix:cond}
We will explain in this short appendix how to deduce the conditions in Remark  \ref{rem:cond} from \ref{cond:3}. This is well-known to experts but we do not find any explicit arguments in the literature. For the convenience of the reader we include the details here. 

We keep the notation in \ref{cond:3} and Remark  \ref{rem:cond}. Let $R_{u,v} = \iota_{u,v}(\varphi(u) \otimes \varphi(v))$ be the range of $P_{u,v}$ for $u,v \in \rep(\G)$. Recall that for $X\in \varphi (u)$, the map $S^{u,v}_{X}$ is defined by 
\[
S^{u,v}_{X}: \varphi(v) \to \varphi(u \boxtimes v),\quad
Y \mapsto \iota_{u,v}(X \otimes Y) .
\]
Its adjoint map is given by $(S^{u,v}_{X})^{*}: \varphi(u \boxtimes v) \to \varphi(v)$, where
\[
(S^{u,v}_{X})^{*}(a) = \left\{\begin{array}{ll} \displaystyle\sum_{k} \langle X_{k},X \rangle Y_{k} & ,\text{ if } a = \iota_{u,v}\left(\displaystyle\sum_{k} X_{k} \otimes Y_{k}\right) \in R_{u,v}  \\ 0 & ,\text{ if } a \in R_{u,v}^{\perp}\end{array}\right.
\]
Note that $P_{u,v} S^{u,v}_{X} = S^{u,v}_{X}$, and hence
\begin{equation}\label{eq:sandp}
(S^{u,v}_{X})^{*} P_{u,v} = (S^{u,v}_{X})^{*}.
\end{equation}
Moreover,  by \ref{cond:2} we have
\[
\varphi(\id\otimes f)\circ S^{u,v}_{X} = S^{u,v'}_{X}\circ\varphi(f),
\]
and therefore
\[
(S^{u,v}_{X})^{*}\circ\varphi(\id\otimes f^{*}) = \varphi(f^{*})\circ (S^{u,v'}_{X})^{*}
\]
for all $f\in\mor_{\G}(v,v')$.

\begin{lem}\label{prop:natural}
Let $u,v,w \in \rep(\G)$ and $X \in \varphi(u)$. It follows
\[
(S^{u,v\boxtimes w}_{X})^{*}\iota_{u \boxtimes v,w}(P_{u,v} \otimes \id) = \iota_{v,w}((S^{u,v}_{X})^{*} \otimes \id).
\]
\end{lem}
\begin{proof}
Fix $u,v,w \in \rep(\G)$ and $X \in \varphi(u)$. For $a = \sum_{k}\iota_{u,v}(X_{k} \otimes Y_{k})\in R_{u,v}$ and $Z \in \varphi(w)$, we have
\begin{align*}
(S^{u,v\boxtimes w}_{X})^{*}(\iota_{u \boxtimes v,w}(a \otimes Z)) & = \sum_{k}(S^{u,v\boxtimes w}_{X})^{*}(\iota_{u \boxtimes v,w}(\iota_{u,v}(X_{k} \otimes Y_{k}) \otimes Z)) \\
& = \sum_{k}(S^{u,v\boxtimes w}_{X})^{*}(\iota_{u, v\boxtimes w}(X_{k} \otimes \iota_{v,w}(Y_{k} \otimes Z)) \\
& = \sum_{k}\langle X_{k}, X\rangle \iota_{v,w}(Y_{k} \otimes Z) = \iota_{v,w}\left(\sum_{k}\langle X_{k}, X\rangle Y_{k} \otimes Z\right) \\
& = \iota_{v,w}((S^{u,v}_{X})^{*}(a) \otimes Z).
\end{align*}
Together with \eqref{eq:sandp} we obtain
\[
(S^{u,v\boxtimes w}_{X})^{*}\iota_{u \boxtimes v,w}(P_{u,v} \otimes \id) = \iota_{v,w}((S^{u,v}_{X})^{*}P_{u,v} \otimes \id) = \iota_{v,w}((S^{u,v}_{X})^{*} \otimes \id).
\]
\end{proof}

\begin{lem}\label{prop:PR}
Let $u,v,w \in \rep(\G)$ and $X \in \varphi(u)$. If
\begin{equation*}
P_{u\boxtimes v,w}P_{u,v\boxtimes w} \leq P_{u,v,w},
\end{equation*}
then
\[
(S^{u,v\boxtimes w}_{X})^{*}\iota_{u \boxtimes v,w} = (S^{u,v\boxtimes w}_{X})^{*}\iota_{u \boxtimes v,w}(P_{u,v}\otimes \id).
\]
\end{lem}
\begin{proof}
As explained in \cite[(3.12)]{PR07}, the assumption indeed yields that
\begin{equation*}
P_{u\boxtimes v,w}P_{u,v\boxtimes w} = P_{u,v,w}.
\end{equation*}
Take $a\in\varphi(u\boxtimes v)$, $Z\in\varphi(w)$, $X'\in\varphi(u)$ and $b\in\varphi(v\boxtimes w)$, we have
\begin{align*}
\langle \iota_{u \boxtimes v,w}(a \otimes Z),\iota_{u, v \boxtimes w}(X' \otimes b)\rangle & = \langle \iota_{u \boxtimes v,w}(a \otimes Z),\iota_{u\boxtimes v, w}\circ\iota^{*}_{u\boxtimes v,w}\circ\iota_{u, v \boxtimes w}(X' \otimes b)\rangle \\
& = \langle \iota_{u \boxtimes v,w}(a \otimes Z),P_{u\boxtimes v,w}\circ P_{u, v \boxtimes w}\circ\iota_{u, v \boxtimes w}(X' \otimes b)\rangle \\
& = \langle \iota_{u \boxtimes v,w}(a \otimes Z),P_{u,v,w}\circ\iota_{u, v \boxtimes w}(X' \otimes b)\rangle
\end{align*}
We write
\[
P_{u,v,w}\circ\iota_{u,v\boxtimes w}(X'\otimes b)=\sum_{X'' , Y' ,Z'}\iota_{u,v,w}(X''\otimes Y'\otimes Z').
\]
for some $X''\in\varphi(u)$, $Y'\in\varphi(v)$ and $Z'\in\varphi(w)$. Note that
\begin{align*}
  \langle \iota_{u \boxtimes v,w}(a \otimes Z),\iota_{u,v, w}(X'' \otimes Y' \otimes Z')\rangle
& = \langle \iota_{u \boxtimes v,w}(a \otimes Z),\iota_{u,
\boxtimes v, w}(\iota_{u,v}(X'' \otimes Y') \otimes Z')\rangle \\
& = \langle a \otimes Z,\iota_{u,v}(X'' \otimes Y') \otimes Z'\rangle \\
& = \langle a,\iota_{u,v}(X'' \otimes Y') \rangle\langle Z,Z'\rangle
\end{align*}
Then, for $a \in R_{u,v}^{\perp}$ and $Z \in \varphi(w)$,
\[
\iota_{u \boxtimes v,w}(a \otimes Z) \in R_{u,v\boxtimes w}^{\perp}.
\]
In other words,
\[
(S^{u,v\boxtimes w}_{X})^{*}\iota_{u \boxtimes v,w}((\id_{\varphi(u\boxtimes v)} - P_{u,v}) \otimes \id) = 0,
\]
as desired.
\end{proof}
Obviously the above two lemmas yields immediately the assertion in Remark \ref{rem:cond}.

\section{A Tannaka-Krein reconstruction theorem for Yetter-Drinfeld C*-algebras}\label{app:yd}

In this appendix, we will establish a general Tannaka-Krein reconstruction theorem for Yetter-Drinfeld C*-algebras in terms of weak unitary tensor functors. In the sequel \emph{we will always assume $\G$ is of Kac type}, which means that the modular properties of the Haar state are trivial, and in particular the solution $R_u$ of the conjugation equation for $u$ can be expressed by $R_u (1_{\mathbb C}) = \sum_i \bar \xi _i \otimes \xi_i$ for an orthonormal basis $(\xi_i)$ of $H_u$, as in the main body of the paper. All the statements and proofs in this appendix work for the general non-Kac setting up to standard adaptations, but this goes beyond the main scope of this paper, and hence we prefer to stick the Kac setting for simplicity. 

The following type of maps plays a central role in our reconstruction procedures.

\begin{defi}[Compatible collection]\label{def:compatible_collection}
Let $\G$ be a compact quantum group and $\varphi: \rep(\G) \to \hil_{f}$ be a weak unitary tensor functor. A collection of linear maps
\[
\{\mathcal{A}^{u}_{v}: \varphi(v) \to \varphi(\bar{u} \boxtimes v \boxtimes u)\}_{u,v \in \rep(\G)}
\]
is called {\em a compatible collection for $\varphi$} if it satisfies the following conditions for all $u,v,w \in \rep(\G)$:
\begin{enumerate}[label=\textup{(A\arabic*)},start=0]
\item\label{cond:A0} if $u$ admits a decomposition $u = \oplus_{i}u_{i}$ into a direct sum of irreducible representations with isometric intertwiners  $\omega_{i} \in \mor_{\G}(u_{i},u)$, then
\[
\sum_{i}\varphi(\bar{\omega}_{i} \xbox \id_{v} \xbox \omega_{i})\mathcal{A}^{u_{i}}_{v} = \mathcal{A}^{u}_{v};
\]

\item\label{cond:A1} $\mathcal{A}^{\triv}_{v} = \id_{\varphi(v)}$ and $\mathcal{A}^{u}_{\triv} = \varphi(R_{u})$;

\item\label{cond:A2} for any $f \in \mor_{\G}(v,w)$, the diagram
\[
\xymatrix@C=5pc{
\varphi(v)\ar[r]^-{\varphi(f)}\ar[d]_-{\mathcal{A}^{u}_{v}} & \varphi(w)\ar[d]^-{\mathcal{A}^{u}_{w}} \\
\varphi(\bar{u} \boxtimes v \boxtimes u)\ar[r]_-{\varphi(\id_{\bar{u}} \xbox f \xbox \id_{u})} & \varphi(\bar{u} \boxtimes w \boxtimes u)
}
\]
commutes;

\item\label{cond:A3} the diagram
\[
\xymatrix@C=5pc{
\varphi(v)\ar[r]^-{\mathcal{A}^{u}_{v}}\ar[d]_-{\mathcal{A}^{u \boxtimes w}_{v}} & \varphi(\bar{u} \boxtimes v \boxtimes u)\ar[d]^-{\mathcal{A}^{w}_{\bar{u} \boxtimes v \boxtimes u}} \\
\varphi(\overline{u \boxtimes w} \boxtimes v \boxtimes u \boxtimes w)\ar[r]^-{\iso}_-{\varphi(\sigma_{u,w} \xbox \id_{v,u,w})} & \varphi(\bar{w} \boxtimes \bar{u} \boxtimes v \boxtimes u \boxtimes w)
}
\]
commutes, where the intertwiner $\sigma_{u,w} \in \mor_{\G}(\overline{u \boxtimes w},\bar{w} \boxtimes \bar{u})$ denotes the canonical isomorphism ${\sigma_{u,w}(\overline{\xi \otimes \eta}) = \bar{\eta} \otimes \bar{\xi}}$ for $\xi\in H_u$ and $\eta\in H_w$;

\item\label{cond:A4} the diagram
\[
\xymatrix@C=5pc{
\varphi(v) \otimes \varphi(v') \ar[rr]^-{\iota}\ar[d]_-{\mathcal{A}^{u}_{v} \otimes \mathcal{A}^{u}_{v'}} && \varphi(v \boxtimes v')\ar[d]^-{\mathcal{A}^{u}_{v \boxtimes v'}} \\
\varphi(\bar{u} \boxtimes v \boxtimes u) \otimes \varphi(\bar{u} \boxtimes v' \boxtimes u)\ar[r]_-{\iota} & \varphi(\bar{u} \boxtimes v \boxtimes u \boxtimes \bar{u} \boxtimes v' \boxtimes u)\ar[r]_-{\varphi(\id \xbox \bar{R}^{*}_{u} \xbox \id)} & \varphi(\bar{u} \boxtimes v \boxtimes v' \boxtimes u)
}
\]
commutes;

\item\label{cond:A5} the diagram
\[
\xymatrix@C=5pc{
\varphi(v)\ar[rr]^-{J_{v}}\ar[d]_-{\mathcal{A}^{u}_{v}} && \varphi(\bar{v})\ar[d]^-{\mathcal{A}^{u}_{\bar{v}}} \\
\varphi(\bar{u} \boxtimes v \boxtimes u)\ar[r]_-{J_{\bar{u} \boxtimes v \boxtimes u}} & \varphi(\overline{\bar{u} \boxtimes v \boxtimes u})\ar[r]^-{\iso}_-{\varphi(\sigma_{\bar{u},v,u})} & \varphi(\bar{u} \boxtimes \bar{v} \boxtimes u)
}
\]
commutes, where the intertwiner $\sigma_{\bar{u},v,u} \in \mor_{\G}(\overline{\bar{u} \boxtimes v \boxtimes u},\bar{u} \boxtimes \bar{v} \boxtimes u)$ denotes the canonical isomorphism ${\sigma_{\bar{u},v,u}(\overline{\bar{\xi} \otimes \eta \otimes \chi}) = \bar{\chi} \otimes \bar{\eta} \otimes \xi}$ for $\xi,\chi\in H_u$ and $\eta\in H_v$.
\end{enumerate}
A compatible collection $\{\mathcal{A}^{u}_{v}\}_{u,v \in \rep(\G)}$ for $\varphi$ is called {\em braided compatible}, if it additionally satisfies:
\begin{enumerate}[resume*]
\item\label{cond:A6} for any $u,v \in \rep(\G)$, the diagram
\[
\xymatrix@C=5pc{
\varphi(u) \otimes \varphi(v) \ar[rr]^-{\Sigma}\ar[d]_-{\id \otimes \mathcal{A}^{u}_{v}} && \varphi(v) \otimes \varphi(u)\ar[d]^-{\iota} \\
\varphi(u) \otimes \varphi(\bar{u} \boxtimes v \boxtimes u)\ar[r]_-{\iota} & \varphi(u \boxtimes \bar{u} \boxtimes v \boxtimes u) \ar[r]_-{\varphi(\bar{R}^{*}_{u} \xbox \id_{v,u})} & \varphi(v \boxtimes u)
}
\]
commutes.
\end{enumerate}
\end{defi}


The above notion of braided compatible collection arises very naturally from the concrete setting. Indeed, let $(B,\lhd,\alpha)$ be a Yetter-Drinfeld $\G$-C*-algebra. Consider the associated weak unitary tensor functor $\varphi_{\alpha}: \rep(\G) \to \hil_{f}$ given by $\varphi_{\alpha}(u) =  \mor_{\G}(u,\alpha)$ and $\varphi_{\alpha}(f)(T) = T\circ f^{*}$ for each $u,u' \in \rep(\G)$ and $f \in \mor_{\G}(u,u')$. Recall that in this setting we have 
\[
\iota_{u,v}: \varphi_{\alpha}(u) \otimes \varphi_{\alpha}(v) \to \varphi_{\alpha}(u \boxtimes v), \quad \iota_{u,v}(T \otimes S)(\xi \otimes \xi') = T(\xi)S(\xi')
\]
and 
\[
J_{u}: \varphi_{\alpha}(u) \to \varphi_{\alpha}(\bar{u}), \quad J_{u}(T)(\bar{\xi}) = T(\xi)^{*}.
\]
For $u, v \in \rep(\G)$ and $T \in \varphi_{\alpha}(v)$, we define the linear map
\[
\tensor*[^\alpha]{\mathcal A}{_v^u} (T):\bar{H}_{u} \otimes H_{v} \otimes H_{u} \to B,\qquad
\bar\xi  \otimes \eta \otimes \xi'  \mapsto T(\eta ) \lhd ( \omega_{\xi',\xi} \otimes \id)(u).
\]
In other words if $(\xi_{i})$ is an orthonormal basis of $H_{u}$ and $(u_{ij})$ denote the corresponding matrix coefficients of $u$, then we may simply write
\[\tensor*[^\alpha]{\mathcal A}{_v^u} (T)  (\bar\xi _i  \otimes \eta \otimes \xi_j) =T(\eta)\lhd u_{ij}. \]
We have
\begin{align*}
(\alpha \circ \tensor*[^\alpha]{\mathcal A}{_v^u}(T))(\bar{\xi}_{i} \otimes \eta \otimes \xi_{k}) & = \alpha(T(\eta) \lhd u_{ik}) \\
& = (T(\eta)_{[0]} \lhd (u_{ik})_{(2)}) \otimes S((u_{ik})_{(1)})T(\eta)_{[1]}(u_{ik})_{(3)} &\text{(by \eqref{eq:yetter_drinfeld_condition})}\\
& = ((\tensor*[^\alpha]{\mathcal A}{_v^u}(T) \otimes \id) \circ \delta_{\bar{u} \boxtimes v \boxtimes u})(\bar{\xi}_{i} \otimes \eta \otimes \xi_{k}),
\end{align*}
for all $\xi_{i},\xi_{k} \in H_{u}$ and $\eta \in H_{v}$, which means that $\tensor*[^\alpha]{\mathcal A}{_v^u}(T) \in \varphi_{\alpha}(\bar{u} \boxtimes v \boxtimes u)$. Thus, we obtain a well-defined linear map
\[
\tensor*[^\alpha]{\mathcal A}{_v^u}: \varphi_{\alpha}(v) \to \varphi_{\alpha}(\bar{u} \boxtimes v \boxtimes u),\quad T\mapsto \tensor*[^\alpha]{\mathcal A}{_v^u}(T) .
\]
The justification of the definition at the beginning is the following result.

\begin{lem}\label{lem:action_collection}
The collection of linear maps $\{\tensor*[^\alpha]{\mathcal A}{_v^u}\}_{u,v \in \rep(\G)}$ yields a compatible collection for $\varphi_{\alpha}$ in the sense of definition~\ref{def:compatible_collection}. Moreover, if $(B,\lhd,\alpha)$ is braided commutative then $\{\tensor*[^\alpha]{\mathcal A}{_v^u}\}_{u,v \in \rep(\G)}$ is braided compatible.
\end{lem}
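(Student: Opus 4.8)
The plan is to verify the axioms \ref{cond:A0}--\ref{cond:A6} of Definition~\ref{def:compatible_collection} one by one for the collection $\{\tensor*[^\alpha]{\mathcal A}{_v^u}\}_{u,v \in \rep(\G)}$, working first at the level of irreducible representations and passing to direct sums afterwards, exactly as in the proof of Theorem~\ref{thm:tkergodic_yetter}. The computations should all reduce to the defining properties of a Hopf $*$-algebra action (Definition~\ref{def:hopfaction}), the Yetter-Drinfeld condition \eqref{eq:yetter_drinfeld_condition}, the braided commutativity condition \eqref{eq:braided_commutative_condition}, and the explicit formulas for $\iota_{u,v}$ and $J_u$ on $\varphi_\alpha$ recalled just above the statement. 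Throughout I would fix orthonormal bases $(\xi_i)$ of the relevant $H_u$ and write matrix coefficients $u_{ij}$, so that every map can be evaluated on basis vectors $\bar\xi_i \otimes \eta \otimes \xi_j$ and the verifications become bookkeeping with Sweedler's notation.

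First I would treat \ref{cond:A1}: $\tensor*[^\alpha]{\mathcal A}{_v^{\triv}}(T) = T \lhd 1 = T$ by condition (1) of Definition~\ref{def:hopfaction}, and $\tensor*[^\alpha]{\mathcal A}{_{\triv}^u}(1_{\mathbb C}) = 1_{\mathcal B} \lhd (\omega_{\xi_j,\xi_i}\otimes\id)(u)$, which by condition (2) equals $\varepsilon$ applied to the matrix coefficient, reproducing $R_u(1) = \sum_i \bar\xi_i \otimes \xi_i$ (in the Kac case). Then \ref{cond:A2} is naturality: $\tensor*[^\alpha]{\mathcal A}{}$ is defined by precomposition of $\lhd$ with the action on the $H$-legs, while $\varphi_\alpha(f)$ is precomposition with $f^*$, and these commute because $f$ is an intertwiner; one checks $\tensor*[^\alpha]{\mathcal A}{_w^u}(T f^*) = \varphi_\alpha(\id_{\bar u}\xbox f\xbox\id_u)\tensor*[^\alpha]{\mathcal A}{_v^u}(T)$ on basis vectors. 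For \ref{cond:A0} and the direct-sum step of the others, I would use that $\lhd$ is linear in the representation-leg and that isometric intertwiners $\omega_i$ satisfy $\sum_i \omega_i\omega_i^* = \id$, so the decomposition formulas follow formally once the irreducible cases are in hand. The substantive axioms are \ref{cond:A3}, \ref{cond:A4}, \ref{cond:A5}, \ref{cond:A6}: \ref{cond:A3} encodes $T \lhd (u_{ij})\lhd(w_{kl}) = T\lhd(u_{ij}w_{kl})$, i.e.\ condition (1) of Definition~\ref{def:hopfaction} together with the fact that the matrix coefficients of $u\boxtimes w$ are products $u_{ij}w_{kl}$ and the flip $\sigma_{u,w}$ matches the conjugation of those products; \ref{cond:A4} encodes $(TS)\lhd a = (T\lhd a_{(1)})(S\lhd a_{(2)})$, which is condition (2) of Definition~\ref{def:hopfaction}, the map $\varphi(\id\xbox \bar R_u^*\xbox\id)$ implementing the contraction that collapses the two copies of the $u$-leg into a single coproduct; and \ref{cond:A5} encodes $(T\lhd a)^* = T^* \lhd S(a)^*$, which is condition (3), the flips $\sigma$ and the maps $J$ accounting for conjugation and the appearance of the antipode. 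Finally \ref{cond:A6} is exactly the translation of \eqref{eq:braided_commutative_condition} $bb' = b'_{[0]}(b\lhd b'_{[1]})$ in terms of the spectral picture: evaluating both sides of the diagram on $\xi\otimes\eta$ with $\xi\in H_u$, $\eta\in H_v$ gives the product $T(\xi)S(\eta)$ on one side and $S(\eta)_{[0]}\,(T(\xi)\lhd S(\eta)_{[1]})$ on the other, and these agree precisely by braided commutativity applied with $b = T(\xi)$, $b' = S(\eta)$, after using $\alpha\circ S = (S\otimes\id)\circ\delta$ appropriately on the matrix coefficients.

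I expect the main obstacle to be purely organizational rather than conceptual: keeping the many conjugate legs, flip maps $\sigma_{u,w}$, $\sigma_{\bar u,v,u}$, antilinear maps $J$, and the contraction maps $\bar R_u^*$ consistently oriented, and making sure the identifications implicit in ``$\varphi_\alpha(\bar u\boxtimes v\boxtimes u)=\mor_\G(\bar u\boxtimes v\boxtimes u,\alpha)$'' are compatible with the leg-numbering conventions used in \eqref{eq:yetter_drinfeld_condition}. In particular, verifying \ref{cond:A5} requires care because $J_u$ is antilinear and the formula $J_u(T)(\bar\xi) = T(\xi)^*$ interacts with the antipode and $*$ in a way that must reproduce $(T\lhd a)^* = T^*\lhd S(a)^*$ exactly; I would do this computation on basis vectors very explicitly. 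Since \eqref{eq:yetter_drinfeld_condition} was already used above to show $\tensor*[^\alpha]{\mathcal A}{_v^u}(T)\in\varphi_\alpha(\bar u\boxtimes v\boxtimes u)$, the remaining work is to confirm that the same Yetter-Drinfeld identity, together with the Hopf $*$-algebra axioms, forces the coherence diagrams; no new input beyond \eqref{eq:yetter_drinfeld_condition} and \eqref{eq:braided_commutative_condition} is needed, which is why the lemma is stated as a direct unwinding of definitions.
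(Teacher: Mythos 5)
Your plan is correct and is essentially the paper's own proof: each axiom \ref{cond:A0}--\ref{cond:A6} is verified by evaluating on basis vectors $\bar{\xi}_i\otimes\eta\otimes\xi_k$ and reduces exactly to the corresponding item of Definition~\ref{def:hopfaction} (unit and multiplicativity of $\lhd$ for \ref{cond:A1} and \ref{cond:A3}, the module-algebra rule for \ref{cond:A4}, the $*$-compatibility with the antipode for \ref{cond:A5}), to naturality for \ref{cond:A2}, and to \eqref{eq:braided_commutative_condition} for \ref{cond:A6}. The only superfluous step is the reduction to irreducibles borrowed from Theorem~\ref{thm:tkergodic_yetter}: since $\tensor*[^\alpha]{\mathcal A}{_v^u}$ is defined uniformly for all $u,v\in\rep(\G)$, the computations go through directly for arbitrary representations, and only \ref{cond:A0} itself involves a decomposition into irreducibles.
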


\begin{proof}[Proof for quantum groups $\G$ of Kac type]
Let us check the required conditions \ref{cond:A0}-\ref{cond:A6} in detail.
\begin{itemize}
\item[\ref{cond:A0}] Let $u = \oplus {x}$ be a decomposition into a direct sum of mutually orthogonal irreducible representations with isometric intertwiners $\omega_{x}\in\mor_\G ({x} , u)$.
For $\xi,\xi' \in H_u$ and $\eta \in H_v$, we have
\begin{align*}
\Bigg(\sum_{x}\varphi(\bar{\omega}_{x} \xbox \id_{v} \xbox \omega_{x})\, \tensor*[^\alpha]{\mathcal A}{_v^{x}}(T)\Bigg)(\bar{\xi}  \otimes \eta  \otimes \xi') & = \sum_{x}\tensor*[^\alpha]{\mathcal A}{_v^{x}}(T)(\overline{\omega^{*}_{x}(\xi )} \otimes \eta  \otimes \omega^{*}_{x}(\xi')) \\
& =  T(\eta ) \lhd \sum_{x}(\langle \;\cdot\;\omega^{*}_{x}(\xi'),\omega^{*}_{x}(\xi )\rangle \otimes \id)(u^{x}) \\
& = T(\eta ) \lhd (\omega_{\xi ',\xi } \otimes \id)(u) \\
& = \tensor*[^\alpha]{\mathcal A}{_v^u}(T)(\bar{\xi}  \otimes \eta  \otimes \xi')
\end{align*}
 
\item[\ref{cond:A1}] We have
\[
{}^{\alpha}\mathcal{A}^{u}_{\triv}(T)(\bar{\xi}_{i} \otimes \xi_{k}) = \cou(u_{ik})T(1_{\C}) = T(R^{*}_{u}(\bar{\xi}_{i} \otimes \xi_{k})) = \varphi_{\alpha}(R_{u})(T)(\bar{\xi}_{i} \otimes \xi_{k}).
\]
and
\[
{}^{\alpha}\mathcal{A}^{\triv}_{v}(T)(\eta ) = T(\eta ) = \id_{\varphi_{\alpha}(v)}(T)(\eta ).
\]

\item[\ref{cond:A2}] Let $u,v,v' \in \rep(\G)$ and $f \in \mor_{\G}(v,v')$. Then
\begin{align*}
\tensor*[^\alpha]{\mathcal A}{_{v'}^u}(\varphi_{\alpha}(f)(T))(\bar{\xi}_{i} \otimes \eta \otimes \xi_{k}) &  = \varphi_{\alpha}(f)(T)(\eta) \lhd u_{ik} = \tensor*[^\alpha]{\mathcal A}{_v^u}(T)(\bar{\xi}_{i} \otimes f^{*}(\eta) \otimes \xi_{k}) \\
& = (\varphi_{\alpha}(\id \xbox f \xbox \id)\,\tensor*[^\alpha]{\mathcal A}{_v^u}(T))(\bar{\xi}_{i} \otimes \eta \otimes \xi_{k})
\end{align*}

\item[\ref{cond:A3}] Let $u,v,w \in \rep(\G)$. If $(\chi_{i'})$ is an orthonormal basis of $H_{w}$ and $(w_{i' k'})$ denotes the corresponding matrix coefficients, then
\begin{align*}
 (\tensor*[^\alpha]{\mathcal A}{_{\bar{u} \boxtimes v \boxtimes u}^{w}}\tensor*[^\alpha]{\mathcal A}{_v^u}(T))(\bar{\chi}_{i'} \otimes \bar{\xi}_{i} \otimes \eta \otimes \xi_{k} \otimes \chi_{k'}) & = \tensor*[^\alpha]{\mathcal A}{_v^u}(T)(\bar{\xi}_{i} \otimes \eta \otimes \xi_{k}) \lhd w_{i'k'} \\
& = T(\eta) \lhd u_{ik}w_{i'k'} = T(\eta) \lhd (u \boxtimes w)_{ii'kk'},
\end{align*}
while
\begin{align*}
(\varphi_{\alpha}(\sigma_{u,w} \xbox \id_{v,u,w})\, \tensor*[^\alpha]{\mathcal A}{^{u \boxtimes w}_{v}}(T))(\bar{\chi}_{i'} \otimes \bar{\xi}_{i} \otimes \eta \otimes \xi_{k} \otimes \chi_{k'})
&= \tensor*[^\alpha]{\mathcal A}{^{u \boxtimes w}_{v}}(T)(\overline{\xi_{i} \otimes \chi_{i'}} \otimes \eta \otimes \xi_{k} \otimes \chi_{k'}) \\
& = T(\eta) \lhd (u \boxtimes w)_{ii'kk'}.
\end{align*}

\item[\ref{cond:A4}] Let $u,v,v' \in \rep(\G)$. We have
\begin{align*}
{}^{\alpha}\mathcal{A}^{u}_{v \boxtimes v'}(\iota(T \otimes T'))(\bar{\xi}_{i} \otimes \eta\otimes \eta' \otimes \xi_{k}) & = (\iota(T \otimes T'))(\eta\otimes \eta') \lhd u_{ik} = T(\eta_{j})T'(\eta') \lhd u_{ik} \\
& = \sum_{l}(T(\eta_{j}) \lhd u_{il})(T'(\eta') \lhd u_{lk}) \\
& = \sum_{l}\left(\tensor*[^\alpha]{\mathcal A}{_v^u}(T)(\bar{\xi}_{i} \otimes \eta\otimes \xi_{l})\right)\left(\tensor*[^\alpha]{\mathcal A}{_{v'}^u}(T)(\bar{\xi}_{l} \otimes \eta' \otimes \xi_{k})\right) \\
& = \iota(\tensor*[^\alpha]{\mathcal A}{_v^u}(T) \otimes \tensor*[^\alpha]{\mathcal A}{_{v'}^u}(T))\left(\bar{\xi}_{i} \otimes \eta\otimes \bar{R}_{u}(1_{\C}) \otimes \eta' \otimes \xi_{k}\right) \\
& = (\varphi_{\alpha}(\id \xbox \bar{R}^{*}_{u} \xbox \id)\iota(\tensor*[^\alpha]{\mathcal A}{_v^u}(T) \otimes \tensor*[^\alpha]{\mathcal A}{_{v'}^u}(T)))\left(\bar{\xi}_{i} \otimes \eta\otimes \eta' \otimes \xi_{k}\right).
\end{align*}

\item[\ref{cond:A5}] Let $u,v \in \rep(\G)$.
We have
\begin{align*}
(\varphi_{\alpha}(\sigma_{\bar{u},v,u})J_{\bar{u} \boxtimes v \boxtimes u}\tensor*[^\alpha]{\mathcal A}{_v^u}(T))(\bar{\xi}_{i} \otimes \bar{\eta}_{j} \otimes \xi_{k}) & = J_{\bar{u} \boxtimes v \boxtimes u}(\tensor*[^\alpha]{\mathcal A}{_v^u}(T))(\overline{\bar{\xi}_{k} \otimes \eta_{j} \otimes \xi_{i}}) \\
& = (\tensor*[^\alpha]{\mathcal A}{_v^u}(T)(\bar{\xi}_{k} \otimes \eta_{j} \otimes \xi_{i}))^{*} = (T(\eta_{j}) \lhd u_{ki})^{*} \\
& = T(\eta_{j})^{*} \lhd S(u_{ki})^{*} = J_{v}(T)(\bar{\eta}_{j}) \lhd u_{ik} \\
& = ({}^{\alpha}\mathcal{A}^{u}_{\bar{v}}J_{v}(T))(\bar{\xi}_{i} \otimes \bar{\eta}_{j} \otimes \xi_{k}).
\end{align*}

\item[\ref{cond:A6}] Let $u,v \in \rep(\G)$. We have
\begin{align*}
(\varphi_{\alpha}(\bar{R}^{*}_{u} \xbox \id_{v,u})\iota(\id \otimes \tensor*[^\alpha]{\mathcal A}{_v^u})(S \otimes T))(\eta \otimes \xi_{i}) & = (\iota(\id \otimes \tensor*[^\alpha]{\mathcal A}{_v^u})(S \otimes T))(\bar{R}_{u}(1_{\C}) \otimes \eta \otimes \xi_{i}) \\
& =\sum_{k}(\iota(\id \otimes \tensor*[^\alpha]{\mathcal A}{_v^u})(S \otimes T))(\xi_{k} \otimes \bar{\xi}_{k} \otimes \eta \otimes \xi_{i}) \\
& = \sum_{k}S(\xi_{k})(T(\eta) \lhd u_{ki}) = S(\xi_{i})_{[0]}(T(\eta) \lhd S(\xi_{i})_{[1]}) .
\end{align*}
If $(B,\lhd,\alpha)$ is braided commutative, then by \eqref{eq:braided_commutative_condition} the last line of the above equation equals $T(\eta)S(\xi_{i})$, or equivalently $(\iota_{v,u}\Sigma(S \otimes T))(\eta \otimes \xi_{i})$, whence the result. \qedhere 
\end{itemize}
\end{proof}

In the following we will show that in order to obtain a Yetter-Drinfeld structure compatible with the recovered ergodic action arising from a weak unitary tensor functor, it suffices to have a compatible collection for the weak unitary tensor functor. If this collection is also braided, then the Yetter-Drinfeld structure will be braided commutative. To this end, let us first recall briefly the reconstructed objects from Theorem \ref{theo:tannakakreinpinzarirobert} (we refer to \cite{PR07,N14} for more details). Let $\G$ be a compact quantum group and let $\varphi: \rep(\G) \to \hil_{f}$ be a weak unitary tensor functor. We have an ergodic action  $\alpha: B \to B \otimes C(\G)$ arising from this reconstruction procedure. We have already mentioned that the algebraic core $\mathcal{B}$ of $B$, as a vector space, is given by
\[
\mathcal{B} = \bigoplus_{x \in \irr(\G)} \overline{\varphi(x)} \otimes H_{x}.
\]
This algebra is indeed a compression of a larger algebra whose vector space structure is given by
\[\tilde{\mathcal{B}}  = \bigoplus_{u \in R(\G)} \overline{\varphi(u)} \otimes H_{u},
\]
where $R(\G)$ denotes some small subcategory (i.e. the objects in this subcategory form a set) containing the tensor subcategory generated by $\irr(\G)$ in $\rep(\G)$.
Moreover $\mathcal{B}$ is obtained from $\tilde{\mathcal{B}}$ via the surjective map
\[
\pi_{\varphi}: \tilde{\mathcal{B}}  \to \mathcal{B},\quad
\pi_{\varphi}(\bar{X} \otimes \xi) = \sum_{x } \overline{\varphi(\theta^*_{x})(X)} \otimes \theta^{*}_{x}(\xi)
\]
where $u \in R(\G)$, $X \in \varphi(u)$, $\xi \in H_{u}$ and $u = \oplus {x}$ is a decomposition into a direct sum of irreducible representations with intertwiners  $\theta_{x}\in\mor_{\G}( {x},u)$. The $*$-algebraic structure on $\mathcal{B} $ is given by
\begin{equation}\label{eq:starB}
\pi_\varphi(\bar X \otimes \zeta )\pi_\varphi(\bar X ' \otimes \zeta ') = \pi_\varphi (\overline{\iota(X \otimes X')} \otimes \zeta \otimes \zeta'),\quad \pi_\varphi(\bar X \otimes \zeta )^* = \pi_\varphi (\overline{J_u (X)} \otimes \bar \zeta),
\end{equation}
for all $X\in \varphi (u)$, $Y\in \varphi (v) $, $\zeta \in H_u$, $\zeta' \in H_v$ with $u,v \in R(\G)$. We will often use the following properties of $\pi_\varphi$:
\begin{enumerate}
\item  $\pi_{\varphi}\circ\pi_{\varphi} = \pi_{\varphi}$ when we regard $\mathcal{B}$ as a vector subspace of $\tilde{\mathcal{B}} $;

\item Given an intertwiner $\omega \in \mor_{\G}(u,v)$ which is a scalar multiple of an isometry, by the definition of $\pi_\varphi$ we have	 
\begin{equation}\label{eq:piw}
\pi_{\varphi}\circ(\overline{\varphi(\omega)} \otimes \id_{H_{v}})= \pi_{\varphi}\circ(\id_{\overline{\varphi(u)}} \otimes \omega^{*}), \qquad
\pi_{\varphi}\circ(\overline{\varphi(\omega^{*})} \otimes \id_{H_{u}})= \pi_{\varphi}\circ(\id_{\overline{\varphi(v)}} \otimes \omega);
\end{equation}

\item The ergodic action $\alpha : B \to B  \otimes C(\G)$ arising from $\varphi$ can be expressed by
\begin{equation}\label{eq:actionformula}
\alpha(\pi_{\varphi}(\bar{X} \otimes \xi_{j})) = \sum_{i} \pi_{\varphi}(\bar{X} \otimes \xi_{i}) \otimes u_{ij}
\end{equation}
for any $u \in R(\G)$, $X \in \varphi(u)$ and orthonormal basis $(\xi_{i})$ in $H_{u}$. 
\end{enumerate}
In the special case where $\varphi:u\mapsto H_u$ is the so-called forgetful tensor functor $\varphi_{\textrm{forg}}$, corresponding to the ergodic action $\Delta: C(\G) \to C(\G) \otimes C(\G)$, we have $\mathcal{B}  = \oplus_{x\in\irr(\G)}\bar{H}_{x} \otimes H_{x} \iso \pol(\G)$ and $\pi_{\varphi}$ is the map
\[
\pi_{\G}: \oplus_{u\in R(\G)}\bar{H}_{u} \otimes H_{u} \to \pol(\G),\quad
\pi_{\G}(\bar{\xi} \otimes \eta) = (\omega_{\xi\eta} \otimes \id)(u) ,\qquad
\forall u \in R(\G),\,\xi,\eta \in H_{u}.
\]
We may write $\pi_{\G}(\bar{\xi}_{i} \otimes \xi_{j}) = u_{ij}$ if $(\xi_{i})$ is an orthonormal basis of $H_{u}$ with corresponding matrix coefficients $(u_{ij})$ of $u$.

We are now ready for the main result of this appendix.

\begin{theo}\label{theo:action_reconstruction}
Let $\varphi: \rep(\G) \to \hil_{f}$ be a weak unitary tensor functor, $(B,\alpha)$ be the corresponding reconstructed ergodic action by Theorem \ref{theo:tannakakreinpinzarirobert}, and $\mathcal{B}$ be the algebraic core of $B$. Assume that $\{\mathcal{A}^{u}_{v}\}_{u,v \in \rep(\G)}$ is a compatible collection for $\varphi$. Define the linear map $\lhd: \mathcal{B} \otimes \pol(\G) \to \mathcal{B}$ by 
\[
(\bar{X} \otimes \chi) \lhd (\bar{\xi} \otimes \eta) = \pi_\varphi (\overline{\mathcal{A}^{u}_{v}(X)} \otimes \bar{\xi} \otimes \chi \otimes \eta)
\]
for all $\bar{X} \otimes \chi \in \overline{\varphi(v)} \otimes H_{v}$, $\bar{\xi} \otimes \eta \in \bar{H}_{u} \otimes H_{u}$ and $u,v \in \irr (\G)$. Then the tuple $(B, \lhd, \alpha)$ forms a Yetter-Drinfeld $\G$-C*-algebra. Moreover, if $\{\mathcal{A}^{u}_{v}\}_{u,v \in \rep(\G)}$ is braided, then $(B, \lhd, \alpha)$ is also braided commutative.\end{theo}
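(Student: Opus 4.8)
The plan is to verify that the map $\lhd$ defined by the displayed formula satisfies the three axioms of a Hopf $*$-algebra action from Definition~\ref{def:hopfaction}, together with the Yetter--Drinfeld compatibility~\eqref{eq:yetter_drinfeld_condition}, and --- when the collection is braided --- the braided commutativity~\eqref{eq:braided_commutative_condition}. The whole argument amounts to running the computations of Lemma~\ref{lem:action_collection} in reverse: each of the axioms \ref{cond:A0}--\ref{cond:A6} of a compatible collection was designed to yield exactly one of the required identities, so the proof can be organised so that one axiom is invoked precisely when the corresponding relation is checked. As an organising remark, once $\lhd$ is known to define a Yetter--Drinfeld structure, the collection associated to it by Lemma~\ref{lem:action_collection} (transported along the monoidal isomorphism $\varphi\cong\varphi_{\alpha}$) is, by construction, the given $\{\mathcal{A}^{u}_{v}\}$; this consistency is what makes the definition natural, though it is not needed for the statement itself.

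The first genuine step is to replace the definition, which is phrased for $u,v\in\irr(\G)$, by the more flexible one valid for arbitrary $u,v\in R(\G)$:
\[
\pi_{\varphi}(\bar X\otimes\chi)\lhd\pi_{\G}(\bar\xi\otimes\eta)=\pi_{\varphi}\big(\overline{\mathcal{A}^{u}_{v}(X)}\otimes\bar\xi\otimes\chi\otimes\eta\big),\qquad X\in\varphi(v),\ \chi\in H_{v},\ \xi,\eta\in H_{u}.
\]
Two things must be checked here. First, independence of the chosen representative of $\pi_{\varphi}(\bar X\otimes\chi)\in\mathcal{B}$: this follows from axiom~\ref{cond:A2} (applied with $f$ a scalar multiple of an isometry) together with the compression relations~\eqref{eq:piw} that describe the kernel of $\pi_{\varphi}$. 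Second, independence of the chosen representative of $\pi_{\G}(\bar\xi\otimes\eta)\in\pol(\G)$: writing $u=\oplus_{i}u_{i}$ into irreducibles with isometric intertwiners $\omega_{i}$, this reduces to the identity $\mathcal{A}^{u}_{v}=\sum_{i}\varphi(\bar\omega_{i}\xbox\id\xbox\omega_{i})\mathcal{A}^{u_{i}}_{v}$, which is precisely axiom~\ref{cond:A0}, again combined with~\eqref{eq:piw}. This reformulation is the workhorse for the multiplicative axioms, since forming $aa'$ or $bb'$ forces one to leave the irreducible setting.

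With the reformulation in hand, the remaining verifications are finite diagram chases. The normalisations $b\lhd 1_{\pol(\G)}=b$ and $1_{\mathcal{B}}\lhd a=\cou(a)1_{\mathcal{B}}$ come from the two halves of~\ref{cond:A1}, using $\pi_{\varphi}\circ\pi_{\varphi}=\pi_{\varphi}$. The module property $b\lhd(aa')=(b\lhd a)\lhd a'$ is obtained by writing $aa'$ as a matrix coefficient of $u\boxtimes u'$, applying $\mathcal{A}$ twice on the left-hand side, and invoking the coherence cube~\ref{cond:A3} (whose intertwiner $\sigma_{u,u'}$ accounts for the reordering $\overline{\xi\otimes\xi'}\mapsto\bar\xi'\otimes\bar\xi$ produced by iterating the leg conventions). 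The twisted multiplicativity $(bb')\lhd a=(b\lhd a_{(1)})(b'\lhd a_{(2)})$ combines the product formula in~\eqref{eq:starB} with axiom~\ref{cond:A4}: after expanding $\com(u_{mn})=\sum_{l}u_{ml}\otimes u_{ln}$, the insertion $\sum_{l}\xi_{l}\otimes\bar\xi_{l}=\bar R_{u}(1_{\C})$ becomes the factor $\varphi(\id\xbox\bar R^{*}_{u}\xbox\id)$ of~\ref{cond:A4} via~\eqref{eq:piw}. The $*$-compatibility $(b\lhd a)^{*}=b^{*}\lhd S(a)^{*}$ follows from the $*$-formula in~\eqref{eq:starB}, axiom~\ref{cond:A5} (whose intertwiner $\sigma_{\bar u,v,u}$ again implements the relevant reordering), and the conjugation relations $(\bar u)_{ij}=u^{*}_{ij}$ and $S(u_{ij})=u^{*}_{ji}$ valid since $\G$ is of Kac type. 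The Yetter--Drinfeld identity~\eqref{eq:yetter_drinfeld_condition} is obtained by feeding the expression for $b\lhd a$ into the formula~\eqref{eq:actionformula} for $\alpha$ on $\mathcal{B}$ and rewriting the matrix coefficients of $\bar u\boxtimes v\boxtimes u$ through $(\bar u)_{m'm}=S(u_{mm'})$; the reduction to irreducible $v$, where needed, uses \ref{cond:A0} and~\ref{cond:A2}. Finally, when $\{\mathcal{A}^{u}_{v}\}$ is braided, the braided commutativity~\eqref{eq:braided_commutative_condition} is proved by computing $b'_{[0]}(b\lhd b'_{[1]})$ with the product formula~\eqref{eq:starB} and applying axiom~\ref{cond:A6}, whose upper-right path $\iota\circ\Sigma$ is exactly the product $b'b$; this is the exact reversal of the last item in the proof of Lemma~\ref{lem:action_collection}.

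The step I expect to be the main obstacle is the well-definedness of the flexible reformulation in the second paragraph --- not because any single identity is hard, but because one must pass the formula simultaneously through the two quotient maps $\pi_{\varphi}:\tilde{\mathcal{B}}\to\mathcal{B}$ and $\pi_{\G}$, while keeping careful track of conjugate Hilbert spaces, the identifications $\overline{H_{u}}\cong H_{\bar u}$, and the leg orderings in the many-fold tensor products $\bar u\boxtimes v\boxtimes u$ and their iterates. Once this bookkeeping is set up cleanly, each later identity is a mechanical application of exactly one axiom of Definition~\ref{def:compatible_collection} together with~\eqref{eq:piw}, \eqref{eq:starB} and~\eqref{eq:actionformula}.
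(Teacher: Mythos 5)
Your proposal is correct and follows essentially the same route as the paper: the ``flexible reformulation'' you identify as the key step is exactly the paper's Lemma~\ref{lem:pivarphi} (proved, as you say, from \ref{cond:A0} and \ref{cond:A2} together with \eqref{eq:piw}), and the subsequent one-axiom-per-identity verification of the Hopf $*$-algebra action axioms, the Yetter--Drinfeld condition via \eqref{eq:actionformula}, and braided commutativity via \ref{cond:A6} matches the paper's computations axiom for axiom.
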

To prove the theorem we will need the following useful lemma.

\begin{lem}\label{lem:pivarphi}
With the previous notations, consider the linear map $\tilde{\lhd}:  \tilde{\mathcal{B}}  \otimes  \widetilde{\pol(\G)} \to  \tilde{\mathcal{B}} $ defined by
\[
(\bar{X} \otimes \chi) \tilde{\lhd} (\bar{\xi} \otimes \eta) = \overline{\mathcal{A}^{u}_{v}(X)} \otimes \bar{\xi} \otimes \chi \otimes \eta
\]
for all $\bar{X} \otimes \chi \in \overline{\varphi(v)} \otimes H_{v} $, $ \bar{\xi} \otimes \eta \in \bar{H}_{u} \otimes H_{u} $ with $u,v \in R(\G)$. Then
\[\pi_{\varphi}(b) \lhd \pi_{\G}(a) = \pi_{\varphi}(b \tilde{\lhd} a)\]
for all $b\in \tilde{\mathcal{B}}$ and $a\in \widetilde{\pol(\G)}$.
\end{lem}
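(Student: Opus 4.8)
The plan is to reduce the identity $\pi_\varphi(b)\lhd \pi_\G(a)=\pi_\varphi(b\,\tilde\lhd\,a)$ to the case where $b=\bar X\otimes\chi$ lies in a summand $\overline{\varphi(v)}\otimes H_v$ with $v\in R(\G)$ \emph{irreducible} and $a=\bar\xi\otimes\eta$ lies in $\bar H_u\otimes H_u$ with $u\in R(\G)$ \emph{irreducible}, since in that case the left-hand side is the \emph{definition} of $\lhd$ and the right-hand side is $\pi_\varphi$ applied to the definition of $\tilde\lhd$, so the two sides literally agree and there is nothing to prove. Thus the whole content of the lemma is to check that both sides are compatible with the surjection $\pi_\varphi$ and $\pi_\G$, i.e. that the formulas are consistent with decomposing a general $u,v\in R(\G)$ into irreducibles. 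By bilinearity it suffices to treat $b$ and $a$ in single homogeneous summands as above, but now with $u,v\in R(\G)$ arbitrary (not necessarily irreducible).

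First I would fix decompositions $u=\oplus_x x$ and $v=\oplus_y y$ into irreducibles with isometric intertwiners $\theta_x\in\mor_\G(x,u)$, $\theta_y\in\mor_\G(y,v)$, and recall the two facts about $\pi_\varphi$ recorded in the excerpt: that $\pi_\varphi$ is idempotent when $\mathcal B\subset\tilde{\mathcal B}$, and the compression rule \eqref{eq:piw}, namely $\pi_\varphi\circ(\overline{\varphi(\omega)}\otimes\id)=\pi_\varphi\circ(\id\otimes\omega^*)$ for $\omega$ a scalar multiple of an isometry. Using these, $\pi_\varphi(\bar X\otimes\chi)=\sum_y\pi_\varphi(\overline{\varphi(\theta_y^*)(X)}\otimes\theta_y^*(\chi))$, so $\pi_\varphi(\bar X\otimes\chi)$ is a sum over irreducibles $y\prec v$ of elements to which the \emph{definition} of $\lhd$ applies; similarly $\pi_\G(\bar\xi\otimes\eta)=\sum_x(\omega_{\theta_x^*\xi,\theta_x^*\eta}\otimes\id)(x)=\sum_x\pi_\G(\overline{\theta_x^*\xi}\otimes\theta_x^*\eta)$. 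Expanding $\pi_\varphi(b)\lhd\pi_\G(a)$ bilinearly over these two sums gives
\[
\pi_\varphi(b)\lhd\pi_\G(a)=\sum_{x,y}\pi_\varphi\!\left(\overline{\mathcal A^x_y(\varphi(\theta_y^*)(X))}\otimes\overline{\theta_x^*\xi}\otimes\theta_y^*\chi\otimes\theta_x^*\eta\right).
\]

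The key step is to recognise the right-hand side as $\pi_\varphi(b\,\tilde\lhd\,a)=\pi_\varphi\big(\overline{\mathcal A^u_v(X)}\otimes\bar\xi\otimes\chi\otimes\eta\big)$. For this I would use, first, the naturality axiom \ref{cond:A2}: $\mathcal A^x_y\varphi(\theta_y^*)=\varphi(\id_{\bar x}\boxtimes\theta_y^*\boxtimes\id_x)\mathcal A^x_v$ when applied to $X\in\varphi(v)$ — wait, $\theta_y^*\in\mor_\G(v,y)$, so \ref{cond:A2} gives $\mathcal A^x_y\varphi(\theta_y^*)=\varphi(\id_{\bar x}\boxtimes\theta_y^*\boxtimes\id_x)\mathcal A^x_v$. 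Then axiom \ref{cond:A0} (the defining compatibility of a compatible collection with direct sums) gives $\sum_x\varphi(\bar\theta_x\boxtimes\id_v\boxtimes\theta_x)\mathcal A^x_v=\mathcal A^u_v$. Combining these, the sum over $x,y$ of $\overline{\mathcal A^x_y(\varphi(\theta_y^*)(X))}\otimes\overline{\theta_x^*\xi}\otimes\theta_y^*\chi\otimes\theta_x^*\eta$ becomes, after pushing the intertwiners $\bar\theta_x$, $\theta_y$, $\theta_x$ through the $\pi_\varphi$ via \eqref{eq:piw} (applied in the appropriate legs of the multiple tensor product $\overline{\varphi(\bar u\boxtimes v\boxtimes u)}\otimes\bar H_u\otimes H_v\otimes H_u$), exactly $\pi_\varphi(\overline{\mathcal A^u_v(X)}\otimes\bar\xi\otimes\chi\otimes\eta)$, using that $\bar\theta_x$ is the conjugate of an isometry hence again a scalar multiple of an isometry, and that $\sum_x\theta_x\theta_x^*=\id$, $\sum_y\theta_y\theta_y^*=\id$. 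The main obstacle I anticipate is bookkeeping: one must apply \eqref{eq:piw} in several tensor legs simultaneously and in the conjugate ($\bar\theta_x$ versus $\theta_x$) slots, and make sure the intertwiners $c_n$-type conjugation conventions match those of Lemma \ref{lem:conjugate}; there is no conceptual difficulty, only the risk of mismatched legs or a stray scalar from the normalisation of non-isometric $\theta$'s, which I would handle by working throughout with genuine isometric intertwiners so that \eqref{eq:piw} applies verbatim.
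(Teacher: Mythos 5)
Your proposal is correct and uses exactly the same ingredients as the paper's proof — decomposition of $u$ and $v$ into irreducibles with isometric intertwiners, the axioms \ref{cond:A0} and \ref{cond:A2}, and the compression rule \eqref{eq:piw} — the only difference being that you assemble the left-hand side into the right-hand side while the paper expands the right-hand side into the left-hand side. The leg-bookkeeping you worry about at the end is handled precisely as you anticipate, by keeping all intertwiners genuinely isometric so that \eqref{eq:piw} applies verbatim in each tensor slot.
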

\begin{proof}
Consider the decompositions $u = \oplus_i u_{i}$ and $v = \oplus_j u_{j}$ into direct sums of irreducible representations with isometric intertwiners $\omega_{i}\in\mor_\G ({u_{i}},{u})$ and $\theta_{j} \in\mor_\G ({v_{j}} , {v})$. Note that by \ref{cond:A2} and \ref{cond:A0} we may write
\begin{equation*}
{\varphi(\bar{\omega}^{*}_{i} \xbox \theta^{*}_{j} \xbox \omega^{*}_{k})\mathcal{A}^{u}_{v} }= \delta_{i,k} {\varphi(\bar{\omega}^{*}_{i} \xbox \id \xbox \omega^{*}_{k})\mathcal{A}^{u}_{v_{j}}\varphi(\theta^{*}_{j}) } = \delta_{i,k} {\mathcal{A}^{u_{i}}_{v_{j}} \varphi(\theta^{*}_{j}) }
\end{equation*}
Then for $a = \bar{\xi} \otimes \eta \in \bar{H}_{u} \otimes H_{u}$ and $b = \bar{X} \otimes \zeta \in \overline{\varphi(v)} \otimes H_{v}$, Equation \eqref{eq:piw} yields
\begin{align*}
\pi_{\varphi}((\bar{X} \otimes \zeta) \tilde{\lhd} (\bar{\xi} \otimes \eta)) & =  \pi_{\varphi}(\overline{\mathcal{A}^{u}_{v}(X)} \otimes \bar{\xi} \otimes \zeta \otimes \eta) \\
& = \pi_{\varphi}\left(\sum_{i,j,k}\overline{\varphi(\bar{\omega}^{*}_{i} \xbox \theta^{*}_{j} \xbox \omega^{*}_{k})\mathcal{A}^{u}_{v}(X)} \otimes \bar{\omega}^{*}_{i}\bar{\xi} \otimes \theta^{*}_{j}\zeta \otimes \omega^{*}_{k}\eta\right) 
 \\
& = \pi_{\varphi}\left(\sum_{i,j} (\overline{\varphi(\theta^{*}_{j})X} \otimes \theta^{*}_{j}\zeta) \tilde{\lhd} (\overline{\omega^{*}_{i}\xi} \otimes \omega^{*}_{i}\eta) \right) \\
& = \pi_{\varphi}(\pi_{\varphi}(b) \tilde{\lhd} \pi_{\G}(a)) = \pi_{\varphi}(b) \lhd \pi_{\G}(a). \qedhere
\end{align*}
\end{proof}

Now let us prove the desired theorem. As mentioned previously we will only discuss the Kac type case.

\begin{proof}[Proof of Theorem \ref{theo:action_reconstruction} for quantum groups $\G$ of Kac type]
We first check that  $\lhd: \mathcal{B} \otimes \pol(\G) \to \mathcal{B} $ yields an action of the Hopf $*$-algebra $\pol(\G)$ on $\mathcal B$. We need to check (1)-(3) in Definition \ref{def:hopfaction}. To this end, take $  \bar{\xi} \otimes \eta \in \bar{H}_{u} \otimes H_{u}$, $ \bar{\xi'} \otimes \eta' \in \bar{H}_{w} \otimes H_{w}$ and $ \bar{X} \otimes \zeta \in \overline{\varphi(v)} \otimes H_{v}$ for $u,v,w \in\irr(\G)$. We have
\begin{align*}
 ((\bar{X} \otimes \zeta)  {\lhd} (\bar{\xi} \otimes \eta))  {\lhd} (\bar{\xi'} \otimes \eta')
&= \pi_{\varphi}((\overline{\mathcal{A}^{u}_{v}(X)} \otimes \bar{\xi} \otimes \zeta \otimes \eta) \tilde{\lhd} (\bar{\xi'} \otimes \eta')) \ \qquad\qquad\qquad \text{(by Lemma \ref{lem:pivarphi})}\\
&= \pi_{\varphi}(\overline{\mathcal{A}^{w}_{\bar{u} \boxtimes v \boxtimes u}\mathcal{A}^{u}_{v}(X)} \otimes \bar{\xi'} \otimes \bar{\xi} \otimes \zeta \otimes \eta \otimes \eta') \\
& = \pi_{\varphi}(\overline{\varphi(\sigma_{u,w} \xbox \id)\mathcal{A}^{u \boxtimes w}_{v}(X)} \otimes \sigma_{u,w}(\overline{\xi \otimes \xi'}) \otimes \zeta \otimes \eta \otimes \eta') \quad \text{(by \ref{cond:A3})}\\
& = \pi_{\varphi}((\overline{\mathcal{A}^{u \boxtimes w}_{v}(X)} \otimes \overline{\xi \otimes \xi'} \otimes \zeta \otimes \eta \otimes \eta') 
 \qquad \qquad \text{(by Equation \eqref{eq:piw})} \\
& = \pi_{\varphi}((\bar{X} \otimes \zeta) \tilde{\lhd} (\overline{\xi \otimes \xi'} \otimes \eta \otimes \eta')) \\
& = (\bar{X} \otimes \zeta)  {\lhd}( (\bar{\xi} \otimes \eta)    (\bar{\xi'} \otimes \eta')),  \qquad\qquad\qquad \qquad \ \quad \text{(by Lemma \ref{lem:pivarphi})}
\end{align*}
and
\[
(\bar{X} \otimes \zeta) \lhd 1_{\pol(\G)}  = \pi_{\varphi}(\overline{\mathcal{A}^{\triv}_{v}(X)} \otimes \bar 1 \otimes \zeta \otimes 1) \overset{\ref{cond:A1}}{=} \bar{X} \otimes \zeta.
\]
In other words, we have
\begin{equation}\label{eq:i}\tag{HA1}
b \lhd (aa') = (b \lhd a) \lhd  a', \quad \forall a,a' \in \pol (\G), b\in \mathcal B
\end{equation}
and
\begin{equation}\label{eq:i2}\tag{HA2}
b \lhd 1 = b, \quad \forall b\in \mathcal B.
\end{equation}
Now take an orthonormal basis $(\xi_{i})$ of $H_{u}$ and consider $ \bar{\xi}_{i} \otimes \xi_{j} \in \bar{H}_{u} \otimes H_{u}$, $  \bar{X} \otimes \zeta \in \overline{\varphi(v)} \otimes H_{v}$ and $ \bar{X'} \otimes \zeta' \in \overline{\varphi(v')} \otimes H_{v'}$ for $u,v,v' \in\irr (\G)$. 
We have
\begin{align*}
& \ \quad \pi_{\varphi}((\overline{\iota(X \otimes X')} \otimes \zeta \otimes \zeta') \tilde{\lhd} (\bar{\xi}_{i} \otimes \xi_{j} )) &  \text{(by Lemma \ref{lem:pivarphi})} \\
&  =  \pi_{\varphi}(\overline{\mathcal{A}^{u}_{v \boxtimes v'}\iota(X \otimes X')} \otimes \bar{\xi}_{i} \otimes \zeta \otimes \zeta' \otimes \xi_{j}) \\
& = \pi_{\varphi}(\overline{\varphi(\id \xbox \bar{R}^{*}_{u} \xbox \id)\iota(\mathcal{A}^{u}_{v}(X) \otimes \mathcal{A}^{u}_{v'}(X'))} \otimes \bar{\xi}_{i} \otimes \zeta \otimes \zeta' \otimes \xi_{j}) 
&  \text{(by \ref{cond:A4})}\\
& = \pi_{\varphi}(\overline{\iota(\mathcal{A}^{u}_{v}(X) \otimes \mathcal{A}^{u}_{v'}(X'))} \otimes \bar{\xi}_{i} \otimes \zeta \otimes \bar{R}_{u}(1) \otimes \zeta' \otimes \xi_{j}) & \text{(by Equation \eqref{eq:piw})} \\
& = \sum_{k}\pi_{\varphi}(\overline{\mathcal{A}^{u}_{v}(X)} \otimes \bar{\xi}_{i} \otimes \zeta \otimes \xi_{k})\pi_{\varphi}(\overline{\mathcal{A}^{u}_{v'}(X')} \otimes \bar{\xi}_{k} \otimes \zeta' \otimes \xi_{j}) \\
& = \sum_{k}\pi_{\varphi}((\bar{X} \otimes \zeta) \tilde{\lhd} (\bar{\xi}_{i} \otimes \xi_{k}))\pi_{\varphi}((\bar{X'} \otimes \zeta') \tilde{\lhd} (\bar{\xi}_{k} \otimes \xi_{j})) .
\end{align*}
Recall that we view $a\coloneqq  \bar{\xi}_{i} \otimes \xi_{j}$ as the element of matrix coefficient $u_{ij}$ in the Hopf algebra $\pol(\G)$ and we use Sweedler's notations $a _{(1)} \otimes  a _{(2)} = \com( a ) = \sum_{k}  (\bar{\xi}_{i} \otimes \xi_{k}) \otimes  (\bar{\xi}_{k} \otimes \xi_{j}).$ We therefore have
\begin{equation}
\label{eq:ii}\tag{HA3}
(bb')\lhd a =  ( b  \lhd  a _{(1)})( b' \lhd  a _{(2)}),\qquad \forall a\in\pol(\G), b,b' \in \mathcal B.
\end{equation}
On the other hand,
\begin{align*}
1_{\mathcal{B}} \lhd u_{ij} 
& = \pi_{\varphi}(\overline{\mathcal{A}^{u}_{\triv}(1)} \otimes \bar{\xi}_{i} \otimes 1 \otimes \xi_{j}) \\
& =  \pi_{\varphi}(\overline{\varphi(R_{u})(1)} \otimes \bar{\xi}_{i} \otimes \xi_{j}) &\text{(by \ref{cond:A1})}\\
& = \pi_{\varphi}(\bar 1 \otimes R^{*}_{u}(\bar{\xi}_{i} \otimes \xi_{j})) &\text{(by \eqref{eq:piw})} \\
& = \delta_{i,j}\pi_{\varphi}(\bar 1 \otimes 1) = \cou(u_{ij})1_{\mathcal{B}}
\end{align*}
for every $1 \leq i,j \leq \dim(u)$.
In other words,
\begin{equation}\label{eq:ii2}\tag{HA4}
1_{\mathcal{B}} \lhd a 
= \cou(a)1_{\mathcal{B}}, \qquad
\forall a\in\pol(\G).
\end{equation}
Now we discuss the involution. With the identification $ {\pol(\G)} = \oplus_x \bar H _x \otimes H_x$, we may interpret the antipode as $S(\bar{\xi} \otimes \eta )^* = \bar{\eta} \otimes \xi$. Take $  \bar{\xi} \otimes \eta \in \bar{H}_{u} \otimes H_{u}$ and $ \bar{X} \otimes \zeta \in \overline{\varphi(v)} \otimes H_{v}$ for $u,v \in \irr (\G)$. Then,
\begin{align*}
( (\bar{X} \otimes \zeta) \lhd S(\bar{\xi} \otimes \eta)^{*})^{*} & = ( (\bar{X} \otimes \zeta) \lhd ( \bar{\eta} \otimes \xi))^{*} = \pi_{\varphi}(\overline{\mathcal{A}^{u}_{v}(X)} \otimes \bar{\eta} \otimes \zeta \otimes \xi)^{*} & \text{(by Equation \eqref{eq:starB})}\\
& = \pi_{\varphi}(\overline{J_{\bar{u} \boxtimes v \boxtimes u}(\mathcal{A}^{u}_{v}(X))} \otimes \overline{\bar{\eta} \otimes \zeta \otimes \xi}) \qquad\\
& = \pi_{\varphi}(\overline{\varphi(\sigma^{*}_{\bar{u},v,u})\mathcal{A}^{u}_{\bar{v}}(J_{v}(X))} \otimes \sigma^{*}_{\bar{u},v,u}(\bar{\xi} \otimes \bar{\zeta} \otimes \eta)) & \qquad\  \text{(by \ref{cond:A5})}\\
& = \pi_{\varphi}(\overline{\mathcal{A}^{u}_{\bar{v}}(J_{v}(X))} \otimes \bar{\xi} \otimes \bar{\zeta} \otimes \eta) \hskip0.11\textwidth & \text{(by Equation \eqref{eq:piw}}\\
& =  (\overline{J_{v}(X)} \otimes \bar{\zeta})  {\lhd} (\bar{\xi} \otimes \eta) .
\end{align*}
As a consequence, 
\begin{equation}
\label{eq:iii}\tag{HA5}
b^{*} \lhd a = (b \lhd S(a)^{*})^{*},\qquad \forall  a\in\pol(\G), b \in \mathcal B.
\end{equation}
From Equations \eqref{eq:i}-\eqref{eq:iii}, we deduce that $\lhd: \mathcal{B} \otimes \pol(\G) \to \mathcal{B} $ is an action of the Hopf $*$-algebra $\pol(\G)$ on $\mathcal B$ according to Definition \ref{def:hopfaction}.

In the following we need to check the Yetter-Drinfeld condition \eqref{eq:yetter_drinfeld_condition}. Consider two orthonormal bases $(\xi_{i})$ and $(\zeta_{k})$ of $H_{u}$ and $H_{v}$ respectively for $u,v \in \irr(\G)$, and denote by $(u_{ij})$ and $(v_{kl})$ the corresponding matrix coefficients. Let $a = \bar{\xi}_{i} \otimes \xi_{j} \in \bar{H}_{u} \otimes H_{u}$ and $b = \bar{X} \otimes \zeta_{k} \in \overline{\varphi(v)} \otimes H_{v}$. Recall the notations
\[
 b _{[0]} \otimes  b _{[1]} := \alpha( b ) = \alpha( \bar{X} \otimes \zeta_{k} ) = \sum_{k'}  (\bar{X} \otimes \zeta_{k'}) \otimes v_{k'k},
\]
and
\[
 a _{(1)} \otimes  a _{(2)} \otimes  a _{(3)} = \sum_{i',j'}  (\bar{\xi}_{i} \otimes \xi_{i'}) \otimes  (\bar{\xi}_{i'} \otimes \xi_{j'}) \otimes  (\bar{\xi}_{j'} \otimes \xi_{j}).
\]
We have
\begin{align*}
\alpha( b \lhd  a)  & =  \alpha(\pi_{\varphi}(\overline{\mathcal{A}^{u}_{v}(X)} \otimes \bar{\xi}_{i} \otimes \zeta_{k} \otimes \xi_{j}))  \\
& = \sum_{i',k',j'}\pi_{\varphi}(\overline{\mathcal{A}^{u}_{v}(X)} \otimes \bar{\xi}_{i'} \otimes \zeta_{k'} \otimes \xi_{j'}) \otimes \bar{u}_{i'i}v_{k'k}u_{j'j} \qquad \text{(by Equation \eqref{eq:actionformula})}\\
& = \sum_{i',j',k'} ((\bar{X} \otimes \zeta_{k'}) \lhd (\bar{\xi}_{i'} \otimes \xi_{j'})) \otimes S(u_{ii'})v_{k'k}u_{j'j} \\
& = ( b _{[0]} \lhd  a _{(2)}) \otimes S( a _{(1)}) b _{[1]} a_{(3)},
\end{align*}
which yields \eqref{eq:yetter_drinfeld_condition}.

Eventually we assume that $\{\mathcal{A}^{u}_{v}\}_{u,v \in \rep(\G)}$ is braided compatible and prove \eqref{eq:braided_commutative_condition}. Let $(\xi_{i})$ be an orthonormal basis of $H_{u}$ for $u\in\irr (\G)$. We take $b = \bar{X} \otimes \xi_{i} \in \overline{\varphi(u)} \otimes H_{u}$ and ${b' = \bar{Y} \otimes \zeta \in \overline{\varphi(v)} \otimes H_{v} }$ for $v\in\irr (\G)$. We have
\begin{align*}
 b _{[0]}( b' \lhd  b_{[1]}) & = \sum_{j}  (\bar{X} \otimes \xi_{j})( (\bar{Y} \otimes \zeta) \lhd  (\bar{\xi}_{j} \otimes \xi_{i})) \\
&= \sum_{j} \pi_{\varphi}(\overline{\iota(X \otimes \mathcal{A}^{u}_{v}(Y)} \otimes \xi_{j} \otimes \bar{\xi}_{j} \otimes \zeta \otimes \xi_{i}) \\
& = \pi_{\varphi}(\overline{\iota(X \otimes \mathcal{A}^{u}_{v}(Y)} \otimes R_{u}(1) \otimes \zeta \otimes \xi_{i})  \\
& = \pi_{\varphi}(\overline{\varphi(\bar{R}^{*}_{u} \xbox \id)\iota(X \otimes \mathcal{A}^{u}_{v}(Y))} \otimes \zeta \otimes \xi_{i}) 
& \text{(by Equation \eqref{eq:piw})} \\
& = \pi_{\varphi}(\overline{\iota(Y \otimes X)} \otimes \zeta \otimes \xi_{i}) & \text{(by \ref{cond:A6})} \\
&= b' b,
\end{align*}
which proves \eqref{eq:braided_commutative_condition}, as desired.
\end{proof}

Now, we summarize the previous results in the following reconstruction theorem for (braided commutative) Yetter-Drinfeld C*-algebras.

\begin{theo}\label{theo:yd_reconstruction}
Let $\varphi: \rep(\G) \to \hil_{f}$ be a weak unitary tensor functor and $(B ,\alpha )$ be the corresponding action of $\G$ arising from the Tannaka-Krein reconstruction for $\varphi$. The following  are equivalent:
\begin{enumerate}[label=\textup{(\roman*)}]
\item there exists a compatible collection $\{\mathcal{A}^{u}_{v}\}_{u,v \in \rep(\G)}$ for $\varphi$;
\item there is an action of the Hopf $*$-algebra $(\pol(\G),\com)$ on $\mathcal{B} $, $\lhd : \mathcal{B}  \otimes \pol(\G) \to \mathcal{B} $, such that the tuple $(B ,\lhd ,\alpha )$ yields a Yetter-Drinfeld $\G$-C*-algebra.
\end{enumerate}
In that case, if $\phi: \varphi \iso \varphi_{\alpha}$ denotes the natural unitary monoidal isomorphism between the weak unitary tensor functors $\varphi$ and $\varphi_{\alpha }$, then the diagram
\[
\xymatrix@C=5pc{
\varphi(v)\ar[r]^-{\mathcal{A}^{u}_{v}}\ar[d]_-{\phi_{v}}^-{\iso} & \varphi(\bar{u} \boxtimes v \boxtimes u)\ar[d]\ar[d]^-{\phi_{\bar{u} \boxtimes v \boxtimes u}}_-{\iso} \\
\mor_{\G}(v,\alpha)\ar[r]_{{}^{\alpha}\mathcal{A}^{u}_{v}} & \mor_{\G}(\bar{u} \boxtimes v \boxtimes u,\alpha)
}
\]
commutes for every $u,v \in \rep(\G)$. Moreover, $\{\mathcal{A}^{u}_{v}\}_{u,v \in \rep(\G)}$ is braided compatible if and only if $(B ,\lhd ,\alpha)$ is braided commutative.
\end{theo}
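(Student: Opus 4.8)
The final statement is Theorem~\ref{theo:yd_reconstruction}, which packages together Lemma~\ref{lem:action_collection} and Theorem~\ref{theo:action_reconstruction} into a single equivalence, together with the commuting square relating the abstract collection $\{\mathcal{A}^u_v\}$ to the concrete one $\{{}^\alpha\mathcal{A}^u_v\}$ under the Tannaka--Krein isomorphism $\phi$. So the plan is essentially to assemble the pieces and verify the compatibility diagram.

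\medskip

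\textbf{Proof strategy.} First I would prove (ii)$\Rightarrow$(i). Given the Yetter--Drinfeld structure $(B,\lhd,\alpha)$, the natural unitary monoidal isomorphism $\phi:\varphi\iso\varphi_\alpha$ is available from Theorem~\ref{theo:tannakakreinpinzarirobert}. Lemma~\ref{lem:action_collection} provides a (braided) compatible collection $\{{}^\alpha\mathcal{A}^u_v\}_{u,v}$ for $\varphi_\alpha$. I would then \emph{transport} it along $\phi$ by setting
\[
\mathcal{A}^u_v := \phi_{\bar{u}\boxtimes v\boxtimes u}^{-1}\circ{}^\alpha\mathcal{A}^u_v\circ\phi_v : \varphi(v)\to\varphi(\bar{u}\boxtimes v\boxtimes u).
\]
Since $\phi$ is a natural unitary monoidal isomorphism of weak unitary tensor functors, it intertwines all the structural data appearing in axioms \ref{cond:A0}--\ref{cond:A6} of Definition~\ref{def:compatible_collection}: it is natural with respect to $\varphi(f)$ for intertwiners $f$ (so \ref{cond:A2} passes), it is compatible with the embeddings $\iota$ (so \ref{cond:A4} and \ref{cond:A6} pass), it commutes with the $J$-maps up to the canonical conjugation isomorphisms (so \ref{cond:A5} passes), and it sends $\varphi(R_u)$ to $\varphi_\alpha(R_u)$ (so \ref{cond:A1} passes); \ref{cond:A0} and \ref{cond:A3} are likewise routine. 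Hence $\{\mathcal{A}^u_v\}$ is a (braided) compatible collection for $\varphi$, and by construction the displayed square commutes.

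\medskip

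For (i)$\Rightarrow$(ii), this is exactly the content of Theorem~\ref{theo:action_reconstruction}: from a (braided) compatible collection $\{\mathcal{A}^u_v\}$ for $\varphi$ one builds the Hopf $*$-algebra action $\lhd$ on $\mathcal{B}$ by the explicit formula $(\bar X\otimes\chi)\lhd(\bar\xi\otimes\eta)=\pi_\varphi(\overline{\mathcal{A}^u_v(X)}\otimes\bar\xi\otimes\chi\otimes\eta)$, and the theorem shows $(B,\lhd,\alpha)$ is a (braided commutative) Yetter--Drinfeld $\G$-C*-algebra. It then remains to check that the collection $\{{}^\alpha\mathcal{A}^u_v\}$ associated (via Lemma~\ref{lem:action_collection}) to \emph{this} reconstructed Yetter--Drinfeld structure corresponds to the original $\{\mathcal{A}^u_v\}$ under $\phi$, i.e.\ the square commutes. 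Here I would unwind both sides on the isomorphism $\varphi_\alpha(v)=\mor_\G(v,\alpha)$: for $X\in\varphi(v)$, the element $\phi_v(X)\in\mor_\G(v,\alpha)$ is (up to the identification $\mathcal{B}=\oplus_x\overline{\varphi(x)}\otimes H_x$) the map $\xi_j\mapsto\pi_\varphi(\bar X\otimes\xi_j)$. Applying ${}^\alpha\mathcal{A}^u_v$ to it and evaluating on $\bar\xi_i\otimes\eta\otimes\xi_k$ gives $\phi_v(X)(\eta)\lhd u_{ik}=\pi_\varphi(\bar X\otimes\eta)\lhd u_{ik}$, which by the defining formula for $\lhd$ (and Lemma~\ref{lem:pivarphi}) equals $\pi_\varphi(\overline{\mathcal{A}^u_v(X)}\otimes\bar\xi_i\otimes\eta\otimes\xi_k)$; the latter is precisely $\phi_{\bar u\boxtimes v\boxtimes u}(\mathcal{A}^u_v(X))$ evaluated on the same vector. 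This establishes $\phi_{\bar u\boxtimes v\boxtimes u}\circ\mathcal{A}^u_v={}^\alpha\mathcal{A}^u_v\circ\phi_v$, which is the commuting square, and also guarantees consistency between the two implications.

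\medskip

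\textbf{Main obstacle.} The conceptually delicate point is the bookkeeping of the conjugation/flip isomorphisms $\sigma_{u,w}$, $\sigma_{\bar u,v,u}$ and the anti-linear maps $J$ when transporting the axioms along $\phi$ — one must be careful that $\phi$ commutes with $J$ only after inserting the correct canonical identification $\varphi(\bar v)\cong\overline{\varphi(v)}$, and similarly for the multi-tensor flips in \ref{cond:A3} and \ref{cond:A5}. Most of this is genuinely routine given that $\phi$ is a \emph{unitary monoidal} natural isomorphism, but it is the kind of computation where signs and orderings of conjugates must be tracked precisely; fortunately the diagram-chase is shielded by the already-verified coherence in Lemma~\ref{lem:action_collection} and the fact (from Theorem~\ref{theo:tannakakreinpinzarirobert}) that $\phi$ respects $R_u$, so no new ideas are required beyond careful diagram manipulation.
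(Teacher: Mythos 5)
Your proposal is correct and follows essentially the same route as the paper: (i)$\Rightarrow$(ii) is Theorem~\ref{theo:action_reconstruction}, (ii)$\Rightarrow$(i) comes from Lemma~\ref{lem:action_collection} (the paper leaves the transport along $\phi$ implicit, which you spell out), and the commuting square is verified by exactly the computation you describe, evaluating ${}^{\alpha}\mathcal{A}^{u}_{v}(\phi_{v}(X))$ on $\bar{\xi}_{i}\otimes\eta_{j}\otimes\xi_{k}$ and using the defining formula for $\lhd$ together with Lemma~\ref{lem:pivarphi}.
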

\begin{proof}
The implication (i)$\Rightarrow $(ii) follows from Theorem~\ref{theo:action_reconstruction}. On the other hand, the implication (ii)$\Rightarrow $(i) follows directly from Lemma~\ref{lem:action_collection}. 
For $v \in \rep(\G)$ and an orthonormal basis $(\eta_{j})$ in $H_{v}$, the natural isomorphism $\phi_{v} : \varphi(v) \to \varphi_{\alpha}(v)$ is given by
\[
\phi_{v}(X)(\eta_{j}) = \pi_{\varphi}(\bar{X} \otimes \eta_{j})
\]
for all $X \in \varphi(v)$ and $\eta_{j} \in H_{v}$. Then, if $(\xi_{i})$ is an orthonormal basis in $H_{u}$, we have
\begin{align*}
(({}^{\alpha}\mathcal{A}^{u}_{v})\phi_{v}(X))(\bar{\xi}_{i} \otimes \eta_{j} \otimes \xi_{k}) & = \phi_{v}(X)(\eta_{j}) \lhd u_{ik} \\
& = \pi_{\varphi}(\bar{X} \otimes \eta_{j}) \lhd \pi_{\G}(\bar{\xi}_{i} \otimes \xi_{k}) \\
& = \pi_{\varphi}(\overline{\mathcal{A}^{u}_{v}(X)} \otimes \bar{\xi}_{i} \otimes \eta_{j} \otimes \xi_{k}) \\
& = \phi_{\bar{u} \boxtimes v \boxtimes u}(\mathcal{A}^{u}_{v}(X))(\bar{\xi}_{i} \otimes \eta_{j} \otimes \xi_{k})
\end{align*}
for all $\xi_{i}, \xi_{k} \in H_{u}$ and $\eta_{j} \in H_{v}$. This implies the last part of the statement.
\end{proof}

We conclude this appendix with some examples.

\begin{exe}\label{ex:trivial_yd_algebra}
Let $\G$ be a compact quantum group. Consider the functor $\varphi_{\textrm{triv}}: \rep(\G) \to \hil_{f}$, $u \mapsto H^{\G}_{u}$, $T\in\mor_{\G}(u,v) \mapsto T|_{H^{\G}_{u}}$, where $H^{\G}_{u} := \{ \xi \in H_{u}: \delta_{u}(\xi) = \xi \otimes 1_{C(\G)}\}$ denotes the fixed point space for $u \in \rep(\G)$. Note that $H^{\G}_{u} \iso \C$ if $\triv \prec u$ and $H^{\G}_{u} =0$ otherwise, and that $H^{\G}_{u} \otimes H^{\G}_{v} \subset H^{\G}_{u \boxtimes v}$ for all $u ,v \in \rep(\G)$. This shows that $\varphi_{\textrm{triv}}$ is a weak unitary tensor functor. By the Tannaka-Krein reconstruction the corresponding algebraic core is given by $\mathcal{B}_{\varphi_{\textrm{triv}}} = \oplus_{x \in \irr(\G)}\overline{H^{\G}_{x}} \otimes H_{x} = \C$ and the corresponding action of $\G$ is given by $\alpha_{\varphi_{\textrm{triv}}}: \C \to \C \otimes C(\G)$, $\lambda \mapsto \lambda \otimes 1_{C(\G)}$. Take now the linear maps $\{\mathcal{A}^{u}_{v}\}_{u,v \in \rep(\G)}$ given by $\mathcal{A}^{u}_{v}= R_{u}$ if $\triv \prec v$ and $\mathcal{A}^{u}_{v}= 0$ otherwise. It is straightforward to see that this trivial collection $\{\mathcal{A}^{u}_{v}\}_{u,v \in \rep (\G )}$ is a braided compatible collection for the functor $\varphi_{\textrm{triv}}$. Thus the resulting tuple $(\C,\lhd_{\varphi_{\textrm{triv}}},\alpha_{\varphi_\textrm{triv}})$ yields a braided commutative Yetter-Drinfeld $\G$-$*$-algebra, where the Hopf $*$-algebra action is given by
\begin{align*}
1 \lhd_{\varphi_{\textrm{triv}}} u_{ij} & = \pi_{\varphi_{\textrm{triv}}}(\overline{\mathcal{A}^{u}_{\triv}(1)} \otimes \bar{\xi}_{i} \otimes 1 \otimes \xi_{j}) = \pi_{\varphi_{\textrm{triv}}}(\overline{R_{u}(1)} \otimes \bar{\xi}_{i} \otimes \xi_{j}) = R^{*}_{u}(\bar{\xi}_{i} \otimes \xi_{j}) = \cou(u_{ij})
\end{align*}
for every $u_{ij} \in \pol(\G)$. In other words, $\lhd_{\varphi_{\textrm{triv}}}$ is the trivial action. 
 
\end{exe}

\begin{exe}\label{ex:trivial_quotient_coideal_yd_algebra}
Let $\G$ be a compact quantum group. Consider the canonical forgetful tensor functor $\varphi_{\textrm{forg}}: \rep(\G) \to \hil_{f}$, $u \mapsto H_{u}$. By Tannaka-Krein reconstruction we have $(\mathcal{B}_{\varphi},\alpha_{\varphi}) = (\pol(\G),\com)$. Take $u,v \in \rep(\G)$ and chose $(\xi_{i})$ an orthonormal basis in $H_{u}$, then define $\mathcal{A}^{u}_{v}: \varphi(v) \to \varphi(\bar{u} \boxtimes v \boxtimes u)$ by $\eta \mapsto \sum_{i} \bar{\xi}_{i} \otimes \eta \otimes \xi_{i}$ for all $\eta \in \varphi(v)$. It is straightforward see that $\{\mathcal{A}^{u}_{v}\}_{u,v \in \rep (\G )}$ is a braided compatible collection for $\varphi$ and it is easy to see that the corresponding tuple $(C(\G),\lhd,\com)$ yields a braided commutative Yetter-Drinfeld $\G$-$*$-algebra where the Hopf $*$-algebra action $\lhd$ is  the adjoint action $b \lhd_{\textrm{ad}} a = S(a_{(1)})ba_{(2)}$ for every $a, b \in \pol(\G)$. This example is essentially an extension of Example \ref{ex:c(G)}.
\end{exe}
%

\begin{exe}
Let $\phi: \rep(\G) \to \rep(\G')$ be a monoidal equivalence between the compact quantum groups $\G$ and $\G'$. Consider the tensor functor $\varphi = \varphi_{\textrm{forg}}\circ\phi: \rep(\G) \to \hil_{f}$. By Tannaka-Krein reconstruction applied to $\varphi$, we obtain a C*-algebra $B$ with algebraic core given by $\mathcal{B} := \oplus_{x \in \irr(\G)} \bar{H}_{\phi(x)} \otimes H_{x}$ and action $\alpha: B \to B \otimes C(G)$ given by
\[
\alpha(\pi_{\varphi}(\bar{X}_{i'} \otimes \xi_{i})) = \sum_{j} \pi_{\varphi}(\bar{X}_{i'} \otimes \xi_{j}) \otimes u_{ji} = \sum_{j} \pi_{\varphi}(\bar{X}_{i'} \otimes \xi_{j}) \otimes \pi_{\G}(\bar{\xi}_{j} \otimes \xi_{i}),
\]
for each $u \in \rep(\G)$ and orthonormal basis $(X_{i'})$, $(\xi_{i})$ in $\varphi(u)$ and $H_{u}$ respectively. The C*-algebra $B$ also carries an action $\alpha': B \to C(\G') \otimes B$ defined by
\[
\alpha'(\pi_{\varphi}(\bar{X}_{i'} \otimes \xi_{i})) = \sum_{j'}S(\phi(u)_{j'i'}) \otimes \pi_{\varphi}(\bar{X}_{j'} \otimes \xi_{i}),
\]
for each $u \in \rep(\G)$ and orthonormal basis $(X_{i'})$, $(\xi_{i})$ in $\varphi(u)$ and $H_{u}$ respectively. It is easy to check that these two actions commute, i.e. $(\alpha' \otimes \id)\alpha = (\id \otimes \alpha)\alpha'$. The C*-algebra $B$ is known in the literature as the {\em linking algebra associated with the monoidal equivalence $\phi$}, see for example \cite{BDRV06}. Moreover, one can easily prove that the action $(\mathcal{B},\alpha)$ is actually {\em a Hopf-Galois extension of $\C$ over $\pol(\G)$}, i.e. an ergodic action such that the {\em canonical map} $\Gamma: \mathcal{B} \otimes \mathcal{B} \to \mathcal{B} \otimes \pol(\G)$, $b \otimes b' \mapsto (b \otimes 1)\alpha(b')$ is a bijective map.

We now want to apply our reconstruction theorem for Yetter-Drinfeld C*-algebras to the weak unitary tensor functor $\varphi$. For the simplicity of exposition we assume again that $\G$ and $\G '$ are of Kac type. Consider $u \in \rep(\G)$ and orthonormal bases $(X_{i'})$ and $(\xi_{'})$ in $H_{\phi(u)}$ and $H_{u}$ respectively. Then,
\begin{align*}
\Gamma\left(\sum_{l}\pi_{\varphi}(X_{l} \otimes \bar{\xi}_{i}) \otimes \pi_{\varphi}(\bar{X}_{l} \otimes \xi_{k})\right) 
& = \sum_{l,k'}\pi_{\varphi}(\overline{\overline{X_{l}} \otimes X_{l}} \otimes \bar{\xi}_{i} \otimes \xi_{k'}) \otimes \pi_{\G}(\bar{\xi}_{k'} \otimes \xi_{k}) \\
& = \sum_{k'}\pi_{\varphi}(\overline{\varphi(R_{u})(1)} \otimes \bar{\xi}_{i} \otimes \xi_{k'}) \otimes \pi_{\G}(\bar{\xi}_{k'} \otimes \xi_{k}) \\
& = \sum_{k'}\pi_{\varphi}(\bar 1 \otimes R^{*}_{u}(\bar{\xi}_{i} \otimes \xi_{k'})) \otimes \pi_{\G}(\bar{\xi}_{k'} \otimes \xi_{k}) \qquad\text{(by \eqref{eq:piw})} \\
& = 1_{\mathcal{B}} \otimes u_{ik}.
\end{align*}
Thus, we can set
\begin{equation}\label{eq:inverse_Gamma}
\Gamma^{-1}(1_{\mathcal{B}} \otimes u_{ik})^{(1)} \otimes \Gamma^{-1}(1_{\mathcal{B}} \otimes u_{ik})^{(2)} := \Gamma^{-1}(1_{\mathcal{B}} \otimes u_{ik}) = \sum_{l}\pi_{\varphi}(X_{l} \otimes \bar{\xi}_{i}) \otimes \pi_{\varphi}(\bar{X}_{l} \otimes \xi_{k}).
\end{equation}
for each $u \in \rep(\G)$ and orthonormal basis $(X_{i'})$ and $(\xi_{i})$ in $\varphi(u)$ and $H_{u}$, respectively. Now, since $\{\tensor*[^{\com_{\G'}}]{\mathcal A}{_{v'}^{u'}}\}_{u',v' \in \rep(\G')}$ is the canonical braided compatible collection for the functor $\varphi_{\textrm{forg}}$, by monoidal equivalence it is straightforward to see that the collection $\{\tensor*[^{\com_{\G'}}]{\mathcal A}{_{\phi(v)}^{\phi(u)}}\}_{u,v \in \rep(\G)}$ is a braided compatible collection for the functor $\varphi$. Hence, we have a Hopf $*$-algebra action $\lhd$ such that $(B,\lhd,\alpha)$ is a braided commutative Yetter-Drinfeld $\G$-C*-algebra, and $\lhd$ is given by
\begin{align*}
\pi_{\varphi}(\bar{Y}_{j'} \otimes \eta_{j}) \lhd u_{ik} & = \pi_{\varphi}(\bar{Y}_{j'} \otimes \eta_{j}) \lhd \pi_{\G}(\bar{\xi}_{i} \otimes \xi_{k}) \\
& = \pi_{\varphi}(\overline{\tensor*[^{\com_{\G'}}]{\mathcal A}{_{\phi(v)}^{\phi(u)}}(Y_{j'})} \otimes \bar{\xi}_{i} \otimes \eta_{j} \otimes \xi_{k}) \\
& = \sum_{l}\pi_{\varphi}(\overline{\overline{X_{l}} \otimes Y_{j'} \otimes X_{l}} \otimes \bar{\xi}_{i} \otimes \eta_{j} \otimes \xi_{k}) \\
& = \sum_{l}\pi_{\varphi}(X_{l} \otimes \bar{\xi}_{i})\pi_{\varphi}(\bar{Y}_{j'} \otimes \eta_{j})\pi_{\varphi}(\bar{X}_{l} \otimes \xi_{k}) \\
& = \Gamma^{-1}(1_{\mathcal{B}} \otimes u_{ik})^{(1)}\pi_{\varphi}(\bar{Y}_{j'} \otimes \eta_{j})\Gamma^{-1}(1_{\mathcal{B}} \otimes u_{ik})^{(2)}
\end{align*}
for each $u,v \in \rep(\G)$, and orthonormal basis $(Y_{j'})$, $(\eta_{j})$ and $(\xi_{i})$ in $H_{\phi(u)}$, $H_{v}$ and $H_{u}$, respectively. This action is the so-called {\em Miyashita-Ulbrich action}.
\end{exe}


\begin{thebibliography}{10}
	
	\bibitem{banica1999symmetries}
	T.~Banica.
	\newblock {Symmetries of a generic coaction}.
	\newblock {\em Mathematische Annalen}, 314(4):763--780, 1999.
	
	\bibitem{banica2010quantum}
	T.~Banica and D.~Goswami.
	\newblock Quantum isometries and noncommutative spheres.
	\newblock {\em Communications in Mathematical Physics}, 298(2):343--356, 2010.
	
	\bibitem{banica2012noncommutative}
	T.~Banica, A.~Skalski, and P.~So{\l}tan.
	\newblock Noncommutative homogeneous spaces: the matrix case.
	\newblock {\em Journal of Geometry and Physics}, 62(6):1451--1466, 2012.
	
	\bibitem{banica2009liberation}
	T.~Banica and R.~Speicher.
	\newblock Liberation of orthogonal {L}ie groups.
	\newblock {\em Advances in Mathematics}, 222(4):1461--1501, 2009.
	
	\bibitem{banica2009fusion}
	T.~Banica and R.~Vergnioux.
	\newblock Fusion rules for quantum reflection groups.
	\newblock {\em Journal of Noncommutative Geometry}, 3(3):327--359, 2009.
	
	\bibitem{BDRV06}
	J.~Bichon, A.~{De Rijdt}, and S.~Vaes.
	\newblock {Ergodic coactions with large multiplicity and monoidal equivalence
		of quantum groups}.
	\newblock {\em Communications in Mathematical Physics}, 262:703 -- 728, 2006.
	
	\bibitem{boca1995ergodic}
	F.~Boca.
	\newblock {Ergodic actions of compact matrix pseudogroups on C*-algebras}.
	\newblock {\em Ast{\'e}risque}, 232:93--109, 1995.
	
	\bibitem{DFW19}
	B.~Das, U.~Franz, and X.~Wang.
	\newblock {Invariant Markov semigroups on quantum homogeneous spaces}.
	\newblock {\em Journal of Noncommutative Geometry}, 15(2):531 -- 580, 2021.
	
	\bibitem{DC17}
	K.~{De Commer}.
	\newblock {Actions of compact quantum groups}.
	\newblock {\em Banach Center Publications}, 111:33 -- 100, 2017.
	
	\bibitem{F14}
	A.~Freslon.
	\newblock {Fusion (semi)rings arising from quantum groups}.
	\newblock {\em Journal of Algebra}, 417:191 -- 197, 2014.
	
	\bibitem{FW16}
	A.~Freslon and M.~Weber.
	\newblock {On the representation theory of partition (easy) quantum groups}.
	\newblock {\em Journal für die Reine und Angewandte Mathematik}, 720:155 --
	197, 2016.
	
	\bibitem{goswami2011rigidity}
	D.~Goswami.
	\newblock Rigidity of action of compact quantum groups.
	\newblock {\em arXiv preprint}, 2011.
	
	\bibitem{huang2013faithful}
	H.~Huang.
	\newblock Faithful compact quantum group actions on connected compact
	metrizable spaces.
	\newblock {\em Journal of Geometry and Physics}, 70:232--236, 2013.
	
	\bibitem{jung2020partition}
	S.~Jung and M.~Weber.
	\newblock Partition quantum spaces.
	\newblock {\em Journal of Noncommutative Geometry}, 14(3):821--859, 2020.
	
	\bibitem{kitamura2022induced}
	K.~Kitamura.
	\newblock Induced coactions along a homomorphism of locally compact quantum
	groups.
	\newblock {\em Journal of Functional Analysis}, 282(12), 2022.
	
	\bibitem{N14}
	S.~Neshveyev.
	\newblock {Duality theory for nonergodic actions}.
	\newblock {\em Münster Journal of Mathematics}, 7:413 -- 437, 2014.
	
	\bibitem{NT13}
	S.~Neshveyev and L.~Tuset.
	\newblock {\em {Compact quantum groups and their representation categories}},
	volume~20 of {\em Cours spécialisés}.
	\newblock Société mathématique de France, 2013.
	
	\bibitem{NY14}
	S.~Neshveyev and M.~Yamashita.
	\newblock {Categorical duality for Yetter--Drinfeld algebras}.
	\newblock {\em Documenta Mathematica}, 19:1105--1139, 2014.
	
	\bibitem{NV09}
	R.~Nest and C.~Voigt.
	\newblock {Equivariant Poincar{\'e} duality for quantum group actions}.
	\newblock {\em Journal of Functional Analysis}, 258(5):1466 -- 1503, 2009.
	
	\bibitem{PR07}
	C.~Pinzari and J.~E. Roberts.
	\newblock {A duality theorem for ergodic actions of compact quantum groups on
		C*-algebras}.
	\newblock {\em Communications in Mathematical Physics}, 277(2):385 -- 421,
	2008.
	
	\bibitem{taipe21}
	F.~Taipe.
	\newblock {Algebraic quantum transformation groupoids of compact type}.
	\newblock {\em arXiv preprint}, 2022.
	
	\bibitem{vaes2005new}
	S.~Vaes.
	\newblock A new approach to induction and imprimitivity results.
	\newblock {\em Journal of Functional Analysis}, 229(2):317--374, 2005.
	
	\bibitem{wang1995free}
	S.~Z. Wang.
	\newblock Free products of compact quantum groups.
	\newblock {\em Communications in Mathematical Physics}, 167(3):671--692, 1995.
	
	\bibitem{wang1998quantum}
	S.~Z. Wang.
	\newblock Quantum symmetry groups of finite spaces.
	\newblock {\em Communications in Mathematical Physics}, 195(1):195--211, 1998.
	
	\bibitem{wassermann1989ergodic}
	A.~Wassermann.
	\newblock {Ergodic actions of compact groups on operator algebras: I. General
		theory}.
	\newblock {\em Annals of Mathematics. (2)}, 130(2):273--319, 1989.
	
	\bibitem{weber2012classification}
	M.~Weber.
	\newblock {On the classification of easy quantum groups -- The
		nonhyperoctahedral and the half-liberated case}.
	\newblock {\em Advances in Mathematics}, 245(1):500--533, 2013.
	
	\bibitem{W87}
	S.~L. Woronowicz.
	\newblock Compact matrix pseudogroups.
	\newblock {\em Communications in Mathematical Physics}, 111(4):613--665, 1987.
	
	\bibitem{W88}
	S.~L. Woronowicz.
	\newblock {Tannaka-Krein duality for compact matrix pseudogroups. Twisted
		$SU(N)$ groups}.
	\newblock {\em Inventiones Mathematicae}, 93(1):35 -- 76, 1988.
	
\end{thebibliography}
\end{document}